\newtheorem{theorem}{Theorem}
\newtheorem{lemma}[theorem]{Lemma}
\newtheorem{proposition}[theorem]{Proposition}
\newtheorem{remark}[theorem]{Remark}
\newtheorem{definition}[theorem]{Definition}
\numberwithin{theorem}{section}
\numberwithin{figure}{section}
\numberwithin{equation}{section}
\DeclareMathOperator{\CR}{CR}
\DeclareMathOperator{\dist}{dist}
\DeclareMathOperator{\SLE}{SLE}
\DeclareMathOperator{\diam}{diam}
\DeclareMathOperator{\Leb}{Leb}
\newcommand{\eps}{\epsilon}
\newcommand{\T}{\mathbb{T}}
\newcommand{\LA}{\mathcal{A}}
\newcommand{\LE}{\mathcal{E}}
\newcommand{\LG}{\mathcal{G}}
\newcommand{\LL}{\mathcal{L}}
\newcommand{\LN}{\mathcal{N}}
\newcommand{\LQ}{\mathcal{Q}}
\newcommand{\LR}{\mathcal{R}}
\newcommand{\LT}{\mathcal{T}}
\newcommand{\LV}{\mathcal{V}}
\newcommand{\LX}{\mathcal{X}}
\newcommand{\LZ}{\mathcal{Z}}
\newcommand{\R}{\mathbb{R}}
\newcommand{\C}{\mathbb{C}}
\newcommand{\D}{\mathbb{D}}
\newcommand{\UC}{\mathbb{S}}
\newcommand{\N}{\mathbb{N}}
\newcommand{\Z}{\mathbb{Z}}
\newcommand{\E}{\mathbb{E}}
\newcommand{\PP}{\mathbb{P}}
\newcommand{\A}{\mathbb{A}}
\newcommand{\one}{\mathbb{1}}
\newcommand{\cond}{\,|\,}
\newcommand{\Bigcond}{\;\Big|\;}
\renewcommand{\Im}{\mathrm{Im}}
\renewcommand{\Re}{\mathrm{Re}}
\newcommand{\ii}{\mathrm{i}}
\def\BdryIn{\partial_{\scalebox{0.6}{\textnormal{in}}}\Omega}
\def\BdryOut{\partial_{\scalebox{0.6}{\textnormal{out}}}\Omega}
\def\OmegaFill{\Omega_{\scalebox{0.6}{\textnormal{fill}}}}
\def\OmegaFillCl{\Omega_{\scalebox{0.6}{\textnormal{fill}}}}
\def\OmegaUniv{\Omega_{\scalebox{0.6}{\textnormal{univ}}}}
\def\LVUniv{\LV_{\scalebox{0.6}{\textnormal{univ}}}}
\def\LEUniv{\LE_{\scalebox{0.6}{\textnormal{univ}}}}
\def\AnnBdryIn{\partial_{\scalebox{0.6}{\textnormal{in}}}\A}
\def\AnnBdryOut{\partial_{\scalebox{0.6}{\textnormal{out}}}\A}
\def\BdryOutUniv{\partial_{\scalebox{0.6}{\textnormal{out}}}\OmegaUniv}
\def\Pois{\mathrm{H}^{\textnormal{ND}}}
\def\Gren{\mathrm{G}}
\def\HarmUniv{h^{\scalebox{0.6}{\textnormal{univ}}}}
\def\phiUniv{\phi^{\scalebox{0.6}{\textnormal{univ}}}}
\def\hUniv{h^{\scalebox{0.6}{\textnormal{univ}}}}
\newcommand{\BdryInNbhdDiscr}[1]{\LV^\delta_{\scalebox{0.6}{\textnormal{in}}}{(#1)}}
\newcommand{\BdryInNbhd}[1]{\Omega_{\scalebox{0.6}{\textnormal{in}}}{(#1)}}
\newcommand{\BdryInNbhdDiscrGr}[1]{\Omega^\delta_{\scalebox{0.6}{\textnormal{in}}}{(#1)}}
\newcommand{\BdryInNbhdDiscrDom}[1]{\tilde\Omega^\delta_{\scalebox{0.6}{\textnormal{in}}}{(#1)}}
\def\BdryOutV{\partial_{\scalebox{0.6}{\textnormal{out}}}\LV}
\def\BdryInV{\partial_{\scalebox{0.6}{\textnormal{in}}}\LV}
\def\BdryOutEUniv{\partial_{\scalebox{0.6}{\textnormal{out}}}\LEUniv}
\def\BdryOutVUniv{\partial_{\scalebox{0.6}{\textnormal{out}}}\LVUniv}
\def\root{\varrho}
\global\long\def\bs{\boldsymbol}
\global\long\def\ud{\mathrm{d}}
\newcommand{\cev}[1]{\reflectbox{\ensuremath{\vec{\reflectbox{\ensuremath{#1}}}}}{}}
\global\long\def\field{\Phi}
\global\long\def\fline{\vartheta}
\global\long\def\gff{\mathrm{h}}
\global\long\def\slecurv{\varpi}
\global\long\def\simpleCurv{\varrho}
\global\long\def\harm{\,\mathfrak{h}}
\global\long\def\ee{\mathrm{e}}
\def\exitT{\tau}
\def\exitTOut{\tau_{\scalebox{0.6}{\textnormal{out}}}}
\def\exitTIn{\tau_{\scalebox{0.6}{\textnormal{in}}}}
\def\exitTT{\tau}
\def\Prob{\mathsf{P}^\delta}
\def\ProbE{\mathsf{E}^\delta}
\def\ProbHat{\hat{\mathsf{P}}^\delta}
\def\ProbEHat{\hat{\mathsf{E}}^\delta}
\def\ProbBM{\mathsf{P}}
\def\ProbEBM{\mathsf{E}}
\global\long\def\TVnorm#1{\big|\big| \, #1 \, \big|\big|_{\rm TV}}
\global\long\def\TVnormInline#1{|| \, #1 \, ||_{\rm TV}}
\def\ph{\varphi}
\begin{document}

\title{Uniform spanning trees and random matrix statistics}

\author{Nathana\"el Berestycki\thanks{University of Vienna. nathanael.berestycki@univie.ac.at}, \, 
Marcin Lis\thanks{Vienna University of Technology. marcin.lis@tuwien.ac.at}, \, 
Mingchang Liu\thanks{Capital Normal University, Beijing. liumc\_prob@163.com}, \, 
and Eveliina Peltola\thanks{Aalto University and University of Bonn. eveliina.peltola@hcm.uni-bonn.de}}

\date{}

\maketitle{
}

\vspace*{-5mm}

\begin{abstract}
We consider a uniform spanning tree in a $\delta$-square grid approximation of a planar domain $\Omega$. For given integer $n\ge 2$, we condition the tree on the following $n$-arm event: we pick $n$ branches, emanating from $n$ points microscopically close to a given interior point, and condition them to connect to the boundary $\partial \Omega$ without intersecting. What can be said about the geometry of these branches? 

We derive an exact formula for the characteristic function of the total winding of the branches, involving the Brownian loop measure in the scaling limit $\delta \to 0$. A surprising consequence of this formula is that in the scaling limit, the behaviour of this function depends on the total number of branches $n$ only through its parity.

We also describe the scaling limit of the branches. If $\Omega = \D$ is the unit disc, then they hit the boundary of $\D$ (i.e., the unit circle) at random positions which coincide exactly with the eigenvalues of a random matrix of size $n$ drawn from the Circular Orthogonal Ensemble (COE, also called C$\beta$E with $\beta =1$). Furthermore, the branches converge to Loewner evolution driven by the circular Dyson Brownian motion with parameter $\beta = 4$ (i.e., $n$-sided radial $\SLE_\kappa$ with $\kappa=2$). We thus verify a prediction made by Cardy in this setting. 

Along the way, we develop a flow-line (imaginary geometry) coupling of $n$-sided radial $\SLE_\kappa$ with the Gaussian free field, which may be of independent interest. Surprisingly, we find that the variance of the corresponding field near the singularity also does not depend on the number $n\ge 2$ of curves. In contrast, the variance of the the \emph{winding} of the curves behaves as $\kappa/n^2$, which agrees with the predictions from the physics literature made by Wieland and Wilson numerically, and by Duplantier and Binder using Coulomb gas methods --- but disagrees with a result of Kenyon. 
\end{abstract}


%

\vspace*{2mm}

\begin{center}
\includegraphics[width=.45\textwidth]{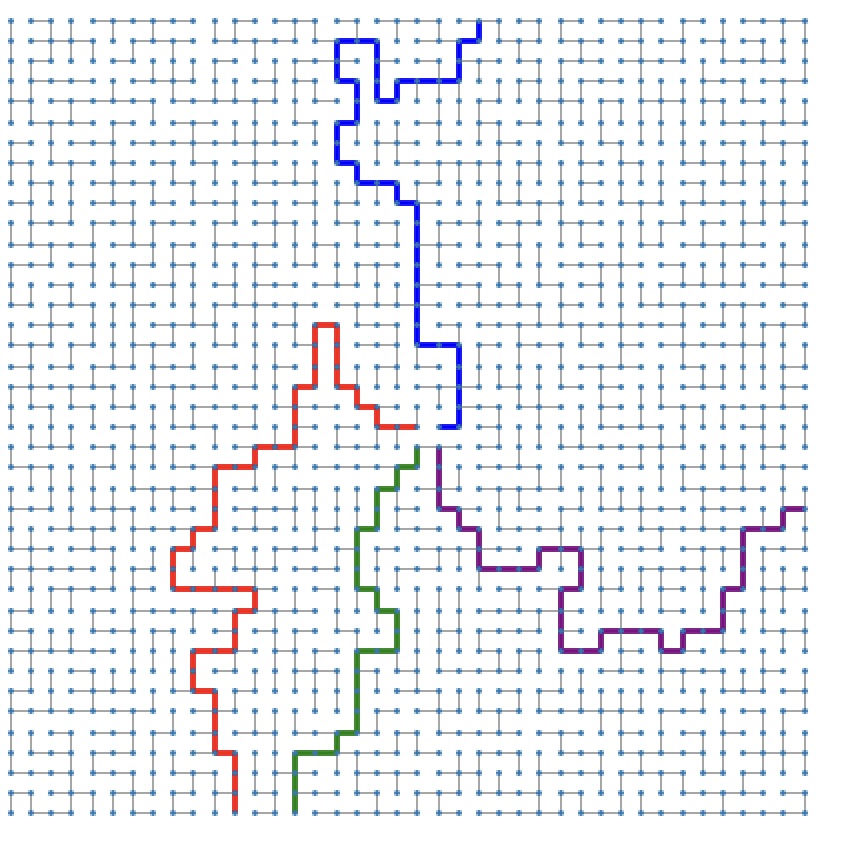}
\\ 
UST 
on a $40\times40$ square, conditioned so that four given neighbouring branches do not intersect.
\end{center}

\newpage
\tableofcontents

\section{Introduction and main results}
\label{sec::main}

It is well known that the scaling limit of a \textbf{uniform spanning tree} (UST) in two dimensions 
almost surely contains exceptional \emph{triple points} (points which, if removed, produce three connected components) but no points of higher multiplicity~\cite{Benjamini:Large_scale_degrees_and_the_number_of_spanning_clusters_for_UST}.  
In this article, we consider the question of conditioning a uniform spanning tree on the event that a given interior point, say $z \in \Omega$, has a given arbitrary (fixed) multiplicity, $n\ge 2$.  
This is a degenerate conditioning if $n\ge 3$, which needs to be carefully defined.  
The main goal of this work is to answer the following question: what can be said about the geometry of these curves?  
For ease, we will consider USTs on finite subgraphs of the square lattice $\delta \Z^2$ with mesh size $\delta>0$, for which the conditioning can be defined relatively simply (see below for precise definitions). 

\medskip 
We find two types of results.  
On the one hand, we obtain an exact formula, valid at the lattice level (for fixed $\delta>0$), for the characteristic function of the \textbf{total winding} (i.e., algebraic total number of crossings by the $n$ branches of a fixed reference line); see Theorem~\ref{thm::total_winding}.  
This formula can be analysed in the scaling limit where the mesh size 
$\delta$ tends to zero, in terms of a certain Brownian loop measure. 
A surprising consequence of our formula is that the total winding has a limit law which is independent of $n$. 

\medskip 
On the other hand, we fully describe the scaling limit of the law of these $n$ curves (with $n$ fixed, while $\delta \to 0$), 
and reveal a connection to objects appearing naturally in the context of \textbf{random matrix theory} 
(specifically, the Circular $\beta$-Ensemble, C$\beta$E with $\beta =1$ and $\beta =4$), 
which correspond to predictions originally made by Cardy~\cite{Cardy:SLE_and_Dyson_circular_ensembles};
see Theorem~\ref{thm::conv_curves}. 
The former describes the density of the hitting points of the curves to the outer boundary, while the latter describes the the Loewner evolution of the curves themselves (as a  variant of the Schramm--Loewner evolution process, SLE, with $\kappa=2$).
In addition, we describe the topological winding of the curves, generalising Schramm's result for a single radial SLE;
see Proposition~\ref{coro::truncated_winding}. 
We also derive a coupling of these curves with a variant of the Gaussian free field,
which fit into the imaginary geometry framework \`a la Miller~\&~Sheffield; see Theorem~\ref{coro::truncated_winding}.

\subsection{Statement of the problem}
\label{subsec:setup}

Let us first define more precisely the conditioning that we have in mind.  
Suppose that $\Omega \subsetneq \C$ is a doubly connected domain, i.e., 
$\Omega$ is conformally equivalent to an annulus, so $\partial\Omega$ has two connected components.  
Denote by $\BdryOut$ the boundary of the unbounded connected component of $\C\setminus\Omega$ and define $\BdryIn := \partial\Omega\setminus\BdryOut$.  
We will suppose that $\BdryIn$ is analytic and $\BdryOut$ is at least a Jordan curve; the inside of this Jordan curve (i.e., the unique bounded component of $\C \setminus \BdryOut)$ is a simply connected domain which we denote by $\OmegaFill$.  
We may assume without loss of generality that $0$ is separated from $\infty$ by $\BdryIn$.

Suppose $\{\Omega^\delta = (\LV^\delta, \LE^\delta)\}_{\delta>0}$ is a sequence of finite graphs such that 
$\Omega^\delta\subset\delta\Z^2$ and $\Omega^\delta$ is doubly connected for every $\delta>0$.  
Define $\BdryOut^\delta$ and $\BdryIn^\delta$ similarly as above.  
Using the usual planar self-duality of the square lattice, each vertex $v\in \LV^\delta$ can be identified with a square $f(v)$ of sidelength $\delta$ centred at $v$.
To the graph $\Omega^\delta$, we can then associate an open set $\tilde \Omega^\delta \subset \R^2 \approx \C$ given by $\tilde \Omega^\delta := \textnormal{Int} (\smash{\underset{v\in \LV^\delta}{\cup}} \, \overline{ f(v)})$.  

\medskip 
\textbf{Setup.}
In the scaling limit results, we assume that $\Omega^\delta$ converges to $\Omega$ as $\delta \to 0$ in the following sense:
\begin{enumerate}[leftmargin=*]
\item \label{setup1}
There exists a conformal isomorphism $\phi = \phi_r$ from $\Omega = \Omega_r$ with some $r = r_\Omega \in(0,1)$ 
onto an annulus $\A_r = \{ z \in \C \,|\, r<|z|<1 \}$ 
and a sequence $(\phi^\delta)_{\delta>0}$ of conformal isomorphisms from $\tilde \Omega^\delta$ 
onto $\A_{r^\delta}$ such that $r^\delta\to r$ as $\delta\to 0$ and $(\phi^\delta)^{-1}$ converges to $\phi^{-1}$ locally uniformly on $\A_r$ as $\delta\to 0$. 
\item \label{setup2}
Using the conventional metric $\dist_X(\cdot,\,\cdot)$ of unparametrised curves defined in Equation~\eqref{eqn::curve_metric} (which is the metric of uniform convergence up to reparametrisation),  
we have $\dist_X (\BdryOut^\delta,\BdryOut)\to 0$ as $\delta\to 0$, 
and there exists a constant $C = C_\Omega>0$ such that $\dist_X (\BdryIn^\delta,\BdryIn)\le C\delta$ for every $\delta>0$.
\end{enumerate}

\begin{figure}
\includegraphics[scale=.5]{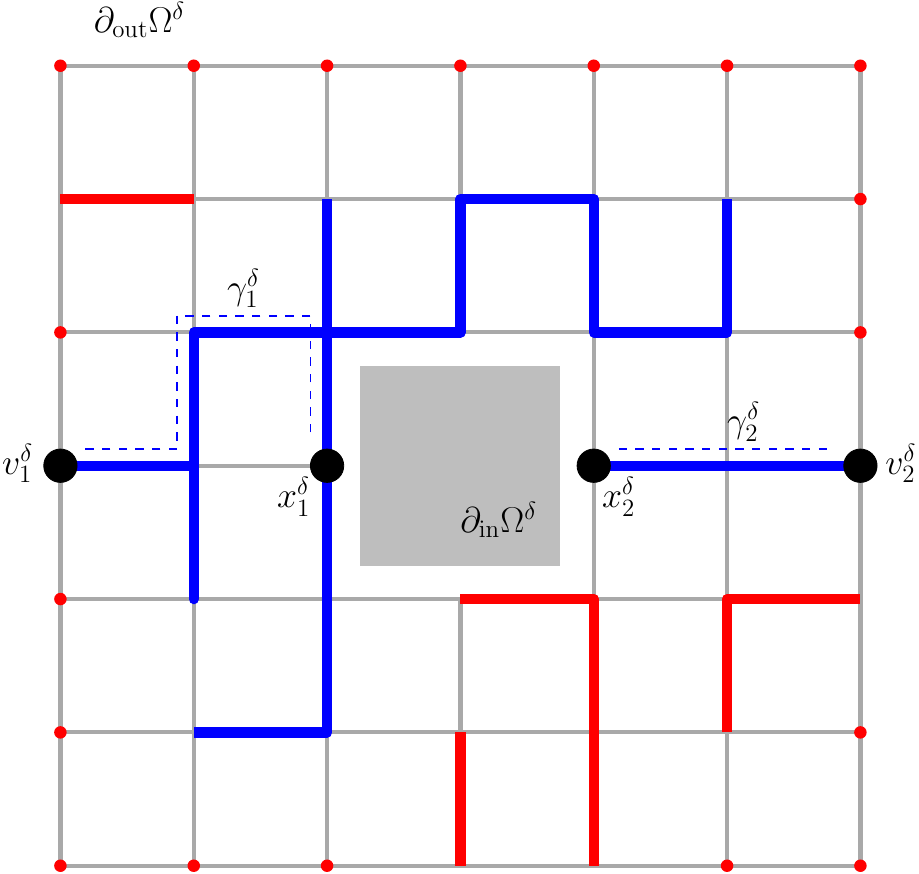} \hspace{1cm}
\includegraphics[scale=.5]{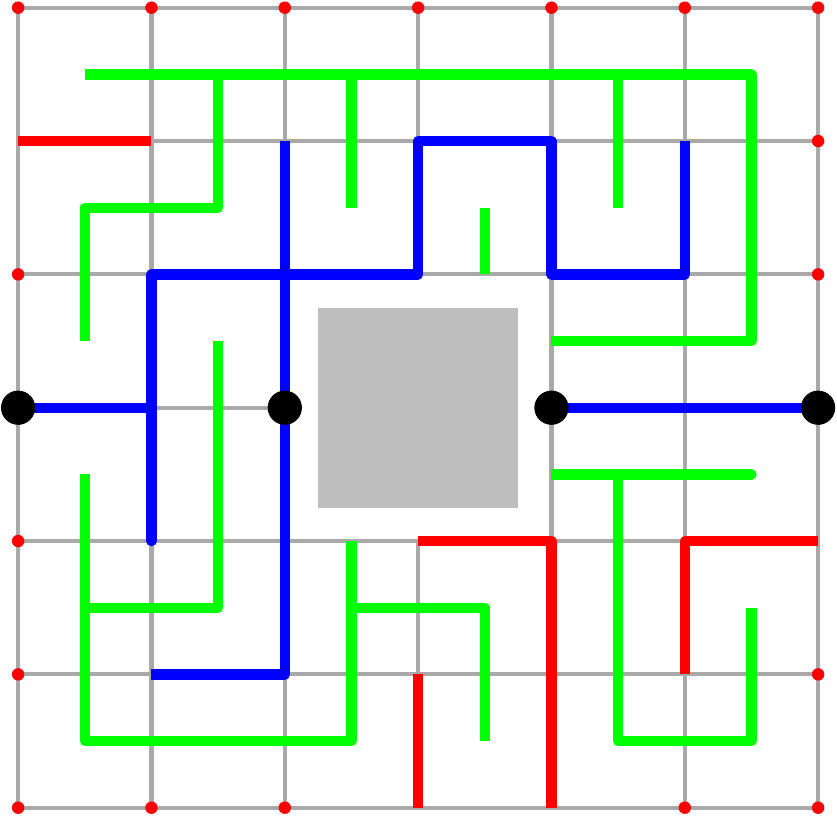}
\caption{Uniform spanning tree on the graph $\Omega^\delta$ (gray), with wired (resp.~free) boundary conditions on $\BdryOut^\delta$ (resp.~$\BdryIn^\delta$), conditioned on the event $E^\delta_{\bs{x}}$, case $n=2$.  
The components containing the special points $x_1^\delta$ and $x_2^\delta$ are in blue. Other components are in red. 
All components of the tree are oriented towards the wired boundary $\BdryOut^\delta$. 
The two disjoint oriented paths $\gamma_1^\delta$ and $\gamma_2^\delta$, from $x_1^\delta$ and $x_2^\delta$ respectively to $\BdryOut^\delta$, have been highlighted in dashed blue. 
In the right panel, we superimpose the corresponding dual spanning tree on the planar dual lattice (in green). Its components (three in this example) are naturally oriented towards the inner boundary.
}
\label{F:UST}
\end{figure}

\medskip 
We fix $n$ distinct vertices $x_1^\delta,\ldots,x_n^\delta$ on the inner boundary $\BdryIn^\delta$, by convention labelled in counterclockwise order, 
and assume that each $x^\delta_j$ converges to $x_j\in\BdryIn$, where $x_1,\ldots,x_n$ are $n$ distinct points on $\BdryIn$.
We will consider the uniform spanning tree $\LT^\delta$ on $\Omega^\delta$ with 
wired\footnote{For simplicity, we will not distinguish the graph $\Omega^\delta$ and the graph obtained from $\Omega^\delta$ by wiring the outer boundary $\BdryOut^\delta$ to a root vertex $\root^\delta$. We assume these boundary conditions throughout.} 
boundary condition on $\BdryOut^\delta$ and free boundary condition on $\BdryIn^\delta$, 
i.e., we view the outer boundary $\BdryOut^\delta$ as a single vertex $\root^\delta$ declared to be the root of the UST. 
Observe that $\LT^\delta$ can be viewed as an \emph{arborescence}, i.e., an acyclic collection of oriented edges in $\Omega^\delta$ such that every vertex $x \in \Omega^\delta$ distinct from the root has a unique outgoing edge.
For $1\le j\le n$, define $\gamma^\delta_j = \vec\gamma^\delta_j$ to be the \textbf{UST branch} in $\LT^\delta$ connecting $x_j^\delta$ to the root vertex $\root^\delta$, obtained by following the outgoing edges emanating from $x_j^\delta$ inductively, and which necessarily terminates at the root $\root^\delta$.  
We primarily view $\gamma_j^\delta$ as an unordered collection of vertices, $\gamma_j^\delta = \{x_j^\delta, \ldots, \root^\delta\}$, though sometimes it will be useful to identify it with the ordered collection $\vec\gamma^\delta_j$ of edges it successively traverses, and sometimes as a continuous curve up to reparametrisation ending at $\BdryOut^\delta$. 
With slight abuse of notation, we usually denote the path as $\gamma_j^\delta$; we highlight the orientation as $\vec\gamma^\delta_j$ and its reverse as $\cev{\gamma}^\delta_j$ only when necessary.   

\medskip 
Let us now define the event we are interested in conditioning on --- see Figure~\ref{F:UST} for an illustration.
For $\bs{x}^\delta = (x_1^\delta,\ldots,x_n^\delta)$ as above, we define 
\begin{align}\label{eqn::disjointE}
E^\delta_{\bs{x}} 
= E(\Omega^\delta;x_1^\delta,\ldots,x_n^\delta) 
:= \big\{\LT^\delta \colon \gamma^\delta_i  \cap \gamma^\delta_j   = \{ \root^\delta\} \textnormal{ for all }1\le i<j\le n \big\},
\end{align}
which is the event that the branches emanating from $x_1^\delta, \ldots, x_n^\delta$ are pairwise disjoint; i.e., $\gamma^\delta_i$ and $\gamma^\delta_j$ for $i \neq j$ do not have any common vertex except for the root vertex $\root^\delta$. 
Events of this type feature prominently in the theory of SLEs, since they determine the crossing and/or arm exponents, whose computation was one of the major motivation and outstanding success of the theory~\cite{LSW:Brownian_intersection_exponents1, LSW:Brownian_intersection_exponents2, LSW:Brownian_intersection_exponents3}.  
See also~\cite{Kenyon:Long-range_properties_of_spanning_trees} and the physics 
literature~\cite{Duplantier-Binder:Harmonic_measure_and_winding_of_conformally_invariant_curves, Wieland-Wilson:Winding_angle_variance_of_Fortuin-Kasteleyn_contours, Duplantier:Conformal_random_geometry} 
(to which we will come back later in Remark~\ref{rem:rel_to_lit}) 
more specifically for computations precisely in this case of the uniform spanning tree. 

\medskip 
In the present work, we are interested in letting the mesh size $\delta$ tend to zero and in letting the inner boundary $\BdryIn$ shrink to a point, meaning that $r \to 0$. 
This can be done simultaneously or in different orders, and either way we do not expect this choice to impact the answers (we will partly justify this with our results).  
For technical reasons, it will turn out to be more convenient to first let the mesh size $\delta \to 0$ (keeping $\Omega$ and in particular $\BdryIn$ fixed) and subsequently letting $\diam(\BdryIn) \to 0$;  
though some of our results (in particular concerning the winding of the curves conditional on the event $E^\delta_{\bs{x}}$) will also be valid independently of the order in which these limits are taken.

\subsection{Total winding}

A question which is particularly natural concerns the winding of the UST branches near their common endpoint. 
To address this, we begin with some generalities about different notions of winding for curves, and an intuitive heuristics that guides the subsequent results.  
If $\gamma \colon [a,b] \to \C$ is a smooth simple curve with $\gamma'(t) \neq 0$ for $t\in [a , b]$, the (intrinsic) winding of $\gamma$ is, by definition, the quantity $\int_a^b \arg \gamma'(t) \, \ud t$.  
This quantity extends\footnote{Note that the intrinsic winding is ill-defined for fractal paths, for which we use the topological winding instead. 
They can be related in the piecewise smooth case; see~\cite[Section~2]{BLR:Dimers_and_imaginary_geometry}.} 
without any difficulty to piecewise smooth paths such as the lattice paths of interest, $\bs{\gamma}^\delta = (\gamma_1^\delta, \ldots, \gamma_n^\delta)$. 
In this case, the winding of such a path also coincides with the sum of turning angles.

\paragraph{Heuristics.} 
Fix $k\ge 1$, and suppose $r \ll \ee^{-k}$. We consider the $n$ branches between scales $\ee^{- k }$ and $\ee^{ - k -1}$, and more specifically, the winding of these branches at this scale, 
say in the sense of the algebraic number of full turns around the annulus. 
For topological reasons, as the curves do not intersect, each branch accumulates the same winding between these two scales.  
Also, scale-invariance suggests that the law of the winding accumulated between scales $\ee^{-k}$ and $\ee^{-k-1}$ does not depend on $k$, and it seems reasonable to guess that these increments are also roughly independent as $k$ varies. 
It is therefore natural to expect that the winding up to scale $\ee^{-k}$ evolves like a Brownian motion as a function of the logarithm $k$ of the scale, but with a variance that needs to be determined 
--- and should in particular depend in an interesting way on the number $n\ge 2$ of branches.  
Intuitively, the larger $n$ is, the more rigid is the system, and thus the smaller we should expect the variance to be.  
We will see that \emph{the variance behaves quadratically in $n$}:
\begin{align}\label{eq:varianceheuristics}
\textnormal{variance of winding} \sim \frac{2}{n^2} ,
\end{align}
up to any given scale, for each curve.
(Here, the constant $2 = \kappa$ is the variance of the corresponding $\SLE_\kappa$ driving function, also discussed below. In general, the variance of the winding turns out to be $\sim \kappa/n^2$.)

\paragraph{An exact law.} 
To state our exact result concerning total winding, it is convenient to consider a slightly more restricted event where we impose the locations at which 
the branches $\bs{\gamma}^\delta = (\gamma_1^\delta, \ldots, \gamma_n^\delta)$ hit the outer boundary. 
Let $v_1^\delta,\ldots,v_n^\delta$ be $n$ distinct vertices on $\BdryOut^\delta$, by convention labelled in counterclockwise order.
We use ``$\cdot \longleftrightarrow \cdot$'' to indicate that there is a connection in the tree $\LT^\delta$.
Thus, bearing in mind that $\LT^\delta$ is an arborescence oriented towards $\BdryOut^\delta$, 
the event $\{x_i^\delta \longleftrightarrow v_j^\delta\}$ means that, following successively the unique oriented branch $\vec \gamma_i^\delta$ started at $x_i^\delta$, one eventually arrives at the vertex $v_j^\delta$ at time $\exitT_j^\delta$.

Using the convention that $v_{j}^\delta = v_{j-n}^\delta$ for $j>n$, we now define
\begin{align}
\nonumber
E^\delta_{\bs{x}, \bs{v}, k} 
:= \; &
\big\{ x_1^\delta  \longleftrightarrow v_{k+1}^\delta, \, \ldots, \, x_n^\delta \longleftrightarrow v_{n+k}^\delta \big\}
\\
= \; & \big\{\LT^\delta \; | \; \vec\gamma^\delta_1(\exitT_1^\delta) = v_{k+1}^\delta, \, \ldots, \, 
\vec\gamma_n(\exitT_n^\delta) = v_{k+n}^\delta\big\} 
, \qquad \textnormal{for } 0\le k\le n-1 ,
\label{eqn::defE}
\\
\label{eqn::defE2}
E^\delta_{\bs{x}, \bs{v}} 
= \; & E(\Omega^\delta;x_1^\delta,\ldots,x_n^\delta;v_1^\delta,\ldots,v_n^\delta) 
:= \bigcup_{k=0}^{n-1}E^\delta_{\bs{x}, \bs{v}, k}.
\end{align}
Due to planarity, the event $E^\delta_{\bs{x}, \bs{v}}$ is exactly the event that $E^\delta_{\bs{x}}$ occurs and $\{\{x_1^\delta, \ldots, x_n^\delta\} \longleftrightarrow \{v_1^\delta, \ldots, v_n^\delta\}\}$: 
indeed, if the latter occurs, then necessarily these connections must respect the counterclockwise order.

\medskip
Let $K(\gamma_j^\delta)$ be the (signed) number of crossings by $\gamma_j^\delta$ across a given line (``zipper'')  connecting the two boundary components of the annulus (see Figure~\ref{F:dimers} for an illustration).  
Our result, Theorem~\ref{thm::total_winding} below, gives an exact identity for the law of the {\textbf total winding}\footnote{This crossing number differs from the actual intrinsic winding, normalised by $2\pi$, by at most $\pm 1$; see Equation~\eqref{eq:crossings}.} 
$K(\bs{\gamma}^\delta) := \sum_{j=1}^n 2\pi K(\gamma_j^\delta)$.  

\medskip
Our identity takes a different form according to whether $n$ is odd or even, but involves either way the (discrete) \textbf{random walk loop soup} on $\Omega^\delta$ (see Section~\ref{subsec:Fomin} and~\cite{Lawler-Ferreras:RW_loop_soup} for definitions, 
bearing in mind that random walks on $\Omega^\delta$ are only reflected at $\BdryIn^\delta$ but absorbed at $\BdryOut^\delta$).
Denote by $\LL_*^\delta$ the set of (discrete) uncontractible loops on $\Omega^\delta$ (i.e., loops not homotopic to a point) which do not hit $\BdryOut^\delta$.  
To each such loop $\ell\in\LL_*^\delta$, one can associate a quantity $\ph(\ell)$ equal to $2\pi$ times the signed number of turns of $\ell$ around the origin --- in other words, $\ph(\ell)$ is the \textbf{topological winding} of $\ell$ around the origin.
Denote by $\PP_{\bs{x}, \bs{v}}^\delta$ the conditional probability measure of $\LT^\delta$ given the event $E^\delta_{\bs{x}, \bs{v}}$ defined in~\eqref{eqn::defE2}. 

\medskip
In Section~\ref{sec::annulus}, we will also show that as $\delta \to 0$, the restriction of the discrete loop measure to $\LL_*^\delta$ converges to the analogous quantity for a \textbf{Brownian loop measure} $\mu^{\textnormal{ND}}_{\Omega}$ on $\Omega$, with Neumann boundary condition on $\BdryIn$ and Dirichlet boundary condition on $\BdryOut$ 
(see Section~\ref{subsec:Fomin} and~\cite{LSW:Conformal_restriction_the_chordal_case, Lawler-Werner:The_Brownian_loop_soup} for definitions).  
This does not directly follow from known results such as~\cite{Lawler-Ferreras:RW_loop_soup} owing to the presence of the reflecting boundary. 
Denote by $\LL_*$ the set of (continuous) loops on $\Omega$ and for each such loop $\ell\in\LL_*$, define $\ph(\ell)$ similarly as before.  
Using this result combined with an exact asymptotic computation of determinants involving random walk excursion kernels, 
we are able to deduce the following result on the law of the total winding of curves. 

\medskip
Recall from the scaling limit Setup (Item~\ref{setup1}) that we denote by $\phi$ a fixed conformal map from $\Omega$ onto the annulus $\A_r$. Choosing the branch of the argument function $\arg$ to take values in $[-\pi,\pi)$, set
\begin{align*}
c := \sum_{j=1}^{n}\arg \phi(x_j)-\sum_{j=1}^{n}\arg \phi(e_j) .
\end{align*}

\begin{theorem}\label{thm::total_winding}
Let $x_1^\delta,\ldots,x_n^\delta \in \BdryIn^\delta$ and $v_1^\delta,\ldots,v_n^\delta \in \BdryOut^\delta$ be distinct vertices labelled in counterclockwise order along the respective boundaries. 
Assume that as $\delta \to 0$, each $x^\delta_j$ converges to $x_j\in\BdryIn$ and each $v^\delta_j$ converges to $v_j\in\BdryIn$,
where $x_1,\ldots,x_n \in \BdryIn$ and $v_1,\ldots,v_n \in\BdryOut$ are distinct and in counterclockwise order.
For any odd $n\ge 1$, we have the following two cases.
\begin{enumerate}
\item If $\beta \in [0,\frac{1}{2})\cup(\frac{1}{2},1)$, 
and $N_\beta\in\Z$ is such that $|\beta-N_\beta| = \underset{k\in\Z}\min \, |\beta-k|$, then we have
\begin{align}\label{eqn::scaling_wind_odd_limit}
\lim_{\delta\to 0} \E_{\bs{x}, \bs{v}}^\delta \Big[\ee^{\ii \beta \sum_{j=1}^{n}2\pi K(\gamma_j^\delta)}\Big]
=  \big( \ee^{\ii(\beta-N_\beta)c} + o_r(1)\big) \; r^{|\beta-N_\beta|}  \; \exp \Big(\mu_\Omega^{\textnormal{ND}}\big[(1-\ee^{\ii\beta\ph(\ell)}) \, \one{\{\ell\in\LL_*\}}\big] \Big).
\end{align}

\item If $\beta=\frac{1}{2}$, then we have
\begin{align}\label{eqn::scaling_wind_odd_limit1}
\lim_{\delta\to 0} \E_{\bs{x}, \bs{v}}^\delta \Big[\ee^{\ii \beta \sum_{j=1}^{n}2\pi K(\gamma_j^\delta)}\Big]
= \big( 2\cos(c/2)+o_r(1) \big) \; r^{{1}/{2}}  \; \exp \Big(\mu_\Omega^{\textnormal{ND}}\big[(1-\ee^{\ii\beta\ph(\ell)}) \, \one{\{\ell\in\LL_*\}}\big] \Big).
\end{align}
\end{enumerate}
\noindent 
For any even $n\ge 2$, denoting $N_\beta\in\Z$ such that $|\beta-N_\beta-\frac{1}{2}| = \underset{k\in\Z}\min \,|\beta-k-\frac{1}{2}|$, we have
\begin{align}\label{eqn::scaling_wind_even_limit}
\lim_{\delta\to 0} \E_{\bs{x}, \bs{v}}^\delta \Big[\ee^{\ii \beta \sum_{j=1}^{n}2\pi K(\gamma_j^\delta)}\Big]
= \big(\ee^{\ii(\beta-N_\beta+\frac{1}{2})c}+o_r(1)\big) \; \exp \Big(\mu_\Omega^{\textnormal{ND}}\big[ \ee^{\frac{\ii}{2}\ph(\ell)} (1-\ee^{\ii\beta\ph(\ell)}) \, \one{\{\ell\in\LL_*\}}\big] \Big).
\end{align}
The term $o_r(1)$ is uniform over the locations of $x_1,\ldots,x_n \in \BdryIn$ and $v_1,\ldots,v_n \in \BdryOut$.
\end{theorem}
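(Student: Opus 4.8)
The plan is to reduce the computation of the characteristic function $\E_{\bs{x},\bs{v}}^\delta[\ee^{\ii\beta\sum_j 2\pi K(\gamma_j^\delta)}]$ to a determinantal formula via Fomin-type identities, and then to analyse that determinant in the scaling limit $\delta\to 0$. First I would recall, as set up in Section~\ref{subsec:Fomin}, that the UST branches $(\vec\gamma_1^\delta,\ldots,\vec\gamma_n^\delta)$ conditioned to connect $\bs{x}^\delta$ to $\bs{v}^\delta$ in the correct cyclic order and pairwise disjointly can be encoded by a loop-erased random walk picture; by Fomin's formula (in the annular/doubly connected setting, where the walks are reflected at $\BdryIn^\delta$ and absorbed at $\BdryOut^\delta$), the probability of the event $E^\delta_{\bs{x},\bs{v},k}$ weighted by $\ee^{\ii\beta\sum_j 2\pi K(\gamma_j^\delta)}$ can be written as a determinant of a matrix whose entries are (twisted) random-walk excursion kernels between the $x_i^\delta$ and the $v_j^\delta$. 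The twisting by $\ee^{\ii\beta\ph(\cdot)}$ around the puncture is exactly what one gets by passing to the universal cover / introducing a $U(1)$ connection with holonomy $\ee^{2\pi\ii\beta}$ around the inner boundary; summing over $k=0,\ldots,n-1$ (i.e.\ over the cyclic shift) and dividing by the unweighted partition function $\Prob[E^\delta_{\bs{x},\bs{v}}]$ gives the desired ratio of determinants.

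The second main step is the asymptotic analysis of these determinants as $\delta\to 0$. Here I would use: (i) convergence of the discrete (reflected–absorbed) excursion/Poisson kernel to its continuum counterpart on $\Omega$, obtained by transporting through the conformal maps $\phi^\delta\to\phi$ of the Setup; (ii) the convergence — proved in Section~\ref{sec::annulus} — of the discrete loop measure restricted to uncontractible loops $\LL_*^\delta$ to the Brownian loop measure $\mu_\Omega^{\textnormal{ND}}$, which accounts for the exponential factor $\exp(\mu_\Omega^{\textnormal{ND}}[(1-\ee^{\ii\beta\ph(\ell)})\one{\{\ell\in\LL_*\}}])$ (respectively its $\ee^{\frac{\ii}{2}\ph(\ell)}$-twisted version when $n$ is even, the extra half-integer twist being forced by the antiperiodic/spin structure that appears because an even number of strands wraps the puncture). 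On the annulus $\A_r$ the twisted excursion kernels and the relevant determinants can be computed essentially explicitly in terms of theta-like series / geometric series in $r$; as $r\to 0$ the dominant term in the ratio is governed by the smallest exponent of $r$, which is $|\beta-N_\beta|$ in the odd case (no spin twist) and $|\beta-N_\beta-\tfrac12|$ in the even case — the case $\beta=\tfrac12$ in the odd situation being exactly the degenerate point where two exponents coincide, producing the $2\cos(c/2)$ prefactor instead of a single exponential. The phase $\ee^{\ii(\beta-N_\beta)c}$ (resp.\ $\ee^{\ii(\beta-N_\beta+\frac12)c}$) comes from tracking $c=\sum_j\arg\phi(x_j)-\sum_j\arg\phi(e_j)$ through the explicit annular kernels, and the uniformity of $o_r(1)$ over the boundary points follows because the subleading terms are bounded by a fixed power of $r$ times a constant depending only on $\Omega$ (compactness of the configuration space of ordered $n$-tuples on the two analytic/Jordan boundary components).

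The parity dichotomy is the conceptual heart: an odd number of non-crossing strands joining the two boundary components of the annulus forces the loop-erased-walk combinatorics to be untwisted, so the relevant winding variable lives in $\Z+$ (a genuine integer number of turns), whereas an even number forces a half-integer shift — this is what makes $N_\beta$ the nearest integer to $\beta$ in one case and to $\beta-\tfrac12$ in the other, and is ultimately a statement about which spin/flat line bundle over the annulus the determinant of the twisted kernel naturally computes. I expect the main obstacle to be the determinant asymptotics on the annulus with the reflecting inner boundary: one must identify the leading $r$-power and its coefficient uniformly in the boundary data, handle the coincidence of exponents at $\beta=\tfrac12$, and cleanly separate the ``uncontractible loop'' contribution (which survives as the Brownian loop-measure exponential) from the ``contractible'' part (which is absorbed into the normalisation and produces no $n$-dependence). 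The combinatorial bookkeeping of signs in Fomin's formula in the multiply connected setting — in particular verifying that the sum over cyclic shifts $k$ assembles into precisely the claimed closed form rather than an alternating sum — is the other delicate point, and is where the hypothesis that $n$ enters only through its parity first becomes visible.
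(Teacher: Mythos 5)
Your proposal follows essentially the same route as the paper: Fomin's formula with a complex ($U(1)$-twisted) edge weight across a branch cut, the loop-erasure decomposition isolating the uncontractible-loop contribution as the Brownian loop-measure exponential, the parity-dependent sign of the cyclic permutations producing the half-integer shift for even $n$, and the $r\to 0$ asymptotics of the twisted annular determinants (including the degenerate coincidence of exponents at $\beta=\tfrac12$ yielding $2\cos(c/2)$). The steps you flag as delicate — the reflecting-boundary loop-measure convergence and the sign bookkeeping over cyclic shifts — are exactly where the paper invests its technical effort, so the sketch is a faithful outline of the actual argument.
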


A remarkable aspect Theorem~\ref{thm::total_winding} is that, setting aside the dependence of the winding on the initial and end position of the $n$ curves (captured by the term $c$), 
the behaviour of the right-hand side as the inner boundary shrinks, meaning that $r \to 0$, depends on $n$ \emph{only through its parity}. 
The difference between the odd and even cases comes from certain determinantal identities (whose source can be traced to Fomin's formula for random walks~\cite{Fomin:LERW_and_total_positivity}, although the edge weights are complex numbers instead of being positive in our case), and a careful analysis of the signs of the permutations which may occur in the expansion of the determinant.
The proof of Theorem~\ref{thm::total_winding} comprises most of Section~\ref{sec::annulus}.

\medskip 
In the limit as $\delta \to 0$ and then $r \to 0$ (in this order),
a more precise rigorous derivation of the asymptotic variance of the total winding $K(\bs{\gamma}^\delta)$ as
claimed in~\eqref{eq:varianceheuristics} follows from the identification of the scaling limit of the UST branches $\gamma_j^\delta$ as $\delta \to 0$, which we now proceed to describe.

\subsection{Convergence to random matrix statistics}
\label{subsec:curves_intro}

We now consider a collection $\bs{\eta} = (\eta_1,\ldots,\eta_n)$ of $n$ random curves in the unit disc $\D = \{ z \in \C \,|\, |z|<1 \}$ starting at some boundary points on the unit circle $\UC = \{ z \in \C \,|\, |z|=1 \}$.
As we will later see, these curves will correspond to the scaling limit of $\bs{\gamma}^\delta = (\gamma^\delta_1, \ldots, \gamma^\delta_n)$ (conditioned on the event $E^\delta_{\bs{x}}$).
First, let $\T^n = (\R/2\pi\Z)^n$ be the $n$-torus with periodic boundary conditions, 
and let $\T_0^n$ denote the subset of elements admitting representatives 
$\bs\theta = (\theta_1, \ldots, \theta_n)$ satisfying $\theta_1<\theta_2< \cdots< \theta_n <\theta_1+2\pi$.
Define
\begin{align*}
\LX_n := \big\{ \ee^{\ii \bs{\theta}} = (\ee^{\ii\theta_1},\ldots,\ee^{\ii\theta_n}) \in \UC^n \cond \bs{\theta} \in \T_0^n \big\}.
\end{align*}
Let the density of the starting points of the curves $\bs{\eta}$ on $\UC^n$ be given by
\begin{align}\label{eqn::density}
\begin{split}
\rho(\ee^{\ii \bs{\theta}}) 
:= \; & \frac{1}{\LZ_n}\one{\{\ee^{\ii \bs{\theta}}\in\LX_n\}} 
\prod_{1\le j<k\le n}\big|\ee^{\ii\theta_j}-\ee^{\ii\theta_k}\big|\prod_{j=1}^{n}\ud\theta_j ,
\\
\textnormal{where} \quad 
\LZ_n := \; & \int_{\UC^n}\one{\{\ee^{\ii \bs{\theta}}\in\LX_n\}} 
\prod_{1\le j<k\le n}\big|\ee^{\ii\theta_j}-\ee^{\ii\theta_k}\big|\prod_{j=1}^{n}\ud\theta_j .
\end{split}
\end{align}
Note that~\eqref{eqn::density} coincides with the eigenvalue distribution of an $(n\times n)$-random matrix from the classical \textbf{Circular Orthogonal Ensemble} (COE, also known as C$\beta$E for $\beta =1$)~\cite{Dyson:Statistical_theory_of_the_energy_levels_of_complex_systems1, Dyson:Algebraic_structure_of_symmetry_groups_and_ensembles_in_quantum_mechanics, Forrester:Log_gases_and_random_matrices}.

\medskip 
Given these starting points, let the conditional law of $\bs{\eta}$ be that of $n$-sided radial $\SLE_2$ started from $\ee^{\ii \bs{\theta}}$. 
In other words, given $\ee^{\ii \bs{\theta}}$, the curves $\bs{\eta}$ can be generated by a radial Loewner evolution
driven by \textbf{Dyson Brownian motion} associated to the Circular $\beta$-Ensemble (C$\beta$E) 
with parameter
$\beta = 4$: 
\begin{align}\label{eqn:DBM}
\ud\Theta_i(t) = \sqrt 2 \, \ud W_i(t) + 2 \sum_{\substack{1\leq j\leq n\\j\neq i}} \cot\bigg(\frac{\Theta_i(t) - \Theta_j(t)}{2}\bigg)\ud t, \qquad 1 \leq i \leq n ,
\end{align}
where $(W_i(t) \colon t\ge 0)$ for $1\le i\le n$ are $n$ independent Brownian motions on $\R$ with $W_i(0) = \theta_i$ for each $i$.

\medskip 
As mentioned, $\bs{\eta}$ is also known as $n$-sided radial SLE (see Section~\ref{sec::scaling}).
The relationship of Dyson Brownian motion and SLE theory was first observed by John Cardy~\cite{Cardy:SLE_and_Dyson_circular_ensembles},
who used conformal field theory techniques to show how radial $\SLE_\kappa$ curves can be generated from Dyson Brownian motion with a judicious choice of the parameter $\beta$ (namely $\beta = 8/\kappa$). 
Later, $n$-sided radial SLE was rigorously constructed in~\cite{Healey-Lawler:N_sided_radial_SLE}. 
The key difficulty is that the configurational measure approach to build SLE variants by tilting fails due to the divergence of the Brownian loop measure term at the common endpoint of the curves; 
which could also be observed from our formulas in Theorem~\ref{thm::total_winding}. 
(See also~\cite{AHP:Large_deviations_of_DBM_and_multiradial_SLE} for a large deviations principle as $\kappa \to 0$, 
and references for connections to integrable systems.)

\medskip 
There are several natural ways to parametrise the curves $\bs{\eta}$, corresponding to the speed at
which the different curves are discovered (e.g., one can discover all the curves at once, or one at a time).
The above description corresponds to what is called the \textbf{common parametrisation}~\cite{Healey-Lawler:N_sided_radial_SLE}, 
where the capacity (i.e., the negative log-conformal radius) seen from the origin in the multi-slit disc\footnote{Here and throughout, abusing notation and with no danger of confusion, we identify the curves $(\eta_1[0, t], \ldots, \eta_n[0,t])$ with the closed hull $\bs{\eta}[0, t] := \cup_j \eta_j[0, t]$ of $\overline{\D}$ that they generate.} 
$\D\setminus \bs{\eta}[0, t]$ equals $nt$. 
In this vein, for $1\le i \le n$, let $\cev{\gamma}_i^\delta$ denote the time-reversal of the branch $\vec{\gamma}_i^\delta$, viewed as a (linearly interpolated and hence continuous) curve from $\BdryOut^\delta$ to $\BdryIn^\delta$.  
We reparametrise $(\cev{\gamma}_1^\delta, \ldots, \cev{\gamma}_n^\delta)$ to obtain curves $\bs{\eta}^\delta = (\eta_1^\delta, \ldots, \eta_n^\delta)$ such that the common parametrisation condition holds:
\begin{align*}
-\log \CR (0; \D\setminus \bs{\eta}^\delta[0, t] ) = nt.
\end{align*}

\medskip 
To state our scaling limit result, which roughly speaking says that the discrete branches $\bs{\eta}^\delta$ converge to the above SLE variant, let us describe in more detail the framework we use for this convergence. 
Let $(\Omega_r)_{0\le r \le 1}$ denote a family of doubly connected domains as in the above Setup (Item~\ref{setup1}).
For the sake of explaining the idea, we will assume (although that is not really necessary)
that $\BdryOut_r$ does not depend on $r$, and $\textnormal{diam} ( \BdryIn_r) \to 0$ as $r \to 0$, 
so in fact $\BdryIn_r$ converges with respect to the Hausdorff distance to $0 \in \OmegaFill$. 
In this way, the conformal isomorphism $\phi_r \colon \Omega_r \to \A_r$ can simply be taken to be the restriction to $\Omega_r$ of the fixed conformal isomorphism 
$\phi \colon \Omega \to \D$ with $\phi(0)=0$ and $\phi'(0)>0$. 

\begin{theorem}\label{thm::conv_curves}
Conditionally on the event $E^\delta_{\bs{x}}$ defined in~\eqref{eqn::disjointE}, we have 
\begin{align*}
(\phi(\eta_1^\delta),\ldots,\phi(\eta_n^\delta)) \longrightarrow (\eta_1,\ldots,\eta_n)
\end{align*}
in law, uniformly on compact intervals of time as $ \delta \to 0$ and then $r \to 0$.
\end{theorem}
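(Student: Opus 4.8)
The plan is to combine three classical ingredients, adapted to the conditioned and multi-curve setting: (i) Wilson's algorithm, which realises the branches $(\gamma_1^\delta, \ldots, \gamma_n^\delta)$ of $\LT^\delta$ as successive loop-erased random walks in shrinking sub-domains; (ii) the discrete martingale-observable method for proving convergence of loop-erased walk to radial $\SLE_2$; and (iii) the exact asymptotics of the relevant discrete partition functions --- essentially the Fomin-type determinant identities (with complex weights) and the random-walk-loop-soup convergence already established in the proof of Theorem~\ref{thm::total_winding} --- which pin down both the drift of the limiting Loewner driving functions and the law of the starting points. Since the law of the collection $\bs\eta^\delta$, viewed as an unordered family of unparametrised curves, does not depend on the choice of time parametrisation, and the passage from any such parametrisation to the common parametrisation $-\log\CR(0;\D\setminus\bs\eta^\delta[0,t]) = nt$ is a continuous functional of the hulls, it suffices to identify the limiting law in the more convenient ``one curve at a time'' parametrisation and then transport convergence along this reparametrisation.

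First I would prove tightness and a priori regularity. A single UST branch is a loop-erased random walk, so RSW-type crossing estimates for loop-erased walk, together with quasi-multiplicativity and separation-of-arms bounds for the $n$-arm event, give a lower bound $\Prob[E^\delta_{\bs{x}}] \ge c(\Omega,\BdryIn,\bs{x}) > 0$ that is bounded away from $0$ uniformly in $\delta$, as well as uniform control of the conditional probability of ``bad'' crossing events under $\Prob[\,\cdot \mid E^\delta_{\bs{x}}]$. From these one deduces that the curves $\phi(\eta_1^\delta), \ldots, \phi(\eta_n^\delta)$, their complementary hulls, and the associated Loewner driving functions form a tight family on every compact interval of time, with no loss of mass near the common tip. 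This is precisely the setting of Karrila's framework for limits of systems of interfaces defined through such conditionings; the one genuinely new point is to verify its hypotheses for the annular UST with mixed boundary conditions (reflecting on $\BdryIn^\delta$, absorbing on $\BdryOut^\delta$), which is where the results of Section~\ref{sec::annulus} on the discrete loop measure with Neumann data enter.

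Next I would identify any subsequential limit. Fixing $i$ and conditioning on the curves $\{\eta_j^\delta : j \ne i\}$ and on an initial segment $\eta_i^\delta[0,t]$, the UST Markov property and Wilson's algorithm show that the remainder of $\eta_i^\delta$ is a loop-erased random walk in the slit domain $\Omega^\delta \setminus (\bigcup_{j\ne i}\gamma_j^\delta \cup \eta_i^\delta[0,t])$, reweighted by an explicit avoidance factor encoding the conditioning $E^\delta_{\bs{x}}$. One then exhibits a discrete observable --- built from the discrete Poisson and Green kernels of this slit domain together with the avoidance factor, and expressed via Fomin's formula and the loop-soup identities of Section~\ref{sec::annulus} as a ratio of determinants --- that is a martingale as $\eta_i^\delta$ is grown. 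Convergence of discrete harmonic objects and of the discrete loop measure to the Brownian loop measure $\mu_\Omega^{\textnormal{ND}}$ shows that this observable converges to the continuum martingale observable characterising $n$-sided radial $\SLE_2$: in the limit the ``hit the outer boundary'' conditioning together with the avoidance factor contributes exactly the drift term appearing in~\eqref{eqn:DBM}, while the loop-erased-walk part contributes Brownian quadratic variation $\kappa\,\ud t = 2\,\ud t$. A standard semimartingale argument (Lévy's characterisation) then shows that the limiting driving functions, after reparametrisation to the common parametrisation, solve~\eqref{eqn:DBM} with $\beta = 4$; since $n$-sided radial $\SLE_2$ is the unique law with this property~\cite{Healey-Lawler:N_sided_radial_SLE}, the whole sequence converges. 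Finally, letting $r \to 0$, the positions at which $\bs\eta^\delta$ hits $\BdryOut^\delta$, transported by $\phi$ to the unit circle, have a law whose density against the uniform measure converges --- by the determinant asymptotics behind Theorem~\ref{thm::total_winding} together with the $\SLE_2$ partition-function computation of Section~\ref{sec::scaling} --- to the Vandermonde $\prod_{j<k}|\ee^{\ii\theta_j}-\ee^{\ii\theta_k}|$, i.e.\ to the COE density~\eqref{eqn::density}; and conditionally on these points the limiting curves are the $n$-sided radial $\SLE_2$ identified above.

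The main obstacle is that $n$-sided radial SLE cannot be built by the usual Brownian-loop-soup tilting, because the measure of loops surrounding the common tip diverges; correspondingly, the Radon--Nikodym derivative defining the conditioned discrete measure blows up as the curves approach one another or the inner boundary, so the scaling limit cannot be taken directly at the tip. The remedy is to carry out the martingale-observable identification with the curves stopped at the first time two of them come within distance $\eps$ of each other or within distance $\eps$ of $\BdryIn$, obtain convergence to $n$-sided radial $\SLE_2$ run up to the corresponding stopping time, and then let $\eps \to 0$ using the a priori estimates of the first step together with the uniqueness and continuity properties of $n$-sided radial $\SLE_2$. Obtaining uniform bounds on the avoidance factors --- and on the associated ratios of discrete excursion-kernel determinants --- down to scale $\eps$ is the technical heart of the argument, and it is precisely here that the sharp determinant asymptotics developed for Theorem~\ref{thm::total_winding} are indispensable.
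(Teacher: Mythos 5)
Your overall architecture (Wilson's algorithm, tightness, identification of subsequential limits, then $r\to 0$) matches the paper's, and you correctly isolate the two real difficulties: the divergence of the loop measure at the common tip, and the need for sharp determinant asymptotics. But your identification step goes by a genuinely different route. You propose a martingale-observable argument: exhibit a discrete martingale built from Fomin determinants, pass to its continuum limit, and use L\'evy's characterisation to show the driving functions solve~\eqref{eqn:DBM}, concluding by uniqueness in law. The paper instead never computes the drift of the driving function from an observable. It proceeds by induction on $n$ (Proposition~\ref{prop::scaling_curve}): the conditioned $n$-branch law is written as an \emph{exact} Radon--Nikodym derivative against a reference law in which one branch is a LERW in the complement of the other $n-1$, the discrete RN derivative is shown to converge (using Proposition~\ref{prop::hittingpoints}, Lemma~\ref{lem::deltato0odd} and the Brownian-loop identity of Lemma~\ref{lem::scaling_constant}), and the limiting RN derivative is matched term-by-term with the tilting formula of Lemma~\ref{lem::flowline_nsided} that characterises the inductive $\SLE_2(2,\dots,2)$ construction relative to independent radial $\SLE_2$'s; uniform integrability then upgrades weak convergence to total-variation convergence as $r\to0$. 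The RN-derivative route buys an exact identification without ever Taylor-expanding an annulus observable; your route would buy a more self-contained SDE-level description but requires computing that expansion.

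Two points in your sketch are thinner than they can afford to be. First, at fixed $r>0$ the natural continuum observable is an \emph{annulus} partition function, and the drift it produces is $r$-dependent (theta-function-like), not the $\cot$ drift of~\eqref{eqn:DBM}; so L\'evy's characterisation at fixed $r$ identifies an annulus multiple SLE, and you must either characterise that intermediate object and then send $r\to0$, or control the $\delta\to0$ convergence uniformly enough to take $r\to0$ inside the observable first. Your sketch elides which of these you intend, and either one needs the quantitative content of Lemma~\ref{lem::deltato0odd}. Second, the passage from the one-curve-at-a-time description (each curve stopped when \emph{it} hits $\partial B(0,\eps)$) to the common parametrisation is not merely ``a continuous functional of the hulls'': the individual hitting times are not stopping times for the common-parametrisation filtration, and matching the two laws is the content of the paper's Lemma~\ref{lemma::together}, which requires a concatenation and RN-derivative computation using~\cite{Healey-Lawler:N_sided_radial_SLE}. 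Neither issue is fatal, but both require genuine arguments that your proposal currently treats as bookkeeping.
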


This result establishes the convergence of the UST branches on the event $E^\delta_{\bs{x}}$ 
to a multiple radial Loewner evolution driven by the circular Dyson Brownian motion with $\beta = 4$ as in~\eqref{eqn:DBM}, 
thus verifying Cardy's prediction~\cite{Cardy:SLE_and_Dyson_circular_ensembles}. 
The general relation $\beta = 8/\kappa$ also explains why we should not be surprised to obtain C$\beta$E with $\beta = 1$ in the hitting distribution~\eqref{eqn::density} --- indeed, in that case the relevant value of $\kappa$ is that of the associated UST Peano curves, meaning $\kappa = 8$~\cite{Schramm:Scaling_limits_of_LERW_and_UST, LSW:Conformal_invariance_of_planar_LERW_and_UST}. 

\medskip 
An interesting feature of Theorem~\ref{thm::conv_curves} is that 
in particular, the $n$ positions where the curves $\bs{\eta}^\delta$ hit the outer boundary $\BdryOut^\delta$ of the domain 
converge in law as $\delta \to 0$ and then $r \to 0$ to the COE distribution obtained as a pullback of~\eqref{eqn::density} on the configuration space $\LX_n$ by $\phi$.  
This particular fact was conjectured by Arista and O'Connell~\cite{Arista-OConnell:Loop-erased_walks_and_random_matrices} and already proved by Kenyon in the cases $n =2, 3$ in~\cite{Kenyon:Long-range_properties_of_spanning_trees}. 

\medskip 
One might suspect that a first step to describe the geometry of the branches $\bs{\eta}^\delta$ will be to obtain sharp asymptotics on the partition function $\PP^\delta[E^\delta_{\bs{x}}]$. 
This is only partly true, in that we only need to determine the asymptotics of the partition function when $n$ is odd. (In fact, in the proof of Proposition~\ref{prop::hittingpoints} in Section~\ref{sec::det} we obtain a much more precise result, including the determination of the leading constant.)

\begin{theorem}\label{thm::partfun}
For the event $E^\delta_{\bs{x}}$ defined in~\eqref{eqn::disjointE} with $\bs{x}^\delta = (x_1^\delta,\ldots,x_n^\delta)$ on the inner boundary 
and the domain $\Omega_r$ conformally equivalent to the annulus $\A_r$, 
when $n\ge 3$ is odd, we have
\begin{align*}
\PP^\delta[E^\delta_{\bs{x}}] = r^{\frac{n^2-1}{4} \,+\, o(1)} ,
\qquad \textnormal{where $o(1) \to 0$ as $\delta \to 0$ followed by $r \to 0$.}
\end{align*}
\end{theorem}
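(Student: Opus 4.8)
The plan is to reduce the computation of $\PP^\delta[E^\delta_{\bs{x}}]$ to the total-winding identity already established in Theorem~\ref{thm::total_winding}, by observing that the partition function $\PP^\delta[E^\delta_{\bs{x}}]$ can be extracted from the un-normalised (sub-probability) mass of the event $E^\delta_{\bs{x}, \bs{v}}$, summed over allowable endpoint configurations $\bs{v}$ on $\BdryOut^\delta$, or equivalently via Fomin-type determinantal identities for loop-erased walks. More precisely, the temporal gauge freedom in the event $E^\delta_{\bs{x}, \bs{v}, k}$ means that the characteristic-function formula of Theorem~\ref{thm::total_winding}, evaluated at a suitable value of $\beta$ (or integrated against a suitable measure in $\beta$), recovers the probability of each sector $E^\delta_{\bs{x}, \bs{v}, k}$, and hence of $E^\delta_{\bs{x}, \bs{v}}$ after summing over $k$. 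First I would set up the discrete partition function as a ratio of determinants of random-walk excursion kernels between $\BdryIn^\delta$ and $\BdryOut^\delta$ (the same determinants that appear in the proof of Theorem~\ref{thm::total_winding}), using Fomin's formula adapted to the annulus with reflecting inner boundary and absorbing outer boundary.

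Next I would perform the asymptotic analysis of these determinants in the regime $\delta \to 0$ followed by $r \to 0$. The $\delta \to 0$ step is the convergence of the discrete excursion kernel to its continuous (Brownian) counterpart on $\Omega_r$, together with the convergence of the uncontractible-loop part of the random-walk loop soup to the Brownian loop measure $\mu^{\textnormal{ND}}_{\Omega}$, both of which are provided by the machinery developed for Theorem~\ref{thm::total_winding} (Section~\ref{sec::annulus}). After the $\delta \to 0$ limit, one is left with a finite-dimensional determinant built from the continuous excursion/Poisson kernel $\mathrm{H}^{\textnormal{ND}}$ on the annulus $\A_r$, modulated by $\exp(\mu_\Omega^{\textnormal{ND}}[\,\cdots])$ loop-soup factors. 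The key point is that, when $n$ is odd, the relevant spectral/Fourier mode contributing to the leading order of this determinant as $r \to 0$ is the one that produces exactly the power $r^{(n^2-1)/4}$: each of the $n$ branches contributes, at leading order, a factor $r^{(j - (n+1)/2)^2 \cdot (\textnormal{something})}$, and $\sum_{j=1}^n (j - \tfrac{n+1}{2})^2 = \tfrac{n(n^2-1)}{12}$; combined with the normalisation one arrives at the exponent $\tfrac{n^2-1}{4}$. (This matches the known SLE$_2$ arm exponent: the $n$-arm exponent for $\kappa = 2$ is $(n^2-1)/4$ in the radial/interior setting, consistent with the general formula for multiple radial SLE$_\kappa$ specialised to $\kappa = 2$.) The loop-soup factor $\exp(\mu_\Omega^{\textnormal{ND}}[\cdots])$ either contributes only to the $o(1)$ or, more carefully, is absorbed into the leading constant — but since the statement only claims $r^{(n^2-1)/4 + o(1)}$, it suffices to control it at the level of the exponent, i.e.\ to show $\mu_\Omega^{\textnormal{ND}}$ of the relevant integrand is $O(\log(1/r))$ at most $o(\log(1/r))$, which follows from the known logarithmic divergence rate of the annular Brownian loop measure as $r \to 0$ having the right coefficient, or simply from the fact that this term is subexponential in $\log(1/r)$ relative to the determinantal power.

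The reason the theorem is stated only for odd $n$ is precisely the parity phenomenon already visible in Theorem~\ref{thm::total_winding}: in the even case the determinantal identity carries an extra $\ee^{\tfrac{\ii}{2}\ph(\ell)}$ twist in the loop-soup exponent and a shifted Fourier mode, which changes the leading asymptotics and, more importantly, is not needed for the downstream argument (the proof of Theorem~\ref{thm::conv_curves} only invokes the odd-$n$ partition function asymptotics — e.g.\ to run an induction on $n$ decreasing by one, or to identify the normalising constant of the hitting density via a one-curve-at-a-time conditioning). So I would simply restrict to odd $n$ throughout and invoke the odd-case formulas.

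The main obstacle I anticipate is the sharp control of the determinant of excursion kernels as $r \to 0$: one needs to diagonalise (or block-diagonalise) the kernel in the Fourier basis around the annulus, track how the $n$ prescribed boundary points $x_1, \ldots, x_n$ couple the angular modes, and show that cross-terms do not contribute at the leading power. This is where the "careful analysis of the signs of permutations" mentioned after Theorem~\ref{thm::total_winding} re-enters, now combined with extracting the dominant $r$-power rather than just a characteristic function; getting the exponent exactly $(n^2-1)/4$ (and not merely an upper/lower bound) requires showing that the Vandermonde-type structure of~\eqref{eqn::density} emerges in the $r \to 0$ limit, which is really the same computation that underlies Proposition~\ref{prop::hittingpoints}. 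Everything else — the $\delta \to 0$ convergence, the loop-soup estimates, the reduction to determinants — is either already in hand from Section~\ref{sec::annulus} or is a routine adaptation of standard Fomin/Karlin–McGregor technology to the annular reflecting setting.
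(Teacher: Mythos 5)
Your proposal follows essentially the same route as the paper: for odd $n$ all contributing permutations in Fomin's formula have positive sign, so $\PP^\delta[E^\delta_{\bs{x},\bs{v}}]$ is exactly the determinant $\det(h^\delta_0(x_i^\delta,v_j^\delta))$ with no loop-soup prefactor (Lemma~\ref{lem::nodd0}); summing over endpoint configurations, passing to the continuum kernel on $\A_r$ (Lemma~\ref{lem::Poisson_limit}), and extracting the dominant Fourier modes as $r\to 0$ (Lemma~\ref{lem::deltato0odd}, Proposition~\ref{prop::hittingpoints}) gives the exponent, exactly as you anticipate when you say this is the same computation underlying Proposition~\ref{prop::hittingpoints}. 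One small correction: the exponent $\tfrac{n^2-1}{4}$ arises not from the sum of squares $\sum_j\big(j-\tfrac{n+1}{2}\big)^2$ but from the product of $\textnormal{sech}\big(|\log r|\,k_i\big)\sim 2r^{|k_i|}$ over the symmetric modes $k_i\in\{-\tfrac{n-1}{2},\ldots,\tfrac{n-1}{2}\}$, i.e.\ from $\sum_i|k_i|=\tfrac{n^2-1}{4}$.
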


\medskip 
Theorem~\ref{thm::conv_curves} also gives us access to the winding of the curves $\bs{\eta}$ 
in the common parametrisation. 
This appears, in fact, relatively easy thanks to the explicit form of the SDE system~\eqref{eqn:DBM} and the close connection between topological winding and Loewner driving function. 
Let $\ph_i(t) := \arg \eta_i(t) - \arg \eta_i(0)$ denote the \textbf{topological winding} around the origin of $\eta_i[0,t]$
(which measures the change in the argument in some branch choice; see~\cite{BLR:Dimers_and_imaginary_geometry} for the relation between topological and intrinsic winding).
The next result is a multi-curve version of Schramm's result~\cite[Theorem~7.2]{Schramm:Scaling_limits_of_LERW_and_UST}.
 
\begin{proposition} \label{coro::truncated_winding}
Let $X\sim\LN(0,1)$. The following convergence holds in law as $t\to\infty$\textnormal{:}
\begin{align*}
\Big(\frac{\ph_1(t)}{\sqrt{2t/n}},\ldots, \frac{\ph_n(t)}{\sqrt{2t/n}}\Big) \longrightarrow (X,\ldots,X) .
\end{align*}
\end{proposition}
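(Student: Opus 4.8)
The plan is to reduce the topological winding of each curve to the corresponding Loewner driving function, and then to exploit the algebraic structure of the circular Dyson system~\eqref{eqn:DBM}. Work throughout in the common parametrisation (purely with the continuum object $\bs{\eta}$), write $g_t$ for the multi-slit radial Loewner maps uniformising $\D\setminus\bs{\eta}[0,t]$ onto $\D$ with $g_t(0)=0$ and $g_t'(0)>0$, and recall that the driving points are $\ee^{\ii\Theta_i(t)}=g_t(\eta_i(t))$, where $(\Theta_1(t),\ldots,\Theta_n(t))$ solves~\eqref{eqn:DBM} started from $(\theta_1,\ldots,\theta_n)$. The first ingredient is a multi-curve version of Schramm's winding estimate~\cite[Theorem~7.2]{Schramm:Scaling_limits_of_LERW_and_UST}, namely
\begin{align*}
\ph_i(t)=\Theta_i(t)-\theta_i+R_i(t),\qquad 1\le i\le n,
\end{align*}
where the error vector $(R_1(t),\ldots,R_n(t))$ is tight as $t\to\infty$ (for our purposes, $o_{\PP}(\sqrt t)$ already suffices). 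Passing to the cylinder coordinate $w=-\ii\log z$ — under which $\D\setminus\{0\}$ becomes the half-infinite cylinder $\mathcal{C}=(\R/2\pi\Z)\times(0,\infty)$ with $0\leftrightarrow+\ii\infty$, and $\ph_i(t)=\Re\big(\hat g_t^{-1}(\Theta_i(t))\big)-\theta_i$ with $\hat g_t$ the Loewner chain in $\mathcal{C}$ and $\Theta_i(t)$ understood as a real lift — this reduces to showing that the horizontal displacement $\Re\big(\hat g_t^{-1}(\Theta_i(t))-\Theta_i(t)\big)$ of the boundary driving point under the uniformising map stays tight. This is obtained by Itô's formula applied to a suitable bounded harmonic observable along the Loewner flow, exactly as in Schramm's single-slit argument; the only new contributions are the $\cot\!\big((\Theta_i-\Theta_j)/2\big)$ terms coming from the other $n-1$ curves, which are time-integrable along the flow and produce only a bounded correction. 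This step is the main technical obstacle.

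Given this, I analyse the driving system directly. Setting $\beta_i(t):=W_i(t)-\theta_i$, which are independent standard Brownian motions started at $0$, equation~\eqref{eqn:DBM} reads
\begin{align*}
\Theta_i(t)-\theta_i=\sqrt 2\,\beta_i(t)+2\int_0^t\sum_{j\ne i}\cot\!\Big(\frac{\Theta_i(s)-\Theta_j(s)}{2}\Big)\,\ud s .
\end{align*}
Summing over $i$, the drift vanishes identically because $u\mapsto\cot(u/2)$ is odd (each unordered pair $\{i,j\}$ contributes $\cot(u/2)+\cot(-u/2)=0$). Hence $\sum_{i=1}^n(\Theta_i(t)-\theta_i)=\sqrt 2\sum_{i=1}^n\beta_i(t)$ is an exact Gaussian with law $\LN(0,2nt)$, so the barycentre $\bar\Theta(t):=\frac1n\sum_{i=1}^n\Theta_i(t)$ satisfies $\frac{\bar\Theta(t)-\bar\theta}{\sqrt{2t/n}}\sim\LN(0,1)$ for \emph{every} $t>0$, where $\bar\theta:=\frac1n\sum_{i=1}^n\theta_i$. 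On the other hand, the $n$-sided radial $\SLE_2$ curves are pairwise disjoint and keep their cyclic order around the origin, so the driving points stay strictly ordered, $\Theta_1(t)<\Theta_2(t)<\cdots<\Theta_n(t)<\Theta_1(t)+2\pi$ for all $t\ge0$ (for $\beta=4\ge1$ the circular Dyson particles a.s.\ never collide, and the Healey--Lawler construction~\cite{Healey-Lawler:N_sided_radial_SLE} provides precisely this ordered lift). In particular $|\Theta_i(t)-\bar\Theta(t)|<2\pi$ uniformly in $i$ and $t$.

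It remains to combine the two steps. For each $i$ we decompose
\begin{align*}
\frac{\ph_i(t)}{\sqrt{2t/n}}=\frac{\bar\Theta(t)-\bar\theta}{\sqrt{2t/n}}+\frac{\big(\Theta_i(t)-\bar\Theta(t)\big)-(\theta_i-\bar\theta)}{\sqrt{2t/n}}+\frac{R_i(t)}{\sqrt{2t/n}} .
\end{align*}
The first term has the fixed law $\LN(0,1)$ and is identical for all $i$; the second term is bounded in absolute value by $\big(2\pi+|\theta_i-\bar\theta|\big)/\sqrt{2t/n}$, which tends to $0$ deterministically; and the third term tends to $0$ in probability by the Schramm-type estimate. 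Hence the random vector $\big(\ph_1(t)/\sqrt{2t/n},\ldots,\ph_n(t)/\sqrt{2t/n}\big)$ equals $(M_t,\ldots,M_t)$ plus a vector converging to $0$ in probability, where $M_t:=(\bar\Theta(t)-\bar\theta)/\sqrt{2t/n}\sim\LN(0,1)$; by Slutsky's theorem it therefore converges in law to $(X,\ldots,X)$ with $X\sim\LN(0,1)$, which is the claim. Everything beyond the first-step estimate is exact algebra together with the ordering constraint, which is why the result is, as advertised, comparatively soft.
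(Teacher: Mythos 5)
Your proposal is correct and follows essentially the same route as the paper: the exact Gaussianity of the barycentre of the driving functions (drift cancellation by oddness of $\cot$), the uniform boundedness of the deviations $\Theta_i-\bar\Theta$ via the ordering of the particles, and the Schramm-type reduction of the topological winding to the driving function with a bounded correction are precisely the content of Lemma~\ref{lem::Dyson_driving} and the proof in Section~\ref{subsec:truncated_winding}. The only cosmetic difference is that the paper phrases the decomposition through $\Sigma=\sum_j\Theta_j$ and $\Delta_j=\Theta_j-\Theta_1$ rather than the barycentre, and carries out the winding-to-driving reduction via the explicit $X_j,Y_j$ parametrisation of Schramm's argument.
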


Taking into account the time-parametrisation, note that 
Proposition~\ref{coro::truncated_winding} can be viewed as a rigorous version of the winding variance~\eqref{eq:varianceheuristics}.  
The result is stated for convenience in the case of $\kappa=2$, which is of interest to us in the context of the uniform spanning tree. 
However, we have written the proof in fact for general $n$-sided radial $\SLE_\kappa$ in Section~\ref{subsec:truncated_winding}: 
in this case, the same result holds, but the term $\sqrt{2/n}$ should be replaced by $\sqrt{\kappa/n}$, 
yielding the variance $\sim \kappa/n^2$ as predicted in~\cite{Wieland-Wilson:Winding_angle_variance_of_Fortuin-Kasteleyn_contours}. 

\subsection{Flow-line coupling for $n$-sided radial SLE with GFF}

To the pair of uniform spanning trees defined by $\LT^\delta$ and its dual one can associate, via a form of Temperley's bijection~\cite{KPW:Trees_and_matchings, Dubedat:SLE_and_free_field, BLR:Note_on_dimers_and_T-graphs}, 
a dimer configuration living on the medial graph (Figure~\ref{F:dimers}).  
The dimer model has a natural height function, which converges to the Gaussian free field (GFF)~\cite{Kenyon:Conformal_invariance_of_domino_tiling, Kenyon:Dominos_and_the_Gaussian_free_field}. 
As we will not rely on the dimer interpretation, 
we do not give precise definitions here (which could, however, be inferred from Figure~\ref{F:dimers}; see also~\cite{BLR:Dimers_on_Riemann_surfaces_1} for a closely related combinatorial setup and bijection, and~\cite{BLR:Dimers_on_Riemann_surfaces_2} for a scaling limit result).

\medskip 
Given the close relations between dimers and imaginary geometry, it is natural to wonder whether the $n$-sided curves $\bs{\eta} = (\eta_1, \ldots, \eta_n)$ can be coupled with a GFF-like object as ``flow lines''. 
In the result below, we derive such a coupling (not only for $\kappa =2$ but for arbitrary $\kappa \in (0,4)$). 
The theory of flow lines for the GFF was extensively developed by Miller and Sheffield~\cite{Miller-Sheffield:Imaginary_geometry1, Miller-Sheffield:Imaginary_geometry4}, who coined the term ``imaginary geometry'',
and earlier by Dub\'edat~\cite{Dubedat:SLE_and_free_field} who focused on the role of partition functions.
The series of works~\cite{BLR:Dimers_and_imaginary_geometry, BLR:Dimers_on_Riemann_surfaces_2, BLR:Dimers_on_Riemann_surfaces_1} 
addresses the case of present interest. 

\begin{figure}
\includegraphics[scale=.5]{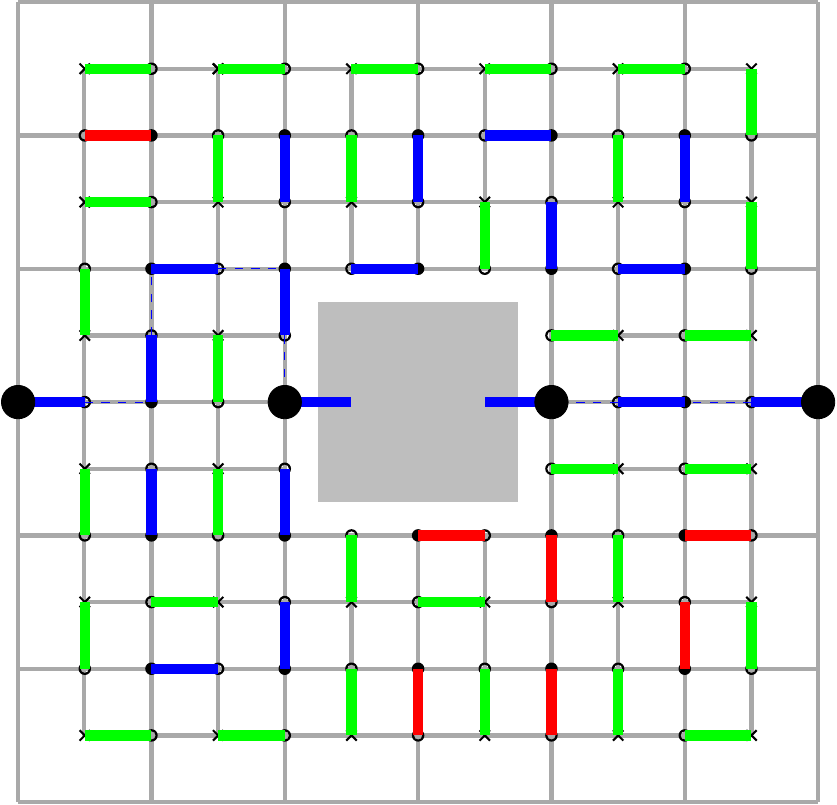} \hspace{1cm} 
\includegraphics[scale=.5]{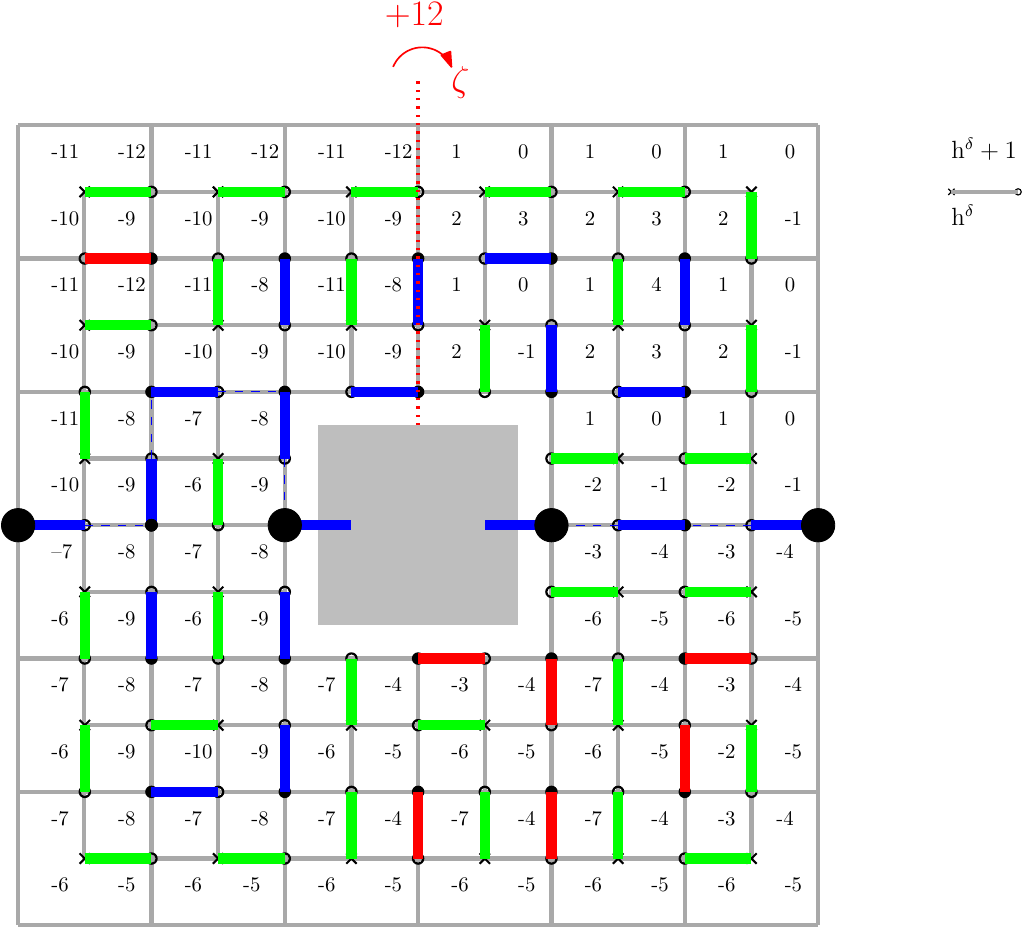} 
\caption{The dimer configuration associated to $\LT^\delta$ and its dual. 
The underlying tree and color coding are identical to those in Figure~\ref{F:UST}.  
In the right panel, we show the associated height function. The conventions for the height are indicated at the top and right of this picture (black vertices are either crosses or black dots; all other vertices are white). 
By convention, the top right corner has height zero and the height increases by $\pm 1$ across edges without dimers (and by $\pm 3$ otherwise). The height function is only well defined after introducing a 
branch cut $\zeta$ (in red), which entails an additional jump of $n+1$ (times 4 with this normalisation, so $+12$ in this example).
}
\label{F:dimers}
\end{figure}

\begin{theorem}\label{thm::coupling}
Fix $\kappa \in (0, 4)$ and $\ee^{\ii\bs{\theta}}\in\LX_n$.
Let $\chi_\kappa:=\frac{2}{\sqrt\kappa}-\frac{\sqrt\kappa}{2}$ and $\lambda_\kappa:=\frac{\pi}{\sqrt\kappa}$.
Consider the harmonic function $\harm \colon \D \to \R$ defined for $w \in \D$ as
\begin{align}\label{eqn::bound}
\harm(w) = \harm_{\bs{\theta}}(w) 
:= -\frac{1}{\sqrt \kappa} \sum_{j=1}^n\arg \bigg(\frac{\ee^{\ii\theta_j}-w}{1-\overline{w}\ee^{\ii\theta_j}}\bigg) 
= -\frac{2}{\sqrt\kappa} \sum_{j=1}^n\arg(\ee^{\ii\theta_j}-w) + \frac{1}{\sqrt\kappa}\sum_{j=1}^n\theta_j.
\end{align}
Let $\gff$ be a Gaussian free field with Dirichlet boundary conditions in $\D$, 
and $\bs{\fline} = (\fline_1,\ldots,\fline_n)$ flow lines of 
\begin{align}\label{eq:field}
\field(\cdot) := \gff(\cdot) + \Big(\chi_\kappa+\frac{n}{\sqrt\kappa}\Big)\arg(\cdot)+\harm_{\bs{\theta}}(\cdot)
\end{align}
starting from $\ee^{\ii\bs{\theta}}$ and with respective angles $\alpha_j = \frac{2\lambda_\kappa (j-1)}{\chi_\kappa}$ for $1 \leq j \leq n$.
After parameterising the curves by their common parametrisation, the law of $\bs{\fline}$ 
agrees with that of the $n$-sided radial $\SLE_\kappa$ curves $\bs{\eta}$. 

\medskip 
In other words, there exists a coupling between $\field$ and $\bs{\eta}$ 
such that for all stopping times $\tau$,  
conditionally given $\bs{\eta}[0, \tau] = (\eta_1[0, \tau], \ldots, \eta_n[0, \tau])$, we have
\begin{align}\label{eq:IG}
\field|_{\D_{\tau}} = \tilde \field \circ g_{\tau} - \chi_\kappa \arg g'_{\tau},  
\end{align}
where $\tilde \field$ has the same law as~\eqref{eq:field}, and $g_{\tau} \colon \D_{\tau} \to \D$ is the unique conformal isomorphism uniformising the complement of $\D_{\tau} := \D \setminus \bs{\eta}[0, \tau]$ 
normalised such that $g_{\tau}(0) = 0$ and $g'_{\tau} (0) >0$. 
\end{theorem}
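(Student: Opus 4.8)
The plan is to reduce the statement to the single-curve imaginary-geometry coupling (Miller--Sheffield / Dub\'edat) by exhibiting the appropriate generalized GFF with the correct boundary data and the correct interior singularities, and then checking that the local martingale observable driving the flow-line coupling is exactly the one whose compensator produces the $n$-sided radial Loewner equation~\eqref{eqn:DBM}. First I would write down, for the target field $\field$ in~\eqref{eq:field}, the harmonic function $\E[\field]$ as a sum of three pieces: the Dirichlet-zero GFF part contributes nothing to the mean, the term $(\chi_\kappa + n/\sqrt\kappa)\arg(\cdot)$ encodes the winding/spiralling of all $n$ flow lines around the origin, and $\harm_{\bs\theta}$ is the harmonic function whose boundary values jump by $\pi/\sqrt\kappa = \lambda_\kappa$ (up to the angle shift $\alpha_j$) across each marked point $\ee^{\ii\theta_j}$. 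The key computation here is that the jump of $\arg\big((\ee^{\ii\theta_j}-w)/(1-\bar w\ee^{\ii\theta_j})\big)$ as $w$ crosses the arc at $\ee^{\ii\theta_j}$ is exactly $\pm\pi$, so that $\harm_{\bs\theta}$ has the piecewise-constant boundary data required for $n$ flow lines started at the $\theta_j$ with angles $\alpha_j$; combined with the $\arg$ term, the total boundary data matches Miller--Sheffield's prescription for flow lines emanating from prime ends, with the extra $n/\sqrt\kappa$ accounting for the fact that at the origin all $n$ flow lines must merge, forcing an interior singularity of strength $n/\sqrt\kappa$ (the ``source of $n$ flow lines'' at $0$).

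Next, I would set up the coupling inductively/dynamically using the common parametrisation: run the $n$-sided radial Loewner flow $g_t$, and track the conditional field $\field_t := \field \circ g_t^{-1} + \chi_\kappa \arg (g_t^{-1})'$ restricted to $\D$. The content of~\eqref{eq:IG} is that $\field_t$ has the same law as~\eqref{eq:field} with $\bs\theta$ replaced by the driving process $\bs\Theta(t)$. To prove this I would show that for each fixed test point $w$ (or test function), the conditional expectation $\E[\field(w)\mid \bs\eta[0,t]]$, transported by $g_t$, evolves as a local martingale precisely when the driving functions satisfy the SDE system~\eqref{eqn:DBM}; this is an It\^o-calculus check using the radial Loewner vector field and the explicit form of $\harm_{\bs\theta}$ and of $\arg$. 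The drift terms of $\harm_{\bs\Theta(t)}(g_t^{-1}(w))$ produce the $\cot((\Theta_i-\Theta_j)/2)$ interactions with coefficient fixed by $\chi_\kappa$ and $\lambda_\kappa$; matching them to $2 = (8/\kappa - 2)/\ldots$ — more precisely to the coefficient in~\eqref{eqn:DBM} — pins down $\beta = 8/\kappa$ and in particular $\beta = 4$ for $\kappa = 2$. Equivalently, one can phrase this as: the partition-function-weighted measure of a single radial $\SLE_\kappa(\underline\rho)$ with force points at the other $\theta_j$ and at $0$, with the specific weights $\rho_j$ read off from the angles $\alpha_j$ and the interior weight, is absolutely continuous with respect to the $n$-sided radial SLE of~\cite{Healey-Lawler:N_sided_radial_SLE}, and the flow-line coupling for that single SLE$(\underline\rho)$ is the classical Miller--Sheffield coupling, which transfers.

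The main obstacle I expect is the interior singularity at the origin: the Brownian-loop-measure term in the configurational (partition-function) construction of $n$-sided radial SLE diverges at the common endpoint (as emphasized in the excerpt after~\eqref{eqn:DBM}), so one cannot simply tilt a product of independent radial SLEs by a Radon--Nikodym derivative globally. I would handle this by working up to stopping times $\tau$ that keep the curves at positive conformal radius from $0$ (equivalently, truncating the common-parametrisation time), proving the coupling~\eqref{eq:IG} there via the local-martingale argument above, and then passing to the limit $\tau \to \infty$ using the a priori existence and continuity of the $n$-sided radial SLE from~\cite{Healey-Lawler:N_sided_radial_SLE} together with the fact that the GFF $\field$ is a genuine random distribution (so its restriction to $\D_\tau$ converges as $\tau\uparrow\infty$). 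A secondary technical point is to verify that the angles $\alpha_j = 2\lambda_\kappa(j-1)/\chi_\kappa$ are admissible (no two flow lines cross or merge prematurely before reaching $0$), which follows from the Miller--Sheffield interaction rules since consecutive angles differ by $2\lambda_\kappa/\chi_\kappa$, exactly the threshold for non-crossing flow lines that eventually merge — consistent with the topological picture that the $n$ UST branches are disjoint except at the root.
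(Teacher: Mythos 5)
Your proposal follows essentially the same route as the paper: grow one curve at a time as a radial $\SLE_\kappa(2,\ldots,2)$ with force points at the other marked points (identified with the $n$-sided process up to the hitting time of $\partial B(0,\eps)$ via the absolute-continuity/inductive construction), and establish the coupling by the It\^o-calculus check that the conditional mean of the field, transported by the Loewner flow, is a local martingale. The one step to make explicit is the companion Hadamard identity $\ud\big\langle \harm(g_t(z)),\,\harm(g_t(w))\big\rangle = -\ud\, \Gren_\D(g_t(z),g_t(w))$ for the quadratic variation, which the paper verifies alongside the drift computation and which your appeal to the Miller--Sheffield machinery implicitly requires.
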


\begin{remark}\textnormal{
We now make a few remarks about the theorem above. 
\begin{itemize}[leftmargin=2em]
\item In common with the literature on imaginary geometry~\textnormal{\cite{Miller-Sheffield:Imaginary_geometry1, Miller-Sheffield:Imaginary_geometry4}} 
and with~\textnormal{\cite{BLR:Dimers_and_imaginary_geometry}}, 
our convention here is to normalise $\gff$ so that $\E [\gff(z) \gff(w)] = - \log |z-w| + O(1)$ as $|z-w| \to 0$. 
\item In Equation~\eqref{eqn::bound}, the function $\arg(\ee^{\ii\theta_j}-\cdot)$ \textnormal{(}and thus the function $\harm$\textnormal{)} 
is a well-defined \textnormal{(}single-valued\textnormal{)} function on $\D$, 
since we can use a branch cut for $\arg(\ee^{\ii\theta_j}-\cdot)$ which remains outside of $\D$. 
\item However, $z\mapsto \arg z$ cannot be defined as a single-valued function on $\D$.
We may choose, for instance, $\arg z \in [\pi/2,5\pi/2)$ with discontinuity in the positive imaginary axis; note the analogy with Figure~\ref{F:dimers}. 
\item Theorem~\ref{thm::coupling} says that the curves $\bs{\eta}$ are flow lines of $\gff + (n+1)\chi_\kappa  \arg + \harm$.  
Using the methods in~\textnormal{\cite{BLR:Dimers_and_imaginary_geometry}}, 
one can in fact show as a corollary of Theorem~\ref{thm::coupling} that the limiting height function 
\textnormal{(}with respect to the winding flow\textnormal{)} 
of the corresponding dimer model is $\gff/\chi_\kappa  + (n+1) \arg + \harm / \chi_\kappa$. 
\end{itemize}
}
\end{remark}

\subsection{A paradox and its resolution}

One of the most interesting aspects of Theorem~\ref{thm::coupling} is that the field $\gff$ in~\eqref{eq:field} is just a standard Gaussian free field: it does not have any unusual singularity at the origin 
(even though the Loewner evolution of the curves themselves is highly singular near the origin). 
In particular, if $\gff_\eps$ denotes the circle average of $\gff$ at distance $\eps>0$, then \emph{independently} of $n$, its variance reads
\begin{align*}
\textnormal{var} ( (1/\chi_\kappa)\gff_\eps (0)) = 2\log (1/\eps) .
\end{align*}
However, in Temperley's bijection, the winding of curves corresponds to the height function, so the fact that the leading-order coefficient in this variance is independent of $n$  appears to be in contradiction with Proposition~\ref{coro::truncated_winding}.  Indeed, both Proposition~\ref{coro::truncated_winding} and its more precise version, Theorem~\ref{thm::total_winding}, indicate that the winding of curves has a variance of order $(2/n^2) \log (1/\eps)$, rather than $2 \log (1/\eps)$. In other words, in both Proposition~\ref{coro::truncated_winding} and Theorem~\ref{thm::total_winding}, the law of the winding of each curve is affected by the number of curves --- which makes sense, because of the rigidity discussed above~\eqref{eq:varianceheuristics} (the more curves, the more rigid the system --- hence, the less the winding of each curve).  
But the variance of the associated \emph{height function} near the origin does not depend on $n$, 
even though (via Temperley's bijection) it is supposed to measure the winding of the curves! 
The resolution of this apparent paradox will be explained below.

\begin{remark}\label{rem:rel_to_lit}
\textnormal{
As we were preparing to write our results, we learnt from Rick Kenyon that he had already considered in his work~\textnormal{\cite{Kenyon:Long-range_properties_of_spanning_trees}} the problem of conditioning a uniform spanning tree on the event $E^\delta_{\bs{x}}$ of consideration here, at least in the cases $n =2$ and $n=3$. 
In~\textnormal{\cite[Top of page~2]{Kenyon:Long-range_properties_of_spanning_trees}}, he writes:
}

\smallskip

\indent{``A final computation is the asymptotic ``winding number'' of the branches of the tree. For the UST on $\Z^2$, there is almost surely a unique branch from the origin to $\infty$.
In the annulus of inner
radius $r$ and outer radius $R$ concentric about the origin, we show that the total turning of this
branch about the origin has variance tending to $(1/2 )\log(R/r) +  O(1)$, as $R/r \to \infty$ while $r \to \infty$.
Surprisingly, this same variance holds even if the origin is conditioned on having two or three
branches to $\infty$.''}

\smallskip

\textnormal{
The factor ``$1/2$'' above is probably a typo --- as we indicated above, the winding when there is no conditioning should be given by $2\log (1/\eps)$, instead of $(1/2) \log (1/\eps)$.  
More intriguingly, the setting described in the above paragraph is in particular covered by our Theorem~\ref{thm::total_winding}, so that \textbf{our result contradicts the conclusion in}~\textnormal{\cite[Theorem~6.1]{Kenyon:Long-range_properties_of_spanning_trees}}.
Namely, according to Theorem~\ref{thm::total_winding}, the variance should be divided by $4$ and $9$, respectively, instead of remaining the same. 
The error in the argument of~\textnormal{\cite{Kenyon:Long-range_properties_of_spanning_trees}} 
appears to us to be similar to the paradox mentioned in the previous paragraph: indeed, Kenyon first computes the asymptotic dimer-dimer correlation, and then deduces asymptotics for the corresponding height function.  
In particular, he then deduces that the variance of the height function at the inner boundary does not depend on $n$, from which his conclusion seemingly follows. 
}
\end{remark}

\paragraph{Resolution.} 
Our resolution of the paradox comes from the fact that the dimers live on a non-simply connected graph, and the height function $\gff^\delta$ is only well defined as a function (so we can speak about its variance, say) 
\emph{if we take into account the jump across the branch cut} 
(see again Figure~\ref{F:dimers}). 

\medskip 
First, let us briefly recall the relation between winding and height function in the simply connected (say, Temperleyan) case.  Consider a 
path going from one face to another, staying parallel --- immediately adjacent --- to a branch from the associated uniform spanning tree. Such a path does not encounter any dimer, as per definition of Temperley's bijection. 
Hence the height function, as we travel along this path, oscillates between two values differing by $\pm 1$, on each straight portion of the path, and has an extra jump at each corner. Overall, the height difference accumulated along this path between faces with the right parity is thus equal to the winding of the path (see~\cite{KPW:Trees_and_matchings, BLR:Note_on_dimers_and_T-graphs, BLR:Dimers_on_Riemann_surfaces_1}). 

\medskip 
Now, consider what happens in the non simply connected case.
Then, in addition to the winding of the path, one also has to take into account all the times when the path jumps across the branch cut. Thus, each time it crosses the branch cut, the path also accumulates a value given by 
$\mp (n+1) 2\pi$ (if the height is measured with respect to the winding reference flow, i.e., in units of $2\pi$
and the system has $n$ radial branches as in Figure~\ref{F:dimers}). 
Thus overall, as we go from inside to outside, if the number of turns across the annulus is $k\in \Z$, 
then the height difference will be equal to (up to unimportant $O(1)$ errors): 
\begin{align*}
\gff^\delta(f') - \gff^\delta(f) = 2k\pi - k (n+1) 2\pi = - 2nk \pi = - n \big( 2 \pi K(\gamma_1^\delta) \big) ,
\end{align*}
at two squares $f,f'$. 
As the winding has a variance proportional to $2/n^2$, it is no wonder that the height difference should have a variance independent of $n$!

\begin{remark}
\textnormal{
In fact, in the the physics literature, Wieland and Wilson~\cite{Wieland-Wilson:Winding_angle_variance_of_Fortuin-Kasteleyn_contours} 
had investigated numerically the variance of the winding of various random fractal curves, including loop-erased random walks, 
and including the setup of the present article. 
In particular, they expected (and numerically verified) that when $n$ strands whose scaling limit is expected to be an $\SLE_\kappa$ curve meet at an interior point, the variance of the winding angle scales as $(\kappa/n^2) \log (R/r)$.  
Our work confirms these numerical results; see the comments after Proposition~\ref{coro::truncated_winding}. 
This is also consistent with the winding spectrum conjectured by Duplantier and Binder using Coulomb gas methods\footnote{In the Coulomb gas formalism of conformal field theory, for each $\kappa \in (0,8]$, the common interior endpoint of the $n$ branches should correspond to a primary field of conformal weight $2h_{0,n/2}(\kappa) = \frac{4n^2-(\kappa-4)^2}{8\kappa}$ (equalling $\frac{n^2-1}{4}$ when $\kappa=2$). Interestingly enough, the conformal weight $2h_{0,1/2}(\kappa) = \frac{(6-\kappa)(\kappa-2)}{8\kappa}$ for $n=1$ vanishes at $\kappa=2$. See also Lemma~\ref{lem::flowline_nsided}.}
(see~\cite[Section~10.6]{Duplantier:Conformal_random_geometry} 
and~\cite{Duplantier-Binder:Harmonic_measure_and_winding_of_conformally_invariant_curves, HPW:Multiradial_SLE_with_spiral}). 
}
\end{remark}

\subsection*{Acknowledgements}

Part of this project was performed while the first, second, and last author were participating in a program hosted by the Institute for Pure and Applied Mathematics (IPAM) in Los Angeles, California, US, in Spring 2024,  
supported by the National Science Foundation (Grant No.~DMS-1925919).

N.B. and M.Lis both acknowledge the support from the Austrian Science Fund (FWF) grants 10.55776/\linebreak F1002 on ``Discrete random structures: enumeration and scaling limits". N.B.'s research is furthermore supported by FWF grant  10.55776/PAT1878824 on ``Random Conformal Fields''. M.Lis' research is supported by FWF grant P36298 on ``Spins, Loops and Fields''.

E.P.~is supported by the European Research Council (ERC) under the European Union's Horizon 2020 research and innovation programme (101042460): 
ERC Starting grant ``Interplay of structures in conformal and universal random geometry'' (ISCoURaGe); 
the Academy of Finland grant number 340461 ``Conformal invariance in planar random geometry'';
the Academy of Finland Centre of Excellence Programme grant number 346315 ``Finnish centre of excellence in Randomness and STructures (FiRST)''; 
and by the Deutsche Forschungsgemeinschaft (DFG, German Research Foundation) under Germany's Excellence Strategy EXC-2047/1-390685813.

\medskip 
We would like to thank Rick Kenyon for pointing out his article~\cite{Kenyon:Long-range_properties_of_spanning_trees}
and for insightful discussions.

\section{Winding identities: proof of Theorem~\ref{thm::total_winding}}
\label{sec::annulus}

In this section, we will derive the exact winding identity stated in Theorem~\ref{thm::total_winding}. 
Some formulas obtained in this section will also be used in the proof of the convergence of the discrete branches to $\SLE_2$ (Theorem~\ref{thm::conv_curves}). 
We use the same setup and notation as in Section~\ref{subsec:setup}.
The first Section~\ref{subsec:Fomin} collects well-known results:
a determinantal formula of Fomin~\cite{Fomin:LERW_and_total_positivity} for random walks,
the algorithm due to Pemantle and Wilson~\cite{Pemantle:Choosing_spanning_tree_for_integer_lattice_uniformly, Wilson:Generating_random_spanning_trees_more_quickly_than_cover_time}
relating UST branches with loop-erased random walks,
and definitions for random walk loop soups~\cite{Lawler-Ferreras:RW_loop_soup}. 
In Section~\ref{subsec:winding}, using Fomin's theorem we express the total winding of the branches in terms of the random walk and its loop soup 
(Proposition~\ref{prop::total_winding}). 
Section~\ref{subsec:total_winding} concerns the scaling limit to Brownian loop soup, 
and after several key lemmas, we finish the proof of Theorem~\ref{thm::total_winding}.

\subsection{Complex weights, Fomin's formula, and Random walk loop soup}
\label{subsec:Fomin}

Fix a simple path on $\zeta$ the dual graph of $\Omega^\delta = (\LV^\delta, \LE^\delta)$ oriented from the inner boundary to the outer boundary 
(this is the ``\textbf{branch cut}'' or ``\textbf{zipper}'', depicted in red in Figure~\ref{F:dimers}). 
For each edge $e\in \LE^\delta$ crossing $\zeta$, we distinguish two possible directions: 
we let $\vec{\LE}{}^\delta_\zeta$ (resp.~$\cev{\LE}{}^\delta_\zeta$)
denote the set of oriented edges crossing $\zeta$ from the left side of $\zeta$ to the right side of $\zeta$
(resp.~from the right side to left side).

For a discrete path $P = (y_0, \ldots, y_l)$ on $\Omega^\delta$, let $w_0(P) = \big( \prod_{i=0}^{l-1}\deg(y_i) \big)^{-1}$ 
denote the probability that a simple random walk starting from $y_0$ and run for time $l$ is equal to $P$. 
Fix $\beta\in\R$, and define
\begin{align}\label{eqn::weight_beta}
w_\beta(P) 
:= w_0(P) \, 
\bigg(\prod_{j=0}^{l-1} \ee^{\ii \beta} \, \one{\{(y_{j}, y_{j+1} ) \in \vec{\LE}{}^\delta_\zeta \}} \bigg) 
\bigg(\prod_{j=0}^{l-1} \ee^{-\ii \beta} \, \one{\{(y_{j}, y_{j+1} ) \in \cev{\LE}{}^\delta_\zeta \}} \bigg).
\end{align}
In words, under $w_\beta$, each path $P$ receives not only its random walk weight, but also an extra factor $\ee^{ \ii \beta}$ each time it crosses $\zeta_\delta$ from left to right, and a factor $\ee^{ - \ii \beta}$ for each crossing in the opposite direction. 

\medskip 
We will recall Fomin's determinant formula in this setup. 
It is more well known in the case of positive weights, but it is equally valid for complex weights as it is simply a consequence of planar topology.
Let $\BdryOutV^\delta$ (resp~$\BdryInV^\delta$) denote the set of vertices of $\BdryOut^\delta$ (resp~$\BdryIn^\delta$).
We define the ``\textbf{hitting probability}''
\begin{align}\label{eq:h_beta}
h^\delta_\beta(x, v) := \sum_{P \colon x \to v} w_\beta(P), 
\end{align}
where the sum is taken over all paths starting from the vertex $x \in \LV^\delta$ and ending at $v \in \BdryOutV^\delta$. 
Note that if $\beta=0$, then $h^\delta_\beta(x, v)$ really corresponds to the probability that a random walk starting from $x$ first touches the outer boundary $\BdryOut^\delta$ at $v$. 
For $\beta \in \R$, the function $h^\delta_\beta(\cdot, v)$ also has a natural interpretation as a discrete harmonic function on the \textbf{universal cover} of $\Omega^\delta$.
Namely, let $\{\Omega^\delta_m\}_{m \in \Z} = \{(\LV^\delta_m,\LE^\delta_m)\}_{m \in \Z}$ be copies of $\Omega^\delta = \Omega^\delta_0$, 
and denote by $\OmegaUniv^\delta$ the graph obtained by gluing $\Omega^\delta_m$ and $\Omega^\delta_{m+1}$ along $\zeta$, for every $m$.  
In particular, there are bijections $\LV^\delta \leftrightarrow \LV^\delta_m$ and $\LE^\delta \leftrightarrow \LE^\delta_m$ given by $v \leftrightarrow v_m$ and $e \leftrightarrow e_m$. 
Then, $h^\delta_\beta(\cdot, v)$ can be extended to all of $\OmegaUniv^\delta$ and is then a discrete harmonic function there,
equalling $\ee^{\ii m \beta}$ at $v_m$, for every $m$.  
In particular, if $\beta=0$, then $h^\delta_\beta(\cdot, v)$ yields back the above probability. 
If $\beta=\pi$, then $h^\delta_\beta(\cdot, v)$ also has a natural interpretation in terms a discrete harmonic function defined on a double cover of $\Omega^\delta$.

\medskip 
For a path $P$, we let $\mathrm{LE}(P)$ denote the \textbf{chronological loop-erasure} of $P$. 
Performing this operation to a random walk sample with reflecting boundary condition on the inner boundary $\BdryInV^\delta$ and absorbing boundary condition on the outer boundary $\BdryOutV^\delta$ yields a sample of the loop-erased random walk (LERW).

\begin{theorem}[Wilson's algorithm~\cite{Wilson:Generating_random_spanning_trees_more_quickly_than_cover_time}]
\label{thm: Wilsons algorithm}
Let $y_0 , \ldots, y_N$ be any enumeration of the set of vertices $\LV^\delta$. 
Define $\LG_0$ as the subgraph consisting of
only the vertex $y_0$, and recursively 
for $k=1,\ldots,N$ define $\LG_k$ as the union of $\LG_{k-1}$ and a LERW from $y_k$ to $\LG_{k-1}$, independently for each $k$. 
Then, the last step of the algorithm, $\LG_N = \LT^\delta$,
gives a sample of a UST on $\Omega^\delta = (\LV^\delta, \LE^\delta)$.
\end{theorem}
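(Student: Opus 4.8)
The theorem is the classical result of Wilson, and I would prove it through the stacks/cycle-popping representation of Pemantle and Wilson~\cite{Pemantle:Choosing_spanning_tree_for_integer_lattice_uniformly, Wilson:Generating_random_spanning_trees_more_quickly_than_cover_time}. Since by convention $\Omega^\delta$ already denotes the graph in which $\BdryOut^\delta$ is wired into the single root vertex $\root^\delta$, and the ``reflecting boundary condition at $\BdryIn^\delta$'' carries no content beyond the fact that a simple random walk on $\Omega^\delta$ has no edges leaving $\BdryInV^\delta$ (while ``free boundary at $\BdryIn^\delta$'' just means those are unconstrained ordinary vertices), the statement reduces to: on the finite connected rooted (multi)graph $\Omega^\delta$, Wilson's algorithm outputs a spanning tree $\mathcal T$ with $\PP[\mathcal T = T]\propto\prod_{(v,w)\in T}p(v,w)$, where $p(v,w) = 1/\deg(v)$ is the one-step transition probability of the walk and $T$ ranges over spanning trees oriented toward $\root^\delta$. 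Since every non-root vertex of such a $T$ has out-degree exactly one, $\prod_{(v,w)\in T}p(v,w) = \prod_{v\neq\root^\delta}1/\deg(v)$ is independent of $T$, so this law is \emph{uniform} --- which is the assertion.

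To establish this I would attach to each non-root vertex $v$ an infinite stack of i.i.d.\ arrows, the $j$-th pointing to a neighbour of $v$ with probability $p(v,\cdot)$; call the triple (vertex $v$, level $j$, arrow) a \emph{coloured arrow}. Reading off the current top arrow at every vertex produces a function to neighbours, which is either a spanning tree toward $\root^\delta$ or contains an oriented cycle; a \emph{poppable coloured cycle} is one visible among the tops, and \emph{popping} it discards the top arrow of each of its vertices, exposing the next one. One iterates while a poppable cycle exists. The crucial step --- and, I expect, the main obstacle --- is Wilson's order-independence lemma: \emph{whether the procedure terminates, and if so the multiset of coloured cycles that it pops, is a deterministic function of the stacks, independent of all choices made along the way}. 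I would prove it by a confluence/diamond argument: if $C_1\neq C_2$ are two simultaneously poppable coloured cycles, popping $C_1$ does not disturb the coloured arrows constituting $C_2$ (so $C_2$ stays poppable), and popping $C_1,C_2$ in either order leaves the same configuration of tops; a finite induction then shows that any two maximal popping sequences are permutations of one another.

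Granting the lemma: almost sure termination holds because $\Omega^\delta$ is finite, connected, and $\root^\delta$ is reachable from every vertex, hence a simple random walk is absorbed at $\root^\delta$ in finite time a.s.; realising the algorithm as a succession of loop-erased walks then reveals only finitely many arrows, so only finitely many coloured cycles get popped and the surviving tops form a spanning tree $\mathcal T$ toward $\root^\delta$. For the law of $\mathcal T$: writing $\mathcal O$ for the popped multiset, the lemma pins down exactly which coloured arrows are popped, so conditionally on $\{\mathcal O=O\}$ the stacks carry prescribed arrows at those levels and, immediately above them, fresh i.i.d.\ arrows conditioned only to read off an acyclic configuration; the chance that this configuration equals a given tree $T$ is $\propto\prod_{(v,w)\in T}p(v,w)$, with a normalisation not depending on $O$. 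Hence $\PP[\mathcal T=T\mid\mathcal O=O]\propto\prod_{(v,w)\in T}p(v,w)$ for every $O$, and averaging over $\mathcal O$ gives $\PP[\mathcal T=T]\propto\prod_{(v,w)\in T}p(v,w)$, which by the first paragraph is the uniform law.

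It remains to recognise $\mathcal T$ as the output $\LG_N$ of the sequential construction in the statement, via the obvious coupling: feed the $k$-th loop-erased walk, started from $y_k$ and run until it reaches $\LG_{k-1}$ (with the walk absorbed at $\BdryOut^\delta=\root^\delta$ in the background), by reading arrows off the stacks; each loop it closes is exactly a poppable coloured cycle, which loop-erasure pops, and after the erasures the walk traverses surviving top arrows, so the edges adjoined at stage $k$ are precisely those tops. After all $N$ stages every non-root vertex has had its cycles popped and contributes its top arrow, whence $\LG_N=\mathcal T$; by the order-independence lemma this identity --- and therefore the law of $\LG_N$ --- does not depend on the enumeration $y_0,\dots,y_N$, and equals that of the uniform spanning tree $\LT^\delta$. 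Two routine bookkeeping checks remain: that the multigraph created by the wiring (parallel edges to $\root^\delta$ counted as distinct) does not affect $\prod_{v\neq\root^\delta}1/\deg(v)$, and that the chronological loop-erasure convention of the statement is the one matching the cycle-popping dynamics.
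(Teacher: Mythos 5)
The paper does not prove this statement at all: it is quoted as a known result with a citation to Wilson's original article, so there is no in-paper argument to compare against. Your proposal is the standard stacks/cycle-popping proof from that reference, and the sketch is correct in all essential points (vertex-disjointness of simultaneously poppable coloured cycles giving the diamond lemma, the factorisation $\PP[\mathcal{O}=O,\,\mathcal{T}=T]$ into a popped part and a tree part, and the identification of the sequential LERW construction with reading arrows off the stacks). One small mismatch worth fixing: the statement roots the algorithm at an arbitrary vertex $y_0$, not necessarily at $\root^\delta$, so your cycle-popping should be set up with $y_0$ as the absorbing root; the weight $\prod_{v\neq y_0}1/\deg(v)$ is still constant over spanning trees, so the unoriented output is uniform regardless of which vertex plays the role of the root.
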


Equation~\eqref{eq:h_beta} applies to random walks starting from $\BdryIn^\delta$ exiting at the outer boundary $\BdryOut^\delta$. 
Via Wilsons algorithm, the event $E^\delta_{\bs{x}, \bs{v}}$ defined in~\eqref{eqn::defE2} concerns exactly this kinds of UST branches.
The following determinantal formula is our key tool to calculate the total winding of these branches.

\begin{theorem}[Fomin's formula~\cite{Fomin:LERW_and_total_positivity}]
\label{T:Fomin}
Fix distinct vertices $v_1^\delta,\ldots,v_n^\delta \in \BdryOutV^\delta$ 
and distinct vertices $x_1^\delta,\ldots,x_n^\delta \in \BdryInV^\delta$ in counterclockwise order. 
For $\beta\in\R$, we have 
\begin{align}\label{eq:Fomin}
\det \big( h^\delta_\beta(x^\delta_i, v_j^\delta)\big)_{1\le i,j \le n} 
= \sum_{\sigma \in \mathfrak{S}_n} \mathrm{sgn}(\sigma)\sum_{P_1, \ldots, P_n} w_\beta(P_1) \cdots w_\beta(P_n),
\end{align}
where the sum is over all choices of paths $P_1, \ldots, P_n$ on $\Omega^\delta$ such that 
$P_j \colon x^\delta_j \to v_{\sigma(j)}^\delta$ for each $1\le j\le n$ 
and $\mathrm{LE}(P_i) \cap P_j = \emptyset$ for all $1\le i < j\le n$. 
\end{theorem}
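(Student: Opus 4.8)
The plan is to derive \eqref{eq:Fomin} from a sign-reversing involution on path systems, following Fomin's original argument; the fact that the edge weights $w_\beta$ are complex rather than positive plays no role, since we only use that $w_\beta$ is multiplicative under concatenation of paths and invariant under rearranging the order in which edges are traversed. First I would expand the left-hand side by the Leibniz formula, $\det \big( h^\delta_\beta(x^\delta_i, v_j^\delta)\big)_{1\le i,j\le n} = \sum_{\sigma \in \mathfrak{S}_n} \mathrm{sgn}(\sigma) \prod_{j=1}^n h^\delta_\beta(x^\delta_j, v^\delta_{\sigma(j)})$, and substitute $h^\delta_\beta(x,v) = \sum_{P \colon x \to v} w_\beta(P)$ into each factor. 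Expanding the product then writes the left-hand side as $\sum_{\sigma \in \mathfrak{S}_n} \mathrm{sgn}(\sigma) \sum_{(P_1,\ldots,P_n)} w_\beta(P_1)\cdots w_\beta(P_n)$, where the inner sum now ranges over \emph{all} systems of paths with $P_j \colon x^\delta_j \to v^\delta_{\sigma(j)}$, with no disjointness constraint. Call such a system \emph{good} if $\mathrm{LE}(P_i) \cap P_j = \emptyset$ for all $1 \le i < j \le n$, and \emph{bad} otherwise. The good systems contribute precisely the right-hand side of \eqref{eq:Fomin}, so it suffices to prove that the total contribution of the bad systems vanishes.

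To this end I would build an involution $\Phi$ on the set of bad systems which preserves $\prod_k w_\beta(P_k)$ but reverses the sign of the associated permutation. Given a bad system $(P_1,\ldots,P_n)$ with permutation $\sigma$, let $i$ be the smallest index for which $\mathrm{LE}(P_i)$ meets $\bigcup_{j>i} P_j$; traversing $\gamma := \mathrm{LE}(P_i)$ from $x^\delta_i$, let $z$ be the first vertex of $\gamma$ that lies on some later path, and let $j > i$ be the canonical such index (resolving ties by a fixed rule, e.g.\ latest last-visit of $z$ along $P_j$, then largest label). Since $z \in \gamma = \mathrm{LE}(P_i)$, the vertex $z$ is also visited by $P_i$; the surgery at $z$ then cuts $P_i$ at its last visit of $z$ and $P_j$ at its first visit of $z$ and exchanges the corresponding tails, producing $\Phi(P_1,\ldots,P_n)$ in which the two paths now terminate at $v^\delta_{\sigma(j)}$ and $v^\delta_{\sigma(i)}$ respectively. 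Thus the permutation becomes $\sigma \circ (i\,j)$, the sign flips, and the multiset of traversed edges — hence $\prod_k w_\beta(P_k)$, including the algebraic count of crossings of the zipper $\zeta$ — is unchanged.

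The crux of the argument, and the step I expect to be the main obstacle, is verifying that $\Phi$ is genuinely an involution: one must show that the image system is again bad and that it produces the \emph{same} minimal index $i$, the same vertex $z$, and the same partner $j$, so that $\Phi^2 = \mathrm{id}$. This hinges on the ``causality'' of loop-erasure: the initial segment of $\mathrm{LE}(P_i)$ up to and including $z$ is determined by $P_i$ only up to its last visit of $z$, which the surgery leaves untouched, so $\mathrm{LE}(P'_i)$ coincides with $\gamma$ until $z$; and the tail swapped into $P'_i$ creates no earlier intersection with the other paths precisely because $z$ was chosen as the \emph{first} intersection point along $\gamma$. Making the tie-breaking rule and the multiple-visit bookkeeping compatible with this reconstruction is the delicate part of the proof.

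Granting that $\Phi$ is a fixed-point-free, sign-reversing, weight-preserving involution, the bad systems cancel in pairs and \eqref{eq:Fomin} follows; the argument is entirely combinatorial and topological, so it applies verbatim to the complex weights $w_\beta$. Finally, I would remark that by planarity, for a good system the connections $x^\delta_j \leftrightarrow v^\delta_{\sigma(j)}$ must respect the counterclockwise cyclic order along $\BdryIn^\delta$ and $\BdryOut^\delta$, so in fact only order-preserving $\sigma$ can contribute on the right-hand side — a refinement we record for later use but which is not needed for the identity itself.
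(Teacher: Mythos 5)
The paper does not prove Theorem~\ref{T:Fomin}; it quotes it from Fomin's paper with the remark that the argument is purely topological and hence insensitive to the weights being complex. Your proposal is a reconstruction of Fomin's original proof, and the overall architecture is exactly right: Leibniz expansion, reduction to showing that the ``bad'' path systems cancel, and a weight-preserving, sign-reversing tail-swap involution based at the first vertex $z$ of $\gamma=\mathrm{LE}(P_i)$ lying on a later path. You also correctly isolate the one property that makes the loop-erased version work, namely that minimality of $z$ along $\gamma$ forces the grafted tail to avoid the earlier vertices $u_0,\ldots,u_{k-1}$ of $\gamma$, so that $\mathrm{LE}(P_i')$ still begins with $\gamma$ up to $z$ (by the last-exit description of chronological loop erasure). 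The preservation of $\prod_k w_\beta(P_k)$ is also fine, since $w_\beta$ is a product over traversed directed edges and the surgery only redistributes edges between $P_i$ and $P_j$.

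The gap is that the surgery you actually specify is \emph{not} an involution, and you have (correctly) flagged but not resolved this. Concretely: you cut $P_i$ at its \emph{last} visit to $z$ but $P_j$ at its \emph{first} visit to $z$. If $P_j$ visits $z$ more than once, the tail $P_j[\sigma,\cdot]$ grafted onto $P_i$ revisits $z$, so the last visit of the new path $P_i'$ to $z$ lies strictly inside the grafted tail; applying the same rule to $\Phi(P_1,\ldots,P_n)$ then cuts $P_i'$ at the wrong place and $\Phi^2\neq\mathrm{id}$. Likewise your primary tie-break for choosing the partner $j$ (``latest last-visit of $z$ along $P_j$'') compares a quantity that the surgery itself changes for $P_j'$, so a different partner can be selected on the second application. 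Both defects are repairable and the repair is what Fomin does: cut \emph{both} $P_i$ and $P_j$ at their last visits to $z$ (then neither grafted tail revisits $z$, so both cut points are reproduced by the same rule applied to the image), and break ties among the candidate partners $\{j>i: z\in P_j\}$ by the label $j$ alone, since that candidate set is preserved by the surgery while the internal visit times of $P_j$ are not. With those two changes your argument closes; as written, the cancellation of the bad systems is not established.
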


Intuitively, the random walk is recovered from its loop-erasure by adding a soup of discrete loops on top of it. 
The random walk loop measure $\nu$, introduced in~\cite{Lawler-Ferreras:RW_loop_soup}, is supported on discrete rooted loops on $\Z^2$.
For each loop $\ell$, we assign the weight $\nu(\ell) := \frac{(1/4)^{|\ell|}}{|\ell|}$, 
and we let $\bar \ell$ denote the equivalence class of $\ell$ in the equivalence relation $\ell \sim \ell'$ if $\ell'$ can be obtained from $\ell$ by rerooting. 
We denote by 
\begin{align*}
\Lambda[\bar\ell] := \sum_{\ell\in\bar\ell} \nu(\ell) 
\end{align*}
the measure obtained from $\nu$ by forgetting the root. 
The \textbf{random walk loop soup with intensity} $\vartheta>0$ is the Poisson point process with intensity measure $\vartheta\Lambda$. 
In~\cite{Lawler-Ferreras:RW_loop_soup}, the authors construct a coupling between Brownian loop soup and random walk loop soup with the same intensity. 
To do so, they first scale the loops so that the are on $\delta \Z^2$ instead of $\Z^2$, and define the \emph{lifetime} of the discrete loop $\ell$ to be given by $t_\ell := \frac{\delta^2}{2} |\ell|$. 
(For future reference, note that we have $t_\ell\ge\frac{\delta}{2} \diam(\ell)$,
where $\diam(\ell)$ is the diameter of the loop $\ell$ with respect to the Euclidean distance.) 
The authors of~\cite{Lawler-Ferreras:RW_loop_soup} then construct a coupling such that, with large probability, 
all the rescaled random walk loops above a certain mesoscopic size correspond uniquely to a unique Brownian loop, and are very well (uniformly) approximated by it.

\medskip 
In the present article, we will consider the random walk loop soup with Neumann boundary condition on $\BdryIn^\delta$ and Dirichlet boundary condition on $\BdryOut^\delta$. 
The presence of a Neumann boundary condition introduces a number of technicalities which we briefly explain. 
For each loop $\ell=(v_0,\ldots,v_{|\ell|})$ of duration $|\ell|$ with root $v_0 = v_{|\ell|} \in \LV^\delta\setminus\BdryOutV^\delta$, we assign the weight
\begin{align*}
\nu^{\textnormal{ND}}_{\Omega^\delta}(\ell) 
:= \frac{1}{|\ell|} \Big( \prod_{i=0}^{|\ell|}\deg(v_i) \Big)^{-1} .
\end{align*}
We similarly define $\Lambda_{\Omega^\delta}^{\textnormal{ND}}$ to be the measure obtained from $\nu_{\Omega^\delta}^{\textnormal{ND}}$ by forgetting the root. 
In this case, we still denote by $t_\ell := \frac{\delta^2}{2} |\ell|$ the lifetime of $\ell$, 
and we parametrise $\ell$ by setting $\ell(s) := v_{[2s/\delta^2]}$ for $0\le s\le t_\ell$.

Note that (with a small abuse of notation), if $F$ is a functional on rooted loops that is invariant under rerooting (so that $F$ can also be seen as a functional on \emph{unrooted} loops), we have
\begin{align*} 
\Lambda^{\textnormal{ND}}_{\Omega^\delta}[F] 
= \; & \sum_{\bar \ell} \Lambda^{\textnormal{ND}}_{\Omega^\delta} (\bar \ell) \, F(\bar \ell) 
\; = \; 
\sum_{\bar\ell}\sum_{\ell\in\bar\ell}\nu(\ell) \,F(\ell) 
\; = \; \sum_{\ell}\nu(\ell)F(\ell) \\
= \; & \sum_{z\in\LV^\delta\setminus\BdryOutV^\delta} \sum_{m=1}^\infty \;
\sum_{\substack{\ell \textnormal{ s.t. } |\ell|=m \textnormal{ and }\\ \textnormal{ the root of $\ell$ is }z}} 
\frac{w_0(\ell) }{m} \, F(\ell) .
\end{align*}
After parameterising the loop by $t \in [0,\infty)$ in such a way at time $t$ its position equals that of the $[2t/\delta^2]$-th step of the original loop,
we obtain 
\begin{align}\label{eqn::dec_discrete_loop}
\Lambda^{\textnormal{ND}}_{\Omega^\delta}[F] 
= \sum_{z\in\LV^\delta\setminus\BdryOutV^\delta} 
\sum_{t\in \frac{1}{2}\delta^2\N}
\frac{\delta^2}{2 t} 
\sum_{\substack{\ell \textnormal{ s.t. } |\ell|=t \textnormal{ and }\\ \textnormal{ the root of $\ell$ is }z}} w_0(\ell) \, F(\ell) .
\end{align}

According to the right side of~\eqref{eqn::dec_discrete_loop}, we will occasionally, with a slight of notation abuse, view $\Lambda^{\textnormal{ND}}_{\Omega^\delta}$ as a measure on rooted loops. 
The \textbf{random walk loop soup} with Neumann boundary condition on $\BdryIn^\delta$ and Dirichlet boundary condition on $\BdryOut^\delta$ is the Poisson point process with intensity measure $\Lambda^{\textnormal{ND}}_{\Omega^\delta}$. 

\medskip 
As part of our argument, we will need to prove the convergence of the measure on the left-hand side of~\eqref{eqn::dec_discrete_loop} --- more precisely, 
the convergence of the conditional law $\Prob[\, \cdot \cond \LR(0) = \LR(t^\delta) = z^\delta]$ 
of the random walk $\LR \sim \Prob$ to return to its root at time $t^\delta$, to a Brownian bridge (with mixed boundary conditions) of duration $t$;
and the convergence of the rescaled probability $\delta^{-2} \, \Prob[\LR(t^\delta) = z^\delta \cond \LR(0)=z^\delta]$. 
This will be done in Proposition~\ref{prop::loop_conv}, with $z^\delta \to z\in\Omega$ and $t^\delta \to t \in(0,\infty)$ as $\delta \to 0$. 
Denote the limits by 
\begin{align*}
\lim_{\delta \to 0} \Prob[\, \cdot \cond \LR(0)=\LR(t^\delta)=z^\delta] =: \; & \mu^{\textnormal{ND}}_{\Omega}(z,z;t)[\,\cdot\,] ,
\\ 
\lim_{\delta \to 0} \delta^{-2} \, \Prob[\LR(t^\delta)=z^\delta \cond \LR(0)=z^\delta] =: \; & p^{\textnormal{ND}}_{\Omega}(z,z;t)
\end{align*}
(the former as a weak limit of measures). Note that here, we parameterise the random walk $\LR$ by letting $\LR(s)$ be the $[2s/\delta^2]$:th lattice step, for $s \geq 0$. 
We define the corresponding (Brownian) loop measure by
\begin{align}\label{eqn::mixed_loop}
\mu_{\Omega}^{\textnormal{ND}}[\,\cdot\,] 
:= \int_\Omega  \ud z \int_0^\infty \ud t \; \frac{ p_{\Omega}^{\textnormal{ND}}(z,z;t)}{t} \, \mu_{\Omega}^{\textnormal{ND}}(z,z;t)[\,\cdot\,] .
\end{align}
A consequence of our results\footnote{Without the reflecting boundary conditions, this would essentially follow trivially from, e.g.,~\cite{Lawler-Ferreras:RW_loop_soup}.} 
is that the discrete loop measure $\Lambda^{\textnormal{ND}}_{\Omega^\delta}$ converges as $\delta \to 0$ to $\mu_{\Omega}^{\textnormal{ND}}$. 
In the present work, the \textbf{Brownian loop soup} on $\Omega$ with Neumann boundary condition on $\BdryIn$
and Dirichlet boundary condition on $\BdryOut$ is the Poisson point process with intensity measure $\mu_{\Omega}^{\textnormal{ND}}$. 

\medskip 
Historically, the Brownian loop measure $\mu$ introduced in~\cite{LSW:Conformal_restriction_the_chordal_case, Lawler-Werner:The_Brownian_loop_soup} has become an important tool in probability theory and geometry of random fractals. 
It is an infinite, sigma-finite, conformally invariant measure supported on the set of unrooted continuous loops on $\C$. 
For a bounded domain $D \subset \C$, denote by $\mu_{D}$ the restriction of $\mu$ on loops supported on $D$. 
Then, $\mu_{D}$ is a conformally invariant in the sense that for any conformal isomorphism $f \colon D \to D'$, the measure $\mu_{D'}$ coincides with the pushforward of $\mu_{D}$ by $f$.
While the total mass of $\mu_{D}$ is infinite (and can be related to the determinant of the Laplacian~\cite{LeJan:Markov_loops_determinants_and_Gaussian_fields, Dubedat:SLE_and_free_field}),   
for any two disjoint compact sets $V_1,V_2\subset D$, the measure $\mu_{D}(V_1,V_2)$ of loops in $D$ which intersect both $V_1$ and $V_2$ is finite (see~\cite{Field-Lawler:Reversed_radial_SLE_and_the_Brownian_loop_measure}). 
Moreover, the measure satisfies an important restriction covariance property
(see~\cite[Proposition~3]{Werner:The_conformally_invariant_measure_on_self-avoiding_loops} for the case useful to our purposes).

\subsection{Discrete winding identities: an exact formula}
\label{subsec:winding}

We now use Fomin's formula (Theorem~\ref{T:Fomin}) to derive key exact identities, which allow us to express the total winding of all 
the UST branches in terms of the discrete loop soup (Proposition~\ref{prop::total_winding}).

\medskip
First, because of planarity, the only permutations $\sigma$ which contribute to the right hand side of~\eqref{eq:Fomin} are powers of the shift $\sigma = (1\ 2 \cdots n)$; i.e., permutations of the form $\sigma^k$ for $0 \le k \le n-1$.  
Note that the sign is always positive when $n$ is odd and it alternates between $\pm 1$ when $n$ is even. 
This will lead to the difference in Theorem~\ref{thm::total_winding} between these cases, which we will treat separately below.

For every $\beta\in\R$, we let $\smash{\Lambda_\beta=\Lambda_{\Omega^\delta,\beta}^{\textnormal{ND}}}$ be the signed loop measure, i.e., 
for every discrete loop $\ell$ on $\Omega^\delta\setminus\BdryOut^\delta$ with length $|\ell|$ 
and with Neumann boundary condition on $\BdryIn^\delta$ and Dirichlet boundary condition on $\BdryOut^\delta$, using~\eqref{eqn::weight_beta} we define its weight to be
\begin{align}\label{eq:weightssigned}
\Lambda_\beta[\ell] := \frac{w_\beta(\ell)}{|\ell|}.
\end{align}

Denote by $\LL^\delta$ the set of all unrooted loops on $\Omega^\delta\setminus\BdryOut^\delta$. 
Let $\LL_0^\delta\subset\LL^\delta$ be the subset of contractible loops, and $\LL_*^\delta = \LL^\delta\setminus\LL_0^\delta$ the subset of noncontractible loops.

\begin{proposition}\label{prop::total_winding}
For any $\beta\in\R$, and for any odd $n\ge 1$, we have
\begin{align}\label{eqn::scaling_wind_odd}
\E_{\bs{x}, \bs{v}}^\delta\Big[\exp \Big(\ii \beta \sum_{j=1}^{n}2\pi K(\gamma_j^\delta)\Big)\Big] 
= & \; \ee^{\Lambda_0[ \LL_*^\delta] - \Lambda_\beta[ \LL_*^\delta]} \; 
\frac{\det \big( h^\delta_{2\pi\beta}(x^\delta_i, v_j^\delta)\big)_{1\le i,j \le n}}{\det \big( h^\delta_0(x^\delta_i, v_j^\delta)\big)_{1\le i,j \le n}} ;
\end{align}
and for any even $n\ge 2$, we have
\begin{align}\label{eqn::scaling_wind_even}
\E_{\bs{x}, \bs{v}}^\delta\Big[\exp \Big(\ii \beta \sum_{j=1}^{n}2\pi K(\gamma_j^\delta)\Big)\Big] 
= & \; \ee^{\Lambda_\pi[ \LL_*^\delta] - \Lambda_{2\pi \beta+\pi}[ \LL_*^\delta]} \; 
\frac{\det \big( h^\delta_{2\pi \beta+\pi}(x^\delta_i, v_j^\delta)\big)_{1\le i,j \le n}}{\det \big( h^\delta_\pi(x^\delta_i, v_j^\delta)\big)_{1\le i,j \le n}}.
\end{align}
where $h^\delta_\beta$ is defined in Equation~\eqref{eq:h_beta}.
\end{proposition}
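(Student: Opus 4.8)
The plan is to apply Wilson's algorithm to sample the UST branches started from $x_1^\delta, \ldots, x_n^\delta$ as successive loop-erased random walks, and then recognise the event $E^\delta_{\bs{x}, \bs{v}}$ together with the weight $\ee^{\ii\beta\sum_j 2\pi K(\gamma_j^\delta)}$ as exactly what is computed by Fomin's formula (Theorem~\ref{T:Fomin}) with the complex weights $w_{2\pi\beta}$. First I would note that the crossing number $K(\gamma_j^\delta)$ counts signed crossings of the branch across the zipper $\zeta$, so that $\ee^{\ii\beta \cdot 2\pi K(\gamma_j^\delta)}$ is precisely the accumulated extra factor that $w_{2\pi\beta}$ assigns to the path relative to $w_0$; hence $\ProbE_{\bs{x},\bs{v}}^\delta[\ee^{\ii\beta\sum_j 2\pi K(\gamma_j^\delta)}]$ equals $\big(\sum w_{2\pi\beta}(\mathrm{LE}(P_1))\cdots\big)/\big(\sum w_0(\cdots)\big)$, where the sums range over the relevant branch configurations realising $E^\delta_{\bs{x},\bs{v}}$ under Wilson's algorithm (branches run one after another, each loop-erased). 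The numerator and denominator are of the form appearing on the right-hand side of Fomin's identity, \emph{but evaluated on loop-erasures rather than on the walks themselves}, so the main work is to convert between the two.

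The key step is the standard "add back the loops" argument: a random walk path decomposes uniquely into its loop-erasure together with a sequence of loops attached along it, and summing $w_0$ over all walks with a prescribed loop-erasure produces the loop-erasure weight times a factor equal to $\exp$ of the loop measure of loops that can be attached. Carrying this out on the universal cover (or equivalently tracking the signed weights $w_\beta$) turns $\det(h^\delta_{2\pi\beta}(x_i^\delta,v_j^\delta))$ — which by Theorem~\ref{T:Fomin} is a sum over disjoint-loop-erasure walk configurations — into the corresponding sum over loop-erased branch configurations times $\exp(\Lambda_{2\pi\beta}[\LL^\delta])$, where $\Lambda_{2\pi\beta}$ is the signed loop measure~\eqref{eq:weightssigned}. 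Dividing the $\beta$ determinant by the $\beta=0$ determinant, the contributions of \emph{contractible} loops cancel (a contractible loop crosses $\zeta$ algebraically zero times, so $w_{2\pi\beta}$ and $w_0$ agree on $\LL_0^\delta$), leaving only $\ee^{\Lambda_0[\LL_*^\delta] - \Lambda_{2\pi\beta}[\LL_*^\delta]}$ as the ratio of the loop-soup normalisations. Since the disjoint-branch sums in numerator and denominator are identical as index sets (they both describe exactly the branch configurations in $E^\delta_{\bs{x},\bs{v}}$, the only difference being the zipper weights, which have already been accounted for), this yields~\eqref{eqn::scaling_wind_odd} in the odd case.

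For even $n$ the sign of $\sigma^k$ in Fomin's expansion alternates, so the ``$\beta$'' and ``$0$'' determinants are no longer a clean numerator/denominator for the conditional expectation; here one replaces $\beta$ by $\beta + \tfrac12$ at the level of the argument $2\pi\beta \mapsto 2\pi\beta+\pi$, which introduces a factor $\ee^{\ii\pi k}=(-1)^k$ on the shift $\sigma^k$ that exactly compensates the sign $\mathrm{sgn}(\sigma^k)$. Thus $\det(h^\delta_{2\pi\beta+\pi}(x_i^\delta,v_j^\delta))$ becomes a genuine \emph{positive} sum over the same branch configurations, now carrying the extra factor $\ee^{\ii\pi K(\gamma_j^\delta)}$ per branch, i.e. $\ee^{\ii\pi \sum_j K(\gamma_j^\delta)}$ overall; comparing $h^\delta_{2\pi\beta+\pi}$ against the reference $h^\delta_\pi$ and running the same loop-resummation gives the prefactor $\ee^{\Lambda_\pi[\LL_*^\delta]-\Lambda_{2\pi\beta+\pi}[\LL_*^\delta]}$ and, after cancelling the common $\ee^{\ii\pi\sum_j K(\gamma_j^\delta)}$-type contributions between numerator and denominator weightings, the identity~\eqref{eqn::scaling_wind_even}. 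The main obstacle I expect is bookkeeping the signs: one must verify carefully that (i) only the cyclic powers $\sigma^k$ contribute by planarity, (ii) $\mathrm{sgn}(\sigma^k)=(-1)^{k(n-1)}$ is $+1$ for odd $n$ and $(-1)^k$ for even $n$, and (iii) the shift by $\pi$ in the complex weight produces precisely the compensating $(-1)^k$, with no stray phase surviving because the target vertices are permuted cyclically and the zipper is crossed a controlled number of extra times. Everything else — Wilson's algorithm, the loop decomposition of random walk, conformal/topological invariance of contractibility under $w_\beta$ — is routine and already available in the cited references.
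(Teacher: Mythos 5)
Your proposal is correct and follows essentially the same route as the paper's proof (Lemmas~\ref{lem::nodd0} and~\ref{lem::neven}): Wilson's algorithm plus Fomin's formula with the complex zipper weights, the loop-erasure/erased-loops resummation, the observation that the noncontractible loop mass factors out as a configuration-independent constant while contractible loops carry $\beta$-independent weight, and in the even case the parity identity $\sum_j K(\gamma_j^\delta)\equiv k \pmod 2$ on $E^\delta_{\bs{x},\bs{v},k}$ so that the shift $2\pi\beta\mapsto 2\pi\beta+\pi$ compensates $\mathrm{sgn}(\sigma^k)=(-1)^k$. The one point worth making explicit when writing this up is \emph{why} the noncontractible loop contribution is the same for every branch configuration: every loop in $\LL_*^\delta$ necessarily meets the first branch $\gamma_1^\delta$ (a crosscut of the annulus), and once $\gamma_1^\delta$ is removed the remaining domain is simply connected, so all loops attached to the later branches are contractible.
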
 

The proof of Proposition~\ref{prop::total_winding} is given in Lemmas~\ref{lem::nodd0}~\&~\ref{lem::neven}.  
We begin with the case where $n$ is odd, which is easier to deal with, since 
(essentially as a consequence from Fomin's formula in Theorem~\ref{T:Fomin})  
the partition function can be expressed exactly as a determinant. For a discrete path $\gamma$, let 
\begin{align} \label{eq:crossings}
K(\gamma) := | \gamma \cap \vec{\LE}{}^\delta_\zeta | - |\gamma\cap\cev{\LE}{}^\delta_\zeta | 
\end{align}
be the (algebraic) number of turns by $\gamma$ in the annulus.

\begin{lemma}\label{lem::nodd0}
When $n\ge 1$ is odd, for the event~\eqref{eqn::defE2}, we have
\begin{align}\label{eqn::odd}
\PP^\delta[E^\delta_{\bs{x}, \bs{e}}] = \det \big(h^\delta_{0}(x^\delta_i,v_j^\delta) \big)_{1\le i,j\le n} ,
\end{align}
given by~\eqref{eq:Fomin} with $\beta=0$.  
Moreover, for any $\beta\in\R$ and for any odd $n\ge 1$, we have
\begin{align}\label{eqn::wind_odd0}
\E_{\bs{x}, \bs{v}}^\delta\Big[\exp \Big(\ii \beta \sum_{j=1}^{n}K(\gamma_j^\delta)\Big)\Big] 
= & \; \ee^{\Lambda_0[ \LL_*^\delta] - \Lambda_\beta[ \LL_*^\delta]} \; 
\frac{\det \big( h^\delta_\beta(x^\delta_i, v_j^\delta)\big)_{1\le i,j \le n}}{\det \big( h^\delta_0(x^\delta_i, v_j^\delta)\big)_{1\le i,j \le n}}.
\end{align}
\end{lemma}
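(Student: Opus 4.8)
The plan is to deduce Lemma~\ref{lem::nodd0} from Fomin's formula (Theorem~\ref{T:Fomin}) together with Wilson's algorithm (Theorem~\ref{thm: Wilsons algorithm}), via the standard ``random walk $=$ loop-erased random walk $+$ loop soup'' decomposition. First I would prove~\eqref{eqn::odd}. Apply Fomin's formula with $\beta=0$: the right-hand side of~\eqref{eq:Fomin} is a sum over permutations $\sigma$, but as already noted after Proposition~\ref{prop::total_winding}, planarity forces $\sigma$ to be a power $\sigma^k$ of the cyclic shift, and when $n$ is odd every such $\sigma^k$ is even, so $\mathrm{sgn}(\sigma)=+1$ throughout. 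Hence $\det(h^\delta_0(x_i^\delta,v_j^\delta))$ equals the total $w_0$-mass of families $(P_1,\dots,P_n)$ with $P_j\colon x_j^\delta\to v_{\sigma^k(j)}^\delta$ and $\mathrm{LE}(P_i)\cap P_j=\emptyset$ for $i<j$. Now run Wilson's algorithm rooted at $\BdryOut^\delta$, starting from $x_1^\delta,\dots,x_n^\delta$ in order: the $j$-th branch is $\mathrm{LE}(P_j)$ where $P_j$ is a random walk from $x_j^\delta$ stopped on hitting $\BdryOut^\delta\cup\LG_{j-1}$, and the non-intersection condition $\mathrm{LE}(P_i)\cap P_j=\emptyset$ is precisely the statement that each branch avoids all the earlier ones, i.e.\ that $E^\delta_{\bs x,\bs v}$ occurs with the hitting pattern prescribed by $\sigma^k$. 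Summing over $k=0,\dots,n-1$ (which is automatically encoded in the permutation sum) and using the defining property of Wilson's algorithm gives $\PP^\delta[E^\delta_{\bs x,\bs v}]=\det(h^\delta_0(x_i^\delta,v_j^\delta))$; here one uses that the walk weight $w_0(P)$ is exactly the probability of the walk trajectory, and that the loop-soup loops attached to reconstruct the walks from their loop-erasures contribute total mass $1$ when $\beta=0$.

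The second statement~\eqref{eqn::wind_odd0} is proved by the same argument carried out with the complex weight $w_\beta$ in place of $w_0$. The key observation is that the crossing number $K(\gamma_j^\delta)$ of~\eqref{eq:crossings} is additive along concatenation, and under loop-erasure the difference between the crossings of a path $P$ and those of $\mathrm{LE}(P)$ is exactly the sum of the crossings of the loops that are erased; in homological terms, the winding of $P$ around the annulus equals the winding of $\mathrm{LE}(P)$ plus the total winding of the erased loops. Therefore, for a family of non-intersecting branches realising $E^\delta_{\bs x,\bs v}$, writing the walk as branches plus an independent loop soup, we get
\begin{align*}
\E^\delta_{\bs x,\bs v}\Big[\ee^{\ii\beta\sum_j K(\gamma_j^\delta)}\Big]
= \frac{\sum_{(P_1,\dots,P_n)} \prod_j w_0(P_j)\, \ee^{\ii\beta\sum_j K(\mathrm{LE}(P_j))}}{\det(h^\delta_0(x_i^\delta,v_j^\delta))}\cdot \frac{1}{\ \E[\ee^{\ii\beta K(\mathrm{loops})}]\ },
\end{align*}
and each factor $\ee^{\ii\beta K(\mathrm{loop})}$ absorbed into $w_\beta$ turns the numerator, via Fomin's formula with weight $w_\beta$, into $\det(h^\delta_\beta(x_i^\delta,v_j^\delta))$, while the loop-soup normalisation contributes $\ee^{\Lambda_0[\LL_*^\delta]-\Lambda_\beta[\LL_*^\delta]}$ (the contractible loops have zero crossing number, hence $\ee^{\ii\beta\ph}=1$ on them and they cancel between numerator and denominator, leaving only $\LL_*^\delta$). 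Combining these gives exactly~\eqref{eqn::wind_odd0}.

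Concretely, the steps in order: (i) record the planarity reduction to cyclic permutations and the sign computation for odd $n$; (ii) set up Wilson's algorithm from $x_1^\delta,\dots,x_n^\delta$ and identify $E^\delta_{\bs x,\bs v}$ with the Fomin non-intersection pattern, proving~\eqref{eqn::odd}; (iii) state and prove the crossing-number additivity under loop-erasure, $K(P)=K(\mathrm{LE}(P))+\sum_{\ell\text{ erased}}K(\ell)$; (iv) decompose the random walk into its loop-erasure plus an independent random walk loop soup with intensity measure $\Lambda^{\textnormal{ND}}_{\Omega^\delta}$ (restricted appropriately), compute the characteristic function of the crossings of the loop soup as $\exp(\Lambda_\beta[\LL^\delta]-\Lambda_0[\LL^\delta])$ up to the contractible part which cancels, and use Fomin's formula with $w_\beta$ on the loop-erased part to recognise the numerator determinant $\det(h^\delta_\beta)$; (v) assemble~\eqref{eqn::wind_odd0}.

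The main obstacle I expect is step (iv): making rigorous the precise bookkeeping of how the random walk loop soup ``completes'' the $n$ loop-erased branches into genuine random walks on a domain with a reflecting inner boundary and absorbing outer boundary, keeping track of which loops are allowed (those in $\LL^\delta$, i.e.\ avoiding $\BdryOut^\delta$) and checking that the characteristic function of their total crossing number factorises as the exponential of the signed loop measure $\Lambda_\beta[\LL^\delta]-\Lambda_0[\LL^\delta]$ — this is the Poissonian computation $\E[\prod_\ell \ee^{\ii\beta K(\ell)}]=\exp(\sum_\ell(\ee^{\ii\beta K(\ell)}-1)\Lambda_0[\ell])=\exp(\Lambda_\beta[\LL^\delta]-\Lambda_0[\LL^\delta])$, but one must be careful that the loops ``used up'' in Wilson's algorithm are exactly a loop soup conditioned to be compatible with the branches, and that the compatibility/non-intersection constraint interacts correctly with the determinant structure so that after dividing by $\PP^\delta[E^\delta_{\bs x,\bs v}]=\det(h^\delta_0)$ the constrained loop-soup partition functions cancel and only the unconstrained exponential remains. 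Once this decomposition is set up cleanly, the rest is formal.
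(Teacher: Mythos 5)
Your proposal follows the same route as the paper's proof: Fomin's formula combined with Wilson's algorithm for~\eqref{eqn::odd}, and, for~\eqref{eqn::wind_odd0}, the loop-erasure decomposition $\sum_{P\colon\mathrm{LE}(P)=\Gamma}w_\beta(P)=w_\beta(\Gamma)\exp\big(\Lambda_\beta[\ell\cap\Gamma\neq\emptyset]\big)$ together with the multiplicativity $w_\beta(P)=w_0(P)\,\ee^{\ii\beta K(P)}$. The obstacle you flag in step (iv) --- why the loop masses attached to the branches, which are a priori constrained to hit the branches, produce exactly the \emph{unconstrained} factor $\ee^{\Lambda_0[\LL_*^\delta]-\Lambda_\beta[\LL_*^\delta]}$ --- is resolved by two planarity observations that you should make explicit: (a) the first branch $\gamma_1^\delta$ is a crosscut of the annulus, so \emph{every} noncontractible loop intersects it, whence $\Lambda_\beta[\gamma_1^\delta]=\Lambda_\beta[\LL_*^\delta]+\Lambda_0\big[\{\ell\in\LL_0^\delta\colon\ell\cap\gamma_1^\delta\neq\emptyset\}\big]$, the contractible part being $\beta$-independent because contractible loops have zero crossing number; and (b) $\Omega^\delta\setminus\gamma_1^\delta$ is simply connected, so every loop counted in $\Lambda_\beta[\gamma_j^\delta\mid\gamma_1^\delta,\ldots,\gamma_{j-1}^\delta]$ for $j\ge 2$ is contractible and hence also $\beta$-independent. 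Summing over $j$ then yields exactly $\Lambda_\beta[\LL_*^\delta]$ plus $\beta$-independent terms that reassemble (via Wilson's algorithm) into the unconditioned probability of the branch configuration; this is precisely the cancellation your step (iv) needs.

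One slip to correct: your displayed identity double counts the loop normalisation. By Wilson's algorithm the sum $\sum_{(P_1,\ldots,P_n)}\prod_j w_0(P_j)\,\ee^{\ii\beta\sum_jK(\mathrm{LE}(P_j))}$ over compatible families already equals $\E^\delta\big[\ee^{\ii\beta\sum_jK(\gamma_j^\delta)}\,\one{\{E^\delta_{\bs{x},\bs{v}}\}}\big]$, so dividing by $\det(h_0^\delta)=\PP^\delta[E^\delta_{\bs{x},\bs{v}}]$ alone gives the conditional expectation; the factor $1/\E[\ee^{\ii\beta K(\mathrm{loops})}]=\ee^{\Lambda_0[\LL_*^\delta]-\Lambda_\beta[\LL_*^\delta]}$ should appear only once you replace that sum by $\det(h_\beta^\delta)$, from which it differs by exactly $\ee^{\Lambda_\beta[\LL_*^\delta]-\Lambda_0[\LL_*^\delta]}$. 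As written the display carries both the raw sum and the correction factor; the final formula you assemble is nevertheless the correct one.
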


\begin{proof}
Since $n$ is odd, for every $\sigma \in \mathfrak{S}_n$ which contributes to the right hand side of~\eqref{eq:Fomin},
we have $\mathrm{sgn}(\sigma)=1$. Taking $\beta=0$ in~\eqref{eq:Fomin}, and invoking Wilson's algorithm (Theorem~\ref{thm: Wilsons algorithm}), we obtain~\eqref{eqn::odd}.

It remains to prove~\eqref{eqn::wind_odd0}. 
We decompose a path $P$ into its loop-erasure $\Gamma$ and the set of loops erased. 
As in~\cite[Proposition~9.5.1]{Lawler-Limic:Random_walk_modern_introduction}, we have
\begin{align}\label{eqn::path_dec_beta}
\sum_{P \colon \mathrm{LE}(P)=\Gamma}w_\beta(P) = w_\beta(\Gamma) \, 
\exp \Big( \Lambda_\beta[\ell\subset\Omega^\delta\setminus\BdryOut^\delta \;|\; \ell \cap \Gamma \neq \emptyset]\Big). 
\end{align}
It will be convenient to introduce the notation $\Lambda_\beta[ \gamma^\delta_j \,|\, \gamma^\delta_1, \ldots, \gamma^\delta_{j-1}]$ to denote the $\Lambda_\beta$-mass of loops in $\LL^\delta$ which intersect $\gamma^\delta_j$ but not $\gamma^\delta_1, \ldots, \gamma^\delta_{j-1}$.   
Note that the weight of each loop in $\LL_0^\delta$ under $\Lambda_\beta$ is actually independent of $\beta$. 
Thus, if we fix a permutation $\sigma$, and for each $j$ a path $P_j$ connecting $x^\delta_j$ to $\smash{v_{\sigma(j)}^\delta}$, 
then letting $\smash{\gamma^\delta_j = \mathrm{LE}(P_j)}$, we obtain
\begin{align}\label{eq:weightsprod}
w_\beta(P_1) \cdots w_\beta(P_n) 
= \; & \Big(\prod_{j=1}^n w_\beta(\gamma_j^\delta)  \Big) 
\exp \Big( \Lambda_\beta \big[ \gamma^\delta_1 \big]  + \cdots + \Lambda_\beta \big[ \gamma^\delta_n \,|\, \gamma^\delta_0, \ldots, \gamma^\delta_{n-1} \big] \Big) .
\end{align}

Consider first the path $P_1$ and $\gamma^\delta_1 = \mathrm{LE}(P_1)$. 
On the one hand, every loop $\ell\in \LL_*^\delta$ is guaranteed to intersect $\gamma^\delta_1$. 
On the other hand, loops which intersect $\gamma^\delta_1$ but are homotopically trivial do not receive any non-trivial complex weights, i.e., have the same weight under both $\Lambda_0$ and $\Lambda_\beta$. Thus, we have 
\begin{align*}
\Lambda_\beta \big[ \gamma^\delta_1 \big] = \Lambda_\beta \big[ \LL_*^\delta \big] + \Lambda_0 \big[ \LL_0^\delta \cap \{\ell \textnormal{ such that } \ell\cap\gamma^\delta_1\neq\emptyset\} \big].
\end{align*}
Furthermore, for the remaining loops, observe that $\gamma^\delta_1$ cuts $\Omega^\delta$ open into a simply connected domain.
Therefore, every loop which intersects $\gamma^\delta_j$ but not $\gamma^\delta_1, \ldots \gamma^\delta_{j-1}$ (for $j\ge 2$) is necessarily homotopically trivial, and thus receives a weight equal to that under $\Lambda_0$. 
We deduce that for any $\beta$, we have
\begin{align*}
\Lambda_0\big[\LL_0^\delta(\gamma_j^\delta) \big] 
= \Lambda_\beta\big[\LL_0^\delta(\gamma_j^\delta) \big]
:= \; & \Lambda_\beta\big[ \ell \in \LL_0^\delta \textnormal{ such that } \ell \cap \gamma_j^\delta \neq \emptyset 
\textnormal{ and }\ell\cap\gamma_k^\delta=\emptyset\textnormal{ for }1\le k\le j-1 \big]
\\
= \; & 
\begin{cases}
\Lambda_\beta \big[ \gamma^\delta_1 \big] - \Lambda_\beta \big[ \LL_*^\delta \big], & j = 1,\\
\Lambda_0\big[ \gamma^\delta_j \,|\, \gamma^\delta_1, \ldots, \gamma^\delta_{j-1} \big], &  j \geq 2 .
\end{cases}
\end{align*}
and thus, combining with~\eqref{eq:weightsprod} we see that Fomin's formula~\eqref{eq:Fomin} reads
\begin{align*}
\det \big( h^\delta_\beta(x^\delta_i, v_j^\delta)\big)_{1\le i,j \le n} 
= \; & \ee^{\Lambda_\beta[\LL_*^\delta]}\sum_{\bs\gamma^\delta\in E^\delta_{\bs{x}, \bs{e}}} 
\Big( \prod_{j=1}^{n}w_\beta(\gamma_j^\delta) \Big) \; \ee^{\Lambda_\beta[\LL_0^\delta(\gamma_j^\delta) ]} 
\\
=\; & \ee^{\Lambda_\beta[ \LL_*^\delta]}\sum_{\bs\gamma^\delta\in E^\delta_{\bs{x}, \bs{e}}} \; 
\exp\Big(\ii\beta\sum_{j=1}^{n}\big(|\gamma_j^\delta\cap\vec{\LE}{}^\delta_\zeta|-|\gamma_j^\delta\cap\cev{\LE}{}^\delta_\zeta|\big)\Big)
\; \Big( \prod_{j=1}^{n}w_0(\gamma_j^\delta) \Big) \;  \ee^{\Lambda_0[\LL_0^\delta(\gamma_j^\delta) ]} 
\\
=\; & \ee^{\Lambda_\beta[ \LL_*^\delta]}\sum_{\bs\gamma^\delta\in E^\delta_{\bs{x}, \bs{e}}} \; \ee^{\ii{\beta}\sum_{j=1}^{n}K(\gamma_j^\delta)}
\; \Big( \prod_{j=1}^{n}w_0(\gamma_j^\delta) \Big) \;  \ee^{\Lambda_0[\LL_0^\delta(\gamma_j^\delta) ]} 
\\
=\; & \ee^{\Lambda_\beta[ \LL_*^\delta]-\Lambda_0[\LL_*^\delta]} \; 
\E^\delta\big[\ee^{\ii{\beta}\sum_{j=1}^{n}K(\gamma_j^\delta)} \, \one{\{E^\delta_{\bs{x}, \bs{e}}\}}\big] 
\\
=\; & \ee^{\Lambda_\beta[ \LL_*^\delta]-\Lambda_0[\LL_*^\delta]} \; 
\E_{\bs{x}, \bs{v}}^\delta\big[\ee^{\ii{\beta}\sum_{j=1}^{n}K(\gamma_j^\delta)}\big] \, \PP^\delta\big[E^\delta_{\bs{x}, \bs{e}}\big] ,
\end{align*}
recalling that $\E_{\bs{x}, \bs{v}}^\delta$ is the conditional expectation given $E^\delta_{\bs{x}, \bs{e}}$.
This proves~\eqref{eqn::wind_odd0} due to~\eqref{eqn::odd}. 
\end{proof}

We now turn to the case where $n$ is even. 
It includes an additional factor with the signed loop measure~\eqref{eq:weightssigned}
from the alternating signs of the permutations on the right-hand side of Fomin's formula~\eqref{eq:Fomin}. 

\begin{lemma}\label{lem::neven}
When $n\ge 2$ is even, for the event~\eqref{eqn::defE2}, we have
\begin{align}\label{eqn::even}
\PP^\delta[E^\delta_{\bs{x}, \bs{e}}] 
= \ee^{\Lambda_{0}[ \LL_*^\delta] - \Lambda_\pi[ \LL_*^\delta]} \; \det \big(h^\delta_{\pi}(x^\delta_i,v_j^\delta) \big)_{1\le i,j\le n} ,
\end{align}
given by~\eqref{eq:Fomin} with $\beta=\pi$.  
Moreover, for any $\beta\in\R$ and for any even $n\ge 2$, we have
\begin{align}\label{eqn::wind_even}
\E_{\bs{x}, \bs{v}}^\delta \Big[\exp\Big(\ii \beta \sum_{j=1}^{n} K(\gamma_j^\delta) \Big) \Big] 
= & \; \ee^{\Lambda_\pi[ \LL_*^\delta] - \Lambda_{\beta+\pi}[ \LL_*^\delta]} \; 
\frac{\det \big( h^\delta_{\beta+\pi}(x^\delta_i, v_j^\delta)\big)_{1\le i,j \le n}}{\det \big( h^\delta_\pi(x^\delta_i, v_j^\delta)\big)_{1\le i,j \le n}}.
\end{align}
\end{lemma}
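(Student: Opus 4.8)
The plan is to mimic the proof of Lemma~\ref{lem::nodd0}, paying attention to where the parity of $n$ enters through the signs of the contributing permutations. Recall that by planarity, the only permutations $\sigma\in\mathfrak{S}_n$ contributing to Fomin's formula~\eqref{eq:Fomin} are the cyclic shifts $\sigma^k$, $0\le k\le n-1$, where $\sigma=(1\ 2\cdots n)$; and $\mathrm{sgn}(\sigma^k)=(-1)^{k(n-1)}=(-1)^k$ when $n$ is even. The key trick is therefore to absorb this alternating sign into the complex edge weights: a crossing count $K(\gamma_j^\delta)$ that is off by one full turn around the annulus contributes a factor $\ee^{\ii\pi}=-1$ to $w_\pi$, and the telescoping sum $\sum_j K(\gamma_j^\delta)$ is, modulo $1$, exactly the shift index $k$ (by planarity the branches connect $x_j^\delta$ to $v_{k+j}^\delta$, so they collectively wind $k$ extra times relative to the base labelling). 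Thus running Fomin's formula with $\beta=\pi$ rather than $\beta=0$ automatically supplies the sign $(-1)^k$ needed, and $h^\delta_\pi$ is the natural object (the harmonic function on the double cover) that makes the determinant an honest \emph{signed} count of the event $E^\delta_{\bs{x},\bs{v}}$.

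First I would establish~\eqref{eqn::even}. Apply Theorem~\ref{T:Fomin} with $\beta=\pi$. For each contributing $\sigma=\sigma^k$ and each tuple of loop-erased branches $\bs\gamma^\delta\in E^\delta_{\bs{x},\bs{v},k}$, use the path decomposition~\eqref{eqn::path_dec_beta} exactly as in the odd case: every noncontractible loop meets $\gamma_1^\delta$ and picks up the weight $\Lambda_\pi[\LL_*^\delta]$, while all further loops (meeting $\gamma_j^\delta$ but not earlier ones, $j\ge2$) are contractible and hence $\beta$-independent. This gives
\begin{align*}
\det\big(h^\delta_\pi(x_i^\delta,v_j^\delta)\big)_{1\le i,j\le n}
= \ee^{\Lambda_\pi[\LL_*^\delta]}\sum_{k=0}^{n-1}(-1)^k\sum_{\bs\gamma^\delta\in E^\delta_{\bs{x},\bs{v},k}}
\ee^{\ii\pi\sum_j K(\gamma_j^\delta)}\Big(\prod_j w_0(\gamma_j^\delta)\Big)\ee^{\Lambda_0[\LL_0^\delta(\gamma_j^\delta)]}.
\end{align*}
The point is that on $E^\delta_{\bs{x},\bs{v},k}$ one has $\sum_j K(\gamma_j^\delta)\equiv k \pmod{1}$ (more precisely $\ee^{\ii\pi\sum_j K(\gamma_j^\delta)}=(-1)^{k}\cdot(-1)^{(\text{base turning, independent of }\bs\gamma^\delta)}$ after fixing the zipper; by relabelling the $v_j^\delta$ we may take the base turning to vanish), so the sign $(-1)^k$ cancels the permutation sign $(-1)^k$ and every term becomes positive. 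Summing over $k$ and over $\bs\gamma^\delta$ recovers $\ee^{\Lambda_\pi[\LL_*^\delta]}\cdot\ee^{-\Lambda_0[\LL_*^\delta]}\cdot\sum_{\bs\gamma^\delta\in E^\delta_{\bs{x},\bs{v}}}\prod_j w_0(\gamma_j^\delta)\,\ee^{\Lambda_0[\LL_0^\delta(\gamma_j^\delta)]}=\ee^{\Lambda_\pi[\LL_*^\delta]-\Lambda_0[\LL_*^\delta]}\,\PP^\delta[E^\delta_{\bs{x},\bs{e}}]$ by Wilson's algorithm (Theorem~\ref{thm: Wilsons algorithm}), which is~\eqref{eqn::even}.

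Next I would prove~\eqref{eqn::wind_even} by the same computation run twice: once at $\beta_1=\pi$ (the denominator, just established) and once at $\beta_2=\beta+\pi$ (the numerator). Repeating the loop-decomposition argument verbatim with edge weight parameter $\beta+\pi$ gives
\begin{align*}
\det\big(h^\delta_{\beta+\pi}(x_i^\delta,v_j^\delta)\big)_{1\le i,j\le n}
= \ee^{\Lambda_{\beta+\pi}[\LL_*^\delta]-\Lambda_0[\LL_*^\delta]}\,
\E^\delta\Big[\ee^{\ii(\beta+\pi)\sum_j K(\gamma_j^\delta)}\,\one{\{E^\delta_{\bs{x},\bs{e}}\}}\Big],
\end{align*}
where the permutation sign $(-1)^k$ has again been absorbed by the $\pi$-part of $\beta+\pi$ (the $\ee^{\ii\pi\sum_j K(\gamma_j^\delta)}$ factor). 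Now divide the two displays: the ratio of determinants equals $\ee^{\Lambda_{\beta+\pi}[\LL_*^\delta]-\Lambda_\pi[\LL_*^\delta]}$ times $\E^\delta[\ee^{\ii(\beta+\pi)\sum_j K(\gamma_j^\delta)}\one{E^\delta_{\bs{x},\bs{e}}}]/\PP^\delta[E^\delta_{\bs{x},\bs{e}}]$. On $E^\delta_{\bs{x},\bs{e}}$ the extra factor $\ee^{\ii\pi\sum_j K(\gamma_j^\delta)}$ equals $(-1)^k$ on $E^\delta_{\bs{x},\bs{v},k}$, which matches the absorbed permutation sign and makes this an honest conditional expectation $\E^\delta_{\bs{x},\bs{v}}[\ee^{\ii\beta\sum_j K(\gamma_j^\delta)}]$ on the event $E^\delta_{\bs{x},\bs{v}}$ (the $\pm1$'s on $E^\delta_{\bs{x},\bs{v},k}$ cancelling against themselves across numerator and the restructured denominator). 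Rearranging the exponential prefactor as $\ee^{\Lambda_\pi[\LL_*^\delta]-\Lambda_{\beta+\pi}[\LL_*^\delta]}$ on the other side yields~\eqref{eqn::wind_even}.

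The main obstacle is the bookkeeping of signs: one must check carefully that the topological statement ``$\sum_j K(\gamma_j^\delta)$ has the same parity as the shift index $k$ on $E^\delta_{\bs{x},\bs{v},k}$, up to a $\bs\gamma^\delta$-independent constant fixed by the choice of zipper'' is correct, so that $(-1)^k=\mathrm{sgn}(\sigma^k)$ cancels precisely against $\ee^{\ii\pi\sum_j K(\gamma_j^\delta)}$. This is the place where planarity and the counterclockwise ordering of $x_1^\delta,\dots,x_n^\delta$ and $v_1^\delta,\dots,v_n^\delta$ must be invoked, and it is what the remark after Proposition~\ref{prop::total_winding} refers to as ``a careful analysis of the signs of the permutations.'' Everything else is a mechanical repetition of the argument in Lemma~\ref{lem::nodd0}, using~\eqref{eqn::path_dec_beta} and the fact that contractible loops carry $\beta$-independent weights.
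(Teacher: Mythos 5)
Your proposal is correct and follows essentially the same route as the paper: invoke Fomin's formula at parameter $\beta+\pi$, use the path-to-loop decomposition~\eqref{eqn::path_dec_beta} exactly as in the odd case, and cancel the alternating permutation signs $(-1)^k$ against the factor $\ee^{\ii\pi\sum_j K(\gamma_j^\delta)}=(-1)^k$ coming from the planarity identity~\eqref{eqn::even_aux1} (your ``modulo $1$'' should of course read ``modulo $2$'', as your subsequent $(-1)^k$ computation makes clear). Your intermediate display for $\det\big(h^\delta_{\beta+\pi}\big)$, with prefactor $\ee^{\Lambda_{\beta+\pi}[\LL_*^\delta]-\Lambda_0[\LL_*^\delta]}$, is in fact the version consistent with both~\eqref{eqn::even} and~\eqref{eqn::wind_even}, whereas the paper's~\eqref{eqn::even_aux2} writes $-\Lambda_\pi[\LL_*^\delta]$ there, which is a typo.
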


\begin{proof}
The proof of Equation~\eqref{eqn::even} is very similar to the proof of Lemma~\ref{eqn::odd}, so we only point out the differences. 
Due to planarity, for every $(\gamma_1^\delta,\ldots,\gamma_n^\delta)\in E^\delta_{\bs{x}, \bs{e}, k}$ realising the event~\eqref{eqn::defE}, we have
\begin{align}\label{eqn::even_aux1}
\sum_{j=1}^{n}\big|\gamma_j^\delta \cap \vec{\LE}{}^\delta_\zeta\big|
\; - \; \sum_{j=1}^{n} \big|\gamma_j^\delta\cap\cev{\LE}{}^\delta_\zeta\big| 
= k \mod 2 , \qquad \textnormal{for } 0\le k\le n-1  .
\end{align}
After replacing $\beta$ by $\beta+\pi$ in Fomin's formula~\eqref{eq:Fomin} 
and taking into account the alternating signs of the permutations on the right-hand side of~\eqref{eq:Fomin}, 
a similar analysis as in the proof of Lemma~\ref{eqn::odd} yields
\begin{align}\label{eqn::even_aux2}
\det \big( h^\delta_{\beta+\pi}(x^\delta_i, v_j^\delta)\big)_{1\le i,j \le n} 
=\; & \ee^{\Lambda_{\beta+\pi}[ \LL_*^\delta]-\Lambda_\pi[\LL_*^\delta]} \; 
\E_{\bs{x}, \bs{v}}^\delta\big[\ee^{\ii{\beta}\sum_{j=1}^{n} K(\gamma_j^\delta)}\big] \, \PP^\delta\big[E^\delta_{\bs{x}, \bs{e}}\big] ,
\end{align}
Equation~\eqref{eqn::wind_even} follows immediately from~\eqref{eqn::even_aux2}, and by taking $\beta=0$ in~\eqref{eqn::even_aux2}, we obtain~\eqref{eqn::even}. 
\end{proof}

\subsection{Convergence to Brownian loop soup: proof of Theorem~\ref{thm::total_winding}}
\label{subsec:total_winding}

Both right-hand sides in Proposition~\ref{prop::total_winding} (corresponding to the case where $n$ is odd and even, respectively) 
are expressed in terms of a ratio of determinants and an exponential of total mass of loops winding around the origin associated with the complex weights~\eqref{eqn::weight_beta}. We will deal with both terms separately, beginning with the loop soup term.
In fact, the latter can simply be re-expressed in terms of the ordinary random walk loop soup (albeit with mixed Neumann and Dirichlet boundary conditions). 
For instance, in the odd case (see Equation~\ref{eqn::scaling_wind_odd}), recalling Campbell's formula for Poisson point processes, we see that
\begin{align}\label{eqn::Campbell}
\E_{\textnormal{RWLS}}^\delta\Big[\exp \Big(\ii \frac{\beta}{2\pi} \sum_{\ell\in\LL_*^\delta} \ph(\ell)\Big)\Big]
= \exp \Big(\Lambda_0 \Big[(\ee^{\ii\frac{\beta}{2\pi}\ph(\ell)}-1)\one{\{\ell\in\LL_*^\delta\}} \Big]\Big)
= \ee^{\Lambda_\beta[\LL_*^\delta]-\Lambda_0[\LL_*^\delta]} ,
\end{align}
where $\E_{\textnormal{RWLS}}^\delta$ is the expectation under the random walk loop soup probability measure $\PP_{\textnormal{RWLS}}^\delta$ with Neumann boundary conditions on $\BdryIn^\delta$ and Dirichlet boundary
conditions on $\BdryOut^\delta$,
and $\ph(\ell)$ is the topological winding of $\ell \in \LL^\delta$ around the origin, equalling $2\pi$ times the signed number of turns of $\ell$ around~$0$.
(When $\beta=0$, the measure $\Lambda_0$ coincides with the random walk loop measure $\Lambda_{\Omega^\delta}^{\textnormal{ND}}$;
and depending on the context it is more natural to write $\Lambda_0$ or $\Lambda_{\Omega^\delta}^{\textnormal{ND}}$, so we keep both notations).

\medskip 
To prove Theorem~\ref{thm::total_winding}, 
our task will thus be to show the convergence of the term in the exponential in the right-hand side to its continuum analogue, 
which involves the Brownian loop soup with Neumann boundary conditions on $\BdryIn$ and Dirichlet boundary
conditions on $\BdryOut$ (recall that we call $\mu_{\Omega}^{\textnormal{ND}}$ the corresponding intensity measure of this Poisson point process). This task will be completed in Lemma~\ref{lem:BLM_conv}.
We first need to prove the convergence of $\PP_{\textnormal{RWLS}}^\delta$ as $\delta \to 0$ when restricted on set of loops with non-trivial winding.  
As mentioned in Section~\ref{subsec:Fomin}, we will prove the convergence of the conditional law $\Prob[\, \cdot \cond \LR(0)=\LR(t^\delta)=z^\delta]$ 
of the random walk $\LR$ to return to its root at time $t^\delta$ to a Brownian bridge of duration $t = \underset{\delta \to 0}{\lim} \, t^\delta$; 
and the convergence of the rescaled probability $\delta^{-2} \, \Prob[\LR(t^\delta)=z^\delta \cond \LR(0)=z^\delta]$.

The convergence takes place in the set of pairs $(\ell,t_\ell)$, where $\ell$ is a loop in the plane of lifetime $t_\ell > 0$, endowed with the usual metric up to reparametrisation, 
\begin{align}\label{eqn::loop_metric}
\dist_\LL \big( (\ell,t_\ell), (\tilde{\ell},t_{\tilde{\ell}}) \big) 
:= |t_\ell - t_{\tilde{\ell}}| \,+\, \inf_{\varrho,\tilde{\varrho}} \sup_{s\in [0,1]} \big|\ell(\varrho(s))-\tilde{\ell}(\tilde{\varrho}(s)) \big|,
\end{align}
where the infimum is taken over all increasing homeomorphisms $\varrho \colon [0,1] \to [0,t_\ell]$ and  $\varrho \colon [0,1] \to [0,t_{\tilde{\ell}}]$.

\begin{proposition}\label{prop::loop_conv}
Suppose $z^\delta \to z\in\Omega$ and $t^\delta \to t\in(0,\infty)$ as $\delta \to 0$. 
Then as $\delta\to 0$,  
the conditional law $\Prob[\, \cdot \cond \LR(0)=\LR(t^\delta)=z^\delta]$ converges weakly 
to the Brownian bridge measure $\mu^{\textnormal{ND}}_{\Omega}(z,z;t)$, 
and the rescaled probability $\delta^{-2} \, \Prob[\LR(t^\delta)=z^\delta \cond \LR(0)=z^\delta]$ converges 
to the transition kernel $p^{\textnormal{ND}}_{\Omega}(z,z,t) \in \R$. 
\end{proposition}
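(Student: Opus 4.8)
The plan is to decouple the two parts of the proposition (convergence of the bridge law and convergence of the rescaled return probability) and to handle the reflecting (Neumann) boundary $\BdryIn^\delta$ as a perturbation of the standard interior case. First I would establish the local central limit theorem for the killed-and-reflected walk: for $z^\delta \to z \in \Omega$ and $t^\delta \to t \in (0,\infty)$, I claim $\delta^{-2}\,\Prob[\LR(t^\delta) = z^\delta \cond \LR(0) = z^\delta] \to p^{\textnormal{ND}}_\Omega(z,z;t)$, where $p^{\textnormal{ND}}_\Omega$ is the heat kernel of Brownian motion in $\Omega$ reflected on $\BdryIn$ and killed on $\BdryOut$. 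When $z$ is in the bulk (at macroscopic distance from $\partial\Omega$), this is just the classical Gaussian LCLT for simple random walk together with the fact that the probability of the walk reaching $\partial\Omega^\delta$ before time $t^\delta$ and returning is negligible at the relevant scale via a strong coupling with Brownian motion and reflection arguments; I would invoke Donsker's theorem plus the Skorokhod-type coupling already used in~\cite{Lawler-Ferreras:RW_loop_soup}. When $z$ lies near the analytic curve $\BdryIn$, I would straighten $\BdryIn$ locally by a conformal (or merely smooth) change of coordinates, compare the reflected walk with a reflected walk in a half-plane, and use the known LCLT for reflected random walk in a half-space (the heat kernel picks up its image charge). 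Since $\BdryIn$ is assumed analytic and $\BdryIn^\delta$ approximates it to within $O(\delta)$ by the Setup, the discrete reflection rule converges to Neumann reflection; this is the step that genuinely uses analyticity rather than mere rectifiability.

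Next, for the weak convergence of the conditioned loop $\Prob[\,\cdot \cond \LR(0) = \LR(t^\delta) = z^\delta]$ to the Brownian bridge measure $\mu^{\textnormal{ND}}_\Omega(z,z;t)$ in the metric~\eqref{eqn::loop_metric}, I would argue in two standard steps: tightness and identification of the limit. Tightness in the space of continuous loops up to reparametrisation follows from a Kolmogorov-type moment bound on increments of the bridge-conditioned walk — uniformly in $\delta$, the conditioned process has Hölder-continuous modulus — which one gets by writing the conditioned increment probabilities as ratios of return probabilities and controlling them with the LCLT from the previous paragraph (the denominator $\Prob[\LR(t^\delta) = z^\delta \cond \LR(0)=z^\delta]$ is bounded below by $c\delta^2$). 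To identify the limit, I would check convergence of finite-dimensional distributions: for times $0 < s_1 < \cdots < s_m < t$, the law of $(\LR(s_1^\delta), \ldots, \LR(s_m^\delta))$ under the bridge conditioning is, by the Markov property, a product of transition kernels divided by the total return kernel, and each factor converges by the LCLT (with the near-boundary refinement) to the corresponding factor built from $p^{\textnormal{ND}}_\Omega$. Thus the fidis converge to those of the Brownian bridge with mixed boundary conditions, which together with tightness gives weak convergence. The continuity in $(\ell, t_\ell)$ of the metric and the fact that $t^\delta \to t$ handle the lifetime coordinate trivially.

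The main obstacle I expect is the uniformity of the LCLT near the reflecting boundary $\BdryIn$, in particular controlling the error when $z^\delta$ approaches $\BdryIn$ along a curved analytic arc and the walk makes many boundary reflections within the time window $[0,t^\delta]$. A clean way around this is: (i) if $z$ is strictly interior, the loop almost surely stays away from $\BdryIn$ on a positive fraction of $[0,t]$, and one can restrict attention to loops of diameter small compared with $\dist(z,\BdryIn)$ for the LCLT, recovering the interior case; (ii) if $z \in \BdryIn$, use a reflected-walk invariance principle in a neighborhood where $\BdryIn$ has been flattened, together with the elementary observation that the number of reflections is $O(\delta^{-1})$ only along the portion of the path within $O(\delta)$ of the boundary, whose contribution to displacement is controlled. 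I would also note that the absorbing boundary $\BdryOut$ causes no trouble: since the target time $t$ is finite and $z$ is at positive distance from $\BdryOut$, the event of hitting $\BdryOut^\delta$ and returning to $z^\delta$ by time $t^\delta$ has probability $o(\delta^2)$ by a standard large-deviation/gambler's-ruin estimate, so it does not affect either limit.
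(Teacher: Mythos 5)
There is a genuine error in your plan for the return probability, and it propagates to the bridge law. You claim twice that the boundary can be ignored: first that ``the probability of the walk reaching $\partial\Omega^\delta$ before time $t^\delta$ and returning is negligible at the relevant scale'', and later that ``the event of hitting $\BdryOut^\delta$ and returning to $z^\delta$ by time $t^\delta$ has probability $o(\delta^2)$''. Both statements are false. For fixed $t>0$ and $z$ at fixed positive distance $R$ from $\partial\Omega$, the strong Markov property and the free local CLT give
\begin{align*}
\Prob_{z^\delta}\big[\LR\textnormal{ hits }\partial B(z^\delta,R)\textnormal{ before }t^\delta/2\textnormal{ and }\LR(t^\delta)=z^\delta\big]
\;\ge\; \Prob_{z^\delta}[\tau_R\le t^\delta/2]\cdot \min_{w,\,s\le t^\delta/2}\Prob_{w}[\LR(t^\delta-s)=z^\delta]
\;\gtrsim\; c(t,R)\,\delta^2 ,
\end{align*}
with $c(t,R)>0$. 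So the boundary contribution is of order $\delta^2$, the \emph{same} order as the quantity you are computing, and it is exactly what makes $p^{\textnormal{ND}}_\Omega(z,z;t)$ differ from the free Gaussian kernel $\hat p(z,z;t)$ (note, e.g., that $p^{\textnormal{ND}}_\Omega(z,z;t)$ decays exponentially in $t$ because of the Dirichlet boundary, while $\hat p$ decays like $1/t$). Your argument, taken literally, would produce the free heat kernel in the limit, which contradicts your own identification of the limit as the mixed Neumann--Dirichlet kernel; the same issue then infects the finite-dimensional distributions of the conditioned loop, whose factors must be mixed-boundary kernels rather than free ones.

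The fix is the mechanism the paper actually uses: decompose the return event over the last entrance into a small ball $B(z^\delta,\eps/2)$ via nested stopping times $T_j^\delta, S_j^\delta$ oscillating between $\partial B(z^\delta,\eps/2)$ and $\partial B(z^\delta,\eps)$. The excursions outside the ball \emph{do} feel both boundaries, and one only needs their laws (not a local CLT) to converge, which follows from the invariance principle for the reflected/absorbed walk (Stroock--Varadhan; this is where analyticity of $\BdryIn$ enters). The free local CLT is invoked only for the final leg confined to $B(z^\delta,\eps)$, which produces the $\delta^2$ factor; the geometric bound $\Prob[T_j^\delta<\infty]\le(1-q)^j$ makes the decomposition summable. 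Your near-boundary half-plane LCLT with image charges is not needed at all, since the statement assumes $z\in\Omega$ (interior), and the application in Lemma~\ref{lem:BLM_conv} only sums over roots bounded away from $\BdryIn$ (the near-boundary loops being discarded separately by Lemma~\ref{lem::root_neighbourhood}). Finally, your tightness step is fine in spirit, but the delicate point is continuity at the terminal time $t$, for which one needs the quantitative bound $\Prob_v[\LR(s)=z^\delta]\le C\delta^2\exp(-\eps^2/(2(t^\delta-c)))$ for $v\in\partial B(z^\delta,\eps)$ and $s\le t^\delta-c$, rather than a generic Kolmogorov criterion.
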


\begin{proof}
Let $0<c<t$. Fix a bounded and measurable function $f$ from $C([0,c],\C)$ to $\R$. 
We choose $\eps>0$ and $\delta_0=\delta_0(\eps)>0$ such that 
$\dist(z^\delta,\partial\Omega^\delta)\ge 2\eps$ for every $\delta\in(0,\delta_0)$, 
and define recursively sequences of stopping times for the random walk: 
we set $T_1^\delta:=\inf\{s>c \colon \LR(s)\in \partial B(z^\delta,\frac{\eps}{2}) \}$; 
and we set 
\begin{align*}
S^\delta_j:=\inf\{s>T^\delta_j \colon \LR(s)\in \partial B(z^\delta,\eps) \} 
\qquad\textnormal{and}\qquad 
T^\delta_{j+1}:=\inf\{s>S^\delta_j \colon \LR(s)\in \partial B(z^\delta,\tfrac{\eps}{2}) \} , \qquad j\ge 1 .
\end{align*}
By the convergence of discrete harmonic measure, there exists a constant $0<q=q(\eps)<1$, such that 
\begin{align}\label{eqn::loop_conv_aux1}
\sup_{\delta\in(0,\delta_0)} 
\max_{w\in\partial B(z^\delta,\eps)} 
\, \Prob_{w}\big[\LR\textnormal{ hits }\partial B(z^\delta,\tfrac{\eps}{2})\textnormal{ before hitting }\BdryOut^\delta\big]\le 1-q.
\end{align} 
where $\Prob_{w}$ denotes the probability measure for $\LR$ starting from $w$. 
We will use the following observations.
\begin{itemize}[leftmargin=*]
\item 
Using the strong Markov property, we see that
\begin{align*} 
\;&\Prob_{z^\delta} \big[f(\LR[0,c]) \, \one{\{\LR(t^\delta)=z^\delta\}} \big]\\
=\;&\sum_{j = 1}^\infty\ProbE_{z^\delta}\Big[[f(\LR[0,c]) \, 
\one{\{T_j^\delta<\infty\}} \, 
\Prob_{\LR(T_j^\delta)} \big[\LR(t^\delta-T_j^\delta) = z^\delta\textnormal{ and }\LR[0,t^\delta-T_j^\delta] \subset B(z^\delta,\eps) \big] \Big]. 
\end{align*}

\item
Denote by $B^{\textnormal{ND}} \sim \ProbBM_{z}$ the Brownian motion on $\Omega$ with reflective boundary $\BdryIn$ started at $z$ and stopped when hitting $\BdryOut$. 
Then, by~\cite{Stroock-Varadhan:Diffusion_processes_with_boundary_conditions} (since we assumed that the reflective boundary $\BdryIn$ is analytic and thus in particular $C^2$ --- see also~\cite{Burdzy-Chen:Discrete_approximations_to_reflected_Brownian_motion} 
and references therein for more recent works on this topic, including an in-depth discussion for the case where the boundary is not necessarily smooth), 
we can couple $\LR[0,t^\delta]$ and $B^{\textnormal{ND}}[0,t]$ together in such a way that $|| \LR - B^{\textnormal{ND}} ||_{{\infty}}\to 0$ as $\delta\to 0$.  
In particular, defining the stopping times $\{T_j\}_{j\ge 1}$ for $B^{\textnormal{ND}}$ similarly as above, 
the law of $(\LR[0,c],T_1^\delta,\LR(T_1^\delta),\ldots, T_N^\delta,\LR(T_N^\delta))$
converges weakly to the law of $(B^{\textnormal{ND}}[0,c],T_1, B^{\textnormal{ND}}(T_1),\ldots, T_N,B^{\textnormal{ND}}(T_N))$ for every $N\in \N$.

\item
Denote by $\hat\LR\sim \ProbHat_{w^\delta}$ the standard random walk on $\delta \Z^2$ started at $\hat\LR(0)=w^\delta\in\LV^\delta$,  
and by $\hat\exitT_\eps^\delta$ the hitting time of $\hat\LR$ at $\partial B(z^\delta,\eps)$. 
Then, we have
\begin{align*}
\;&\Prob_{w^\delta}\big[\LR(s^\delta) = z^\delta\textnormal{ and }\LR[0,s^\delta]\subset B(z^\delta,\eps)\big]\\
=\;& \ProbHat_{w^\delta}\big[\hat\LR(s^\delta) = z^\delta\big] \, - \, \ProbHat_{w^\delta}\big[\one{\{\hat\exitT_\eps^\delta < s^\delta\}}\, 
\ProbHat_{\hat\LR(\hat\exitT_\eps^\delta)}\big[\hat\LR(s^\delta - \hat\exitT_\eps^\delta) = z^\delta]\big] , \qquad s^\delta > 0 .
\end{align*}
Similarly, denote by $\hat{B} \sim \hat\ProbBM_w$ the standard Brownian motion on $\C$ started at $\hat{B}(0) = w\in\Omega$, 
and by $\hat\exitT_\eps$ the hitting time of $\hat{B}$ at $\partial B(z,\eps)$. 
Let $\hat{p}(x,y;u)$ be the transition probability of $\hat{B}$ from $x$ to $y$ at time $u$.
Suppose $w^\delta\in\partial B(z^\delta,\frac{\eps}{2})$ converges to $w\in\partial B(z,\frac{\eps}{2})$ and $s^\delta \to s>0$ as $\delta\to 0$.
By the convergence of the law of $\hat \LR$ to the law of $\hat{B}$ and the local central limit theorem (e.g.,~\cite[Theorem~2.1.1]{Lawler-Limic:Random_walk_modern_introduction}), we~have 
\begin{align*} 
\; & \lim_{\delta\to 0}\frac{1}{\delta^2}\,\Prob_{w^\delta}\big[\LR(s^\delta) = z^\delta\textnormal{ and }\LR[0,s^\delta]\subset B(z^\delta,\eps)\big]
\\
=\; & \hat{p}(w,z;s) \, - \, \hat\ProbEBM_w \big[\one{\{\hat\exitT_\eps<s\}} \, \hat{p}(\hat{B}(\hat\exitT_\eps),z;s-\hat\exitT_\eps) \big].
\end{align*}
Moreover, there exists a constant $C=C(\eps)>0$, such that 
\begin{align}\label{eqn::loop_conv_aux4}
\Prob_{w^\delta}\big[\LR(s^\delta) = z^\delta\textnormal{ and }\LR[0,s^\delta]\subset B(z^\delta,\eps)\big]
\le \ProbHat_{w^\delta}\big[\hat\LR(s^\delta) = z^\delta\big] 
\le C\delta^2 .
\end{align}

\item
By~\eqref{eqn::loop_conv_aux1} and the strong Markov property, we have 
\begin{align}\label{eqn::exp_time}
\Prob_{v}[T_j^\delta<\infty]\le (1-q)^j , \qquad v\in\Omega^\delta , \; \delta\in(0,\delta_0) ,  \; j\in \N .
\end{align}
\end{itemize}
Combining these four observations, taking the test function $f \equiv 1$ 
we obtain the convergence of the rescaled probability 
$\delta^{-2}\,\Prob_{z^\delta}[\LR(t^\delta)=z^\delta]$, as well as
the convergence under the conditional probability measure $\Prob_{z^\delta}[\, \cdot \cond \LR(t^\delta)=z^\delta]$ of the random walk $\LR[0,c]$ to the Brownian motion $B^{\textnormal{ND}}[0,c]$ for every $0<c<t$.

\medskip 
It remains to prove that for every $\eps,\eps'>0$, we can choose $c>0$ which is close to $t$ such that 
\begin{align}\label{eqn::continuity}
\Prob_{z^\delta}\Big[\max_{u\in [c,t^\delta]} |\LR(u)-z^\delta|> \eps \Bigcond \LR(t^\delta)=z^\delta\Big]\le \eps' ,
\end{align}
which is a sort of a continuity estimate. To this end,
note that there exists a constant $C=C(\eps)>0$ such that for all $v\in\partial B(z^\delta,\eps)$, and 
$s \in [0,t^\delta-c]$, and for every $\delta\in(0,\delta_0)$, we have
\begin{align}\label{eqn::loop_conv_aux_RW}
\Prob_{v}[\LR(s)=z^\delta] 
=\;&\sum_{j = 1}^\infty\Prob_{v}[\LR(s)=z^\delta, \, T^\delta_j<\infty, \textnormal{ and } S^\delta_j=\infty]\notag\\
=\;&\sum_{j = 1}^\infty\ProbE_{v}\Big[[\one{\{T_j^\delta<\infty\}} \, \Prob_{\LR(T_j^\delta)}\big[\LR(s-T_j^\delta) = z^\delta\textnormal{ and }\LR[0,s-T_j^\delta] \subset B(z^\delta,\eps) \big]\Big]\notag\\
\le \;&\sum_{j = 1}^\infty\Prob_{v}[T_j^\delta<\infty]\;\max_{u \in [0,t^\delta-c]}
\max_{w\in\partial B(z^\delta,\eps/2)} \ProbHat_{w}[\hat\LR(u)=z^\delta]\notag\\
\le \;&C\delta^2\, \exp\Big( \! -\frac{\eps^2}{2(t^\delta-c)} \Big) .
\qquad \textnormal{[by the local central limit theorem and~\eqref{eqn::exp_time}]}
\end{align}
We can conclude by the following two estimates: 
\begin{itemize}
\item
when $\LR(c) \in B(z^\delta,\frac{\eps}{2})$, we have
\begin{align*}
\Prob_{z^\delta}\Big[\max_{s\in [c,t^\delta]} |\LR(s)-z^\delta|>\eps , \, \LR(c)\in B(z^\delta,\tfrac{\eps}{2}), \textnormal{ and } \LR(t^\delta)=z^\delta\Big]
\le\;&\max_{s\in [0,t^\delta-c]} \max_{v\in\partial B(z^\delta,\eps)} \Prob_{v}[\LR(s)=z^\delta] ;
\end{align*}

\item
when $\LR(c)\notin B(z^\delta,\frac{\eps}{2})$, we have

\begin{align*}
\Prob_{z^\delta}\Big[\max_{s\in [c,t^\delta]} |\LR(s)-z^\delta|>\eps , \, \LR(c)\notin B(z^\delta,\tfrac{\eps}{2}), \textnormal{ and } \LR(t^\delta)=z^\delta\Big] 
\le\;&\max_{s\in [0,t^\delta-c]} \max_{v\in\partial B(z^\delta,\eps/2)} \Prob_{v}[\LR(s)=z^\delta].
\end{align*}
\end{itemize}
Combining this with~\eqref{eqn::loop_conv_aux_RW} and the convergence of $\delta^{-2}\,\Prob_{z^\delta}[\LR(t^\delta)=z^\delta]$, we obtain~\eqref{eqn::continuity}.
\end{proof}

Next, we control the lifetimes $t_\ell$ of loops $\ell$ under $\Lambda_{\Omega^\delta}^{\textnormal{ND}}$. 
For every $c>0$, we denote  the $c$-neighbourhood of the inner boundary $\BdryIn^\delta$ by $\BdryInNbhdDiscr{c} := \{v\in\LV^\delta \;|\; \dist(v,\BdryIn^\delta)<c\}$.

\begin{lemma}\label{lem::time_control}
For every $\varepsilon>0$, there exist $\delta_0=\delta_0(\varepsilon)>0$ and 
a constant $M=M(\varepsilon)>1$ such that $\smash{\underset{\varepsilon \to 0}{\lim} \, M(\varepsilon) \to \infty}$ and 
\begin{align}\label{eqn::time_bound_control}
\Lambda_{\Omega^\delta}^{\textnormal{ND}}\big[\ell\in\LL_*^\delta \textnormal{ and its lifetime } t_\ell\notin \big[\tfrac{1}{M},M\big] \big]<\varepsilon , \qquad \textnormal{for all } \delta \in (0, \delta_0) . 
\end{align}
Furthermore, there exists a constant $c=c(\varepsilon)>0$ such that
$\smash{\underset{\varepsilon \to 0}{\lim} \, c(\varepsilon) \to 0}$ and 
\begin{align}\label{eqn::root_bound_control}
\Lambda_{\Omega^\delta}^{\textnormal{ND}}\big[\ell\in\LL_*^\delta \textnormal{ and the root of }\ell\textnormal{ lies in }\BdryInNbhdDiscr{c}\big]<\varepsilon , \qquad \textnormal{for all } \delta \in (0, \delta_0) .  
\end{align}
Moreover, the estimates~\textnormal{(\ref{eqn::time_bound_control},~\ref{eqn::root_bound_control})} are uniform with respect to the shape of the domains $\Omega^\delta$.
\end{lemma}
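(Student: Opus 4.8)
The plan is to estimate the two tails using the decomposition~\eqref{eqn::dec_discrete_loop} of $\Lambda^{\textnormal{ND}}_{\Omega^\delta}$ as a weighted sum over rooted loops, together with the elementary bound $t_\ell \ge \tfrac{\delta}{2}\diam(\ell)$ recorded just after the definition of the lifetime in Section~\ref{subsec:Fomin}. For~\eqref{eqn::time_bound_control}, I would split the event $\{t_\ell \notin [\tfrac 1M, M]\}$ into $\{t_\ell > M\}$ and $\{t_\ell < \tfrac 1M\}$ and treat them separately. For the \emph{short-loop} tail $\{t_\ell < \tfrac 1M\}$: a noncontractible loop must have diameter at least $\inrad$ of the annular domain between the two boundary components — which by the Setup (Item~\ref{setup1}, via the conformal map $\phi^\delta$) is bounded below by a constant $c_0 = c_0(\Omega) > 0$ uniformly in small $\delta$ — so $t_\ell \ge \tfrac{\delta}{2} c_0$. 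Thus, for $\delta$ small depending on $M$, the short-loop event is empty; but we need uniformity in $\delta$ down to $0$, so instead I would note $t_\ell < \tfrac 1M$ forces $|\ell| < \tfrac{2}{M\delta^2}$ while $\diam(\ell)\ge c_0$ forces $|\ell|\ge c_0/\delta$, whence $\delta < 2/(Mc_0)$; for such $\delta$ the number of steps is $\asymp$ the minimum possible, and a direct summation of $\nu^{\textnormal{ND}}(\ell) = w_0(\ell)/|\ell|$ over noncontractible loops of bounded length and rooted anywhere — using that the return probability at $|\ell|$ steps for a walk forced to travel distance $\ge c_0$ is exponentially small in $c_0^2/(\delta^2 |\ell|) \gtrsim M$ — gives a bound $\le C e^{-cM}$. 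For the \emph{long-loop} tail $\{t_\ell > M\}$: here I would use that the Dirichlet boundary condition on $\BdryOut^\delta$ makes the killed heat kernel decay; writing $\Lambda^{\textnormal{ND}}_{\Omega^\delta}[\ell\in\LL_*^\delta, t_\ell > M] = \sum_z \sum_{t > M}\tfrac{\delta^2}{2t}\, w_0(\{\text{loops of length }t\text{ rooted at }z\text{, noncontractible}\})$, bounding the inner sum by $\Prob_z[\LR(t^\delta) = z,\ \LR[0,t^\delta]\cap\BdryOut^\delta = \emptyset]$, and invoking that this probability is $\le C\delta^2 e^{-\lambda t}$ for the principal Dirichlet eigenvalue $\lambda = \lambda_{\Omega^\delta} > 0$ (uniformly bounded below by the scaling-limit Setup), so that $\sum_{t > M}\tfrac 1t e^{-\lambda t}\cdot\#\{z\} \le C' e^{-\lambda M/2}$ after absorbing the $O(\delta^{-2})$ vertex count against the $\delta^2$; choosing $M = M(\varepsilon)\to\infty$ slowly makes both tails $< \varepsilon/2$.

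For~\eqref{eqn::root_bound_control}, the idea is that a noncontractible loop rooted within Euclidean distance $c$ of the \emph{analytic} inner boundary $\BdryIn^\delta$ still must wind around the hole and hence has diameter $\ge c_0$; but since the root is near the reflecting boundary, I would re-root. Because $\Lambda^{\textnormal{ND}}_{\Omega^\delta}$ is (as an unrooted loop measure) invariant under re-rooting, $\Lambda^{\textnormal{ND}}_{\Omega^\delta}[\ell\in\LL_*^\delta,\ \text{some parametrisation has root in }\BdryInNbhdDiscr{c}]$ is at most $\Lambda^{\textnormal{ND}}_{\Omega^\delta}[\ell\in\LL_*^\delta,\ \ell\text{ spends}\ge\text{some fraction of its time in }\BdryInNbhdDiscr{c}]$ — no, more precisely I would bound the mass of noncontractible loops passing through $\BdryInNbhdDiscr{c}$ at all, then intersect with the time control~\eqref{eqn::time_bound_control} already proven so that we may restrict to $t_\ell\in[\tfrac 1M, M]$. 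On that event, I would couple the discrete loop (via the reflected-Brownian-motion coupling from~\cite{Stroock-Varadhan:Diffusion_processes_with_boundary_conditions}, as used in Proposition~\ref{prop::loop_conv}) with a continuous reflected Brownian loop of duration $t\in[\tfrac 1M, M]$, and use continuity of the (limiting) Neumann--Dirichlet Brownian loop measure $\mu^{\textnormal{ND}}_\Omega$ restricted to $\LL_*$: the $\mu^{\textnormal{ND}}_\Omega$-mass of noncontractible loops hitting the $c$-neighbourhood of $\BdryIn$ tends to $0$ as $c\to 0$ by dominated convergence (since $\one{\{\ell\cap\BdryInNbhd{c}\ne\emptyset,\ \ell\in\LL_*\}}\downarrow 0$ pointwise and the mass of $\{t_\ell\in[\tfrac 1M,M],\ \ell\in\LL_*\}$ is finite). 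Passing this back to the discrete level through the coupling yields~\eqref{eqn::root_bound_control} with $c = c(\varepsilon)\to 0$.

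Uniformity with respect to the shape of $\Omega^\delta$ is automatic from the proof structure, since every constant used — $c_0$ (a lower bound on the inradius of the annular region), $\lambda$ (a lower bound on the principal Dirichlet eigenvalue), and the constants in the local central limit theorem and the reflected-walk coupling — depends only on the Setup data $(r, C_\Omega)$ of Items~\ref{setup1}--\ref{setup2}, not on the individual domains; I would make this explicit by phrasing each estimate in terms of the conformal moduli and the $C^2$ bound on $\BdryIn$.

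\textbf{Main obstacle.} The delicate point is the \emph{short-loop} tail near the reflecting boundary: a noncontractible loop with very small lifetime is only possible for a correspondingly small $\delta$, so the estimate~\eqref{eqn::time_bound_control} is not a statement about a fixed measure but about a $\delta$-indexed family, and one must rule out a conspiracy where, as $\delta\to 0$, the loop measure concentrates on loops that wind tightly around the hole in a number of steps $\asymp c_0/\delta$. Controlling this requires a genuine lower bound on the number of steps a noncontractible loop must take — which comes from $\diam(\ell)\ge c_0$ combined with $|\ell|\ge\diam(\ell)/\delta$ — paired with a Gaussian upper bound on the return probability $\Prob_z[\LR(|\ell|\text{ steps})=z]$ that is small precisely when $|\ell|\ll(\text{distance})^2/\delta^2$. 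The reflecting boundary complicates the return-probability bound (the walk is not a simple random walk near $\BdryIn^\delta$), so I would either cite a local CLT for reflected random walks on domains with $C^2$ boundary, or reduce to the free case by a reflection/unfolding argument valid because $\BdryIn$ is analytic.
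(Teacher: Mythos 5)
Your high-level decomposition (split the lifetime tail into short and long loops, use a diameter lower bound plus a return-probability estimate for the short ones and a decay factor in $t$ for the long ones, and control the root location by the Lebesgue measure of the shrinking neighbourhood) matches the skeleton of the paper's argument, and your spectral-gap treatment of the long-loop tail is a legitimate, even sharper, alternative to the paper's cruder bound, which only uses the factor $\frac1t\le\frac1{M'}$ together with $\sum_j\Prob_w[\LR(j)=z]=\Gren^{\textnormal{ND}}_{\Omega^\delta}(w,z)$. But there are genuine gaps. First, every quantitative input you invoke — the Gaussian upper bound on the displacement of the walk in time $<\frac1M$, the on-diagonal bound $\Prob_z[\LR(t)=z]\le C\delta^2\ee^{-\lambda t}$, the local CLT — is asserted for the walk \emph{reflected} at the curved boundary $\BdryIn^\delta$, where none of these are off-the-shelf; you flag this yourself, but the proposed fixes (a reflected local CLT, or unfolding a lattice walk across an analytic curve) are substantial subproblems in their own right. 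The paper sidesteps the issue entirely with the stopping times $\sigma_1$ (exit $\BdryInNbhdDiscr{2c}$) and $\sigma_2$ (move a further $c/2$): the displacement segment $[\sigma_1,\sigma_2]$ takes place away from $\BdryIn^\delta$, where the walk is standard and the large-deviation bound~\eqref{eqn::CLT_bound_time} applies, while the return to the root is absorbed into $\Gren^{\textnormal{ND}}_{\Omega^\delta}(\LR(\sigma_2),z)$, which Lemma~\ref{lem::Green_mix} controls precisely because $\LR(\sigma_2)$ is far from both $z$ and $\BdryIn^\delta$. This device is the missing ingredient. Second, your diameter lower bound for noncontractible loops is misstated: the correct bound is $\diam(\BdryIn)$ (the loop must enclose the hole), not the inradius of the annular region — in $\A_{\eps}=\{\eps<|z|<1\}$ the inradius is $\approx\frac12$ while noncontractible loops of diameter $\approx2\eps$ exist. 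For a fixed $\Omega$ this is repairable, but your constants then degenerate as $\diam(\BdryIn)\to0$, which is exactly the regime in which the intermediate estimates~(\ref{eqn::truncated_time_bound},~\ref{eqn::root_time_bound_control_aux}) are reused in Proposition~\ref{prop::BM_mix_Diri}; the paper instead isolates the noncontractible loops confined to $\BdryInNbhdDiscr{2c}$ and controls them by a separate winding-variance/Beurling argument (Lemma~\ref{lem::root_neighbourhood}), which has no counterpart in your plan.

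Third, your proof of~\eqref{eqn::root_bound_control} by coupling with the continuum and invoking dominated convergence for $\mu^{\textnormal{ND}}_\Omega$ is logically backwards: the uniform-in-$\delta$ dominating bound needed to transfer the continuum smallness back to the discrete sums over roots in $\BdryInNbhdDiscr{c}$ is precisely what this lemma is supposed to supply (Lemma~\ref{lem::time_control} is an \emph{input} to Lemma~\ref{lem:BLM_conv}, not a consequence of it), and Proposition~\ref{prop::loop_conv} only provides the coupling for a fixed interior root. The paper's argument here is elementary and purely discrete: after restricting to $t_\ell\in[\frac1M,M]$ and to loops exiting $\BdryInNbhdDiscr{2c}$, the mass of roots in $\BdryInNbhdDiscr{c'}$ is at most $\frac M2\,\Leb(\BdryInNbhdDiscrDom{c'})\cdot\sup\Gren^{\textnormal{ND}}_{\Omega^\delta}$, which is small with $c'$ and requires no continuum limit. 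I would rework both halves around Lemma~\ref{lem::Green_mix} and the $\sigma_1,\sigma_2$ decomposition.
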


To prove Lemma~\ref{lem::time_control}, we need two auxiliary results. 
\begin{itemize}[leftmargin=*]
\item
In Lemma~\ref{lem::Green_mix}, we bound the discrete Green's function $\smash{\Gren_{\Omega^\delta}^{\textnormal{ND}}}$ on $\Omega^\delta$ with Dirichlet boundary condition on $\BdryOut^\delta$ and Neumann boundary condition on $\BdryIn^\delta$, defined as
\begin{align*} 
\Gren_{\Omega^\delta}^{\textnormal{ND}}(z,w)
:=\sum_{j=0}^\infty\Prob_{z}[\LR(j)=w \textnormal{ and } \exitTOut^\delta>j] , \qquad z,w\in \LV^\delta ,
\end{align*} 
where $\exitTOut^\delta$ is the hitting time at $\BdryOut^\delta$ of the random walk $\LR \sim \Prob_{z} = \Prob_{z}[\, \cdot \cond \LR(0)=z]$. 

\item
In Lemma~\ref{lem::root_neighbourhood}, we bound the measure under $\Lambda_{\Omega^\delta}^{\textnormal{ND}}$ of loops in $\BdryInNbhdDiscr{c}$ with non-trivial winding: 
it is very unlikely for small loops to have winding even with Neumann boundary condition on $\BdryIn^\delta$. 
\end{itemize}

\begin{lemma}\label{lem::Green_mix}
For every $c>0$, there exist $\delta_0=\delta_0(c)>0$ and a constant $C=C(c)>0$ such that 
\begin{align*}
\Gren_{\Omega^\delta}^{\textnormal{ND}}(z,w)\le C , \qquad \textnormal{for all } z,w\in\LV^\delta
\textnormal{ with } |z-w|\ge c  \textnormal{ and } w\notin\BdryInNbhdDiscr{c} ,
\textnormal{ and for all } \delta \in (0, \delta_0) . 
\end{align*}
Moreover, the estimate is uniform with respect to the shape of the domains $\Omega^\delta$, provided that $\Omega^\delta \subset B(0,R)$.
\end{lemma}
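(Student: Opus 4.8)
\emph{Proof plan.} The plan is to start from the first-passage (strong Markov) identity
\begin{align*}
\Gren_{\Omega^\delta}^{\textnormal{ND}}(z,w) \;=\; \Prob_z\big[\tau_w^\delta < \exitTOut^\delta\big]\cdot \Gren_{\Omega^\delta}^{\textnormal{ND}}(w,w),
\qquad z \neq w,
\end{align*}
where $\tau_w^\delta$ is the hitting time of $w$ by $\LR$, which reduces the lemma to the two estimates
\begin{align}\label{eq:plan-two-bounds}
\Gren_{\Omega^\delta}^{\textnormal{ND}}(w,w) \;\le\; C(c,R)\,\big(1 + \log(1/\delta)\big)
\qquad\textnormal{and}\qquad
\Prob_z\big[\tau_w^\delta < \exitTOut^\delta\big] \;\le\; \frac{C(c,R)}{1 + \log(1/\delta)} ,
\end{align}
whose product is bounded by $C(c,R)^2$. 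I would obtain both via a multiscale decomposition on the concentric circles $\partial B(w,c/4) \subset \partial B(w,c/2) \subset \partial B(w,c)$; the point of the hypothesis $w \notin \BdryInNbhdDiscr{c}$ is precisely that $B(w,c)$ does not meet $\BdryIn^\delta$, so in $B(w,c)$ (once the easy case $\dist(w,\BdryOut^\delta)<c$ has been disposed of) the walk is an ordinary planar simple random walk, and the classical two-dimensional estimates built from the potential kernel $a(x) = \tfrac{2}{\pi}\log|x| + O(1)$ are available.

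Assume first that $\dist(w,\BdryOut^\delta) \ge c$, so that $B(w,c) \subseteq \Omega^\delta$ is free of boundary. By the potential-kernel estimates for planar simple random walk there are absolute constants $c_0, C_0>0$ and $\delta_1 = \delta_1(c)>0$ with
\begin{align}\label{eq:plan-classical}
\Prob_w\big[\tau_{\partial B(w,c/2)} < \tau_w^{+}\big] \;\ge\; \frac{c_0}{\log(1/\delta)}
\qquad\textnormal{and}\qquad
\sup_{u \in \partial B(w,c/4)} \Prob_u\big[\tau_w < \tau_{\partial B(w,c/2)}\big] \;\le\; \frac{C_0}{\log(1/\delta)} , \qquad \delta < \delta_1 .
\end{align}
The extra ingredient is the following \emph{escape estimate}, claimed to be uniform over admissible $\Omega^\delta \subseteq B(0,R)$: there is $q_2 = q_2(c,R)>0$ such that
\begin{align}\label{eq:plan-escape}
\inf_{v \in \partial B(w,c/2)}\ \Prob_v\big[\exitTOut^\delta < \tau_{\partial B(w,c/4)}\big] \;\ge\; q_2 .
\end{align}
Granting~\eqref{eq:plan-escape}, I would prove the second bound in~\eqref{eq:plan-two-bounds} by an excursion decomposition between $\partial B(w,c/4)$ and $\partial B(w,c/2)$: denoting by $\tau^{(1)} < \sigma^{(1)} < \tau^{(2)} < \sigma^{(2)} < \cdots$ the successive hits of $\partial B(w,c/4)$ and then $\partial B(w,c/2)$, one checks that every visit of $\LR$ to $w$ lies in some interval $[\tau^{(i)},\sigma^{(i)}]$; a union bound over $i$ combined with~\eqref{eq:plan-escape} (which bounds the number of such excursions before $\exitTOut^\delta$ by a geometric variable of parameter $q_2$) and the second estimate in~\eqref{eq:plan-classical} gives
\begin{align*}
\Prob_z\big[\tau_w^\delta < \exitTOut^\delta\big] \;\le\; \sup_{v \in \partial B(w,c/2)}\Prob_v\big[\tau_w^\delta<\exitTOut^\delta\big] \;\le\; \frac{1}{q_2}\cdot\frac{C_0}{\log(1/\delta)} ,
\end{align*}
where the first inequality uses the strong Markov property at $\tau_{\partial B(w,c/2)}$ (valid since $|z-w|\ge c>c/2$). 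The first bound in~\eqref{eq:plan-two-bounds} then follows from $\Gren_{\Omega^\delta}^{\textnormal{ND}}(w,w) = 1/\Prob_w[\exitTOut^\delta < \tau_w^{+}]$ together with
\begin{align*}
\Prob_w\big[\exitTOut^\delta<\tau_w^{+}\big] \;\ge\; \Prob_w\big[\tau_{\partial B(w,c/2)}<\tau_w^{+}\big]\,\Big(1 - \sup_{v\in\partial B(w,c/2)}\Prob_v\big[\tau_w^\delta<\exitTOut^\delta\big]\Big) \;\ge\; \frac{c_0}{\log(1/\delta)}\cdot\tfrac{1}{2}
\end{align*}
for $\delta$ small, using the estimate just obtained. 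In the remaining case $\dist(w,\BdryOut^\delta) < c$ the walk from $w$ is within distance $c$ of the absorbing boundary; a Beurling projection estimate gives directly $\Gren_{\Omega^\delta}^{\textnormal{ND}}(w,w) \le C(c)(1+\log(1/\delta))$, and running the same decomposition through the sphere $\partial B(w,\tfrac12\dist(w,\BdryOut^\delta))$ yields~\eqref{eq:plan-two-bounds} as well; this case is routine and I would not dwell on it.

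The main obstacle is the escape estimate~\eqref{eq:plan-escape}, specifically its \emph{uniformity} in the shape of $\Omega^\delta$. The difficulty is the reflecting boundary $\BdryIn^\delta$: since the walk is not killed there, the naive route---comparing $\Omega^\delta$ with a concentric annulus by monotonicity or by a maximum-principle barrier---fails, because the natural comparison functions proportional to $\log(1/|x-w|)$ do not have the correct sign of the normal derivative along the ``far side'' of $\BdryIn^\delta$. My plan is the following. The probability in~\eqref{eq:plan-escape} is exactly the probability that the reflected--killed walk in the pair-of-pants domain $\Omega^\delta \setminus \overline{B(w,c/4)}$ (Dirichlet on $\BdryOut^\delta \cup \partial B(w,c/4)$, Neumann on $\BdryIn^\delta$) started at $v$ is absorbed on $\BdryOut^\delta$ rather than on $\partial B(w,c/4)$. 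Because $\Omega^\delta \subseteq B(0,R)$ with $R$ fixed, while the target $\partial B(w,c/4)$ has macroscopic diameter, lies at distance $\ge c/4$ from $v$ and at distance $\ge 3c/4$ from $\BdryIn^\delta$, the ratio $R/c$ is bounded; reflecting the walk across $\BdryIn^\delta$ into the doubled domain reduces~\eqref{eq:plan-escape} to a statement about an ordinary killed planar walk, to which a Beurling projection estimate applies (using that $\BdryOut^\delta$ is a connected curve enclosing $B(w,c/2)$) and yields that the harmonic measure of the target seen from $v$ is bounded away from $1$ by a constant depending only on $c$ and $R$. For a \emph{fixed} limiting domain $\Omega$ one may instead argue more softly via the convergence of the discrete harmonic measure to its continuum counterpart---exactly as in the proof of Proposition~\ref{prop::loop_conv}, cf.~the use of~\eqref{eqn::loop_conv_aux1}---the continuum quantity being strictly positive; the fully uniform statement~\eqref{eq:plan-escape} then follows by combining this with the Beurling bound through a compactness argument over the admissible domains contained in $B(0,R)$. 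The remaining inputs~\eqref{eq:plan-classical} and the Beurling estimate in the case $\dist(w,\BdryOut^\delta)<c$ are standard two-dimensional random walk facts (e.g.~\cite{Lawler-Limic:Random_walk_modern_introduction}), with constants that are absolute once the scales $c/4,c/2,c$ are fixed---which is what propagates the uniformity in $\Omega^\delta$ through the whole argument.
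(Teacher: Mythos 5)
Your proof is correct in its essentials but takes a genuinely different route from the paper's. You factor $\Gren_{\Omega^\delta}^{\textnormal{ND}}(z,w)=\Prob_z[\tau_w<\exitTOut^\delta]\cdot\Gren_{\Omega^\delta}^{\textnormal{ND}}(w,w)$ and bound the two factors by $C/\log(1/\delta)$ and $C\log(1/\delta)$ respectively, via potential-kernel estimates and an excursion decomposition between $\partial B(w,c/4)$ and $\partial B(w,c/2)$, letting the divergent factors cancel. The paper never sees the $\log(1/\delta)$ divergence at all: it applies the maximum principle to the discrete harmonic function $\Gren^{\textnormal{ND}}(\cdot,w)$ on $\Omega^\delta\setminus B(w,c/2)$ (so the maximum $\Gren^{\textnormal{ND}}(v_c,w)$ is attained on $\partial B(w,c/2)$), uses optional stopping from the maximising point $v_{\textnormal{in}}$ of $\BdryIn^\delta$ to get $\Gren^{\textnormal{ND}}(v_{\textnormal{in}},w)\le(1-q)\,\Gren^{\textnormal{ND}}(v_c,w)$, and combines this with the strong-Markov comparison $\Gren^{\textnormal{ND}}(v_c,w)-\Gren^{\textnormal{D}}(v_c,w)\le\Gren^{\textnormal{ND}}(v_{\textnormal{in}},w)$ to conclude $\Gren^{\textnormal{ND}}(v_c,w)\le q^{-1}\Gren^{\textnormal{D}}(v_c,w)$, reducing everything to the standard bound on the Dirichlet Green's function at macroscopic separation. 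The paper's argument is shorter and free of $\delta$-dependent bookkeeping; yours is more hands-on but, reassuringly, needs exactly the same single nontrivial probabilistic input: a uniform positive probability for the reflected walk to reach $\BdryOut^\delta$ before a fixed macroscopic circle around $w$ (your escape estimate; the paper's constant $q$).

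On that input, one push-back: your justification via ``reflecting across $\BdryIn^\delta$ and applying Beurling to $\BdryOut^\delta$'' is the weakest link. Beurling controls the probability of avoiding a connected set only when that set is close to the walk and spans an annulus around it; when $\BdryOut^\delta$ is far from $v$ the relevant bound is the elementary annulus estimate ($\BdryOut^\delta$ separates $v$ from $\partial B(w,2R)$, and the free walk reaches $\partial B(w,2R)$ before $\partial B(w,c/4)$ with probability $\gtrsim 1/\log(R/c)$), and the reflection genuinely matters: an inner boundary that nearly surrounds $B(w,c/2)$ except for a narrow opening (a ``horseshoe'' at distance $c$ from $w$, which is compatible with $w\notin\BdryInNbhdDiscr{c}$) traps the walk and makes the escape probability arbitrarily small, so the estimate is \emph{not} uniform over all $\Omega^\delta\subset B(0,R)$. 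This caveat applies equally to the $(1-q)$ bound the paper asserts in one line; in the actual applications the inner boundary is either a fixed analytic curve or shrinking to a point, which rules out such geometries, and there your second suggested route (convergence of discrete harmonic measure, as in the proof of Proposition~\ref{prop::loop_conv}, cf.~\eqref{eqn::loop_conv_aux1}) does the job. So the proposal stands, with the same implicit restriction on the domain geometry that the paper itself makes.
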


\begin{proof}
Suppose $\Omega^\delta \subset B(0,R)$ for some $R>0$. Fix $w\in\LV^\delta\setminus\BdryInNbhdDiscr{c}$.  
We know that $\Gren_{\Omega^\delta}^{\textnormal{ND}}(\cdot,w)|_{\BdryIn^\delta}$ takes its maximum at some $v_{\scalebox{0.6}{\textnormal{in}}}\in\BdryIn^\delta$, 
and $\Gren_{\Omega^\delta}^{\textnormal{ND}}(\cdot,w)|_{\Omega^\delta\setminus B(w,\tfrac{c}{2})}$ takes its maximum at some $v_c\in\partial B(w,\tfrac{c}{2})$.

On the one hand, we have
\begin{align*}
\Gren_{\Omega^\delta}^{\textnormal{ND}}(v_{\scalebox{0.6}{\textnormal{in}}},w) 
\le \; & \Prob_{v_{\scalebox{0.6}{\textnormal{in}}}} \big[\LR\textnormal{ hits }\partial B(w,\tfrac{c}{2})\textnormal{ before hitting }\BdryOut^\delta\big] 
\max_{u^\delta\in \partial B(w,\tfrac{c}{2})}\Gren_{\Omega^\delta}^{\textnormal{ND}}(u^\delta,w) \\
\le \; & (1-q) \, \Gren_{\Omega^\delta}^{\textnormal{ND}}(v_c,w) ,
\end{align*}
where $0<q=q(c)<1$ and $\delta \in (0, \delta_0)$ (uniformly on $\Omega^\delta$ subject to the above conditions). 
Hence, we have
\begin{align}\label{eqn::boundary_value}
q \, \Gren_{\Omega^\delta}^{\textnormal{ND}} (v_c, w) \le \Gren_{\Omega^\delta}^{\textnormal{ND}}(v_c,w) - \Gren_{\Omega^\delta}^{\textnormal{ND}}(v_{\scalebox{0.6}{\textnormal{in}}},w).
\end{align}
On the other hand, by the strong Markov property, we have
\begin{align}\label{eqn::boundary_value2}
\Gren_{\Omega^\delta}^{\textnormal{ND}}(v_c,w)-\Gren_{\Omega^\delta}^D(v_c,w)\le \Gren_{\Omega^\delta}^{\textnormal{ND}}(v_{\scalebox{0.6}{\textnormal{in}}},w) ,
\end{align}
where $\Gren^{\textnormal{D}}_{\Omega^\delta}$ is the discrete Green's function on $\Omega^\delta$ with Dirichlet boundary condition on $\partial\Omega^\delta$, defined as 
\begin{align*} 
\Gren_{\Omega^\delta}^{\textnormal{D}}(z,w)
:=\sum_{j=0}^\infty\Prob_{z}[\hat\LR(j)=w \textnormal{ and } \hat\exitT^\delta>j] , \qquad z,w\in \LV^\delta ,
\end{align*}
with $\hat\exitT^\delta$ the hitting time at $\partial\Omega^\delta$ of the standard random walk $\hat\LR$ on $\delta \Z^2$.
Combining~(\ref{eqn::boundary_value},~\ref{eqn::boundary_value2}) and using the strong Markov property, we find that
\begin{align*}
\Gren_{\Omega^\delta}^{\textnormal{ND}}(z,w)\le \Gren_{\Omega^\delta}^{\textnormal{ND}}(v_c,w)
\le \frac{1}{q} \, \Gren_{\Omega^\delta}^{\textnormal{D}}(v_c,w).
\end{align*}
The claim now follows from a standard bound for $\Gren_{\Omega^\delta}^{\textnormal{D}}$
(for instance, see~\cite[Theorem~2.5]{Chelkak-Smirnov:Discrete_complex_analysis_on_isoradial_graphs}; this bound is uniform on $\Omega^\delta$ subject to the above conditions by domain monotonicity of the Green's function with Dirichlet boundary conditions).
\end{proof}

\begin{lemma}\label{lem::root_neighbourhood}
For every $\varepsilon>0$, there exist $\delta_0=\delta_0(\varepsilon)>0$ and a constant $c_0=c_0(\varepsilon)>0$ such that 
\begin{align*}
\Lambda_{\Omega^\delta}^{\textnormal{ND}}\big[\ell\in\LL_*^\delta \textnormal{ such that } \ell\subset\BdryInNbhdDiscr{c}\big]<\varepsilon , \qquad 0<c\le c_0 , \qquad \textnormal{for all } \delta \in (0, \delta_0) . 
\end{align*}
\end{lemma}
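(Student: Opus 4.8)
The plan is to bound the $\Lambda_{\Omega^\delta}^{\textnormal{ND}}$-mass of noncontractible loops confined to a thin neighbourhood $\BdryInNbhdDiscr{c}$ of the analytic inner boundary by decomposing according to the root and using that such a loop must make a full turn around the origin while staying within distance $c$ of $\BdryIn^\delta$. First I would use the decomposition~\eqref{eqn::dec_discrete_loop}: writing $F = \one{\{\ell\in\LL_*^\delta,\; \ell\subset\BdryInNbhdDiscr{c}\}}$, we have
\begin{align*}
\Lambda_{\Omega^\delta}^{\textnormal{ND}}[F]
= \sum_{z\in\BdryInNbhdDiscr{c}} \sum_{m\ge 1} \frac{1}{m}
\sum_{\substack{\ell:\, |\ell|=m,\ \ell\subset\BdryInNbhdDiscr{c},\\ \text{root}(\ell)=z,\ \ell\ \text{noncontractible}}} w_0(\ell),
\end{align*}
where only roots in $\BdryInNbhdDiscr{c}$ contribute because the loop is confined there. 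Dropping the factor $1/m\le 1$ and summing over $m$, the inner double sum is at most $\Prob_z[\LR \text{ returns to } z \text{ before exiting } \BdryInNbhdDiscr{c} \text{, having wound around } 0]$, so it suffices to bound, for each such root $z$, the probability that a reflected random walk started at $z$ returns to $z$ while staying inside $\BdryInNbhdDiscr{c}$ \emph{and} making a nontrivial loop around the origin.

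The key geometric point is that $\BdryInNbhdDiscr{c}$ is a thin annular tube around the analytic curve $\BdryIn$; to wind once around $0$ inside this tube, the walk must travel a total Euclidean length comparable to $\diam(\BdryIn)$ (a fixed positive constant, since $\Omega$ is fixed at this stage), while being confined to a tube of width $O(c)$. I would therefore split the return probability as $\sum_{k} \Prob_z[\LR \text{ returns to } z \text{ in exactly } k \text{ steps, confined to } \BdryInNbhdDiscr{c}, \text{ winding}]$. For the walk to wind, we need $k$ at least of order $c^{-1}\cdot\mathrm{const}$ steps (in lattice units, $\delta^{-1}$ times a macroscopic length); but confinement to a tube of width $c$ forces, via a Beurling-type / barrier estimate for reflected walk near the smooth curve $\BdryIn$, an exponentially small survival probability: the walk hits $\BdryOut^\delta$ (or more precisely leaves the tube) with probability bounded below on each macroscopic length scale it traverses. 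Concretely, partitioning the tube into $O(c^{-1})$ overlapping boxes of size $\asymp c$, on each of which the reflected walk has a uniformly positive chance (bounded below uniformly in $\delta$ and in the position along the analytic boundary, using the convergence of discrete harmonic measure as in~\eqref{eqn::loop_conv_aux1} and Lemma~\ref{lem::Green_mix}) of escaping the tube toward $\BdryOut^\delta$ before completing the passage, one gets that the probability of a closed winding excursion confined to $\BdryInNbhdDiscr{c}$ decays like $(1-q)^{c^{-1}}$ for some $q=q(\Omega)>0$. Meanwhile the number of candidate roots $z\in\BdryInNbhdDiscr{c}$ is $O(c\,\delta^{-2})$, but each return probability already carries the Green's function normalisation $G^{\textnormal{ND}}(z,z)\asymp \log(1/\delta)$ — which is killed because here we additionally require the walk to \emph{wind}, not merely return; it is precisely the winding constraint plus confinement that produces the $(1-q)^{c^{-1}}$ factor, overwhelming the $O(c\,\delta^{-2})$ count once $c$ is small (after first sending $\delta\to 0$ appropriately, or more carefully keeping the $\delta$-dependence uniform).

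The main obstacle I expect is making the barrier/confinement estimate \emph{quantitative and uniform in $\delta$} in the presence of the reflecting (Neumann) boundary condition along $\BdryIn^\delta$: for an absorbed walk, confinement to a thin tube giving exponential decay is classical (Beurling estimates), but with reflection the walk does not die upon touching $\BdryIn^\delta$, so one must instead argue that it escapes \emph{outward} across the tube toward $\BdryOut^\delta$. This is where the analyticity (hence $C^2$) assumption on $\BdryIn$ enters: locally straightening the boundary and comparing the reflected walk to a half-plane reflected walk (using the Stroock--Varadhan / Burdzy--Chen coupling already invoked in Proposition~\ref{prop::loop_conv}), one shows that within each $O(c)$-box the reflected walk, regardless of its starting point in $\BdryInNbhdDiscr{c}$, exits the outer side $\{v : \dist(v,\BdryIn^\delta)= c'\}$ for some fixed $c'\in(c,c_0)$ before returning near its start, with probability bounded below uniformly. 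Iterating this over the $\asymp c^{-1}$ boxes the winding loop must traverse yields the exponential bound. A secondary technical point is the interplay of the $c\to 0$ and $\delta\to 0$ limits: I would prove the estimate first with $\delta_0$ allowed to depend on $c$, which is all that is claimed (``there exist $\delta_0 = \delta_0(\varepsilon)$ and $c_0 = c_0(\varepsilon)$ such that \ldots for all $\delta\in(0,\delta_0)$''), so it is enough to choose $c_0$ small to make the geometric factor beat the combinatorial one and then take $\delta$ small to validate the random-walk estimates. Once Lemma~\ref{lem::root_neighbourhood} is in place, it feeds directly into Lemma~\ref{lem::root_neighbourhood}'s companion Lemma~\ref{lem::time_control} via~\eqref{eqn::root_bound_control}, as indicated above.
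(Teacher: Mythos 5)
There is a genuine gap, and it is in the very first reduction. When you drop the factor $1/m\le 1$ and sum over $m$, the quantity $\sum_{m}\sum_{\ell:\,|\ell|=m,\ \mathrm{root}(\ell)=z}w_0(\ell)\,F(\ell)$ is not a probability: it is the \emph{expected number} of returns of the walk to $z$ along noncontractible confined loops, i.e.\ a Green's-function-type quantity of order $\log(c/\delta)$ at the diagonal. More importantly, the $1/|\ell|$ weight is precisely what makes the unrooted loop measure finite: each unrooted loop is counted once under $\Lambda^{\textnormal{ND}}_{\Omega^\delta}$ but $|\ell|\gtrsim \diam(\BdryIn)/\delta$ times in your rooted sum. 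Quantitatively, your bound is at best of order $c\,\delta^{-2}\log(c/\delta)\,(1-q)^{A/c}$ (number of roots, times the Green's function, times your tube-escape estimate), and the factor $(1-q)^{A/c}$ does not depend on $\delta$, so this diverges as $\delta\to 0$ for every fixed $c$. No choice of $c_0(\varepsilon)$ can then produce a bound valid for all $\delta\in(0,\delta_0)$, which is what the lemma asserts; the parenthetical ``after first sending $\delta\to 0$ appropriately'' does not resolve this, since the estimate must be uniform in $\delta$ before any limit is taken.

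The geometric core of your argument --- that a walk confined to a tube of width $c$ around the analytic inner boundary must escape outward, with uniformly positive probability per $O(c)$-box, before completing a turn, hence winds $j$ times with probability at most $(\varepsilon')^j$ --- is sound and is essentially the crosscut-plus-Beurling estimate the paper uses. What is missing is a way to convert this single-walk estimate into a bound on the loop measure without unrolling $\Lambda^{\textnormal{ND}}$ over roots and lengths. The paper does this via the loop-erasure decomposition \cite[Proposition~9.5.1]{Lawler-Limic:Random_walk_modern_introduction}: since noncontractible loops satisfy $|\ph(\ell)|\ge 2\pi$, one has $\one{\{\ell\in\LL_*^\delta\}}\le \ph(\ell)^2/(4\pi^2)$, and the $\ph^2$-weighted loop measure of the loops erased from a single random walk crossing the tube equals $4\pi^2$ times the excess of $\mathrm{Var}[K(\LR)]$ over $\mathrm{Var}[K(\mathrm{LE}(\LR))]$; the exponential tail $\Prob[|K(\LR)|\ge j]\le(\varepsilon')^j$ then makes this variance small. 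The divergent entropy of roots and durations is absorbed exactly by the identity, which is why no $\delta^{-2}$ ever appears. You would need to replace your first step by this (or an equivalent device that respects the $1/|\ell|$ weighting, such as the lifetime decomposition together with the on-diagonal heat-kernel bound $\Prob_z[\LR(t)=z]\lesssim \delta^2/t$ used in Lemma~\ref{lem::time_control}) for the argument to close.
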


\begin{proof}
We bound the left-hand side by controlling the variance of the winding $K(\LR)$ of a random walk $\LR \sim \Prob_{x,v,c}$ starting from $x = x_1^\delta$, conditioned to leave $\BdryInNbhdDiscr{c}$ through a given inner vertex $v \in \BdryInNbhdDiscr{c}$ on the outer boundary of $\BdryInNbhdDiscrGr{c} := \Omega^\delta \cap \BdryInNbhdDiscrDom{c}$, 
where $\BdryInNbhdDiscrDom{c} := \textnormal{Int} ({\underset{v\in \BdryInNbhdDiscr{c}}{\cup}} \, \overline{ f(v)})$, and stopped there.  

Let $\Gamma = \mathrm{LE}(\LR)$ be the loop-erasure of $\LR$. 
By~\cite[Proposition~9.5.1]{Lawler-Limic:Random_walk_modern_introduction}, we have
\begin{align*}
\Lambda^{\textnormal{ND}}_{\BdryInNbhdDiscrGr{c}} \big[(\varphi(\ell))^2 \, \one_{\{\varphi(\ell) \neq 0\}}]
\; = \; 
4\pi^2\big(\textnormal{Var}_{x,v, c}[K(\LR)]-\textnormal{Var}_{x, v, c}[K(\Gamma)| ]\big)
\; \le \; 
4\pi^2 \textnormal{Var}_{x,v,c}[K(\LR)].
\end{align*}
By Markov's inequality, we have
\begin{align*}
\Lambda_{\Omega^\delta}^{\textnormal{ND}}\big[\ell\in\LL_*^{\delta}  \textnormal{ such that } \ell\subset\BdryInNbhdDiscrGr{c}\big] 
\le  \textnormal{Var}_{x, v, c}[K(\LR)] , 
\end{align*} 
and by taking sum over $v$, we obtain
\begin{align*}
\Lambda_{\Omega_\delta}^{ND}\big[\ell\in\LL_*^{\delta}  \textnormal{ such that } \ell\subset\BdryInNbhdDiscrGr{c}\big] 
\le 4\pi^2\textnormal{Var}[K(\LR)].
\end{align*} 
Thus, to conclude, it suffices to prove that for every $\varepsilon'>0$, we can choose $c$ small enough such that 
\begin{align}\label{eqn::wind_control_aux1}
\Prob_{x}\big[|K(\LR)|\ge j\big] \le (\varepsilon')^j , \qquad j\ge 1,
\end{align}
uniformly in $x \in \BdryInNbhdDiscrGr{c}$. Consider two crosscuts $\simpleCurv_1, \simpleCurv_2$ connecting the two boundary components of $\BdryInNbhdDiscrGr{c}$ and such that
they are at macroscopic distance away from each other and from $x$.
If the LERW $\Gamma$ has non-trivial winding, it must cross both crosscuts. 
Hence, we consider stopping times for the random walk $\LR$, defined recursively as
$T_1:=\inf\{t\ge 0 \colon \LR(t)\in \simpleCurv_1\}$; 
and $S_j:=\inf\{t\ge T_{j} \colon \LR(t)\in\simpleCurv_2\}$; 
and $T_{j+1}:=\inf\{t\ge S_{j} \colon \LR(t)\in\simpleCurv_1\}$, for $j\ge 1$.
Now, using the the discrete Beurling estimate and the strong Markov property of $\LR$, we see that
\begin{align*}
\Prob_{x}\big[|K(\mathcal{R})|\ge j\big] \le \Prob_{x}[0<T_1<\cdots<S_{j-1}<\infty]\le(\varepsilon')^j ,
\end{align*}
whenever $c = c(\varepsilon')$ and $\delta_0=\delta_0(\varepsilon')>0$ are small enough. 
This implies the asserted estimate. 
\end{proof}

We are now ready to finish with Lemma~\ref{lem::time_control}. 

\begin{proof}[Proof of Lemma~\ref{lem::time_control}]
Fix $\varepsilon>0$. 
To prove the first asserted estimate~\eqref{eqn::time_bound_control}, thanks to Lemma~\ref{lem::root_neighbourhood} it will be sufficient to consider loops exiting a neighbourhood of the inner boundary $\BdryIn^\delta$ (cf.~\eqref{eqn::truncated_time_bound}).
Denote $c_\Omega := \tfrac{1}{100} \dist(\BdryIn, \BdryOut)$. For $c\in (0, c_\Omega )$, we define two stopping times for the random walk $\LR$ on $\Omega^\delta$: 
\begin{itemize}
\item
if $\LR(0) \notin \BdryInNbhdDiscr{c}$, then we set $\sigma_1=\sigma_1(c):=0$; 
\item
if $\LR(0) \in \BdryInNbhdDiscr{c}$, then we set $\sigma_1=\sigma_1(c)$ to be the first time when the walk 
$\LR$ exits $\BdryInNbhdDiscr{2c}$; and  
\begin{align*}
\sigma_2=\sigma_2(c):=\inf\{t\ge 0 \colon |\LR(\sigma_2)-\LR(\sigma_1)|\ge c/2\} . 
\end{align*}
\end{itemize}
In both cases, we have $\min\{ |\LR(\sigma_2)-\LR(\sigma_1)| , \, |\LR(\sigma_2)-\LR(0)| \} \ge c/2$ 
and $\LR(\sigma_2)\notin\BdryInNbhdDiscr{c/2}$.  
Note that any uncontractible loop $\ell\not\subset\BdryInNbhdDiscr{2c}$ has diameter at least $c$.
With Lemma~\ref{lem::root_neighbourhood} at hand, 
the crucial claim is that for each $\varepsilon'>0$, we can choose $M'=M'(\varepsilon', c)>1$ large enough and $\delta_0'=\delta_0'(\varepsilon')>0$ small enough 
 such that 
\begin{align}\label{eqn::truncated_time_bound}
\Lambda_{\Omega^\delta}^{\textnormal{ND}}\big[\ell\not\subset\BdryInNbhdDiscr{2c}, \, \diam (\ell) \ge c, 
\textnormal{ and its lifetime }t_\ell\notin\big[\tfrac{1}{M'},M'\big] \big] 
\le \varepsilon' \; \max_{\substack{w\notin\BdryInNbhdDiscr{c/2} \\ |w-z|\ge c/2}} \Gren^{\textnormal{ND}}_{\Omega^\delta}(z,w) ,
\end{align}
for all $\delta \in (0, \delta_0')$ (where the maximum of the Green's function is controlled by Lemma~\ref{lem::Green_mix}). 
Note that 
\begin{align*}
& \; 
\Lambda_{\Omega^\delta}^{\textnormal{ND}}\big[\ell\not\subset\BdryInNbhdDiscr{2c}, \, 
\diam (\ell) \ge c, 
\textnormal{ and its lifetime }t_\ell\notin\big[\tfrac{1}{M'},M'\big] \big] 
\\
\le & \; \sum_{z\in\LV^\delta\setminus\BdryOutV^\delta} 
\frac{\delta^2}{2}\sum_{t\in \frac{1}{2}\delta^2\N \setminus[\frac{1}{M'},M']}\frac{1}{t} \, \Prob_{z}\big[\LR[0,t] \not\subset\BdryInNbhdDiscr{2c},\, 
\diam\LR[0,t]\ge c, 
 \textnormal{ and } \LR(t) = z\big] 
 \\
\le & \; \sum_{z\in\LV^\delta\setminus\BdryOutV^\delta} 
\frac{\delta^2}{2}\sum_{t\in \frac{1}{2}\delta^2\N \setminus[\frac{1}{M'},M']}\frac{1}{t} \, \Prob_{z}\big[0\le\sigma_1<\sigma_2<t  \textnormal{ and } \LR(t)=z\big].
\end{align*}
We split the sum over $t$ into the following two parts.
\begin{itemize}
\item
For $t\ge M'$, we have
\begin{align}\label{eqn::large_time_aux}
\; &\sum_{z\in\LV^\delta\setminus\BdryOutV^\delta}
\frac{\delta^2}{2}\sum_{t\in \frac{1}{2}\delta^2\N \cap [M',\infty) }\frac{1}{t} \, \Prob_{z}\big[0\le\sigma_1<\sigma_2<t  \textnormal{ and } \LR(t)=z\big]\notag\\
\le \; & \frac{1}{M'} \sum_{z\in\LV^\delta\setminus\BdryOutV^\delta}\frac{\delta^2}{2}
\max_{\substack{w\notin\BdryInNbhdDiscr{c/2} \\ |w-z|\ge\frac{c}{2}}}
\sum_{j = 1}^\infty \Prob_{w}[\LR(j)=z]\notag\\
= \; & \frac{1}{M'} \sum_{z\in\LV^\delta\setminus\BdryOutV^\delta}\frac{\delta^2}{2}
\max_{\substack{w\notin\BdryInNbhdDiscr{c/2} \\ |w-z|\ge\frac{c}{2}}}
\Gren^{\textnormal{ND}}_{\Omega^\delta}(w,z) 
\; \le \; \frac{\Leb(\tilde \Omega^\delta)}{2M'} 
\max_{\substack{w\notin\BdryInNbhdDiscr{c/2} \\ |w-z|\ge\frac{c}{2}}}
\Gren^{\textnormal{ND}}_{\Omega^\delta}(w,z) ,
\end{align}
where $\Leb(\tilde \Omega^\delta)$ is the Lebesgue measure of the domain\footnote{Note that $\Leb(\tilde \Omega^\delta)$ is uniformly controlled by $\Leb(\Omega)$ by the assumption in Item~\ref{setup2} of the Setup, when $\delta$ is small.}
$\tilde \Omega^\delta := \textnormal{Int} (\smash{\underset{v\in \LV^\delta}{\cup}} \, \overline{ f(v)}) \subset \C$. 

\item
For $t\le\frac{1}{M'}$, we have
\begin{align}
\; & \sum_{z\in\LV^\delta\setminus\BdryOutV^\delta}
\frac{\delta^2}{2}\sum_{t\in \frac{1}{2}\delta^2\N \cap (0,\frac{1}{M'}]} 
\frac{1}{t} \, \Prob_{z}\big[0\le\sigma_1<\sigma_2<t   \textnormal{ and } \LR(t)=z\big]\notag\\
\le \; & \sum_{z\in\LV^\delta\setminus\BdryOutV^\delta}
\frac{\delta^2}{2} \, \ProbE_{z}\bigg[\frac{\one{\{0\le\sigma_1<\sigma_2<\tfrac{1}{M'}\}}}{\sigma_2-\sigma_1} \sum_{j = 1}^\infty \Prob_{\LR(\sigma_2)}\big[\LR(j)=z\big]\bigg]\notag\\
\le \; & \frac{\Leb(\tilde \Omega^\delta)}{2} \max_{z\in\LV^\delta} 
\ProbE_{z}\bigg[\frac{\one{\{0\le\sigma_1<\sigma_2<\tfrac{1}{M'}\}}}{\sigma_2-\sigma_1}\bigg]\,
\max_{\substack{w\notin\BdryInNbhdDiscr{c/2} \\ |w-z|\ge\frac{c}{2}}}
\Gren^{\textnormal{ND}}_{\Omega^\delta}(w,z).
\label{eqn::small_time_aux}
\end{align}
Let us denote by $\hat\LR \sim \ProbHat_{z} = \ProbHat_{z}[\, \cdot \cond \hat\LR(0)=z]$ 
the standard random walk on $\delta\Z^2$, and by $T_c$ the first time when $\hat\LR$ exits $\partial B(0,c/2)$. 
Combining with~\cite[Proposition~2.1.2(b)]{Lawler-Limic:Random_walk_modern_introduction} 
we find that there exists two constants $K_1>0$ and $K_2>0$ such that for each $z\in\LV^\delta$, we have
\begin{align}\label{eqn::CLT_bound_time}
\ProbE_{z}\bigg[\frac{\one{\{0\le\sigma_1<\sigma_2<\tfrac{1}{M'}\}}}{\sigma_2-\sigma_1}\bigg]\le \ProbEHat_0\bigg[\frac{\one{
\{T_c<\tfrac{1}{M'}\}}}{T_c}\bigg]\le K_1  \, \exp \Big( \! -K_2 \, c^2 \, M' \Big) . 
\end{align}
Plugging~\eqref{eqn::CLT_bound_time} into~\eqref{eqn::small_time_aux}, we obtain
\begin{align}\label{eqn::small_time}
\; & \sum_{z\in\LV^\delta\setminus\BdryOutV^\delta}
\frac{\delta^2}{2} \sum_{t\in \frac{1}{2}\delta^2\N \cap (0,\frac{1}{M'}]} 
\frac{1}{t} \, \Prob_{z}\big[0\le\sigma_1<\sigma_2<t  \textnormal{ and } \LR(t)=z\big]\notag\\
\le \; &  \frac{\Leb(\tilde \Omega^\delta)}{2} \, K_1  \, \exp \Big( \! -K_2 \, c^2 \, M' \Big)
\max_{\substack{w\notin\BdryInNbhdDiscr{c/2} \\ |w-z|\ge\frac{c}{2}}}
\Gren^{\textnormal{ND}}_{\Omega^\delta}(w,z).
\end{align}
\end{itemize}
By choosing $M'$ large enough and $\delta_0'$ small enough, these two estimates imply~\eqref{eqn::truncated_time_bound}. 
Now, the asserted estimate~\eqref{eqn::time_bound_control} follows 
taking $c = (c_0/2) \wedge c_\Omega$ with $c_0=c_0(\varepsilon/2)>0$ as in Lemma~\ref{lem::root_neighbourhood}, and
$\varepsilon' = \varepsilon/(2C(c/2))$ with $C = C(c/2)$ as in Lemma~\ref{lem::Green_mix}, 
and choosing $\delta_0=\delta_0(\varepsilon)>0$ suitably small.

\medskip It remains to prove~\eqref{eqn::root_bound_control}. 
Again, thanks to Lemma~\ref{lem::root_neighbourhood} we only consider loops exiting a neighbourhood of the inner boundary $\BdryIn^\delta$. 
Moreover, thanks to Equation~\eqref{eqn::time_bound_control}, we may restrict to lifetimes in $\big(\tfrac{1}{M},M\big)$, 
where $M=M(\varepsilon)$ and $\delta_0=\delta_0(\varepsilon)$ are suitably chosen. 
As in~(\ref{eqn::large_time_aux},~\ref{eqn::small_time_aux}), we obtain
\begin{align}\label{eqn::root_time_bound_control_aux}
& \; 
\Lambda_{\Omega^\delta}^{\textnormal{ND}}\big[\ell\not\subset\BdryInNbhdDiscr{2c}, \, \diam (\ell) \ge c, \textnormal{ its lifetime }t_\ell\in\big(\tfrac{1}{M},M\big) ,
\textnormal{ and its root lies in } \BdryInNbhdDiscr{c'} \big] 
\notag\\
\le & \;  M 
\sum_{z\in \BdryInNbhdDiscr{c'} }
\frac{\delta^2}{2}\sum_{j = 1}^\infty 
 \ProbE_{z} \big[\Prob_{\LR(\sigma_2)}\big[\LR(j)=z\big]\big]
\notag \\
\le & \;  \frac{M}{2} \, \Leb (\BdryInNbhdDiscrDom{c'}) 
\; \max_{\substack{w\notin\BdryInNbhdDiscr{c/2} \\ |w-z|\ge\frac{c}{2}}} \Gren^{\textnormal{ND}}_{\Omega^\delta}(z,w) ,
\qquad  \; c' \in (0,c) , 
\end{align} 
where\footnote{Note that the Lebesgue measure is uniformly controlled by the Lebesgue measure of the corresponding neighbourhood for the inner boundary of $\Omega$, by the assumption in Item~\ref{setup2} of the Setup, when $\delta$ is small.} 
$\BdryInNbhdDiscrDom{c'} := \textnormal{Int} (\smash{\underset{v\in \BdryInNbhdDiscr{c'}}{\cup}} \, \overline{ f(v)}) \subset \C$. 
Choosing $c'$ small enough,~\eqref{eqn::root_bound_control} follows. 
\end{proof}

Combining Proposition~\ref{prop::loop_conv} and Lemma~\ref{lem::time_control} together, we can now deduce the asymptotics of the loop soup terms in Proposition~\ref{prop::total_winding}. This is encapsulated by the following result.

\begin{lemma}\label{lem:BLM_conv}
For any $\beta \in \R$, we have 
\begin{align*}
\lim_{\delta\to 0}\Lambda_{\Omega^\delta}^{\textnormal{ND}}\big[\ee^{\ii\beta\ph(\ell)} \, \one{\{\ell\in\LL_*^\delta\}}\big]
= \;& \mu_\Omega^{\textnormal{ND}} \big[ \ee^{\ii\beta\ph(\ell)} \one{\{\ell\in\LL_*\}} \big]  .
\end{align*}
\end{lemma}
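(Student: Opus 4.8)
The plan is to combine the finite-$\delta$ identity with the two convergence ingredients already proved, namely Proposition~\ref{prop::loop_conv} (loop-by-loop convergence of the conditioned random walk bridge and of the rescaled return probability) and Lemma~\ref{lem::time_control} (uniform control of the lifetimes and of the roots of the winding loops). First I would rewrite the left-hand side using the decomposition~\eqref{eqn::dec_discrete_loop}: for $F(\ell) = \ee^{\ii\beta\ph(\ell)}\one{\{\ell\in\LL_*^\delta\}}$,
\begin{align*}
\Lambda_{\Omega^\delta}^{\textnormal{ND}}\big[\ee^{\ii\beta\ph(\ell)}\one{\{\ell\in\LL_*^\delta\}}\big]
= \sum_{z\in\LV^\delta\setminus\BdryOutV^\delta}\sum_{t\in\frac12\delta^2\N}\frac{\delta^2}{2t}\,
\Prob_z\big[\LR(t)=z\big]\,\ProbE_z\big[\ee^{\ii\beta\ph(\LR[0,t])}\one{\{\LR[0,t]\in\LL_*^\delta\}}\,\big|\,\LR(t)=z\big],
\end{align*}
which is precisely a Riemann-sum approximation (in $z$ via $\delta^2\sum_z\to\int_\Omega\ud z$, and in $t$ via $\frac{\delta^2}{2}\sum_t\to\int_0^\infty\ud t$) of
\begin{align*}
\mu_\Omega^{\textnormal{ND}}\big[\ee^{\ii\beta\ph(\ell)}\one{\{\ell\in\LL_*\}}\big]
= \int_\Omega\ud z\int_0^\infty\frac{\ud t}{t}\,p_\Omega^{\textnormal{ND}}(z,z;t)\,\mu_\Omega^{\textnormal{ND}}(z,z;t)\big[\ee^{\ii\beta\ph(\ell)}\one{\{\ell\in\LL_*\}}\big],
\end{align*}
with $p_\Omega^{\textnormal{ND}}$ and $\mu_\Omega^{\textnormal{ND}}(z,z;t)$ the limits identified in Proposition~\ref{prop::loop_conv}.

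The integrand converges pointwise for $z\in\Omega$ fixed and $t\in(0,\infty)$ fixed: the factor $\delta^{-2}\Prob_z[\LR(t^\delta)=z^\delta\,|\,\LR(0)=z^\delta]\to p_\Omega^{\textnormal{ND}}(z,z;t)$ by Proposition~\ref{prop::loop_conv}, and the conditional expectation of $F$ converges because $\Prob[\,\cdot\,|\,\LR(0)=\LR(t^\delta)=z^\delta]\Rightarrow\mu_\Omega^{\textnormal{ND}}(z,z;t)$ weakly, provided $\ell\mapsto\ee^{\ii\beta\ph(\ell)}\one{\{\ell\in\LL_*\}}$ is continuous and bounded at $\mu_\Omega^{\textnormal{ND}}(z,z;t)$-a.e.\ loop. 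Boundedness is clear; for continuity one uses that the topological winding number $\ph(\cdot)/2\pi$ around the origin is locally constant on the set of loops avoiding $0$, and is $\mu_\Omega^{\textnormal{ND}}(z,z;t)$-a.e.\ continuous since a Brownian bridge based at $z\in\Omega$ a.s.\ does not pass through $0$ and a.s.\ is not tangent to the marginal event $\{\ell\in\LL_*\}$ (being homotopically nontrivial is an event whose boundary has Brownian-bridge measure zero). Here one should note the lattice indicator $\one{\{\ell\in\LL_*^\delta\}}$ passes to $\one{\{\ell\in\LL_*\}}$ under $\dist_\LL$-convergence away from the boundary of the event, using that the inner boundary $\BdryIn^\delta$ converges to $\BdryIn$ (Setup, Item~\ref{setup2}), so ``uncontractible'' is detected correctly in the limit.

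The main obstacle is turning pointwise convergence of the integrand into convergence of the integrals, i.e.\ obtaining a dominated-convergence / uniform-integrability statement over both $z$ and $t$. This is exactly where Lemma~\ref{lem::time_control} enters. For the tails $t_\ell\notin[\tfrac1M,M]$ and for roots lying in the $c$-neighbourhood $\BdryInNbhdDiscr{c}$ of the inner boundary, the lemma gives that the $\Lambda_{\Omega^\delta}^{\textnormal{ND}}$-mass of winding loops there is $<\varepsilon$ uniformly in $\delta$, and by construction $|\ee^{\ii\beta\ph(\ell)}|=1$, so these regions contribute at most $\varepsilon$ to both the discrete quantity and (passing $\delta\to0$, and using that the same estimates hold in the limit, or simply Fatou) to the continuum quantity. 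On the complementary good set — lifetimes in $[\tfrac1M,M]$, root at macroscopic distance $\ge c$ from $\BdryIn$, and (by Lemma~\ref{lem::root_neighbourhood} together with the diameter bound $t_\ell\ge\tfrac\delta2\diam(\ell)$) loops of macroscopic diameter — the prefactor $\delta^{-2}\Prob_{z^\delta}[\LR(t^\delta)=z^\delta]$ is uniformly bounded by a constant $C(c)$ (combine Lemma~\ref{lem::Green_mix}, or the on-diagonal bound $\le C\delta^2$ from~\eqref{eqn::loop_conv_aux4}, with the conditioning), the weight $1/t\le M$ is bounded, and the Lebesgue volume $\Leb(\tilde\Omega^\delta)$ is controlled by $\Leb(\Omega)$; hence the contribution of the good set is a Riemann sum of a uniformly bounded, pointwise-convergent integrand over the compact region $z\in\Omega\setminus\BdryInNbhd{c}$, $t\in[\tfrac1M,M]$, to which bounded convergence applies directly. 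Letting first $\delta\to0$ and then $M\to\infty$, $c\to0$ (so $\varepsilon\to0$) gives the claimed equality. I would also record that, since $|\ee^{\ii\beta\ph(\ell)}|=1$, taking $\beta$ such that the real and imaginary parts are both captured suffices, and that the case $\beta=0$ of this statement is exactly the already-announced convergence $\Lambda_{\Omega^\delta}^{\textnormal{ND}}\to\mu_\Omega^{\textnormal{ND}}$ on winding loops, so no separate argument is needed there.
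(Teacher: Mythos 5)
Your proposal is correct and follows essentially the same route as the paper: truncate to loops with lifetime in $[\tfrac1M,M]$ and root away from $\BdryIn^\delta$ (controlling the remainder by Lemma~\ref{lem::time_control}), pass to the limit on this good set via Proposition~\ref{prop::loop_conv} together with the uniform bound~\eqref{eqn::loop_conv_aux4}, and then send $M\to\infty$, $c\to 0$. Your discussion of the a.e.\ continuity of $\ell\mapsto\ee^{\ii\beta\ph(\ell)}\one{\{\ell\in\LL_*\}}$ is a detail the paper leaves implicit, but it is correct and does not change the structure of the argument.
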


\begin{proof}
For $\varepsilon>0$, take $c=c(\varepsilon)$, and $M=M(\varepsilon)$, 
and $\delta_0=\delta_0(\varepsilon)$ as in Lemma~\ref{lem::time_control}. 
Consider the set 
\begin{align*}
\LA_*^\delta(M,c)
:=\Big\{\ell\in\LL_*^\delta \;|\; \textnormal{ the lifetime } t^\delta\in\big(\tfrac{1}{M},M\big) ,\textnormal{ and the root of }\ell\textnormal{ lies not in }\BdryInNbhdDiscr{c}\Big\} 
\end{align*} 
of discrete loops, and define $\BdryInNbhd{c}:=\{z\in\Omega \;|\; \dist(z,\BdryIn)<c\}$.  
For $\beta \in \R$, we have
\begin{align*}
\lim_{\delta\to 0}\Lambda_{\Omega^\delta}^{\textnormal{ND}}\big[\ee^{\ii\beta\ph(\ell)} \, \one{\{\LA_*^\delta(M,c)\}}\big]
= \;& 
\lim_{\delta\to 0}\sum_{\substack{z^\delta\in\LV^\delta\setminus\BdryOutV^\delta \\ z^\delta\not\in\BdryInNbhdDiscr{c}}}\frac{\delta^2}{2}\sum_{\substack{t^\delta\in \frac{1}{2}\delta^2\N\\\frac{1}{M}\le t^\delta\le M}}\frac{1}{t^\delta}\,
\Prob_{z^\delta}\Big[\ee^{\ii\beta\ph(\ell)} \, \one{\{\ell\in\LL_*^\delta,\, \LR(t^\delta)=z^\delta\}} \Big]\\
= \;& \int_{\Omega \setminus \BdryInNbhd{c}} \ud z \int_{[\tfrac{1}{M},M]} \ud t \; \frac{p^{\textnormal{ND}}_{\Omega}(z,z,t)}{t} \;
\mu^{\textnormal{ND}}_{\Omega}(z,z,t)\big[\ee^{\ii\beta\ph(\ell)} \, \one{\{\ell\in\LL_*\}}\big]  , 
\end{align*}
by Proposition~\ref{prop::loop_conv}, Lemma~\ref{lem::time_control}, and~\eqref{eqn::loop_conv_aux4}.
Taking $\varepsilon\to 0$, so $M(\varepsilon)\to\infty$ and $c(\varepsilon)\to 0$, we obtain 
\begin{align*}
\lim_{\delta\to 0}\Lambda_{\Omega^\delta}^{\textnormal{ND}}\big[\ee^{\ii\beta\ph(\ell)}\one{\{\ell\in\LL_*^\delta\}}\big] 
=\;& \int_{\Omega} \ud z \int_0^\infty \ud t \; \frac{p^{\textnormal{ND}}_{\Omega}(z,z,t)}{t}
\; \mu^{\textnormal{ND}}_{\Omega}(z,z,t)\big[\ee^{\ii\beta\ph(\ell)} \one{\{\ell\in\LL_*\}}\big] \\
=\;& \mu_\Omega^{\textnormal{ND}}\big[\ee^{\ii\beta\ph(\ell)} \one{\{\ell\in\LL_*\}}\big] , \qquad \beta \in \R .
\qquad \textnormal{[by definition~\eqref{eqn::mixed_loop}]}
\end{align*}
This completes the proof. 
\end{proof}

To complete the proof of Theorem~\ref{thm::total_winding}, it remains to deal with the ratio of determinants in the formulas in Proposition~\ref{prop::total_winding}. 
The asymptotics will be obtained in Lemma~\ref{lem::deltato0odd}.

\medskip
Recall that we denote by $\OmegaUniv^\delta = (\LVUniv^\delta,\LEUniv^\delta)$ the universal cover of the graph $\Omega^\delta$, obtained by gluing its copies $\{\Omega^\delta_m\}_{m \in \Z} = \{(\LV^\delta_m,\LE^\delta_m)\}_{m \in \Z}$ together along the cut $\zeta$. 
In particular, there are bijections $\LV^\delta \leftrightarrow \LV^\delta_m$ and $\LE^\delta \leftrightarrow \LE^\delta_m$ given by $v \leftrightarrow v_m$ and $e \leftrightarrow e_m$, and $h^\delta_\beta(\cdot, v)$ defined in~\eqref{eq:h_beta} defines a discrete harmonic function on $\LVUniv^\delta$. 
For each subset $V \subset \LVUniv^\delta$ of vertices and for each vertex $x\in\LVUniv^\delta$, define
\begin{align*}
\HarmUniv_{\Omega^\delta}(x,V) := \PP[\textnormal{ random walk on }\OmegaUniv^\delta\textnormal{ starting from }x\textnormal{ hits }V\textnormal{ before hitting }\BdryOutUniv^\delta\setminus V] , 
\end{align*} 
which is discrete harmonic on $\LVUniv^\delta$ with $\HarmUniv_{\Omega^\delta}(\cdot,V) = 1$ on $V$
and $\HarmUniv_{\Omega^\delta}(\cdot,V) = 0$ (elsewhere) on the outer boundary.
When $V=\{v\}$ is a singleton with $v\in\BdryOutEUniv^\delta$, we simply denote 
$\HarmUniv_{\Omega^\delta}(x,\{v\}) = \HarmUniv_{\Omega^\delta}(x,v)$.

\medskip 
Let $\phi_r \colon \Omega \to \A_r$ be the conformal isomorphism onto the annulus 
$\A_r = A(r,1)$,
so that $\phi_r$ coincides with the restriction to $\Omega$ of the conformal isomorphism 
$\phi \colon \Omega \to \D$ used above. The universal cover of $\A_r$ is the strip
$S_r := \{z=x+\ii y \in \C \cond x\in\R,\, 0< y< -\log r\}$, with covering map is given by $p(z):=e^{-\ii z+\log r}$ for $z\in S_r$. Denote by $\OmegaUniv$ the universal cover of $\Omega$ obtained by gluing $\Omega$ with its copies along $\zeta:=\phi_r^{-1}[r,1]$, and by $\BdryOutUniv$ its outer boundary. 
Finally, let $\phiUniv_r \colon \OmegaUniv \to S_r$ be the associated lift of $\phi_r$. 
Denote by $\Pois_{S_r}$ the \textbf{Poisson kernel} on $S_r$ with Dirichlet boundary condition on $\R - \ii\log r$ and Neumann boundary condition on $\R$: the function $\Pois_{S_r}(\cdot,\cdot)$ is smooth on the set 
\begin{align*}
(\overline{S_r} \times\overline{S_r})\setminus\{(z,z) \;|\; z \in \R - \ii\log r\};
\end{align*}
for every $z\in\R - \ii\log r$, the function $\Pois_{S_r}(\cdot,z)$ is harmonic on $S_r$, and
\begin{align*}
\Pois_{S_r}(\cdot,z)|_{(\R - \ii\log r) \setminus\{z\}}=0
\qquad\textnormal{and}\qquad 
\partial_n \Pois_{S_r}(\cdot,z)|_{\R}=0.
\end{align*}
Note that $\Pois_{S_r}(\cdot,\cdot)$ is well-defined up to a multiplicative constant. 
We choose the normalisation as
\begin{align*}
\int_{-\infty}^{\infty}\Pois_{S_r}(\cdot,x-\ii\log r)\, \ud x = 1.
\end{align*}
Then, we have the explicit formula
\begin{align*}
\Pois_{S_r}(x_1,x_2-\ii\log r)
= \frac{1}{2|\log r|}\textnormal{sech}\Big(\frac{\pi \, (x_2-x_1)}{2|\log r|}\Big) .
\end{align*}

As before, we fix $2n$ distinct vertices $x_1^\delta,\ldots,x_n^\delta \in \BdryIn^\delta$ 
and $v_1^\delta,\ldots,v_n^\delta \in \BdryOut^\delta$, 
by convention labelled in counterclockwise order, 
and we assume that as $\delta \to 0$, each $x^\delta_j$ converges to $x_j\in\BdryIn$ and each $v^\delta_j$ converges to $v_j\in\BdryIn$,
where $x_1,\ldots,x_n \in \BdryIn$ and $v_1,\ldots,v_n \in\BdryOut$ are distinct.
(For the different copies in the universal over, we only use the subscript ``$m$'' trusting that it is clear from context.)
Choosing the branch of the argument function $\arg$ to take values in $[-\pi,\pi)$, set
\begin{align*}
c := \sum_{j=1}^{n}\arg \phi_r(x_j)-\sum_{j=1}^{n}\arg \phi_r(e_j) .
\end{align*}
Define also
\begin{align*}
\Upsilon(\bs{x},\bs{v}) := \prod_{1\le i<j\le n}\sin\bigg(\frac{\arg\phi(x_j)-\arg\phi(x_i)}{2}\bigg)
\prod_{1\le i<j\le n}\sin\bigg(\frac{\arg\phi(v_j)-\arg\phi(v_i)}{2}\bigg) .
\end{align*}

\begin{lemma}\label{lem::deltato0odd}
Suppose $z^\delta \in\LV^\delta$ converge to $z\in\Omega\setminus\zeta$ as $\delta\to 0$. Then, we have
\begin{align}\label{eqn::scaling_det_odd}
\lim_{\delta\to 0} \frac{\det \big( h^\delta_{2\pi\beta}(x^\delta_i, v^\delta_j)\big)_{1\le i,j \le n}}{\prod^n_{j=1}\HarmUniv_{\Omega^\delta}(z^\delta,v_j^\delta)} 
= \frac{\det \big( h_{2\pi\beta}(x_i, v_j)\big)_{1\le i,j \le n}}{\prod^n_{j=1} \Pois_{S_r}\big((p^{-1}\circ \phiUniv_r)(z), \, (p^{-1}\circ \phiUniv_r)(v_j)\big) } .
\end{align}
If $n$ is odd, we have the following two cases as $r \to 0$.
\begin{enumerate}
\item if $\beta \in [0,\frac{1}{2})\cup(\frac{1}{2},1)$, 
and $N_\beta\in\Z$ is such that $|\beta-N_\beta| = \underset{k\in\Z}\min \, |\beta-k|$, then we have
\begin{align}\label{eqn::determinodd}
\det \big( h_{2\pi\beta}(x_i, v_j)\big)_{1\le i,j \le n} 
=\;& \frac{1}{\pi^n} \, \big( \ee^{\ii(\beta-N_\beta)c} + o_r(1)\big) \; 
r^{\frac{n^2-1}{4}+|\beta-N_\beta|} \; \Upsilon(\bs{x},\bs{v}) ;
\end{align}

\item if $\beta=\frac{1}{2}$, then we have
\begin{align}\label{eqn::determinodd2}
\det \big( h_{2\pi\beta}(x_i, v_j)\big)_{1\le i,j \le n} 
= \;& \frac{1}{\pi^n}  \, \big( 2\cos(c/2)+o_r(1) \big) \;
r^{\frac{n^2-1}{4}+\frac{1}{2}} \; \Upsilon(\bs{x},\bs{v}) .
\end{align}
\end{enumerate}
If $n$ is even, and $N_\beta\in\Z$ such that $|\beta-N_\beta-\frac{1}{2}| = \underset{k\in\Z}\min \,|\beta-k-\frac{1}{2}|$, then as $r \to 0$ we have
\begin{align}\label{eqn::determineven}
\det \big( h_{2\pi\beta}(x_i, v_j)\big)_{1\le i,j \le n}
=\;& \frac{1}{\pi^n}  \, \big(\ee^{\ii(\beta-N_\beta+\frac{1}{2})c}+o_r(1)\big) 
\; r^{\frac{n^2}{4}}  \; \Upsilon(\bs{x},\bs{v}) .
\end{align}
\end{lemma}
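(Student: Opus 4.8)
The plan is to prove Lemma~\ref{lem::deltato0odd} in two independent parts: first the $\delta\to 0$ convergence~\eqref{eqn::scaling_det_odd}, and then the $r\to 0$ asymptotics of the continuum determinant via the Cauchy--Binet formula. For the $\delta\to 0$ part, the key observation is that, since under $w_\beta$ a path $P\colon x^\delta_i\to v^\delta_j$ on $\Omega^\delta$ gains a factor $\ee^{\ii\beta}$ (resp.\ $\ee^{-\ii\beta}$) at each positive (resp.\ negative) crossing of $\zeta$, and since the net number of such crossings equals the index $m$ of the sheet of $\OmegaUniv^\delta$ in which the lift of $P$ started in sheet $0$ ends, one has
\begin{align*}
h^\delta_{2\pi\beta}(x^\delta_i, v^\delta_j) \;=\; \sum_{m\in\Z} \ee^{2\pi\ii\beta m}\;\HarmUniv_{\Omega^\delta}\big(x^\delta_i,(v^\delta_j)_m\big),
\end{align*}
a sum converging absolutely since $\sum_m\HarmUniv_{\Omega^\delta}(x^\delta_i,(v^\delta_j)_m)=h^\delta_0(x^\delta_i,v^\delta_j)\le 1$. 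For each fixed $m$ I would use convergence of discrete harmonic measure on the universal cover (transported through $p^{-1}\circ\phiUniv_r$ to the strip $S_r$) to get $\delta^{-1}\HarmUniv_{\Omega^\delta}(x^\delta_i,(v^\delta_j)_m)\to \mathsf{c}_0\,\Pois_{S_r}(\,\cdot\,,\,\cdot\,)$ with source at the $m$-th lift of $v_j$ and a universal lattice constant $\mathsf{c}_0$, while a Beurling-type estimate using two crosscuts separating the boundary components (as in the proof of Lemma~\ref{lem::root_neighbourhood}) gives a uniform bound $\HarmUniv_{\Omega^\delta}(x^\delta_i,(v^\delta_j)_m)\le C\delta\,\rho^{|m|}$ with $\rho=\rho(r)<1$, so dominated convergence passes the sum over $m$ through the limit. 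Calling the limit $h_{2\pi\beta}(x_i,v_j)$, one gets $\delta^{-1}h^\delta_{2\pi\beta}(x^\delta_i,v^\delta_j)\to \mathsf{c}_0\,h_{2\pi\beta}(x_i,v_j)$ and likewise $\delta^{-1}\HarmUniv_{\Omega^\delta}(z^\delta,v^\delta_j)\to \mathsf{c}_0\,\Pois_{S_r}((p^{-1}\circ\phiUniv_r)(z),(p^{-1}\circ\phiUniv_r)(v_j))$; the factor $(\mathsf{c}_0\delta)^n$ cancels in the ratio of an $n\times n$ determinant by a product of $n$ terms, which is~\eqref{eqn::scaling_det_odd}.

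For the $r\to 0$ part, I would first record an explicit formula for the continuum kernel. Writing $L:=|\log r|$, $\alpha_i:=\arg\phi(x_i)$, $\omega_j:=\arg\phi(v_j)$, separation of variables for the Laplacian on $S_r$ with Neumann data on the inner boundary, Dirichlet data on the outer boundary, and monodromy $\ee^{2\pi\ii\beta}$ under $\sigma\mapsto\sigma+2\pi$ gives, with source on the Dirichlet boundary evaluated on the Neumann boundary,
\begin{align*}
h_{2\pi\beta}(x_i,v_j)\;=\;\frac{1}{2\pi}\sum_{k\in\Z}\frac{\ee^{\ii(\beta+k)(\alpha_i-\omega_j)}}{\cosh\big((\beta+k)L\big)},
\end{align*}
which by Poisson summation agrees with the $\ee^{2\pi\ii\beta m}$-twisted sum of $2\pi$-translates of $x\mapsto\tfrac{1}{2L}\,\textnormal{sech}\big(\tfrac{\pi x}{2L}\big)$, consistently with the normalisation of $\Pois_{S_r}$. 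Feeding this rank-expansion into Cauchy--Binet,
\begin{align*}
\det\big(h_{2\pi\beta}(x_i,v_j)\big)_{1\le i,j\le n}=\frac{1}{(2\pi)^n}\sum_{k_1<\cdots<k_n}\Big(\prod_{a=1}^n\frac{1}{\cosh\big((\beta+k_a)L\big)}\Big)\;D^{x}_{\bs k}\,D^{v}_{\bs k},
\end{align*}
with $D^x_{\bs k}=\det(\ee^{\ii(\beta+k_a)\alpha_i})_{i,a}$ and $D^v_{\bs k}=\det(\ee^{-\ii(\beta+k_a)\omega_j})_{j,a}$.

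Since $\textnormal{sech}((\beta+k)L)=2\,r^{|\beta+k|}(1+o_r(1))$, as $r\to 0$ this sum is dominated by the $n$-element sets $\{k_1,\dots,k_n\}$ minimising $S(\bs k):=\sum_a|\beta+k_a|$, every other set giving a strictly larger power of $r$. It is here that the parity of $n$ enters: for $n$ odd and $\beta\in[0,\tfrac12)\cup(\tfrac12,1)$ the unique minimiser is the run of the $n$ integers closest to $-\beta$, namely $\{N_\beta-\tfrac{n-1}{2},\dots,N_\beta+\tfrac{n-1}{2}\}$, centred at the integer $N_\beta$, with $S=\tfrac{n^2-1}{4}+|\beta-N_\beta|$; at $\beta=\tfrac12$ there are exactly two such runs, centred at $0$ and $-1$; and for $n$ even the minimiser is a run centred at a half-integer, with $S=\tfrac{n^2}{4}$, which is the source of the half-integer shift in the rounding defining $N_\beta$. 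For a consecutive run of length $n$ centred at $c_*\in\tfrac12\Z$, pulling the phase $\ee^{\ii(\beta+c_*)\alpha_i}$ out of row $i$ of $D^x_{\bs k}$ leaves a Vandermonde determinant in $\ee^{\ii\alpha_i}$ with exponents symmetric about $0$; the ``centring'' phase $\prod_i\ee^{\ii c_*\alpha_i}$ then cancels against the product of off-diagonal Vandermonde phases, so that $D^x_{\bs k}=(\prod_i\ee^{\ii(\beta+c_*)\alpha_i})(2\ii)^{\binom n2}\prod_{i<i'}\sin\tfrac{\alpha_{i'}-\alpha_i}{2}$, and likewise for $D^v_{\bs k}$ with the $\omega$'s. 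Multiplying: the sines assemble into $\Upsilon(\bs x,\bs v)$; the exponentials into $\ee^{\ii(\beta+c_*)(\sum_i\alpha_i-\sum_j\omega_j)}$, which equals $\ee^{\ii(\beta-N_\beta)c}$ for $n$ odd (the analogous half-integer-shifted phase for $n$ even, and, for $n$ odd with $\beta=\tfrac12$, the two runs contribute conjugate phases $\ee^{\pm\ii c/2}$ adding to $2\cos(c/2)$); and the $\cosh$ factors into $2^n r^{S}$. Collecting the numerical constants yields~\eqref{eqn::determinodd},~\eqref{eqn::determinodd2} and~\eqref{eqn::determineven}.

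I expect the part requiring the most care to be this $r\to 0$ analysis: the combinatorial description of the minimiser of $S(\bs k)$ and of its (non)uniqueness as a function of the parity of $n$ and of $\beta$ --- which is precisely the mechanism producing the parity dichotomy of Theorem~\ref{thm::total_winding} --- together with the bookkeeping of the phase factors $\ee^{\ii(\cdots)c}$ and of the signs of the Vandermonde factors, the delicate sign analysis already flagged after that theorem. The only technical subtlety in the $\delta\to 0$ step, namely interchanging the limit with the infinite sum over sheets of $\OmegaUniv^\delta$, is handled by the uniform geometric decay of harmonic measure quoted above.
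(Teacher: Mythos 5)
Your proposal follows essentially the same two-step route as the paper: the $\delta\to 0$ limit via the universal-cover decomposition $h^\delta_{2\pi\beta}(x,v)=\sum_m \ee^{2\pi\ii\beta m}\HarmUniv_{\Omega^\delta}(x,v_m)$ with geometric decay over sheets, and the $r\to 0$ asymptotics via the Fourier/Cauchy--Binet expansion of the continuum kernel, identification of the minimising consecutive run of frequencies (with exactly the parity dichotomy and the $\beta=\tfrac12$ degeneracy you describe), and Vandermonde evaluation of the leading term. Two technical points where the paper's route is safer. First, your intermediate claim $\delta^{-1}\HarmUniv_{\Omega^\delta}(x^\delta,(v^\delta_j)_m)\to\mathsf{c}_0\,\Pois_{S_r}(\cdot,\cdot)$ is not justified under the standing hypothesis that $\BdryOut$ is merely a Jordan curve: the harmonic measure of a single boundary vertex need not scale like $\delta$ at a rough boundary point, and no universal constant $\mathsf{c}_0$ exists in that generality. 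The paper avoids this entirely by only ever proving convergence of \emph{ratios} $\HarmUniv_{\Omega^\delta}(x^\delta,w_m^\delta)/\HarmUniv_{\Omega^\delta}(z^\delta,w_0^\delta)$ (normalising at an interior reference point $z^\delta$, via Harnack, Arzel\`a--Ascoli, and identification of the limit as a Martin-kernel ratio), which is precisely the form in which~\eqref{eqn::scaling_det_odd} is stated; your argument should be reformulated this way. Second, "every other set giving a strictly larger power of $r$" needs a summable bound over the infinitely many non-minimal tuples $\bs{k}$, and if you want the error term to be $o_r(1)$ \emph{uniformly in the positions} (as used in Theorem~\ref{thm::total_winding}) you need the Vandermonde-factored determinant bound of Lemma~\ref{lem::det_cont} rather than the trivial bound $|D^x_{\bs k}D^v_{\bs k}|\le (n!)^2$, since the leading term itself is proportional to $\Upsilon(\bs{x},\bs{v})$, which can be small. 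Finally, your computation gives $|D^x_{\bs k}D^v_{\bs k}|=4^{\binom{n}{2}}\Upsilon(\bs{x},\bs{v})$ for the minimising run, which produces an overall prefactor $2^{n(n-1)}/\pi^n$ rather than the stated $1/\pi^n$; this power of $2$ cancels in all the ratios that enter the main theorems, but you should reconcile your bookkeeping with the normalisation in~\eqref{eqn::determinodd} before asserting the constant.
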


To derive Lemma~\ref{lem::deltato0odd}, we will to deal with the corresponding quantity on the universal cover.
For $z\in\Omega$ and $w \in \BdryOut$, denoting by $w_m \in \BdryOutUniv$ the $m$:th copy of $w=w_0$, define 
\begin{align}\label{eqn::beta_prob_continuous}
\hUniv_\beta(z, w) := \sum_{m\in\Z} \ee^{\ii m\beta} \, \Pois_{S_r}\big((p^{-1}\circ \phiUniv_r)(z), \, (p^{-1}\circ \phiUniv_r)(w_m)\big) .
\end{align}

\begin{lemma}\label{lem::Poisson_limit}
Suppose $z^\delta, x^\delta \in\LV^\delta$, and $w^\delta \in \BdryOutV^\delta$ converge to $z,x\in\Omega\setminus\zeta$ and $w = w_0\in\BdryOut\setminus\zeta$ as $\delta\to 0$. 
Denote by $w_m^\delta\in \BdryOutVUniv^\delta$ and $w_m\in \BdryOutUniv$ the $m$:th copies of $w^\delta=w_0^\delta$ and $w=w_0$. 
Then, we have
\begin{align}\label{eqn::Poisson_limit}
\frac{\HarmUniv_{\Omega^\delta}(x^\delta,w_m^\delta)}{\HarmUniv_{\Omega^\delta}(z^\delta,w_0^\delta)}
= \frac{\Pois_{S_r}\big((p^{-1}\circ \phiUniv_r)(x), \, (p^{-1}\circ \phiUniv_r)(w_m)\big)}{\Pois_{S_r}\big((p^{-1}\circ \phiUniv_r)(z), \, (p^{-1}\circ \phiUniv_r)(w_0)\big)} .
\end{align}
In particular, for the function defined in~\eqref{eq:h_beta}, we have
\begin{align}\label{eqn::conv_Poisson_limit}
\lim_{\delta\to 0} \frac{h^\delta_\beta(x^\delta, w^\delta)}{\HarmUniv_{\Omega^\delta}(z^\delta,w^\delta)}
= \frac{\hUniv_\beta(x, w)}{\Pois_{S_r}\big((p^{-1}\circ \phiUniv_r)(z), \, (p^{-1}\circ \phiUniv_r)(w)\big)} .
\end{align}
\end{lemma}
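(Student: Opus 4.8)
We prove that the left-hand side of \eqref{eqn::Poisson_limit} converges, as $\delta\to 0$, to its right-hand side; write $F:=p^{-1}\circ\phiUniv_r\colon\OmegaUniv\to S_r$ for the conformal map appearing in the statement. The plan is to reduce both claims to a single convergence-of-discrete-harmonic-measure statement on the universal cover, and then to prove the latter by combining the (reflected) invariance principle with potential theory on the strip $S_r$. First, the graph $\OmegaUniv^\delta$ and the simple random walk on it are invariant under the deck transformation $x_m\mapsto x_{m+1}$, so $\HarmUniv_{\Omega^\delta}(x^\delta,w^\delta_m)=\HarmUniv_{\Omega^\delta}(x^\delta_{-m},w^\delta_0)$, where $x^\delta_{-m}$ denotes the copy of $x^\delta$ in the $(-m)$th sheet; since $F$ conjugates this deck transformation to translation by the period of $p$ and $\Pois_{S_r}$ is translation invariant, $\Pois_{S_r}\big(F(x_{-m}),F(w_0)\big)=\Pois_{S_r}\big(F(x),F(w_m)\big)$. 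Hence \eqref{eqn::Poisson_limit} is equivalent to the single statement
\begin{align}\label{eqn::Poisson_plan_reduced}
\lim_{\delta\to 0}\frac{\HarmUniv_{\Omega^\delta}(a^\delta,w^\delta_0)}{\HarmUniv_{\Omega^\delta}(z^\delta,w^\delta_0)}
=\frac{\Pois_{S_r}\big(F(a),F(w_0)\big)}{\Pois_{S_r}\big(F(z),F(w_0)\big)},
\end{align}
valid for interior points $a^\delta\to a$, $z^\delta\to z$ away from $\zeta$ (and, for the application in Lemma~\ref{lem::deltato0odd}, also when one of them lies on the analytic reflecting boundary $\BdryIn$, which changes nothing below). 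Second, recall from Section~\ref{subsec:Fomin} that $h^\delta_\beta(\cdot,w)$ extends to the \emph{unique bounded} discrete harmonic function on $\OmegaUniv^\delta$ taking the value $\ee^{\ii m\beta}$ at $w_m$ (and $0$ at the other outer-boundary vertices), so $h^\delta_\beta(x^\delta,w^\delta)=\sum_{m\in\Z}\ee^{\ii m\beta}\HarmUniv_{\Omega^\delta}(x^\delta,w^\delta_m)$; dividing by $\HarmUniv_{\Omega^\delta}(z^\delta,w^\delta_0)$, the identity \eqref{eqn::conv_Poisson_limit} follows from \eqref{eqn::Poisson_limit} and the definition of $\hUniv_\beta$ in \eqref{eqn::beta_prob_continuous} once the interchange of the limit with the sum is justified, which needs a $\delta$-uniform summable bound on the summands.

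Both the summability and the noncompactness of $\OmegaUniv^\delta$ will be handled by a winding estimate which I would prove via the reflected-Brownian-motion invariance principle (in the spirit of the winding estimates of Lemmas~\ref{lem::root_neighbourhood}~and~\ref{lem::time_control}): for our fixed domain $\Omega$ there are $C<\infty$ and $\rho\in(0,1)$ such that, uniformly in small $\delta$ and in the starting vertex, the probability that the angular displacement of the random walk on $\Omega^\delta$ around $\BdryIn$ exceeds $2\pi|m|$ at some time before $\BdryOut^\delta$ is hit (equivalently, that its lift to $\OmegaUniv^\delta$ reaches the $m$th sheet) is at most $C\rho^{|m|}$ --- conformally, this is an angular-oscillation bound for a planar Brownian motion reflected on $\BdryIn$ and absorbed on $\BdryOut$, which is controlled by its exponentially integrable absorption time in the radial coordinate, and the discrete version follows from the invariance principle already used in the proof of Proposition~\ref{prop::loop_conv}. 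Now fix a small macroscopic $\rho_0>0$ so that $\gamma_0:=\partial B(w_0,\rho_0)\cap\OmegaUniv$ is a crosscut separating $w_0$ (which, as $w\in\BdryOut\setminus\zeta$, lies at positive distance from $\zeta$, so $B(w_0,\rho_0)$ sits in a single sheet) from the interior points under consideration, and let $\sigma$ be the first time the random walk hits $\gamma_0^\delta\cup(\BdryOutUniv^\delta\setminus B(w_0,\rho_0))$. By the strong Markov property,
\begin{align}\label{eqn::Poisson_plan_decomp}
\HarmUniv_{\Omega^\delta}(y^\delta,w^\delta_0)=\ProbE_{y^\delta}\Big[\one{\{\LR(\sigma)\in\gamma_0^\delta\}}\,\HarmUniv_{\Omega^\delta}\big(\LR(\sigma),w^\delta_0\big)\Big]
\qquad\textnormal{for interior }y^\delta\textnormal{ outside }B(w_0,\rho_0).
\end{align}
Applying this with $y^\delta=x^\delta_{-m}$ and with $y^\delta=z^\delta$, bounding the numerator above and the denominator below using the boundary Harnack principle on $\gamma_0$ (to compare the values of $\HarmUniv_{\Omega^\delta}(\cdot,w^\delta_0)$ on $\gamma_0^\delta$ within a $\delta$-uniform factor), using that $\PP_{z^\delta}[\LR(\sigma)\in\gamma_0^\delta]$ is bounded below uniformly in $\delta$, and using the winding estimate to bound $\PP_{x^\delta_{-m}}[\LR(\sigma)\in\gamma_0^\delta]\le C\rho^{|m|}$ (reaching sheet $0$ from sheet $-m$ requires angular displacement $\ge 2\pi|m|$), one gets $\HarmUniv_{\Omega^\delta}(x^\delta,w^\delta_m)/\HarmUniv_{\Omega^\delta}(z^\delta,w^\delta_0)\le C'\rho^{|m|}$ uniformly in $\delta$; together with the exponential decay of $m\mapsto\Pois_{S_r}(F(z),F(w_m))$ along the Dirichlet boundary of $S_r$, this licenses the interchange.

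It remains to prove \eqref{eqn::Poisson_plan_reduced}. I would apply \eqref{eqn::Poisson_plan_decomp} with $y^\delta=a^\delta$ and $y^\delta=z^\delta$ and divide both sides by $\HarmUniv_{\Omega^\delta}(y_{\mathrm{ref}}^\delta,w^\delta_0)$ for a fixed reference point $y_{\mathrm{ref}}\in\gamma_0$: the normalised functions $\Psi_\delta:=\HarmUniv_{\Omega^\delta}(\cdot,w^\delta_0)/\HarmUniv_{\Omega^\delta}(y_{\mathrm{ref}}^\delta,w^\delta_0)$ are positive discrete harmonic, Neumann on $\BdryInUniv^\delta$ and vanishing on $\BdryOutUniv^\delta\setminus\{w^\delta_0\}$, so by the Harnack and boundary Harnack principles they are equicontinuous on compact subsets of $\OmegaUniv\setminus\{w_0\}$, uniformly in $\delta$; any subsequential limit is a positive harmonic function on $\OmegaUniv$, Neumann on $\BdryInUniv$, vanishing on $\BdryOutUniv\setminus\{w_0\}$, bounded away from $w_0$ and decaying at the two ends of $\OmegaUniv$ (by the winding estimate), hence --- by uniqueness of such a function, a Phragm\'en--Lindel\"of argument on $S_r$ --- equal to $\Pois_{S_r}(F(\cdot),F(w_0))/\Pois_{S_r}(F(y_{\mathrm{ref}}),F(w_0))$, so the full sequence $\Psi_\delta$ converges locally uniformly. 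Meanwhile, by the reflected-Brownian-motion invariance principle, the law of $\LR(\sigma)$ under $\ProbE_{a^\delta}$ restricted to $\{\LR(\sigma)\in\gamma_0^\delta\}$ converges weakly to the corresponding sub-probability exit measure $\omega^a$ of reflected Brownian motion on $\OmegaUniv$ (and similarly with $z$). Passing to the limit in \eqref{eqn::Poisson_plan_decomp} after normalisation gives $\HarmUniv_{\Omega^\delta}(a^\delta,w^\delta_0)/\HarmUniv_{\Omega^\delta}(y_{\mathrm{ref}}^\delta,w^\delta_0)\to\int\Psi\,\ud\omega^a=\Pois_{S_r}(F(a),F(w_0))/\Pois_{S_r}(F(y_{\mathrm{ref}}),F(w_0))$ --- the last equality being optional stopping at $\sigma$ for the harmonic function $\Pois_{S_r}(F(\cdot),F(w_0))$ along reflected Brownian motion, together with conformal covariance of the Poisson kernel (the factor $|F'(w_0)|$ is common to $a$ and $z$ and cancels). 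The same holds with $a$ replaced by $z$, and dividing gives \eqref{eqn::Poisson_plan_reduced}, hence Lemma~\ref{lem::Poisson_limit}.

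The main obstacle is precisely \eqref{eqn::Poisson_plan_reduced}: it is a convergence-of-discrete-harmonic-measure statement on the \emph{unbounded} universal cover carrying a \emph{reflecting} boundary, so one must orchestrate the reflected-Brownian-motion invariance principle near $\BdryIn$, the winding estimate to truncate the cover and dispose of its two ends, and the correct near-boundary behaviour at the single Dirichlet vertex $w_0$. The device of working throughout with ratios against a fixed reference point on the macroscopic crosscut $\gamma_0$ is what lets us sidestep identifying the lattice-dependent constant relating the harmonic measure of one boundary vertex to the continuum Poisson kernel density, which would otherwise be the delicate point.
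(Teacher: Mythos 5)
Your proposal is correct and follows essentially the same route as the paper: precompactness of the normalised discrete harmonic functions via Harnack-type estimates, identification of any subsequential limit as the mixed Dirichlet--Neumann Poisson kernel on the strip, and an exponential-in-$|m|$ winding estimate (the paper's bound $\HarmUniv_{\Omega^\delta}(z^\delta,w^\delta_m)/\HarmUniv_{\Omega^\delta}(z^\delta,w^\delta)\le Cq^{|m|}$) to sum over the sheets for~\eqref{eqn::conv_Poisson_limit}. The crosscut decomposition and the weak convergence of exit measures on $\gamma_0$ are harmless but redundant: once the locally uniform convergence of $\Psi_\delta$ to the Poisson-kernel ratio is established, evaluating at $a$ and $z$ already gives~\eqref{eqn::Poisson_plan_reduced}.
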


\begin{proof}
The proof comprises standard discrete harmonic arguments.  
Note that the right side of~\eqref{eqn::Poisson_limit} is independent of the normalisation of the Poisson kernel. 
Denote by $z_m^\delta\in \LVUniv^\delta$ and $z_m\in \OmegaUniv$ the $m$:th copies of $z^\delta$ and $z$. 
Observe that 
$\HarmUniv_{\Omega^\delta}(z_0^\delta,w_0^\delta) = \HarmUniv_{\Omega^\delta}(z^\delta_m,w^\delta_m)$
and the ratio $\HarmUniv_{\Omega^\delta}(\cdot,w^\delta_m)/\HarmUniv_{\Omega^\delta}(z^\delta_m,w^\delta_m)$
is discrete harmonic on $\LVUniv^\delta$, equalling $1$ at $z^\delta_m$ and $0$ on $\BdryOutVUniv^\delta\setminus \{w^\delta_m\}$. 
By a discrete Harnack-type inequality, for every $\eps>0$, there are constants $A>0$ and $\alpha>0$ such that 
\begin{align*}
\frac{\HarmUniv_{\Omega^\delta}(u,w^\delta_m)}{\HarmUniv_{\Omega^\delta}(z^\delta_m,w^\delta_m)}\le A
\qquad \textnormal{and} \qquad
\bigg| \frac{\HarmUniv_{\Omega^\delta}(u,w^\delta_m)}{\HarmUniv_{\Omega^\delta}(z^\delta_m,w^\delta_m)}
- \frac{\HarmUniv_{\Omega^\delta}(v,w^\delta_m)}{\HarmUniv_{\Omega^\delta}(z^\delta_m,w^\delta_m)} \bigg| \le A \, |u-v|^a , 
\qquad u,v\in\LVUniv^\delta\setminus B(w^\delta_m,\eps) .
\end{align*}
By the Arzela--Ascoli theorem, this (and the pointwise convergence on $\BdryOutVUniv^\delta\setminus \{w^\delta_m\}$) imply the pre-compactness of 
$\big\{\HarmUniv_{\Omega^\delta}(\cdot,w^\delta_m)/\HarmUniv_{\Omega^\delta}(z^\delta_m,w^\delta_m)\big\}_{\delta>0}$ with respect to the uniform topology. Suppose $h$ is any subsequential limit. 
By the uniform convergence, we have $h=0$ on $\BdryOutUniv\setminus \{w_m\}$ and $h(z_m)=1$ at 
$z_m = \lim_{\delta \to 0} z^\delta_m$.
By constructing the harmonic conjugate locally (e.g., see the proof of Proposition 4.2 in~\cite{LSW:Conformal_invariance_of_planar_LERW_and_UST}), we see that $\partial_n (h\circ(\phiUniv_r)^{-1})|_{\R}=0$.  
Thus, $h(x)$ equals the right-hand side of~\eqref{eqn::Poisson_limit}.

For~\eqref{eqn::conv_Poisson_limit}, similarly to~\eqref{eqn::wind_control_aux1}, there exists a constant $q\in (0,1)$ and a constant $C>0$ such that  
\begin{align*}
\frac{\HarmUniv_{\Omega^\delta}(z^\delta, w^\delta_m)}{\HarmUniv_{\Omega^\delta}(z^\delta,w^\delta)}\le C q^{|m|}.
\end{align*}
Combining this with~\eqref{eqn::Poisson_limit}, we obtain~\eqref{eqn::conv_Poisson_limit}.
\end{proof}

\begin{proof}[Proof of Lemma~\ref{lem::deltato0odd}]
The asserted Equation~\eqref{eqn::scaling_det_odd} was proved in Equation~\eqref{eqn::conv_Poisson_limit} in Lemma~\ref{lem::Poisson_limit}.
The asymptotics~\eqref{eqn::determinodd} and~\eqref{eqn::determineven} are given 
by~\cite[Proposition~5.5]{Arista-OConnell:Loop-erased_walks_and_random_matrices}, which we recall briefly below.
By Equation~\eqref{eqn::beta_prob_continuous} and~\cite[Lemma~A.2]{Arista-OConnell:Loop-erased_walks_and_random_matrices}, we have
\begin{align}\label{eqn::odd_det_aux}
\;&\det \big( h_{2\pi\beta}(x_i, v_j)\big)_{1\le i,j \le n} 
\\
\notag
=\;&\frac{1}{(2\pi)^n}\sum_{\substack{k_1<\cdots<k_n\\ k_i\in\Z}} 
\det \big( \ee^{\ii (\beta-k_i) \arg(\phi_r(x_j))} \big)_{1\le i,j\le n} 
\; \det\big( \ee^{-\ii (\beta-k_i) \arg(\phi_r(v_j))} \big)_{1\le i,j\le n} 
\; \prod_{i=1}^{n}\textnormal{sech}\big(|\log r|(\beta-k_i)\big).
\end{align}
We first consider the case of odd $n$.
\begin{itemize}[leftmargin=*]
\item If $\beta\in [0,\frac{1}{2})\cup(\frac{1}{2},1)$, the minimal power of $r$ is obtained at
$(k_1,\ldots,k_n)=(-\frac{n-1}{2}+N_\beta,\ldots,\frac{n-1}{2}+N_\beta)$.
\cite[Proposition~5.5]{Arista-OConnell:Loop-erased_walks_and_random_matrices} thus yields
\begin{align*}
\;& \det \big( \ee^{\ii (\beta-k_i) \arg(\phi_r(x_j))} \big)_{1\le i,j\le n} 
\; \det \big(\ee^{-\ii (\beta-k_i) \arg(\phi_r(v_j))} \big)_{1\le i,j\le n} 
\; \prod_{i=1}^{n}\textnormal{sech}\big(|\log r|(\beta-k_i)\big) 
\\
=\;&\big(2^n \ee^{\ii(\beta-N_\beta)c}+o_r(1)\big) 
\; r^{\frac{n^2-1}{4}+|\beta-N_\beta|} \; \Upsilon(\bs{x},\bs{v})  , \qquad r \to 0 .
\end{align*}
By Lemma~\ref{lem::det_cont} given below, there exists a constant $C>0$ such that
\begin{align*}
&\big|\det \big(\ee^{\ii (\beta-k_i) \arg(\phi_r(x_j))} \big)_{1\le i,j\le n}\big| \; 
\big| \det \big(\ee^{-\ii (\beta-k_i) \arg(\phi_r(v_j))} \big)_{1\le i,j\le n}\big|
\;\prod_{i=1}^{n}\textnormal{sech}\big(|\log r|(\beta-k_i)\big)\\
\le \;& C \; r^{\sum_{i=1}^{n}|\beta-k_i|} \;\prod_{i=1}^{n}(|k_i|+1)^n \; \Upsilon(\bs{x},\bs{v})  , \qquad r \to 0.
\end{align*}
Plugging this into~\eqref{eqn::odd_det_aux}, we obtain~\eqref{eqn::determinodd}.

\item If $\beta=\frac{1}{2}$, the minimal power of $r$ is obtained at
$(k_1,\ldots,k_n)=(-\frac{n-1}{2}+\beta-\frac{1}{2},\ldots,\frac{n-1}{2}+\beta-\frac{1}{2})$, 
or $(k_1,\ldots,k_n)=(-\frac{n-1}{2}+\beta+\frac{1}{2},\ldots,\frac{n-1}{2}+\beta+\frac{1}{2})$. 
Equivalently, $\beta-\frac{1}{2}$ and $\beta+\frac{1}{2}$ are both integers closest to $\beta$. 
By a similar computation as above in the proof of~\eqref{eqn::determinodd}, we obtain~\eqref{eqn::determinodd2}.
\end{itemize}
Next, we consider the case of even $n$. 
In this case, the minimal power of $r$ is obtained at $(k_1,\ldots,k_n)=(-\frac{n}{2}+N_\beta+1,\ldots,\frac{n}{2}+N_\beta)$, and~\cite[Proposition~5.5]{Arista-OConnell:Loop-erased_walks_and_random_matrices} yields
\begin{align*}
\;&\det \big(\ee^{\ii (\beta-k_i) \arg(\phi(x_j))}\big)_{1\le i,j\le n}
\; \det\big(\ee^{-\ii (\beta-k_i) \arg(\phi(v_j))}\big)_{1\le i,j\le n}
\; \prod_{i=1}^{n}\textnormal{sech}\big(|\log r|(\beta-k_i)\big) 
\\
=\;&\big(2^n \ee^{\ii(\beta-N_\beta-\frac{1}{2})c}+o_r(1)\big)
\; r^{\frac{n^2}{4}} \; \Upsilon(\bs{x},\bs{v}) , \qquad r \to 0 .
\end{align*}
By a similar argument as in the proof of~\eqref{eqn::determinodd}, we obtain~\eqref{eqn::determineven}. This completes the proof. 
\end{proof}

\begin{proof}[Proof of Theorem~\ref{thm::total_winding}]
This is a collection of Proposition~\ref{prop::total_winding}  and Lemmas~\ref{lem:BLM_conv} and~\ref{lem::deltato0odd}.
\end{proof}

\begin{lemma}\label{lem::det_cont}
Suppose $f_1,\ldots f_n$ are analytic functions on $\partial\D$, and let $f_i^{(j)}$ denote the $j$-th derivative of $f_i$.
Then, there exists a constant $C>0$, which only depends on $n$, such that
\begin{align}\label{eqn::genbound}
\big|\det \big(f_i(z_j)\big)_{1\le i,j\le n}\big| \le C \, \Big(\prod_{1\le i<j\le n} |z_j-z_i| \Big) \bigg( \prod^n_{i=1} \max_{1\le j\le n}\max_{z\in\partial\D}|f^{(j-1)}_i(z)| \bigg) .
\end{align}
\end{lemma}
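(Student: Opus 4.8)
The plan is to exploit the analyticity of the $f_i$ by writing the Vandermonde-type factor $\prod_{i<j}|z_j - z_i|$ explicitly out of the determinant, which is the standard "confluent Vandermonde" trick. Concretely, I would first observe that the determinant $\det(f_i(z_j))_{1 \le i,j \le n}$ vanishes whenever two of the points $z_j$ coincide, since then two columns are equal. More is true: viewing the determinant as a function of $(z_1, \ldots, z_n)$, it is an analytic function in a neighbourhood of $(\partial \D)^n$ in each variable (because each $f_i$ is analytic on $\partial\D$, hence extends analytically to an annular neighbourhood of $\partial\D$), and it is antisymmetric under permuting the $z_j$. Therefore it is divisible, in the ring of such analytic functions, by the Vandermonde determinant $V(z_1, \ldots, z_n) = \prod_{1 \le i < j \le n}(z_j - z_i)$; the quotient
\begin{align*}
Q(z_1, \ldots, z_n) := \frac{\det(f_i(z_j))_{1 \le i,j \le n}}{\prod_{1 \le i<j \le n}(z_j - z_i)}
\end{align*}
is again analytic in each variable on a fixed neighbourhood of $(\partial\D)^n$, with no singularities on the diagonal.

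The second step is to bound $Q$ uniformly on $(\partial\D)^n$ by a constant times $\prod_{i=1}^n \max_{1 \le j \le n}\max_{z \in \partial\D}|f_i^{(j-1)}(z)|$. To do this I would use the integral representation: along any path where several $z_j$ collide, the quotient $Q$ is obtained by (iterated) divided differences of the rows, so one can write $Q$ as a determinant whose $i$-th row consists of divided differences of $f_i$ at the points $z_1, \ldots, z_n$; in turn, by the Hermite–Genocchi formula each such divided difference of order $k$ equals an average of $f_i^{(k)}$ over a simplex, hence is bounded by $\max_{z \in [\text{conv of the } z_j]}|f_i^{(k)}(z)|$ up to a combinatorial constant. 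Since the $z_j$ all lie on $\partial\D$ and the relevant convex hulls lie in the closed unit disc, and the $f_i$ are analytic on $\partial\D$ (so I may take the maximum of $|f_i^{(j-1)}|$ over $\partial\D$ after using the maximum principle on the analytic extension, or simply enlarge the domain slightly), each entry of this row-reduced determinant is controlled by $\max_{1 \le j \le n}\max_{z \in \partial\D}|f_i^{(j-1)}(z)|$. Expanding the determinant by the Leibniz formula over $\mathfrak{S}_n$ then gives the bound with $C = n!$ (or a slightly larger explicit constant absorbing the Hermite–Genocchi normalisations), depending only on $n$.

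Combining the two steps, $|\det(f_i(z_j))| = |Q(z_1,\ldots,z_n)| \cdot \prod_{i<j}|z_j - z_i| \le C\,\big(\prod_{i<j}|z_j-z_i|\big)\prod_{i=1}^n \max_j \max_{\partial\D}|f_i^{(j-1)}|$, which is exactly~\eqref{eqn::genbound}.

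The main obstacle, and the only genuinely delicate point, is justifying the uniform control of the quotient $Q$ near the diagonal when the $z_j$ are close but not equal: one must make sure that the bound does not blow up as $z_i \to z_j$. This is handled cleanly by the divided-difference / Hermite–Genocchi representation rather than by naive estimation of numerator and denominator separately; the key input is that each $f_i$, being analytic on $\partial\D$, has all derivatives bounded on $\partial\D$, so the divided differences are uniformly bounded regardless of how the points cluster. Once this representation is in place, the rest is bookkeeping: counting the combinatorial constants in the Leibniz expansion and in Hermite–Genocchi, all of which depend only on $n$. One technical remark: strictly speaking "analytic on $\partial\D$" should be taken to mean analytic on an open neighbourhood of $\partial\D$ in $\C$ (which is how these functions arise in the application, e.g. $z \mapsto z^{\pm(\beta-k)}$ restricted near the unit circle), so that divided differences along chords make sense; this is implicit in the statement and causes no difficulty.
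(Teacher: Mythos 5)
Your proposal is correct and follows essentially the same route as the paper: extract the Vandermonde factor via iterated divided differences within each row, control each divided difference through the Hermite--Genocchi integral representation in terms of $f_i^{(j-1)}$ evaluated at convex combinations of the $z_k$, and expand the resulting determinant by the Leibniz formula. The paper implements exactly this by defining the divided-difference entries $F_{i,j}$ recursively and writing out the explicit iterated integral, so the two arguments coincide.
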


\begin{proof}
Writing 
\begin{align*}
\det \big(f_i(z_j)\big)_{1\le i,j\le n} = \; & \Big(\prod_{1\le i<j\le n}(z_j-z_i) \Big) \; \det \big(F_{i,j}(z_j)\big)_{1\le i,j\le n} ,
\\
F_{i,1}(w) := f_i(w) ,
\qquad 
F_{i,j}(w) := \; & \frac{F_{i,j-1}(w)-F_{i,j-1}(z_{j-1})}{w-z_{j-1}}, \qquad i \in \{1,\ldots, n\} , \; j  \in \{2,\ldots, n\} ,
\end{align*}
we inductively find that
\begin{align*}
F_{i,j}(z_j) =\int_0^1 \lambda_1^{j-1} \ud\lambda_1\int_0^1 \lambda_2^{j-2}\ud\lambda_2 \cdots\int^1_{0}\ud\lambda_{j-1}
\; f_i^{(j-1)}\bigg(\sum^{j-1}_{k=1}\lambda_1\cdots\lambda_{k-1}(1-\lambda_k)z_k \, + \, \lambda_1\cdots\lambda_{j-1}z_j\bigg) ,
\end{align*}
and expanding the determinant yields~\eqref{eqn::genbound}. 
\end{proof}

\section{Scaling limit of LERW rays: proof of Theorem~\ref{thm::conv_curves}}
\label{sec::curves}

In this section, we will use the same setup and notation as in Sections~\ref{sec::main}--\ref{sec::annulus}.
Our aim is to establish the convergence of the hitting points of the UST branches at the outer boundary (Proposition~\ref{prop::hittingpoints} in Section~\ref{sec::det}), and the convergence of UST branches themselves (Proposition~\ref{prop::scaling_curve} in Section~\ref{sec::scaling}).
We finish with the proof of Theorem~\ref{thm::conv_curves} in the end of Section~\ref{sec::scaling}.
Knowing the scaling limit of the curves, we compute their winding variance in Section~\ref{subsec:truncated_winding} (yielding Proposition~\ref{coro::truncated_winding}).

\subsection{Scaling limit of the law of hitting points}\label{sec::det}

Next, we derive the scaling limit of the conditional law of the hitting points of the UST branches, 
$\bs\gamma^\delta(\bs\exitT^\delta) = (\gamma_1^\delta(\exitT_1^\delta),\ldots,\gamma_n^\delta(\exitT_n^\delta))$, to the outer boundary $\BdryOut^\delta$ 
on the event $E^\delta_{\bs{x}}$, by first letting $\delta\to 0$ and then shrinking $\diam(\BdryIn) \to 0$. 
Let $\alpha_1,\ldots,\alpha_n$ be $n$ disjoint arcs located on $\BdryOut$ in counterclockwise order, and suppose that 
$\alpha^\delta_1,\ldots,\alpha_n^\delta$ are $n$ disjoint arcs on $\BdryOut^\delta$ such that 
$\dist_X(\alpha_j^\delta,\alpha_j)\to 0$ as $\delta\to 0$. 
Consider the event
\begin{align*}
E^\delta_{\bs{x}, \bs{\alpha}, k} 
:=\{\vec\gamma^\delta_j\textnormal{ hits }\BdryOut^\delta\textnormal{ at }\alpha_{j+k}^\delta,\textnormal{ for each }1\le j\le n\},
\qquad 1\le k\le n ,
\end{align*}
where we use the convention that $\alpha^\delta_{j+k}=\alpha^\delta_{j+k-n}$ if $j+k>n$. 
Define also the event
\begin{align}\label{eqn::defE3}
E^\delta_{\bs{x}, \bs{\alpha}} 
= E(\Omega^\delta;x_1^\delta,\ldots,x_n^\delta;\alpha_1^\delta,\ldots,\alpha_n^\delta) 
:= \bigcup_{k=0}^{n-1}E^\delta_{\bs{x}, \bs{\alpha}, k}.
\end{align}
Recall that we denote by $\OmegaFill$ the simply connected domain bounded by $\BdryOut$, and we let
$\phi \colon \OmegaFill \to \D$ be the conformal bijection sending it onto the disc $\D = \{z \in \C \,|\, |z|<1\}$
such that $\phi(0)=0$ and $\phi'(0)>0$. 
Let $\phi_r \colon \Omega \to \A_r$ be the conformal isomorphism onto the annulus 
$\A_r = A(r,1)$, so that $\phi_r$ coincides with the restriction to $\Omega$ of the conformal isomorphism $\phi \colon \Omega \to \D$. 

\medskip
In the odd case, Proposition~\ref{prop::hittingpoints} gives the asymptotics as $r \to 0$ claimed in Theorem~\ref{thm::partfun}.
Observe, however, that in the even case, the Brownian loop soup term depends on $r$, because $\Omega$ does.  
Therefore, we cannot directly infer the asymptotics as $r \to 0$ from Proposition~\ref{prop::hittingpoints} in the even case. 

\begin{proposition}\label{prop::hittingpoints}
For every $n\ge 1$, the following scaling limit holds. 
When $n$ is odd, as $r \to 0$, we have
\begin{align}\label{eqn::hitting_odd_aux01}
\lim_{\delta\to 0} \PP^\delta[E^\delta_{\bs{x}, \bs{\alpha}}]
= \; & \frac{r^{\frac{n^2-1}{4}}}{\pi^{n}} \, \prod_{1\le i<j\le n}\sin\bigg(\frac{\arg\phi(x_j)-\arg\phi(x_i)}{2}\bigg)
\\
\nonumber 
\; & \times 
\int_{\UC^n}\one{\{\ee^{\ii \bs{\theta}}\in\LX_n\}} 
\one{\{\ee^{\ii\theta_j}\in\phi(\alpha_j),\, \forall~j\}} 
\prod_{1\le j<k\le n} \sin\Big(\frac{\theta_j - \theta_k}{2}\Big) \prod_{j=1}^{n}\ud\theta_j 
\; + \; o\big(r^{\frac{n^2-1}{4}}\big) . 
\end{align}
When $n$ is even, as $r \to 0$, we have
\begin{align}\label{eqn::hitting_even_aux01}
\lim_{\delta\to 0} \PP^\delta[E^\delta_{\bs{x}, \bs{\alpha}}]
= \; & \exp \Big(\! -2\mu_\Omega^{\textnormal{ND}}[\{\ell\in\LL_* \cond \tfrac{\ph(\ell)}{2\pi} \textnormal{ is odd }\}] \Big) 
\Bigg( \frac{r^{\frac{n^2}{4}}}{\pi^{n}} \, \prod_{1\le i<j\le n}\sin\bigg(\frac{\arg\phi(x_j)-\arg\phi(x_i)}{2}\bigg)
\\
\nonumber 
\; & \quad \times 
\int_{\UC^n}\one{\{\ee^{\ii \bs{\theta}}\in\LX_n\}} 
\one{\{\ee^{\ii\theta_j}\in\phi(\alpha_j),\, \forall~j\}} 
\prod_{1\le j<k\le n} \sin\Big(\frac{\theta_j - \theta_k}{2}\Big) \prod_{j=1}^{n}\ud\theta_j 
\; + \; o\big(r^{\frac{n^2-1}{4}}\big) 
\Bigg)  . 
\end{align}
Moreover, for every $n\ge 1$, the following scaling limit holds: 
\begin{align}
\nonumber
\lim_{r \to 0}\lim_{\delta\to 0} 
\frac{\PP^\delta[E^\delta_{\bs{x}, \bs{\alpha}}]}{\PP^\delta[E^\delta_{\bs{x}}]} 
= \; & \frac{2^{\frac{n(n-1)}{2}}}{\LZ_n}
\int_{\UC^n}\one{\{\ee^{\ii \bs{\theta}}\in\LX_n\}} 
\one{\{\ee^{\ii\theta_j}\in\phi(\alpha_j),\, \forall~j\}} 
\prod_{1\le j<k\le n} \sin\Big(\frac{\theta_j - \theta_k}{2}\Big) \prod_{j=1}^{n}\ud\theta_j \\
\label{eqn::scalingprob}
= \; & \frac{1}{\LZ_n}
\int_{\UC^n}\one{\{\ee^{\ii \bs{\theta}}\in\LX_n\}} 
\one{\{\ee^{\ii\theta_j}\in\phi(\alpha_j),\, \forall~j\}} 
\prod_{1\le j<k\le n}\big|\ee^{\ii\theta_j}-\ee^{\ii\theta_k}\big|\prod_{j=1}^{n}\ud\theta_j,
\end{align}
where
\begin{align*}
\LZ_n := \; & \int_{\UC^n}\one{\{\ee^{\ii \bs{\theta}}\in\LX_n\}} 
\prod_{1\le j<k\le n}\big|\ee^{\ii\theta_j}-\ee^{\ii\theta_k}\big|\prod_{j=1}^{n}\ud\theta_j \\
= \; & 2^{\frac{n(n-1)}{2}} \int_{\UC^n}\one{\{\ee^{\ii \bs{\theta}}\in\LX_n\}} 
\prod_{1\le j<k\le n} \sin\Big(\frac{\theta_j - \theta_k}{2}\Big) \prod_{j=1}^{n}\ud\theta_j .
\end{align*}
In particular, the conditional law of $\bs\gamma^\delta(\bs\exitT^\delta)$ given $E^\delta_{\bs{x}}$ converges weakly when first letting $\delta\to 0$ and then shrinking $\diam(\BdryIn)\to 0$. 
The density of the limit points $\bs{y}=(y_1,\ldots,y_n)$ on $\BdryOut^n$ is obtained from~\eqref{eqn::density} by conformal invariance;
the density of $\phi(\bs{y}) = (\phi(y_1),\ldots,\phi(y_n))$ on $\UC^n$ equals $\rho(\phi(\bs{y}))$ in~\eqref{eqn::density}.
\end{proposition}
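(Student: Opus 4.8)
The plan is to combine the exact discrete identities from Section~\ref{sec::annulus} with the determinant asymptotics from Lemma~\ref{lem::deltato0odd} and the loop soup convergence from Lemma~\ref{lem:BLM_conv}, and then to replace the ``zipper'' constraint by the exit-arc constraint. First I would observe that on the event $E^\delta_{\bs{x},\bs{\alpha}}$ the branches must connect the $x_j^\delta$ to the arcs $\alpha_j^\delta$ in the cyclic order, so $\PP^\delta[E^\delta_{\bs{x},\bs{\alpha}}]$ can be written (via Wilson's algorithm and planarity) as a sum over the cyclic shifts $\sigma^k$ of a Fomin-type determinant, but now with the outer endpoints integrated against the arcs. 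Concretely, using Lemma~\ref{lem::nodd0} (resp.~Lemma~\ref{lem::neven}), for $n$ odd one gets $\PP^\delta[E^\delta_{\bs{x},\bs{\alpha}}]$ as a boundary integral of $\det\big(h^\delta_0(x_i^\delta,\cdot)\big)$ over $\bs{v}\in\prod_j\alpha_j^\delta$, while for $n$ even the signed version with $\beta=\pi$ appears, producing the prefactor $\exp(\Lambda_0[\LL_*^\delta]-\Lambda_\pi[\LL_*^\delta])$, which by Lemma~\ref{lem:BLM_conv} (applied with $\beta=1/2$, since $\ph(\ell)/2\pi$ enters) converges to $\exp\big(-2\mu_\Omega^{\textnormal{ND}}[\{\ell\in\LL_*\cond \ph(\ell)/2\pi \textnormal{ odd}\}]\big)$.

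Next I would take $\delta\to0$. The discrete Poisson kernels $h^\delta_\beta(x_i^\delta,v_j^\delta)$, normalised by the universal-cover harmonic measure $\prod_j\HarmUniv_{\Omega^\delta}(z^\delta,v_j^\delta)$, converge by Lemma~\ref{lem::Poisson_limit}; summing over the $n$ cyclic shifts and integrating the continuum kernel $h_\beta(x_i,v_j)$ (with $\beta=0$ for odd $n$, $\beta=\pi$ for even $n$) over $\phi(\alpha_j)$ turns the determinant into the continuum determinant $\det(h_\beta(x_i,v_j))$ integrated over the arcs. I would then invoke Lemma~\ref{lem::deltato0odd}, more precisely the representation~\eqref{eqn::odd_det_aux} of $\det(h_{2\pi\beta}(x_i,v_j))$ as a sum over $k_1<\cdots<k_n$ of a product of two Vandermonde-like determinants times $\prod_i\mathrm{sech}(|\log r|(\beta-k_i))$. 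Specialising to $\beta=0$ (odd $n$) the dominant term as $r\to0$ comes from $(k_1,\ldots,k_n)=(-\tfrac{n-1}{2},\ldots,\tfrac{n-1}{2})$, giving $r^{(n^2-1)/4}$ times $\prod_{i<j}\sin\big((\arg\phi(v_j)-\arg\phi(v_i))/2\big)$; integrating this over $\prod_j\phi(\alpha_j)$ and identifying $\arg\phi(v_j)=\theta_j$ produces exactly the integral in~\eqref{eqn::hitting_odd_aux01}, and the prefactor $\prod_{i<j}\sin\big((\arg\phi(x_j)-\arg\phi(x_i))/2\big)/\pi^n$ comes from the $x$-Vandermonde and the normalisation of $\Pois_{S_r}$. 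For $\beta=\pi$ (even $n$) the same computation gives the leading power $r^{n^2/4}$ and the integral in~\eqref{eqn::hitting_even_aux01}, multiplied by the loop-soup prefactor already identified; the error bounds use Lemma~\ref{lem::det_cont} to control the non-dominant terms in the $k$-sum, exactly as in the proof of Lemma~\ref{lem::deltato0odd}.

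For the ratio~\eqref{eqn::scalingprob}, I would take $\alpha_j=\BdryOut$ for all $j$ (i.e., no outer constraint) in the above to get the asymptotics of $\PP^\delta[E^\delta_{\bs{x}}]$: in the odd case this is just Theorem~\ref{thm::partfun}, while in the even case the loop-soup prefactor and the power $r^{n^2/4}$ both appear — crucially, the \emph{same} prefactor $\exp\big(-2\mu_\Omega^{\textnormal{ND}}[\{\ell\in\LL_*\cond \ph(\ell)/2\pi\textnormal{ odd}\}]\big)$ and the \emph{same} power of $r$ occur in $\PP^\delta[E^\delta_{\bs{x},\bs{\alpha}}]$, so they cancel exactly in the ratio, leaving only the ratio of the two $\theta$-integrals. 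Using $|\ee^{\ii\theta_j}-\ee^{\ii\theta_k}|=2|\sin((\theta_j-\theta_k)/2)|$ and the fact that $\sin((\theta_j-\theta_k)/2)>0$ on $\T_0^n$, the factor $2^{n(n-1)/2}$ converts the product of sines to the product of chordal distances, yielding both lines of~\eqref{eqn::scalingprob} and identifying $\LZ_n$ with the COE normalisation~\eqref{eqn::density}. Weak convergence of the conditional law of $\bs\gamma^\delta(\bs\exitT^\delta)$ then follows from~\eqref{eqn::scalingprob} applied to all choices of arcs $\bs\alpha$ (a standard argument identifying a limit law by its values on a generating family of open sets, together with tightness on the compact space $\BdryOut^n$), and conformal covariance of the density under $\phi$ is immediate since $\phi$ maps $\BdryOut$ to $\UC$.

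The main obstacle, I expect, is the interchange of limits and the uniformity needed to pass from the exact discrete formulas to the stated continuum asymptotics \emph{with} the correct $o(r^{(n^2-1)/4})$ error terms: one must control the tail of the $k$-sum in~\eqref{eqn::odd_det_aux} uniformly over the arcs $\bs\alpha$ (done via Lemma~\ref{lem::det_cont}), control the convergence of $h^\delta_\beta/\HarmUniv$ uniformly down to the boundary arcs (this is where the analyticity of $\BdryIn$ and the Harnack/harmonic-conjugate arguments in Lemma~\ref{lem::Poisson_limit} are used), and, in the even case, verify that the $\delta\to0$ limit of the signed loop-soup prefactor is genuinely the claimed Brownian quantity and is bounded away from $0$ and $\infty$ (from Lemma~\ref{lem:BLM_conv} together with the finiteness of $\mu_\Omega^{\textnormal{ND}}$ on loops of non-trivial winding, which is Lemma~\ref{lem::time_control}). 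Everything else is bookkeeping with Vandermonde determinants and trigonometric identities.
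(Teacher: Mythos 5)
Your route is the paper's own: Wilson/Fomin identities summed over the exit arcs, the determinant asymptotics of Lemma~\ref{lem::deltato0odd}, the loop-soup convergence of Lemma~\ref{lem:BLM_conv}, and cancellation of the power of $r$ and of the even-$n$ loop-soup prefactor in the ratio. The formula for $\PP^\delta[E^\delta_{\bs{x}, \bs{\alpha}}]$ with disjoint arcs, the uniformity via Lemma~\ref{lem::det_cont}, and the final identification of the COE density are all essentially as in the paper.

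There is, however, one genuine gap: the asymptotics of the denominator $\PP^\delta[E^\delta_{\bs{x}}]$. You propose to ``take $\alpha_j=\BdryOut$ for all $j$'' in the arc formula. This does not work. The identity $\lim_{\delta\to 0}\PP^\delta[E^\delta_{\bs{x}, \bs{\alpha}}]=\det\big(h_{\A_r}(\phi(x_i),\phi(\alpha_j))\big)$ is valid only for \emph{disjoint} arcs in counterclockwise order: the planarity argument that reduces Fomin's sum to cyclic shifts (and hence to a single determinant) breaks down for overlapping arcs, and the naive substitution $\alpha_j=\BdryOut$ produces a matrix whose columns are all equal (each entry being the harmonic measure of the full outer boundary), whose determinant vanishes for $n\ge 2$. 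Note also that, by antisymmetry, the quantity one actually needs, $\sum_{m_1<\cdots<m_n}\det\big(h(x_i,\xi_{m_j})\big)$, cannot be recovered by merging columns via multilinearity. The missing argument --- which is what the paper supplies --- is to partition $\BdryOut^\delta$ into $m$ small arcs, write $\PP^\delta[E^\delta_{\bs{x}}]$ as the sum of $\PP^\delta[E^\delta_{\bs{x}, \bs{\xi}}]$ over the $\binom{m}{n}$ choices of $n$ disjoint arcs plus the collision event $A^\delta_{\bs{\xi}}$ that two branches exit through the same small arc, bound $\PP^\delta[A^\delta_{\bs{\xi}}]\lesssim n^2/m$ by a union bound over pairs, and let $m\to\infty$ so that the sum of determinants becomes a Riemann sum for the integral over the ordered configuration space $\LX_n$. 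Without this, the ratio~\eqref{eqn::scalingprob} has no denominator. Relatedly, you cannot invoke Theorem~\ref{thm::partfun} for the odd case here: that theorem is deduced from the present proposition, so the citation would be circular. The remainder of your argument is sound.
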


\begin{proof}
\textbf{$n$ odd.} We first consider the case where $n$ is odd. 
For a boundary arc $\alpha \subset \partial B(0,1)$, denote by $h_{\smash{\A_{r}}}(\cdot,\alpha)$ 
the bounded harmonic function on $\smash{\A_{r}}$ with mixed Dirichlet-Neumann boundary conditions:
\begin{align*}
h_{\smash{\A_{r}}}(\cdot,\alpha)|_{\partial B(0,1)\setminus\alpha}=0,
\qquad h_{\smash{\A_{r}}}(\cdot,\alpha)|_{\alpha}=1 ,
\qquad\textnormal{and}\qquad \partial_n h_{\smash{\A_{r}}}(\cdot,\alpha)|_{\partial B(0,r)}=0.
\end{align*}
After taking the sum over the edges along $\alpha^\delta_j$ for $j = 1, \ldots, n$, combining~\eqref{eqn::odd} from Lemma~\ref{lem::nodd0} 
and~\eqref{eqn::determinodd} from Lemma~\ref{lem::deltato0odd}, 
the asserted Equation~\eqref{eqn::hitting_odd_aux01} follows from the exact determinantal identity
\begin{align}\label{eqn::hitting_odd_aux1}
\lim_{\delta\to 0} \PP^\delta[E^\delta_{\bs{x}, \bs{\alpha}}]
= \; & \det \big( h_{\A_r}(\phi(x_i),\phi(\alpha_j)) \big)_{1\le i,j\le n} .
\end{align}
For the denominator $\PP^\delta[E^\delta_{\bs{x}}]$, take a large $m\in\N$ and set 
$\xi_{j,m}:=\phi^{-1}\big(\ee^{\ii\frac{2\pi j}{m}},\ee^{\ii\frac{2\pi(j+1)}{m}}\big)$,
and let $\xi_{j,m}^\delta$ be its discrete approximation on $\BdryOut^\delta$, for $0\le j\le m-1$. 
Write $\bs{\xi} = (\xi_{1,m}^\delta, \ldots , \xi_{m,m}^\delta)$. 
By~\eqref{eqn::hitting_odd_aux1}, we find 
\begin{align}\label{eqn::hitting_odd_aux2}
\lim_{\delta\to 0} \sum_{0\le m_1\cdots<m_n\le m-1} \PP^\delta[E^\delta_{\bs{x}, \bs{\xi}}]
= \; & \sum_{0\le m_1\cdots<m_n\le m-1} \det\big(h_{\A_r}(\phi(x_i),\phi(\xi_{m_j,m}))\big)_{1\le i,j\le n}.
\end{align}
Consider now the event 
\begin{align*}
A^\delta_{\bs{\xi}}
:= \big\{\textnormal{there exist at least two hitting points of }\{\gamma_1^\delta,\ldots,\gamma^\delta_n\} \textnormal{ in }\xi^\delta_{j,m}\textnormal{ for some }0\le j\le m-1 \big\}. 
\end{align*}
Then, up to a constant independent of $n$ and $m$, we have
\begin{align}\label{eqn::hitting_odd_aux3}
\limsup_{\delta\to 0} \PP^\delta[A^\delta_{\bs{\xi}}] 
\leq \; & n^2 \, \sum_{j=0}^{m-1} \max_{1\le i\le n} \Big( h_{\A_r}(\phi(x_i),\phi(\xi_{j,m})) \Big)^2
\lesssim \frac{n^2}{m} .
\end{align}
Combining~(\ref{eqn::hitting_odd_aux01},~\ref{eqn::hitting_odd_aux2},~\ref{eqn::hitting_odd_aux3}) together, we obtain
\begin{align*}
\lim_{\delta\to 0} \PP^\delta[E^\delta_{\bs{x}}] 
= \; & \lim_{m\to\infty}\sum_{0\le m_1\cdots<m_n\le m-1} \det\big(h_{\A_r}(\phi(x_i),\phi(\xi_{m_j,m}))\big)_{1\le i,j\le n}
\\
= \; & \frac{r^{\frac{n^2-1}{4}}}{\pi^{n}}  \, \prod_{1\le i<j\le n}\sin\bigg(\frac{\arg\phi(x_j)-\arg\phi(x_i)}{2}\bigg)
\\
 \; & \times 
\int_{\UC^n}\one{\{\ee^{\ii \bs{\theta}}\in\LX_n\}} 
\one{\{\ee^{\ii\theta_j}\in\phi(\alpha_j),\, \forall~j\}} 
\prod_{1\le j<k\le n} \sin\Big(\frac{\theta_j - \theta_k}{2}\Big) \prod_{j=1}^{n}\ud\theta_j 
\; + \; o\big(r^{\frac{n^2-1}{4}}\big) . 
\end{align*}
For odd $n$, the asserted Equation~\eqref{eqn::scalingprob} now follows combining with the limit~\eqref{eqn::hitting_odd_aux1}.

Now suppose that \textbf{$n$ is even.} The proof is similar. 
Taking the sum over the edges along $\alpha^\delta_j$ for $j = 1, \ldots, n$, combining~\eqref{eqn::even} from Lemma~\ref{lem::neven}
and~\eqref{eqn::determineven} from Lemma~\ref{lem::deltato0odd} with Lemma~\ref{lem:BLM_conv}, 
the asserted Equation~\eqref{eqn::hitting_even_aux01} follows from 
\begin{align*}
\lim_{\delta\to 0} \PP^\delta[E^\delta_{\bs{x}, \bs{\alpha}}]
= \; & \exp \Big(\! -2\mu_\Omega^{\textnormal{ND}}[\{\ell\in\LL_* \cond \tfrac{\ph(\ell)}{2\pi} \textnormal{ is odd }\}] \Big) 
\; \det \big( h_{\A_r}(\phi(x_i),\phi(\alpha_j)) \big)_{1\le i,j\le n} .
\end{align*}
The argument to obtain the asserted Equation~\eqref{eqn::scalingprob} is the same as in the odd case.
\end{proof}

We now establish a relation between Brownian motion with mixed boundary conditions and Brownian motion with Dirichlet boundary conditions. 
Recall that we denote by $\OmegaFill$ the domain bounded by $\BdryOut$.

\begin{proposition}\label{prop::BM_mix_Diri}
Suppose $\simpleCurv$ is a simple curve on $\OmegaFillCl$ which only intersects $\BdryOut$ at its starting point and does not hit $0$. Then, for every $\beta\in\R$, we have
\begin{align}\label{eqn::conv_loop_measure_odd}
\lim_{\diam (\BdryIn)\to 0} \mu^{\textnormal{ND}}_\Omega \big[ \one{\{\ell\in\LL_* \textnormal{ and } \ell\cap\simpleCurv\neq\emptyset\}} \; \ee^{\ii\beta\ph(\ell)} \big] 
= \mu_{\OmegaFill} \big[ \one{\{\ell\in\LL_* \textnormal{ and } \ell\cap\simpleCurv\neq\emptyset\}} \; \ee^{\ii\beta\ph(\ell)} \big].
\end{align}
In particular, we have 
\begin{align}\label{eqn::scaling_constant_poly}
\lim_{\diam (\BdryIn)\to 0} \exp \Big( 2\mu_{\Omega}^{\textnormal{ND}} \big[\ell \in \LL_* \textnormal{ such that } \tfrac{1}{2\pi}\ph(\ell)\textnormal{ is odd and }\ell\cap\simpleCurv\neq\emptyset \big] \Big) 
= \bigg|\frac{\phi_\simpleCurv'(0)}{\phi'(0)}\bigg|^{1/4} ,
\end{align}
where $\phi \colon \OmegaFill \to \D$ and $\phi_\simpleCurv \colon \OmegaFill\setminus\simpleCurv \to \D$ are any conformal isomorphisms fixing the origin. 
\end{proposition}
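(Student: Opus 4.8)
The plan is to prove the convergence~\eqref{eqn::conv_loop_measure_odd} first, and then to read off~\eqref{eqn::scaling_constant_poly} from it. For the convergence, write $\Omega=\OmegaFill\setminus\overline U$, where $U$ is the bounded component of $\C\setminus\BdryIn$; as $\diam(\BdryIn)\to 0$ the sets $\overline U$ shrink to $\{0\}$. A loop $\ell\subset\Omega$ lies in $\LL_*$ precisely when its winding $\ph(\ell)$ around $0$ is non-zero, and on the right-hand side of~\eqref{eqn::conv_loop_measure_odd} $\LL_*$ is read as $\{\ph(\ell)\neq 0\}$. Put $d_0:=\dist(0,\simpleCurv)>0$. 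Any loop with $\ell\cap\simpleCurv\neq\emptyset$ and $\ph(\ell)\neq 0$ has diameter at least $d_0$, so the family of such loops has finite $\mu_{\OmegaFill}$-mass. I would split $\mu_\Omega^{\textnormal{ND}}\big[\one{\{\ell\in\LL_*,\,\ell\cap\simpleCurv\neq\emptyset\}}\,\ee^{\ii\beta\ph(\ell)}\big]$ into the loops avoiding $\overline U$ and those touching $\overline U$. On the first piece $\mu_\Omega^{\textnormal{ND}}$ coincides with $\mu_{\OmegaFill}$ restricted to loops avoiding $\overline U$, and since $\mu_{\OmegaFill}$-a.e.\ loop avoids $0$ this piece converges by dominated convergence to $\mu_{\OmegaFill}\big[\one{\{\ph(\ell)\neq 0,\,\ell\cap\simpleCurv\neq\emptyset\}}\,\ee^{\ii\beta\ph(\ell)}\big]$. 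For the second piece, a loop touching $\overline U\subset B(0,\diam U)$ that also meets $\simpleCurv$ must cross the annular region $B(0,d_0)\setminus B(0,\diam U)$; the Beurling--crosscut estimate from the proof of Lemma~\ref{lem::root_neighbourhood} — insensitive to the reflecting boundary on $\BdryIn$, since that sits inside $B(0,\diam U)$ — bounds the $\mu_\Omega^{\textnormal{ND}}$-mass of such loops by $O(1/\log(d_0/\diam U))\to 0$. This gives~\eqref{eqn::conv_loop_measure_odd}.

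For~\eqref{eqn::scaling_constant_poly}, note that for $\ell\in\LL_*$ one has $\one{\{\ph(\ell)/2\pi\ \textnormal{odd}\}}=\tfrac12\big(1-\ee^{\tfrac{\ii}{2}\ph(\ell)}\big)$, so applying~\eqref{eqn::conv_loop_measure_odd} with $\beta=0$ and $\beta=\tfrac12$ shows that $\exp\!\big(2\mu_\Omega^{\textnormal{ND}}[\ell\in\LL_*:\ \tfrac{\ph(\ell)}{2\pi}\ \textnormal{odd},\ \ell\cap\simpleCurv\neq\emptyset]\big)$ converges to $\exp\!\big(2\mu_{\OmegaFill}[\ell:\ \tfrac{\ph(\ell)}{2\pi}\ \textnormal{odd},\ \ell\cap\simpleCurv\neq\emptyset]\big)$, and it remains to identify this with $\big|\phi_\simpleCurv'(0)/\phi'(0)\big|^{1/4}$. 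Parity of the winding around $0$ is conformally invariant (it is the deck-group class of $\ell$ in the double cover of $\OmegaFill\setminus\{0\}$ branched at $0$), so the quantity depends only on the conformal triple $(\OmegaFill,0,\simpleCurv)$; I would record this and pass to a reference domain via $\phi$. Since the loops of $\OmegaFill$ that miss $\simpleCurv$ are exactly the loops of $\OmegaFill\setminus\simpleCurv$, with unchanged winding around $0$, one has for $0<\eps<d_0$
\[
\mu_{\OmegaFill}\big[\ell:\ \tfrac{\ph(\ell)}{2\pi}\ \textnormal{odd},\ \ell\cap\simpleCurv\neq\emptyset\big]
=\lim_{\eps\to 0}\big(F_\eps(\OmegaFill)-F_\eps(\OmegaFill\setminus\simpleCurv)\big),
\qquad
F_\eps(D):=\mu_D\big[\ell:\ \tfrac{\ph(\ell)}{2\pi}\ \textnormal{odd},\ \ell\not\subset B(0,\eps)\big],
\]
the limit being monotone and finite.

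The last ingredient is the evaluation of $F_\eps$. Conformal invariance of $\mu$ and of the odd-winding event, applied to the uniformizing map of $D$ fixing $0$ (derivative $1/\CR(0;D)$ at the origin), reduces $F_\eps(D)$ to the single functional on $\D$ with cut-off $\eps/\CR(0;D)$ up to a multiplicative $1+o(1)$; evaluating that one functional — the restriction-covariance identity for the Brownian loop measure attached to a marked interior point, in the form supplied by~\cite{Werner:The_conformally_invariant_measure_on_self-avoiding_loops, Field-Lawler:Reversed_radial_SLE_and_the_Brownian_loop_measure} — yields $F_\eps(D)=\tfrac18\log\tfrac{\CR(0;D)}{\eps}+c_0+o(1)$ as $\eps\to0$, with $c_0$ a universal constant. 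Subtracting the two domains cancels $c_0$ and the $\log\eps$ term, and with $\CR(0;\OmegaFill)=1/|\phi'(0)|$, $\CR(0;\OmegaFill\setminus\simpleCurv)=1/|\phi_\simpleCurv'(0)|$ this gives $2\mu_{\OmegaFill}[\cdots]=\tfrac14\log\big|\phi_\simpleCurv'(0)/\phi'(0)\big|$; exponentiating yields~\eqref{eqn::scaling_constant_poly}.

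The main obstacle is exactly this last step: establishing the logarithmic asymptotics of $F_\eps$ with coefficient precisely $\tfrac18$ and a domain-independent constant, equivalently pinning down the restriction exponent of the Brownian loop measure carried by a single interior marked point (the value forced by $\kappa=2$, i.e.\ central charge $c=-2$). Everything else — the convergence in the first step and the conformal bookkeeping — is routine once the near-origin loop-measure estimates of Section~\ref{subsec:total_winding} are available; as a consistency check, the coefficient $\tfrac18$ is the one compatible with the $n=1$ instance of Proposition~\ref{prop::hittingpoints}, where the law of a single loop-erased branch is explicit.
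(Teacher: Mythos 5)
Your proposal is correct and follows essentially the same route as the paper: for \eqref{eqn::conv_loop_measure_odd} the paper likewise splits into loops avoiding $\BdryIn$ (where the mixed loop measure agrees with the Dirichlet one, which the paper justifies at the discrete level via \cite{Lawler-Ferreras:RW_loop_soup}) and loops joining $\simpleCurv$ to $\BdryIn$, whose mass is killed by the same $\log/\log$ harmonic-measure estimate combined with the Green's-function bounds of Section~\ref{subsec:total_winding}. The coefficient $\tfrac18$ that you flag as the remaining obstacle is exactly what the references you already name supply: the paper applies the formula $\mu_{\OmegaFill}\big[\tfrac{1}{2\pi}\ph(\ell)=m,\ \ell\cap\simpleCurv\neq\emptyset\big]=\tfrac{1}{2\pi^2m^2}\log\big|\phi_\simpleCurv'(0)/\phi'(0)\big|$ from \cite{Werner:The_conformally_invariant_measure_on_self-avoiding_loops, Garban-Trujillo-Ferreras:The_expected_area_of_the_filled_planar_Brownian_loop_is_Pi_over_5, Yor:Loi_de_lindice_du_lacet_brownien_et_distribution_de_Hartman-Watson} directly to the set $\simpleCurv$ (so no cutoff functional $F_\eps$ or undetermined constant $c_0$ is needed) and sums $\sum_{k\in\Z}\tfrac{1}{2\pi^2(2k+1)^2}=\tfrac18$; a self-contained alternative derivation of the exponent via a martingale argument is given in Lemma~\ref{lem::scaling_constant} of Appendix~\ref{app::scaling_constant}.
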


\begin{proof}
We take $\Omega^\delta=\Omega\cap\delta\Z^2$ and take $\simpleCurv^\delta$ to be an approximation of $\simpleCurv$ such that $\dist(\simpleCurv^\delta,\simpleCurv)\le 2\delta$. 
Rescaling the loops on $\Z^2$ as described Section~\ref{subsec:Fomin} 
and denoting by $\Lambda^{\textnormal{D}}_{\Omega^\delta}$ the restriction of the random walk loop measure on loops contained in $\Omega^\delta\setminus\partial\Omega^\delta$, from the definitions we obtain
\begin{align*}
\Lambda^{\textnormal{ND}}_{\Omega^\delta}[ \, \cdot \, \one{\{\ell\textnormal{ is contained in }\Omega^\delta\setminus\partial\Omega^\delta\}}]=\Lambda^{\textnormal{D}}_{\Omega^\delta}[ \, \cdot \,] . 
\end{align*}
In particular, we have
\begin{align*}
\;&\Lambda^{\textnormal{ND}}_{\Omega^\delta}\big[\one{\{\ell\in\LL_*^\delta \textnormal{ and } \ell\cap\simpleCurv^\delta\neq\emptyset\}}\,\ee^{\ii\beta\ph(\ell)}\big]\\
=\;&\Lambda^{\textnormal{D}}_{\Omega^\delta}\big[\one{\{\ell\in\LL_*^\delta \textnormal{ and } \ell\cap\simpleCurv^\delta\neq\emptyset\}}\,\ee^{\ii\beta\ph(\ell)}\big]
\; + \;\Lambda^{\textnormal{ND}}_{\Omega^\delta}\big[\one{\{\ell\in\LL_*^\delta \textnormal{ and } \ell\cap\simpleCurv^\delta\neq\emptyset, \textnormal{ and }\ell\cap\BdryIn^\delta\neq\emptyset\}}\,\ee^{\ii\beta\ph(\ell)}\big] .
\end{align*}
For the first term, by~\cite[Theorem~1.1]{Lawler-Ferreras:RW_loop_soup} we have
\begin{align}\label{eqn::BM_loop_control}
\lim_{\delta\to 0}\Lambda^{\textnormal{D}}_{\Omega^\delta} \big[\one{\{\ell\in\LL_*^\delta \textnormal{ and } \ell\cap\simpleCurv^\delta\neq\emptyset\}}\,\ee^{\ii\beta\ph(\ell)} \big]
= \mu_{\Omega} \big[\one{\{\ell\in\LL_* \textnormal{ and } \ell\cap\simpleCurv\neq\emptyset\}}\,\ee^{\ii\beta\ph(\ell)} \big].
\end{align}
Consider then the second term. 
Let $d=\frac{1}{4}\dist(\simpleCurv,0)$ and choose $\eps_0 \in (0,\frac{d}{100})$ and $\delta_0' \in (0,\frac{1}{4}d)$ such that $\BdryIn\cup\BdryIn^\delta\subset B(0,\eps_0)$ for all $\delta \in (0,\delta_0')$.  
Note that any loop on $\Omega^\delta$ which intersects both $\BdryIn^\delta$ and $\simpleCurv^\delta$ cannot be contained in $\BdryInNbhdDiscr{d}$, and we must have 
$\diam(\ell) \ge d/2$.  
Fix $\varepsilon>0$. By~\eqref{eqn::truncated_time_bound} and Lemma~\ref{lem::Green_mix},
we can choose $M=M(\varepsilon, d)>1$ large enough and $\delta_0=\delta_0(\varepsilon,d)>0$ small enough such that 
\begin{align*}
\Lambda_{\Omega^\delta}^{\textnormal{ND}}\big[\ell\not\subset\BdryInNbhdDiscr{d}, \, \diam (\ell) \ge d/2, 
\textnormal{ and its lifetime }t_\ell\notin\big[\tfrac{1}{M},M\big] \big] 
\le \; & \varepsilon , \qquad \textnormal{ for all } \delta \in (0, \delta_0) . 
\end{align*}
By~\eqref{eqn::root_time_bound_control_aux} and Lemma~\ref{lem::Green_mix}, 
we can furthermore choose $c=c(\varepsilon,d)>0$ small enough such that 
\begin{align*}
\Lambda_{\Omega^\delta}^{\textnormal{ND}}\big[\ell\not\subset\BdryInNbhdDiscr{d}, \, \diam (\ell) \ge d/2, \textnormal{ its lifetime }t_\ell\in\big(\tfrac{1}{M},M\big) ,
\textnormal{ and its root lies in } \BdryInNbhdDiscr{c} \big] 
\le \; & \varepsilon , 
\end{align*}
for all $\delta \in (0, \delta_0)$.
Now, let $\exitTIn$ be the hitting time of the random walk $\LR$ at $\BdryIn^\delta$. As before, we have
\begin{align}
\nonumber
\;&\Lambda_{\Omega^\delta}^{\textnormal{ND}} \big[\ell\in\LL_*^\delta, \textnormal{ its lifetime } t_\ell\in \big(\tfrac{1}{M},M\big), 
\textnormal{ its root lies not in }\BdryInNbhdDiscr{c}, \textnormal{ and } \ell\cap\BdryIn^\delta\neq\emptyset\big] \\
\nonumber
= \; & \sum_{\substack{z\in\LV^\delta\setminus\BdryOutV^\delta \\ z\not\in\BdryInNbhdDiscr{c}}}
\frac{\delta^2}{2}\sum_{t\in \frac{1}{2}\delta^2\N \cap[\frac{1}{M},M]}\frac{1}{t} \, \Prob_{z}\big[ \LR[0,t]\cap\BdryIn^\delta\neq\emptyset \textnormal{ and }  \LR(t) = z \big] \\
\nonumber
\leq \; & \sum_{\substack{z\in\LV^\delta\setminus\BdryOutV^\delta \\ z\not\in\BdryInNbhdDiscr{c}}}
\frac{\delta^2}{2} \, M \, 
\ProbE_{z}\bigg[\one{\{\exitTIn<\infty\}} \sum_{j = 1}^\infty\Prob_{\LR(\exitTIn)}[\LR(j)=z] \bigg] \\
\leq \; &  \frac{M}{2}\Leb(\tilde \Omega^\delta)\; \Big(\max_{z\not\in\BdryInNbhdDiscr{c}}\Prob_{z}[\exitTIn<\infty] \Big) \; \underbrace{\Big(\max_{ \substack{ z\notin\BdryInNbhdDiscr{c} \\ |w-z|\ge c}}\Gren^{\textnormal{ND}}_{\Omega^\delta}(w,z)\Big)}_{\substack{ \leq \; C(c) \; \textnormal{by Lemma~\ref{lem::Green_mix}} \\ \textnormal{whenever $\delta$ is small enough}}}
\label{eqn::hitting_aux1}
\end{align}
Taking $\delta \to 0$, we obtain $\Leb(\tilde \Omega^\delta) \to \Leb(\Omega)$ and 
\begin{align} \label{eqn::hitting_aux}
\limsup_{\delta \to 0} \max_{z\not\in\BdryInNbhdDiscr{c}}\Prob_{z}[\exitTIn<\infty]
\leq \frac{\log c}{\log \diam (\BdryIn)}.
\end{align}
Combining~(\ref{eqn::BM_loop_control},~\ref{eqn::hitting_aux1},~\ref{eqn::hitting_aux}) together, 
by first letting $\delta\to 0$, then letting $\diam (\BdryIn)\to 0$ and finally letting $\varepsilon\to 0$, we complete the proof of~\eqref{eqn::conv_loop_measure_odd}.

For~\eqref{eqn::scaling_constant_poly}, note that by~\eqref{eqn::conv_loop_measure_odd}, we have
\begin{align} \label{eqn::conv_loop_odd_aux}
\begin{split}
\lim_{\diam(\BdryIn)\to 0} \; & \mu_{\Omega}^{\textnormal{ND}} \big[ \ell \in \LL_* \textnormal{ such that }  \tfrac{1}{2\pi}\ph(\ell)\textnormal{ is odd and }\ell\cap\simpleCurv\neq\emptyset \big]
\\
= \; & \mu_{\OmegaFill} \big[ \ell \in \LL_* \textnormal{ such that }  \tfrac{1}{2\pi}\ph(\ell)\textnormal{ is odd and }\ell\cap\simpleCurv\neq\emptyset \big].
\end{split}
\end{align}
Combining this with an argument similar to~\cite[Proof of Proposition~4.3]{BCL:Spin_systems_from_loop_soups}, we are able to complete the proof.
Indeed, using the restriction covariance property~\cite[Proposition~3]{Werner:The_conformally_invariant_measure_on_self-avoiding_loops} 
combined with~\cite[Theorems~5.1~\&~5.2]{Garban-Trujillo-Ferreras:The_expected_area_of_the_filled_planar_Brownian_loop_is_Pi_over_5} 
(which rely on a direct computation and a result of Yor~\cite{Yor:Loi_de_lindice_du_lacet_brownien_et_distribution_de_Hartman-Watson} for the law of the index of a Brownian loop around a fixed point), 
we can write
\begin{align*}
\textnormal{\eqref{eqn::conv_loop_odd_aux}}
= \; & \sum_{k=-\infty}^{\infty} \mu_{\OmegaFill} \big[ \ell \in \LL_* \textnormal{ such that }  \tfrac{1}{2\pi}\ph(\ell) = 2k+1 \textnormal{ and }\ell\cap\simpleCurv\neq\emptyset \big]
\\
= \; & \sum_{k=-\infty}^{\infty} \frac{1}{2\pi^2 (2k+1)^2} \, \log \bigg|\frac{\phi_\simpleCurv'(0)}{\phi'(0)}\bigg|
\; = \; \frac18 \, \log \bigg|\frac{\phi_\simpleCurv'(0)}{\phi'(0)}\bigg| ,
\end{align*}
which yields~\eqref{eqn::scaling_constant_poly}. 
\end{proof}

In Lemma~\ref{lem::scaling_constant}, we will provide a self-contained proof of~\eqref{eqn::scaling_constant_poly} via a martingale technique.

\subsection{Scaling limit of LERW rays: proof of Theorem~\ref{thm::conv_curves}}\label{sec::scaling}

We now consider the collection $\bs{\eta} = (\eta_1,\ldots,\eta_n)$ of $n$ random curves in the unit disc $\D$ as in Section~\ref{subsec:curves_intro}. 
To describe the scaling limit of the UST branches $\bs\gamma^\delta = (\gamma_1^\delta,\ldots,\gamma_n^\delta)$, 
it is convenient to consider their time-reversals $\cev{\bs\gamma}^\delta = (\cev{\gamma}_1^\delta,\ldots,\cev{\gamma}_n^\delta)$, 
reparametrised by common capacity time to obtain curves $\bs{\eta}^\delta = (\eta_1^\delta, \ldots, \eta_n^\delta)$ such that
$-\log \CR (0; \D\setminus \bs{\eta}^\delta[0, t] ) = nt$, 
where the conformal radius $\CR (0; \D\setminus \bs{\eta}^\delta[0, t] ) = 1/g'(0)$ is defined by the distortion $g'(0)$
of the conformal isomorphism $g \colon \D\setminus \bs{\eta}^\delta[0, t]  \to \D$ such that $g(0)=0$ and $g'(0)>0$.

To begin, we will prove the convergence of $\cev{\bs\gamma}^\delta$, by first letting $\delta\to 0$ and then $\diam(\BdryIn)\to 0$ (Proposition~\ref{prop::scaling_curve}). 
Then, we prove that after reparametrisation, the limiting curve has the same law as $\bs{\eta}$ (Lemma~\ref{lemma::together}), 
with starting point density~\eqref{eqn::density} and Loewner driving functions given by Dyson Brownian motion~\eqref{eqn:DBM}
--- that is, a $n$-sided radial $\SLE_2$ process.  
Let us first briefly recall the definition of $n$-sided radial $\SLE_\kappa$ (see~\cite{Healey-Lawler:N_sided_radial_SLE} for more details)
and $\SLE_\kappa$ with force points (see~\cite{Schramm-Wilson:SLE_coordinate_changes} for more details).

\bigskip

Consider $n$ simple curves $\bs{\slecurv} = (\slecurv_1,\ldots,\slecurv_n)$ in $\D$ such that each $\slecurv_j$ intersects $\partial\D$ only at its starting point, and they are disjoint before reaching $0$. 
They can be described by the \textbf{radial Loewner flow} $g_t$ (i.e., the conformal isomorphism $g_t \colon \D\setminus \bs{\slecurv}[0, t] \to \D$ such that $g_t(0)=0$ and $g_t'(0)>0$), which satisfies the flowing ODE
\begin{align}\label{eqn::ODE}
\partial_t g_t(z)=g_t(z)\sum_{j=1}^n\frac{\ee^{\ii\Theta_j(t)}+g_t(z)}{\ee^{\ii\Theta_j(t)}-g_t(z)},
\qquad t\ge 0 ,
\qquad \textnormal{where } \ee^{\ii\Theta_j(t)}=g_t(\slecurv_j(t)) .
\end{align}
Here and throughout, the time-parametrisation of the curves is the common parametrisation (by capacity).
If there is just one curve, we parametrise it by default by its capacity. 
For $\kappa\in (0,4]$, the \textbf{$n$-sided radial $\SLE_\kappa$} process is obtained by taking the Loewner driving functions to be the unique strong solution $\bs\Theta(t) = (\Theta_1(t),\ldots,\Theta_n(t))$ to the SDEs\footnote{We emphasize that the time $t$ in the present article corresponds to time $\frac{\kappa}{4} t$ in~\cite{Healey-Lawler:N_sided_radial_SLE}, 
and $\Theta_j(t)$ corresponds to $2\theta_t^j$ in~\cite{Healey-Lawler:N_sided_radial_SLE}. 
The choice of our normalisation is more suitable in the setting of flow-line couplings/imaginary geometry.}
(compare with~\eqref{eqn:DBM}) 
\begin{align}\label{eqn::drivingfunction}
\ud\Theta_i(t) = \sqrt \kappa \, \ud W_i(t) + 2 \sum_{\substack{1\leq j\leq n\\j\neq i}} \cot\bigg(\frac{\Theta_i(t)-\Theta_j(t)}{2}\bigg)\ud t, \qquad 1 \leq i \leq n ,
\end{align}
where $(W_i(t) \colon t\ge 0)$ for $1\le i\le n$ are $n$ independent Brownian motions on $\R$ with $W_i(0) = \theta_i$ for each $i$.

We denote the $n$-sided radial $\SLE_\kappa$ curves in common parametrisation by $\bs{\eta}(t) = (\eta_1(t),\ldots,\eta_n(t))$.
Up to any time before the curves meet, its law (for $\kappa\in (0,4]$) is absolutely continuous with respect to that of $n$ independent radial $\SLE_\kappa$ curves in the common parametrisation, 
which we shall denote by $\bs{\slecurv}(t) = (\slecurv_1(t),\ldots,\slecurv_n(t))$ throughout; see~\cite[Theorem~3.12]{Healey-Lawler:N_sided_radial_SLE}. 

\bigskip

We will also use $\SLE_\kappa$ curves with force points. 
For $p\ge 2$ and $\bs{\rho} := (\rho_2, \ldots, \rho_p) \in\R^{p-1}$, we define radial $\SLE_\kappa(\bs{\rho})$ 
in $(\D; \ee^{\ii\bs{\theta}}; 0)$ with $p$ distinct boundary force points $(\ee^{\ii\theta_2}, \ldots, \ee^{\ii\theta_p}) \in \UC^{p-1}$
as the curve $\slecurv$ started at $\ee^{\ii\theta_1} \in \UC$ whose Loewner driving function $\Theta$ solves the SDEs 
\begin{align}\label{eqn::SLE_kappa_rho}
\ud \Theta(t) = \; & \sqrt{\kappa} \, \ud W(t) 
- \sum_{j=2}^p \frac{\rho_j}{2} \cot \bigg(\frac{V_{j}(t) - \Theta(t)}{2}\bigg) \ud t , \qquad \Theta(0)=\theta_1, 
\\
\ud V_{j}(t) =  \; &  \cot \bigg(\frac{V_{j}(t) - \Theta(t)}{2}\bigg) \ud t, \qquad V_{j}(0)=\theta_j, 
\qquad j\in \{2, \ldots, p\} ,
\end{align}
where $\ee^{\ii V_{j}(t)} = g_t(\ee^{\ii\theta_j})$ are the time-evolutions of the force points, and $W$ is a one-dimensional Brownian motion;
the associated Loewner flow $g_t \colon \D\setminus \slecurv[0, t] \to \D$ (with $g_t(0)=0$ and $g_t'(0)>0$) satisfies the ODE
\begin{align}\label{eqn::ODE1}
\partial_t g_t(z) = g_t(z) \, \frac{\ee^{\ii\Theta(t)}+g_t(z)}{\ee^{\ii\Theta(t)}-g_t(z)},
\qquad t\ge 0 .
\end{align}
We define radial $\SLE_\kappa(\bs{\rho})$ in $\OmegaFill$ started at $z_1 \in \BdryOut$ with force points $z_2,\ldots,z_n \in \BdryOut$ (all distinct) 
as the pushforward measure of $\SLE_\kappa(\bs{\rho})$ from $(\D; \ee^{\ii\bs{\theta}}; 0)$ 
by the conformal bijection $\phi^{-1} \colon \D \to \OmegaFill$, where $\phi(z_j) =: \ee^{\ii\theta_j}$, for each $j$.
We will be interested in the case where $\rho_j=2$ for all $j$.

\bigskip

Now, let $\bs z = (z_1,\ldots,z_n)$ be $n$ distinct points located on $\BdryOut$ in counterclockwise order, and let $\kappa \in (0,4]$.
(Later, we will set $\kappa=2$.) 

\begin{definition} \label{D:inductive}
For $\eps>0$, we sample random curves $(\eta^{\bs z}_{1,\eps},\ldots,\eta^{\bs z}_{n,\eps})$ as follows.
\begin{itemize}[leftmargin=*]
\item
The law of $\eta^{\bs z}_{1,\eps}$ is that of radial $\SLE_\kappa(2,\ldots,2)$ from $z_1$ to $0$ on $\OmegaFill$ 
stopped when hitting $\partial B(0,\eps)$, with boundary force points $z_2,\ldots, z_n$. 
We let $T_{1,\eps}$ be a stopping time for $\eta^{\bs z}_{1,\eps}$ before the hitting time.

\item
Inductively, for $2\le j\le n$, the conditional law of $\eta^{\bs z}_{j,\eps}$ given $(\eta^{\bs z}_{1,\eps},\ldots,\eta^{\bs z}_{j-1,\eps})$ 
is that of the law of radial $\SLE_\kappa(2,\ldots,2)$ from $z_{j}$ to $0$ on $\OmegaFill\setminus \cup_{i=1}^{j-1}\eta^{\bs z}_{i,\eps}$ stopped when hitting $\partial B(0,\eps)$, 
with force points $z_{j+1},\ldots, z_n,\eta^{\bs z}_{1,\eps}(T_{1,\eps}),\ldots,\eta^{\bs z}_{j-1,\eps}(T_{j-1,\eps})$. 
We let $T_{j,\eps}$ be a stopping time for $\eta^{\bs z}_{j,\eps}$ before the hitting time.
\end{itemize}

We call this the \textbf{inductive construction} in $\OmegaFill$ from $\bs{z}$ to $0$.
\end{definition}

The next result gives the key scaling limit description for the LERW rays.
The convergence takes place weakly for probability measures on the complete and separable metric space $(X, \dist_X)$ of planar oriented unparametrised curves with the standard metric 
\begin{align}\label{eqn::curve_metric}
\dist_X(\eta, \tilde{\eta}):=\inf_{\varrho,\tilde{\varrho}}\sup_{t\in [0,1]}|\eta(\varrho(t))-\tilde{\eta}(\tilde{\varrho}(t))|, \qquad \eta, \tilde{\eta} \in X ,
\end{align}
where the infimum is taken over all increasing homeomorphisms $\varrho, \tilde{\varrho} \colon [0,1]\to [0,1]$. 
Abusing notation, we also write $\TVnormInline{\eta - \tilde{\eta}}$ for the total variation distance between the laws of two random variables $\eta, \tilde{\eta}$ in $X$.
For many curves, we use the metric space $X_{0}^n = X_{0}^n(\Omega) \subset X^n$ comprising $n$-tuples of disjoint simple unparameterised curves which only intersect $\BdryOut$ at their starting points, endowed with 
the metric
\begin{align}\label{eqn::curve_metric_many}
\dist_X((\eta_1, \ldots, \eta_N),(\tilde{\eta}_1, \ldots, \tilde{\eta}_N) ) := \max_{1 \leq j \leq N} \dist_X(\eta_j, \tilde{\eta}_j) .
\end{align}
Note that the metric space $(X_{0}^n, \dist_X)$ is not complete, but $\SLE_\kappa$ curves for $\kappa \in (0,4]$ belong to this space almost surely up to any stopping time before the curves meet.

\begin{proposition}\label{prop::scaling_curve}
Conditionally on the event $E^\delta_{\bs{x}}$ defined in~\eqref{eqn::disjointE}, 
the law of $(\cev{\gamma}_1^{\delta},\ldots,\cev{\gamma}_n^{\delta})$ converges weakly in $(X_{0}^n,\dist_X)$ and locally uniformly 
to a limit $(\cev{\gamma}_1,\ldots,\cev{\gamma}_n)$ 
by first letting $\delta\to 0$ and then shrinking $\diam(\BdryIn) \to 0$. 
Moreover, the following properties hold almost surely.
\begin{enumerate}
\item \label{item::scaling_curve1}
Each $\cev{\gamma}_j$ is a simple curve and intersects $\BdryOut$ only at its starting point, for $1 \leq j \leq n$. 
Moreover, the curves $(\cev{\gamma}_1[0,t_1],\ldots,\cev{\gamma}_n[0,t_n])$ are disjoint at any times before reaching $0$.

\item \label{item::scaling_curve2}
Let $z_j \in \BdryOut$ be the starting point of $\cev{\gamma}_j$, 
for $1 \leq j \leq n$.
Then, the density of $(\phi(z_1),\ldots,\phi(z_n))$ \textnormal{(}with respect to the product Lebesgue measure on $\partial\D$\textnormal{)} 
is given  by~\eqref{eqn::density}. For every $\eps>0$, the conditional law of $(\cev{\gamma}_1,\ldots,\cev{\gamma}_n)$ given $(z_1,\ldots,z_n)$ stopped when hitting $\partial B(0,\eps)$ 
is that of $(\eta^{\bs z}_{1,\eps},\ldots,\eta^{\bs z}_{n,\eps})$ for $\kappa=2$.
\end{enumerate}
\end{proposition}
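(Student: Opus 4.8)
\emph{Overall strategy.} The proof splits into (i) precompactness of the conditional laws, (ii) identification of every subsequential limit with the inductive construction, and (iii) checking that the limit enjoys the stated regularity. Throughout I take the two limits in the prescribed order: first $\delta\to0$ with $\Omega$ (hence $\BdryIn$) fixed, then $\diam(\BdryIn)\to0$.

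\emph{Precompactness.} By Wilson's algorithm (Theorem~\ref{thm: Wilsons algorithm}), each branch $\gamma_j^\delta$ is a loop-erased random walk in the domain obtained from $\Omega^\delta$ by slitting along the previously sampled branches, and slitting only decreases the relevant crossing probabilities; hence the standard a priori (Beurling-type) estimates for LERW crossings of annuli apply uniformly in $\delta$ to each $\gamma_j^\delta$, and therefore to $\cev{\bs\gamma}^\delta$. Since $\PP^\delta[E^\delta_{\bs x}]$ converges, as $\delta\to0$ with $\Omega$ fixed, to a strictly positive constant — this follows from the determinantal identities of Lemmas~\ref{lem::nodd0}--\ref{lem::neven} together with the convergence of discrete harmonic measure, exactly as in the proof of Proposition~\ref{prop::hittingpoints} — these bounds transfer to the conditional measure $\PP^\delta[\,\cdot\mid E^\delta_{\bs x}]$ at the cost of a bounded multiplicative constant. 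This yields tightness of $\{\PP^\delta[\cev{\bs\gamma}^\delta\in\cdot\mid E^\delta_{\bs x}]\}_\delta$ in $(X^n,\dist_X)$ via the usual tightness criteria for families of random curves, and the same remains true after the further limit $\diam(\BdryIn)\to0$; so subsequential limits exist.

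\emph{Identification.} Fix a subsequential limit $(\cev\gamma_1,\ldots,\cev\gamma_n)$. That the density of the starting points $(\phi(z_1),\ldots,\phi(z_n))$ is~\eqref{eqn::density} is already contained in Proposition~\ref{prop::hittingpoints}. It remains to identify, conditionally on $(z_1,\ldots,z_n)$ and stopped at $\partial B(0,\eps)$, the joint law with the inductive construction, which I do by induction on $n$, peeling off the first branch. Conditioning additionally on all hitting points $\bs z^\delta$, the domain Markov property and reversibility of LERW give that the conditional law of $\cev\gamma_1^\delta$ is absolutely continuous with respect to the (reversed) branch from $z_1^\delta$ to $x_1^\delta$, with Radon--Nikodym derivative equal to the normalised $\PP^\delta$-probability that the remaining $n-1$ branches hit $z_2^\delta,\ldots,z_n^\delta$ while avoiding $\gamma_1^\delta$ and one another; by Lemmas~\ref{lem::nodd0}--\ref{lem::neven} this quantity is a ratio of determinants of $h^\delta$'s in the slit domain times an exponential of a loop-soup mass. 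It is a martingale in the growing tip of $\cev\gamma_1^\delta$, and by the convergence of a single interior-to-boundary LERW to radial $\SLE_2$ (\cite{LSW:Conformal_invariance_of_planar_LERW_and_UST}), together with the convergence of the discrete Poisson-kernel/determinant factors (Lemmas~\ref{lem::Poisson_limit},~\ref{lem::deltato0odd}) and of the loop-soup terms (Lemma~\ref{lem:BLM_conv} and Proposition~\ref{prop::BM_mix_Diri}), this martingale converges to the explicit continuum martingale that reweights radial $\SLE_2$ targeted at $0$ into radial $\SLE_2(2,\ldots,2)$ with force points at $z_2,\ldots,z_n$. Hence $\cev\gamma_1$ has that law, i.e.\ the law of $\eta^{\bs z}_{1,\eps}$. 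Conditioning on $\gamma_1^\delta$ then reduces the configuration to the conditioned $(n-1)$-branch problem in the simply connected domain $\Omega^\delta\setminus\gamma_1^\delta$ (a slit annulus), to which one applies the induction hypothesis — stated for $\OmegaFill$ carrying finitely many disjoint boundary slits and marked points — and the definition of the inductive construction closes the argument; uniform convergence on compact capacity-time intervals then follows from continuity of the capacity reparametrisation.

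\emph{Regularity and main obstacle.} By the previous step each $\cev\gamma_j$ is, in the appropriate slit domain, a radial $\SLE_2(2,\ldots,2)$ curve with all weights $\rho=2>-2$, hence almost surely simple, disjoint from $\BdryOut$ away from its start, and avoiding its force points; in particular distinct $\cev\gamma_j$ are disjoint before reaching $0$, so the limit lies in $X_0^n$ almost surely. To make this last point rigorous one must rule out degeneration of distinct discrete branches onto each other in the limit, which I would do via a separation estimate for the conditioned branches at any fixed scale away from $0$ (again a Beurling-type bound combined with $\PP^\delta[E^\delta_{\bs x}]$ being bounded below). The genuinely delicate step is the single-branch identification: upgrading the \emph{pointwise} convergence of the discrete non-intersection partition functions to convergence of the Radon--Nikodym derivatives \emph{as processes} requires uniform control, in $\delta$ and along the curve, of the relevant determinant and loop-soup quantities — including near the common target, which is precisely why one stops at $\partial B(0,\eps)$ and invokes the estimates of Lemmas~\ref{lem::time_control}--\ref{lem::Green_mix} — together with a perturbation argument handling the fact that $\cev\gamma_1^\delta$ is built in a slit annulus whose target is a shrinking inner boundary rather than the point $0$.
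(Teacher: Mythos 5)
Your overall architecture (tightness via Beurling estimates plus a uniform lower bound on $\PP^\delta[E^\delta_{\bs x}]$, then identification of subsequential limits through partition-function martingales, with single-curve LERW$\to$radial $\SLE_2$ convergence as the analytic input) matches the paper's, and your precompactness step is essentially Lemma~\ref{lem::scaling_one_step}. The gap is in how you close the induction. After identifying $\cev{\gamma}_1$, you condition on the \emph{entire} first branch and claim the remaining configuration is the conditioned $(n-1)$-branch problem in the simply connected domain $\Omega^\delta\setminus\gamma_1^\delta$, to which you apply the induction hypothesis ``stated for $\OmegaFill$ carrying finitely many disjoint boundary slits and marked points''. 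But Proposition~\ref{prop::scaling_curve} --- the statement being inducted on --- is formulated for a doubly connected domain with branches running from the inner to the outer boundary and walks reflected on $\BdryIn^\delta$, and every supporting input (the determinant asymptotics of Lemma~\ref{lem::deltato0odd}, the loop-measure convergence of Lemma~\ref{lem:BLM_conv}, the hitting-point density of Proposition~\ref{prop::hittingpoints}) is proved in exactly that setting. Once the full slit $\gamma_1^\delta$ is wired in, the complement is simply connected, the remaining branches must in addition avoid an internal wired slit rather than merely one another, and none of the quoted lemmas applies as stated; the generalised induction hypothesis you invoke is a genuinely stronger theorem whose proof would require redoing the partition-function and loop-soup analysis in slit domains. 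As written, the induction does not close.

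The paper sidesteps this by running the reference measure in the opposite order: branches $2,\ldots,k$ are sampled from the $(k-1)$-branch conditioned law \emph{in the same annulus} (so the induction hypothesis applies verbatim), branch $1$ is then a LERW in their complement, and the explicit Radon--Nikodym derivative $L$ of the true conditioned law with respect to this reference is bounded and its limit identified using the \emph{symmetric} continuum Radon--Nikodym formula of Lemma~\ref{lem::flowline_nsided} (the factor $R$), which is precisely what reconciles the two growth orders; a Fatou plus uniform-integrability argument ($\LQ[L]=1$) then upgrades almost sure convergence of $L_r$ to total-variation convergence of the laws. Your sketch never addresses this normalisation step --- showing that the limiting Radon--Nikodym derivative integrates to $1$, which rules out loss of mass in the limit --- and without Lemma~\ref{lem::flowline_nsided} (or an equivalent exchangeability statement) you only control the marginal of $\cev{\gamma}_1$, not the joint law required in Item~\ref{item::scaling_curve2}.
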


Before proving Proposition~\ref{prop::scaling_curve}, we first give another description of the continuum curves $(\eta^{\bs z}_{1,\eps},\ldots,\eta^{\bs z}_{n,\eps})$.

\begin{lemma}\label{lem::flowline_nsided}
Fix $\kappa \in (0,4]$.
Let $\bs z = (z_1,\ldots,z_n) \in (\BdryOut)^n$ be $n$ distinct points in counterclockwise order. Let $\bs{\slecurv}^{\bs{z}} = (\slecurv_1^{\bs{z}},\ldots,\slecurv_n^{\bs{z}})$ be $n$ independent radial $\SLE_\kappa$ curves from $\bs{z}$ to $0$ on $\OmegaFill$.
For $\eps>0$ and $1 \leq j \leq n$, let $T_{j,\eps}$ be a stopping time for $\slecurv_j^{\bs{z}}$ before the hitting time to $B(0,\eps)$. 
Then, the law of the curves in the inductive construction, $\bs{\eta}^{\bs{z}} [ \bs{0}, \bs{T}_\eps] : = (\eta^{\bs z}_{1,\eps}[0,T_{1,\eps}],\ldots,\eta^{\bs z}_{n,\eps}[0,T_{n,\eps}])$, coincides with the law of the curves
$\bs{\slecurv}^{\bs{z}}[\bs{0}, \bs{T}_\eps] = (\slecurv_1^{\bs{z}}[0, T_{1,\eps}], \ldots, \slecurv_n^{\bs{z}}[0, T_{n,\eps}])$
weighted by the martingale
\begin{align}\label{eqn::RN_n_curves}
\;&R( \OmegaFill; \bs{\slecurv}^{\bs{z}}[\bs{0}, \bs{T}_\eps]) 
\notag\\
:=\;& \one{\{I_n^\eps\}} \; 
\bigg(\frac{\psi_\eps'(0)}{\phi'(0)}\bigg)^{\frac{n^2-1}{2\kappa}+\frac{(6-\kappa)(\kappa-2)}{8\kappa}} 
\; \exp \left( \frac{(3\kappa-8)(6-\kappa)}{4\kappa} 
\, \mu_{\OmegaFill\setminus \bs{\slecurv}^{\bs{z}}[\bs{0}, \bs{T}_\eps]} \Big[(N(\ell)-1) \; 
\one{\{\ell \cap \bs{\slecurv}^{\bs{z}}[\bs{0}, \bs{T}_\eps] \neq\emptyset\}} \Big] \right)
\notag\\
\;&\times \prod_{j=1}^{n}\Big(\big(\psi_\eps\circ\psi_{j,\eps}^{-1}\big)'(\psi_{j,\eps}(\slecurv_j^{\bs{z}}(T_{j,\eps})))\Big)^{\frac{6-\kappa}{2\kappa}} 
\; 
\left|\frac{\underset{1\le i<j\le n}{\prod} \, \sin\big(\frac{1}{2} (\arg\psi_\eps(\slecurv_j^{\bs{z}}(T_{j,\eps}))-\arg\psi_\eps(\slecurv_i^{\bs{z}}(T_{i,\eps}))) \big)}{\underset{1\le i<j\le n}{\prod} \,  \sin\big(\frac{1}{2} (\arg\phi(z_j)-\arg\phi(z_i)) \big)}\right|^{\frac{2}{\kappa}} 
\notag\\
\;&\times\prod_{j=1}^{n} \bigg(\frac{\psi'_{j,\eps}(0)}{\phi'(0)}\bigg)^{\frac{(6-\kappa)(2-\kappa)}{8\kappa}} ,
\end{align}
where $\psi_\eps \colon \OmegaFill\setminus\bs{\slecurv}^{\bs{z}}[\bs{0}, \bs{T}_\eps] \to \D$ 
is the conformal isomorphism such that $\psi_\eps(0)=0$ and $\psi'_\eps(0)>0$, 
and $\psi_{j,\eps} \colon \OmegaFill\setminus\slecurv_j^{\bs{z}}[0,T_{j,\eps}] \to \D$ 
is the conformal isomorphism such that $\psi_{j,\eps}(0)=0$ and $\psi'_{j,\eps}(0)>0$ for each $j$,  
and $N(\ell)$ is the number of curves in $\{\slecurv_1^{\bs{z}}[0,T_{1,\eps}],\ldots,\slecurv_n^{\bs{z}}[0,T_{n,\eps}]\}$ which intersect the loop $\ell$, and lastly, 
\begin{align*}
I_n^\eps:=\{\slecurv_i^{\bs{z}}[0,T_{i,\eps}]\cap\slecurv_j^{\bs{z}}[0,T_{j,\eps}]=\emptyset\textnormal{ for }1\le i<j\le n\}.
\end{align*}
In other words, we have
\begin{align*}
\frac{\ud \, \bs{\eta}^{\bs{z}} [\bs{0}, \bs{T}_{\eps}]}
{\ud \, \bs{\slecurv}^{\bs{z}}[\bs{0}, \bs{T}_\eps]} 
= R( \OmegaFill; \bs{\slecurv}^{\bs{z}}[\bs{0}, \bs{T}_\eps]) ,
\end{align*}
where we use $\mathrm{d} X/ \ud \, Y$ with an abuse of notation to denote the Radon--Nikodym derivative of the law of the random variable $X$ with respect to that of $Y$. 
\end{lemma}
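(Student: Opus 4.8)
The plan is to derive the Radon--Nikodym derivative formula by iterating the single-curve change-of-measure (the imaginary-geometry/partition-function identity for radial $\SLE_\kappa(\bs\rho)$ against independent radial $\SLE_\kappa$), and then bookkeeping how the successive conformal maps $\psi_{j,\eps}$ compose into $\psi_\eps$, together with the loop-soup corrections coming from the restriction property of the Brownian loop measure. Concretely, I would first recall that for a single radial $\SLE_\kappa(2,\dots,2)$ in a domain $D$ with one marked (target) interior point and $p-1$ boundary force points, the law is, up to the relevant stopping time, absolutely continuous with respect to plain radial $\SLE_\kappa$, with Radon--Nikodym derivative given by a ratio of boundary partition functions times a conformal-radius (interior) term raised to an explicit exponent, times a Brownian-loop-measure term accounting for the difference between the ``chordal-type'' loop weight and the independent loop weight (this is exactly the $(3\kappa-8)(6-\kappa)/4\kappa$ coefficient, which for $\kappa=2$ is tied to the $c\le 1$ loop-soup story). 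The exponents $\frac{6-\kappa}{2\kappa}$, $\frac{(6-\kappa)(\kappa-2)}{8\kappa}$, $\frac{2}{\kappa}$ are the standard $\SLE_\kappa$ boundary/interior scaling weights: $h_{1,2}(\kappa) = \frac{6-\kappa}{2\kappa}$ for a boundary point with a ``$2$'' force point, $h_{0,1/2}(\kappa) = \frac{(6-\kappa)(\kappa-2)}{8\kappa}$ for the interior singularity, and the $\frac{2}{\kappa}$ power on the $\sin$-products is the two-point boundary exponent for a pair of $\rho=2$ force points.

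Next, I would set up the induction: the $j$-th curve $\eta^{\bs z}_{j,\eps}$ is by Definition~\ref{D:inductive} a radial $\SLE_\kappa(2,\dots,2)$ in the slit domain $\OmegaFill\setminus\cup_{i<j}\eta^{\bs z}_{i,\eps}$ with force points at the remaining $z_{j+1},\dots,z_n$ and at the tips $\eta^{\bs z}_{i,\eps}(T_{i,\eps})$ for $i<j$. Conditionally on the previously drawn curves, applying the single-curve identity in that slit domain gives a factor that is a ratio of partition functions in $\OmegaFill\setminus\cup_{i<j}\eta^{\bs z}_{i,\eps}$, plus a conformal-radius factor $(\psi_{j}^{\text{slit}}{}'(0)/\text{(corresponding distortion)})^{h_{0,1/2}}$, plus a loop term for loops hitting $\eta^{\bs z}_{j,\eps}$. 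Multiplying these $n$ factors together, and at the same time comparing to $n$ \emph{independent} radial $\SLE_\kappa$ curves $\bs\slecurv^{\bs z}$: the product of the single-curve loop terms telescopes into $\mu_{\OmegaFill\setminus\bs\slecurv^{\bs z}[\bs0,\bs T_\eps]}[(N(\ell)-1)\one\{\ell\cap\bs\slecurv^{\bs z}[\bs0,\bs T_\eps]\neq\emptyset\}]$ exactly because $N(\ell)-1 = \sum_{j}(\one\{\ell \text{ hits }\slecurv_j\} ) - \one\{\ell\text{ hits at least one}\}$ and the single-step loop measures restricted to successive slit domains add up via the additivity of the Brownian loop measure under restriction (Werner's restriction covariance, already cited in the excerpt). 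Similarly, the chain of interior conformal-radius factors telescopes: each step's distortion at $0$ in the slit domain, when composed, produces $\psi_\eps'(0)$ in the global term with exponent $\frac{n^2-1}{2\kappa}+\frac{(6-\kappa)(\kappa-2)}{8\kappa}$ once one also absorbs the $n$ separate $\psi_{j,\eps}'(0)$ factors with the ``one-curve'' exponent $\frac{(6-\kappa)(2-\kappa)}{8\kappa}$ — the $\frac{n^2-1}{2\kappa}$ piece being precisely $2h_{0,n/2}(\kappa) - 2h_{0,1/2}(\kappa)$, the partition-function exponent of the $n$-fold interior fusion minus $n$ singletons, consistent with the conformal weight $2h_{0,n/2}(\kappa)=\frac{4n^2-(\kappa-4)^2}{8\kappa}$ mentioned in the footnote. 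The boundary partition-function ratios, tracked along the slit domains and re-expressed via the conformal maps $\psi_\eps,\psi_{j,\eps}$, produce the $\prod_j((\psi_\eps\circ\psi_{j,\eps}^{-1})'(\psi_{j,\eps}(\slecurv_j(T_{j,\eps}))))^{\frac{6-\kappa}{2\kappa}}$ factor and the $\sin$-product ratio to the power $\frac{2}{\kappa}$. Finally, the indicator $\one\{I_n^\eps\}$ appears because the inductive construction only ever produces disjoint curves, so the Radon--Nikodym derivative is supported on that event.

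I would then verify the formula is a genuine (local) martingale in the common parametrisation: each single-curve factor is a martingale for the corresponding $\SLE_\kappa(\bs\rho)$ against $\SLE_\kappa$ (standard Girsanov computation, or cite~\cite{Schramm-Wilson:SLE_coordinate_changes} and the partition-function formalism of~\cite{Dubedat:SLE_and_free_field}); products of conditionally-martingale factors computed in nested slit domains remain martingales by the tower property, and the loop-measure exponential is handled by the usual Brownian-loop-measure Itô calculus (Field--Lawler-type lemma, cited in the excerpt as~\cite{Field-Lawler:Reversed_radial_SLE_and_the_Brownian_loop_measure}). The one point requiring care is that the ``target'' interior point $0$ is shared by all $n$ curves, so the interior conformal-radius factors from the different steps are \emph{not} independent — this is where the $n^2$ exponent and the loop correction genuinely interact, and it is the step I expect to be the main obstacle: one must check that the telescoping of interior distortions and the additivity of the loop measure are perfectly compatible so that no spurious boundary-of-$B(0,\eps)$ contribution survives before the stopping times $T_{j,\eps}$. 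This is precisely why the statement restricts to stopping times before hitting $B(0,\eps)$, and why the subsequent Proposition~\ref{prop::scaling_curve} will need to let $\eps\to 0$ separately. Once the single-curve base case and the telescoping identities for loops and distortions are in hand, the general-$n$ formula follows by a clean induction on $n$.
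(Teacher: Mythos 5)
Your proposal follows essentially the same route as the paper's proof: the paper also decomposes the Radon--Nikodym derivative into conditional single-curve factors (using~\cite[Proposition~2.2]{Healey-Lawler:N_sided_radial_SLE} for the slit-domain comparison, which supplies the derivative factors and the $\frac{(3\kappa-8)(6-\kappa)}{4\kappa}$ loop term, and an explicit It\^o/Girsanov martingale for the $\rho=2$ force-point tilting, which supplies the $\sin$-products and the conformal-radius exponent), and then takes the product over $j$ so that the loop terms and distortion factors telescope exactly as you describe. The only difference is presentational: where you invoke the partition-function formalism abstractly, the paper writes out the tilting martingale $M(t)$ and verifies it by a direct It\^o computation.
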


\begin{proof}
By conformal invariance, we may assume that $\OmegaFill=\D$. 
Write $D_{j,\eps} := \D\setminus\bigcup_{i=1}^{j}\eta^{\bs z}_{i,\eps}[0,T_{i,\eps}]$ for each~$j$. 
Fix $j\in \{1,\ldots, n\}$. 
We shall prove that, given $(z_1,\ldots,z_n)$, the conditional law of $\eta^{\bs z}_{j,\eps}[0,T_{j,\eps}]$ given $\eta^{\bs z}_{1,\eps}[0,T_{1,\eps}],\ldots,\eta^{\bs z}_{j-1,\eps}[0,T_{j-1,\eps}]$ is that of $\slecurv_{j}^{\bs{z}}[0,T_{j,\eps}]$ weighted~by
\begin{align}\label{eqn::RN_IM_1}
\frac{R(\D;\eta^{\bs z}_{1,\eps}[0,T_{1,\eps}],\ldots,\eta^{\bs z}_{j-1,\eps}[0,T_{j-1,\eps}], \slecurv_j^{\bs{z}}[0,T_{j,\eps}], z_{j+1},\ldots, z_n)}{R(\D;\eta^{\bs z}_{1,\eps}[0,T_{1,\eps}],\ldots,\eta^{\bs z}_{j-1,\eps}[0,T_{j-1,\eps}],z_{j},\ldots, z_n)}.
\end{align}
Denote by $\phi_{j,\eps} \colon D_{j,\eps} \to \D$ the conformal isomorphism such that $\phi_{j,\eps}(0)=0$ and $\phi_{j,\eps}'(0)>0$.
Denote by $\eta_{j}$ the radial $\SLE_\kappa$ from $z_{j}$ to $0$ on $D_{j-1,\eps}$. 
Then, by~\cite[Proposition 2.2]{Healey-Lawler:N_sided_radial_SLE}, it follows that the Radon--Nikodym derivative of $\eta_j$ with respect to $\slecurv_{j}^{\bs{z}}$ is given by 
\begin{align}\label{eqn::RN_IM_2}
\;& J(\eta^{\bs z}_{1,\eps}[0,T_{1,\eps}],\ldots,\eta^{\bs z}_{j-1,\eps}[0,T_{j-1,\eps}]; \slecurv_j^{\bs{z}}[0,T_{j,\eps}])
\\
:=\;&\one{\Big\{\slecurv_{j}^{\bs{z}}[0,T_{j,\eps}]\cap\bigcup_{i=1}^{j-1}\eta^{\bs z}_{i,\eps}[0,T_{i,\eps}]=\emptyset\Big\}}
\bigg(\frac{(\phi_{j,\eps} \circ \psi_{j,\eps}^{-1})'(\psi_{j,\eps}(\slecurv_j^{\bs{z}}(T_{j,\eps})))}{\phi'_{j-1,\eps}(z_j)}\bigg)^{\frac{6-\kappa}{2\kappa}}\bigg(\frac{(\phi_{j,\eps} \circ \psi_{j,\eps}^{-1})'(0)}{\phi'_{j-1,\eps}(0)}\bigg)^{\frac{(6-\kappa)(\kappa-2)}{8\kappa}}\notag\\
\;&\times \exp \bigg(\frac{(3\kappa-8)(6-\kappa)}{4\kappa} \; \mu_{D_{j-1,\eps}}\Big[\ell\in\LL \textnormal{ such that } \ell \cap \bigcup_{i=1}^{j-1}\eta^{\bs z}_{i,\eps}[0,T_{i,\eps}]\neq \emptyset\textnormal{ and }\ell \cap \slecurv_{j}^{\bs{z}}[0,T_{j,\eps}] \neq \emptyset \Big] \bigg)
\notag
\end{align}

Next, denote 
\begin{align*}
(w_1,\ldots,w_{n-1}) := (\phi_{j-1,\eps}(\eta^{\bs z}_{1,\eps}(T_{1,\eps})),\ldots,\phi_{j-1,\eps}(\eta^{\bs z}_{j-1,\eps}(T_{j-1,\eps})),\phi_{j-1,\eps}(z_j),\ldots,\phi_{j-1,\eps}(z_n)) ,
\end{align*} 
and denote by $(\tilde \Theta \colon t\ge 0)$ the driving function of $\phi_{j-1,\eps}(\eta_{j})$ and by $(\tilde g_t \colon t\ge 0)$ the corresponding radial Loewner flow as in~\eqref{eqn::ODE1}. 
By It\^o's formula, the process $\{M(t\wedge T_{j,\eps})\}_{t\ge 0}$ defined by 
\begin{align*}
M(t) := \;& \ee^{\frac{n^2-1}{2\kappa}t} \;  \prod_{i=1}^{n-1} |\tilde g'_t(w_i)|^{\frac{6-\kappa}{2\kappa}} \;
\prod_{i=1}^{n-1} \bigg|\sin\left(\frac{\arg\tilde g_t(w_i)-\tilde \Theta(t)}{2}\right)\bigg|^{\frac{2}{\kappa}}
\prod_{1\le i<l\le n-1}\bigg|\sin\left(\frac{\arg\tilde g_t(w_l)-\arg\tilde g_t(w_i)}{2}\right)\bigg|^{\frac{2}{\kappa}}
\end{align*}
is a martingale for $\phi_{j-1,\eps}(\eta_{j})$. 
By Girsanov's theorem, 
the conditional law of $\phi_{j-1,\eps}(\eta^{\bs z}_{j,\eps}[0,T_{j,\eps}])$ given $\eta^{\bs z}_{1,\eps}[0,T_{1,\eps}],\ldots,\eta^{\bs z}_{j-1,\eps}[0,T_{j-1,\eps}]$ is that of 
$\phi_{j-1,\eps}(\slecurv_{j}^{\bs{z}}[0,T_{j,\eps}])$ tilted by $M(T_{j,\eps})$.
Combining with~\eqref{eqn::RN_IM_2}, we obtain~\eqref{eqn::RN_IM_1}. 
Lastly, taking the product of~\eqref{eqn::RN_IM_1} over $1\le j\le n$, we complete the proof.
\end{proof}

We now consider the case of one curve, $n=1$, where the limit $\cev{\gamma}_1$, if stopped when hitting $\partial B(0,\eps)$ at time $\cev{\exitTT}_{\eps}$,
is just a radial $\SLE_2$ curve $\eta$ stopped at the same\footnote{For simplicity of notation, we will denote the stopping times for various curves by the same symbol, though for different curves the hitting time has different values. We will parameterise curves by capacity.} 
hitting time $\exitTT_{\eps}$,
so that $\cev{\gamma}_1[0,\cev{\exitTT}_{\eps}] \sim \eta[0,\exitTT_{\eps}]$.

\begin{proof}[Proof of Proposition~\ref{prop::scaling_curve}, $n=1$]
Obtaining tightness of $\{\cev{\gamma}_1^\delta\}_{\delta>0}$ in $(X, \dist_X)$ is standard~\cite[Theorem~3.9]{LSW:Conformal_invariance_of_planar_LERW_and_UST}. 
The input of the proof is the Beurling estimate for simple random walk (see~\cite[Lemma~3.2]{Berestycki-Liu:Piecewise_Temperleyan_dimers_and_multiple_SLE8} 
for the proof with the presence of a reflective boundary). 
Suppose $\cev{\gamma}_1 \in X$ is any subsequential limit. 
To show the convergence of the whole sequence, we will prove that for every $\eps > \diam(\BdryIn)$, we have
\begin{align}\label{eqn::TV_conv_1}
\lim_{\diam(\BdryIn)\to 0} \TVnorm{\eta[0,\exitTT_{\eps}] - \cev{\gamma}_1[0,\cev{\exitTT}_{\eps}]} = 0 .
\end{align}

Let $\OmegaFill^\delta$ be the discrete simply connected domain bounded by $\BdryOut^\delta$ on $\delta\Z^2$. 
For $w^\delta\in\Omega^\delta$, denote by $\Gamma_{w}^\delta$ (resp.~$\Gamma_{\scalebox{0.6}{\textnormal{fill}},w}^\delta$) the loop-erasure of a random walk $\LR\sim \Prob_{w}$ on $\Omega^\delta$ (resp.~$\OmegaFill^\delta$) starting from $w^\delta$ and ending at $\BdryOut^\delta$, reflected at $\BdryIn^\delta$. 
Denote by $\cev{\Gamma}_{w}^\delta$ (resp. $\cev{\Gamma}_{\scalebox{0.6}{\textnormal{fill}},w}^\delta$) the time-reversal of $\Gamma_{w}^\delta$ (resp. $\Gamma_{\scalebox{0.6}{\textnormal{fill}},w}^\delta$). 

Let $s > 0$ be such that $\diam(\BdryIn^\delta)<\frac{s}{2}$ for all $\delta$ small enough.  
Take $w\in\OmegaFill$ such that $\dist(w,0)=s$ and $w^\delta$ on $\Omega^\delta$ such that $|w-w^\delta|\le 2\delta$. 
Since the random walk converges to Brownian motion, we have
\begin{align}\label{eqn::TV_discrete1}
\begin{split}
\limsup_{\delta\to 0} \TVnorm{ \Gamma_{w}^\delta - \Gamma_{\scalebox{0.6}{\textnormal{fill}},w}^\delta }
\le \; & \Prob_{w} \big[\LR\textnormal{ hits }\BdryIn^\delta\textnormal{ before hitting }\BdryOut^\delta \big] 
\\
\le \; & \frac{\log (s/\dist(0,\BdryOut))}{\log (\diam(\BdryIn)/\dist(0,\BdryOut))}.
\end{split}
\end{align}
By Wilson's algorithm (Theorem~\ref{thm: Wilsons algorithm}), 
for every discrete simple curve $\simpleCurv^\delta$ on $\Omega^\delta$ started at $\BdryOut^\delta$, we have 
\begin{align*}
\PP^\delta \big[\cev{\gamma}_1^\delta[0,\cev{\exitTT}_{\eps}^\delta] = \simpleCurv^\delta\big] 
= \E^\delta \Bigg[\frac{\Prob_{x_1}[\LR\textnormal{ hits }\BdryOut^\delta\cup\simpleCurv^\delta\textnormal{ at }\simpleCurv^\delta(\cev{\exitTT}_{\eps}^\delta)]}{\Prob_{w}[\LR\textnormal{ hits }\BdryOut^\delta\cup\simpleCurv^\delta\textnormal{ at }\simpleCurv^\delta(\cev{\exitTT}_{\eps}^\delta)]} 
\; \one{\big\{\cev{\Gamma}_{w}^\delta[0,\cev{\exitTT}_{\eps}^\delta] = \simpleCurv^\delta\big\}}\Bigg] ,
\end{align*}
where $x_1^\delta \in \BdryIn^\delta \subset B(0,\eps)$ is the eventual endpoint of $\cev{\gamma}_1^\delta$. 
Thus, the Radon--Nikodym derivative of $\cev{\gamma}_1^\delta[0,\cev{\exitTT}_{\eps}^\delta]$ with respect to $\cev{\Gamma}_{w}^\delta[0,\cev{\exitTT}_{\eps}^\delta]$ is given by 
\begin{align*}
A^\delta\big( \cev{\Gamma}_{w}^\delta[0,\cev{\exitTT}_{\eps}^\delta] \big) 
:= \frac{\Prob_{x_1}[\LR\textnormal{ hits }\BdryOut^\delta\cup\cev{\Gamma}_{w}^\delta[0,\cev{\exitTT}_{\eps}^\delta]\textnormal{ at }\cev{\Gamma}_{w}^\delta(\cev{\exitTT}_{\eps}^\delta)]}{\Prob_{w}[\LR\textnormal{ hits }\BdryOut^\delta\cup\cev{\Gamma}_{w}^\delta[0,\cev{\exitTT}_{\eps}^\delta]\textnormal{ at }\cev{\Gamma}_{w}^\delta(\cev{\exitTT}_{\eps}^\delta)]} .
\end{align*}
By a Harnack-type inequality, there exist constants $C_1,C_2>0$ such that 
\begin{align*}
\big| A^\delta\big( \cev{\Gamma}_{w}^\delta[0,\cev{\exitTT}_{\eps}^\delta] \big) - 1 \big|\le C_1\Big(\frac{s}{\eps}\Big)^{C_2} ,
\end{align*}
for every realisation of $\cev{\Gamma}_{w}^\delta[0,\cev{\exitTT}_{\eps}^\delta]$. 
This implies that
\begin{align}\label{eqn::TV_discrete2}
\limsup_{\delta\to 0} \TVnorm{ \cev{\gamma}_1^{\delta}[0,\cev{\exitTT}_{\eps}^\delta] - \cev{\Gamma}_{w}^\delta[0,\cev{\exitTT}_{\eps}^\delta] } 
\le C_1\Big(\frac{s}{\eps}\Big)^{C_2}.
\end{align}
Next, choose a discrete approximation $o^\delta$ of $0$ on $\OmegaFill^\delta$. 
By the discrete Beurling estimate, there exist constants $C_3>0$ and $C_4>0$ such that in $\OmegaFill^\delta$, we have
\begin{align*}
\PP^\delta \Big[\cev{\Gamma}_{\scalebox{0.6}{\textnormal{fill}},o}^\delta[0,\cev{\exitTT}_{\scalebox{0.6}{\textnormal{fill}},\eps}^\delta]\neq\cev{\Gamma}_{\scalebox{0.6}{\textnormal{fill}},w}^\delta[0,\cev{\exitTT}_{\scalebox{0.6}{\textnormal{fill}},\eps}^\delta]
\Big]
\le C_3\Big(\frac{s}{\eps}\Big)^{C_4}.
\end{align*}
By the well-known result in~\cite[Theorem~3.9]{LSW:Conformal_invariance_of_planar_LERW_and_UST}, 
the LERW $\cev{\Gamma}_{\scalebox{0.6}{\textnormal{fill}},o}^\delta$ on $\OmegaFill^\delta$ converges in law to the radial $\SLE_2$ curve $\eta$ both in $(X, \dist_X)$ and locally uniformly. 
In particular, combining with~(\ref{eqn::TV_discrete1},~\ref{eqn::TV_discrete2}), we conclude that
there exist constants $C_5>0$ and $C_6>0$ such that we have
\begin{align}\label{eqn::TV_discrete4}
\TVnorm{\eta[0,\exitTT_{\eps}] - \cev{\gamma}_1[0,\cev{\exitTT}_{\eps}]} \le C_5\Big(\frac{s}{\eps}\Big)^{C_6}.
\end{align}
Taking $s \to 0$ along with $\diam(\BdryIn)\to 0$, this shows~\eqref{eqn::TV_conv_1} and completes the proof of the identification of the scaling limit.
The properties~\ref{item::scaling_curve1}~\&~\ref{item::scaling_curve2} follow from properties of the radial $\SLE_2$~\cite{LSW:Conformal_invariance_of_planar_LERW_and_UST, Rohde-Schramm:Basic_properties_of_SLE}. 
\end{proof}

The next lemma establishes tightness of the multiple UST branches.

\begin{lemma}\label{lem::scaling_one_step}
Conditionally on the event $E^\delta_{\bs{x}}$, 
the family $\{(\cev{\gamma}_1^\delta,\ldots,\cev{\gamma}_n^\delta)\}_{\delta>0}$ is tight in $(X_0^n, \dist_X)$. 
\end{lemma}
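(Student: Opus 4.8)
The plan is to establish tightness of the $n$-tuple $\{(\cev{\gamma}_1^\delta,\ldots,\cev{\gamma}_n^\delta)\}_{\delta>0}$ in $(X_0^n, \dist_X)$ by reducing to the tightness of a \emph{single} UST branch, which was already handled (for $n=1$) in the course of the proof of Proposition~\ref{prop::scaling_curve} using the Beurling estimate for reflected random walk. Since $\dist_X$ on $X_0^n$ is the maximum of the $n$ coordinate distances (see~\eqref{eqn::curve_metric_many}), a family of $n$-tuples is tight if and only if each of the $n$ coordinate families is tight; so it suffices to prove that, for each fixed $j$, the conditional law (given $E^\delta_{\bs{x}}$) of $\cev{\gamma}_j^\delta$ is tight in $(X, \dist_X)$.

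First I would recall the standard criterion of Aizenman--Burchard / Kemppainen--Smirnov: a family of random curves is tight (and any subsequential limit is supported on curves admitting a continuous parametrisation) provided one has a uniform power-law bound on the probability that the curve makes an unexpectedly small-scale crossing of an annulus — concretely, that for some $C, \Delta >0$ and all $0 < s < S$ and all annuli $A(w; s, S)$, the probability that $\cev{\gamma}_j^\delta$ crosses $A(w;s,S)$ without disconnecting $w$ from one side is at most $C(s/S)^{\Delta}$. For a single LERW branch (with reflecting boundary on $\BdryIn^\delta$) this is precisely the content of the discrete Beurling estimate, as used for $n=1$; see~\cite[Lemma~3.2]{Berestycki-Liu:Piecewise_Temperleyan_dimers_and_multiple_SLE8} and~\cite[Theorem~3.9]{LSW:Conformal_invariance_of_planar_LERW_and_UST}. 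The only point requiring care is that here we condition on the event $E^\delta_{\bs{x}}$, which has probability decaying polynomially in $r$ but is bounded below uniformly in $\delta$ for fixed $r$ (by Theorem~\ref{thm::partfun} and the determinantal identities of Section~\ref{sec::annulus}, $\PP^\delta[E^\delta_{\bs{x}}]$ converges as $\delta\to 0$ to a strictly positive limit). Hence the conditional probability of a bad crossing event is at most $\PP^\delta[E^\delta_{\bs{x}}]^{-1}$ times its unconditional probability, and since $\PP^\delta[E^\delta_{\bs{x}}]$ is bounded away from $0$ uniformly in $\delta$ (for fixed $r>0$), the crossing bound survives the conditioning with a worse constant $C$ but the same exponent $\Delta$. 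This gives tightness of each coordinate, hence of the $n$-tuple.

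Alternatively — and this is the route I would actually write, as it is cleaner and reuses more of the preceding work — one can argue via Wilson's algorithm directly. Generate $\LT^\delta$ by running Wilson's algorithm starting from $x_1^\delta, \ldots, x_n^\delta$ in order; then $\vec\gamma_1^\delta = \mathrm{LE}(\LR_1)$ where $\LR_1$ is a single reflected random walk from $x_1^\delta$ to $\BdryOut^\delta$, $\vec\gamma_2^\delta$ is the loop-erasure of a reflected walk from $x_2^\delta$ to $\vec\gamma_1^\delta \cup \BdryOut^\delta$, and so on. Each individual branch, \emph{before conditioning}, is therefore a loop-erased reflected random walk to a set containing $\BdryOut^\delta$, and the Beurling-type crossing estimate for such walks (uniform over the target set, which always contains the macroscopic outer boundary) gives the annulus-crossing bound for each $\cev{\gamma}_j^\delta$ uniformly in $\delta$ and uniformly over the realisations of the earlier branches. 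Conditioning on $E^\delta_{\bs{x}}$ is then absorbed exactly as above, using that $\PP^\delta[E^\delta_{\bs{x}}]$ is bounded below uniformly in $\delta$.

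The main obstacle is genuinely just the interaction between the conditioning and the crossing estimate: one must be sure that the conditioning $E^\delta_{\bs x}$, which is a codimension-type degenerate event as $r\to 0$, does not destroy the uniform Beurling bound for fixed $\delta$. This is resolved by the observation that for \emph{fixed} $r>0$ the event $E^\delta_{\bs{x}}$ has $\delta$-uniformly positive probability — which follows from the convergence of $\PP^\delta[E^\delta_{\bs{x}}]$ to a positive constant established through Lemmas~\ref{lem::nodd0}, \ref{lem::neven} and \ref{lem::deltato0odd} (and Proposition~\ref{prop::hittingpoints}) — so the conditional probabilities are controlled by a fixed multiple of the unconditional ones. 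Note that tightness here is stated for fixed $r$; the subsequent $r\to 0$ limit is handled separately in Proposition~\ref{prop::scaling_curve} and Lemma~\ref{lemma::together}, so no uniformity in $r$ is needed at this stage. With the crossing bound in hand, the Aizenman--Burchard criterion yields tightness in $(X, \dist_X)$ for each coordinate, and the maximum metric~\eqref{eqn::curve_metric_many} upgrades this to tightness of the $n$-tuple in $(X_0^n, \dist_X)$.
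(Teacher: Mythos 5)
Your core mechanism is the same as the paper's: for fixed $r>0$ the probability $\PP^\delta[E^\delta_{\bs{x}}]$ is bounded below uniformly in $\delta$, so conditional probabilities of bad events are controlled by a fixed multiple of unconditional ones, and single-branch tightness (Beurling for the reflected LERW, as in the $n=1$ case) does the rest. Two remarks on that part. First, the paper obtains the lower bound on $\PP^\delta[E^\delta_{\bs{x}}]$ by an elementary direct construction --- disjoint tubes $U_1,\ldots,U_n$ with $x_j \in \BdryIn \cap U_j$ and $\BdryOut \cap \partial U_j \neq \emptyset$, forcing each walk to exit its own tube through the outer boundary --- rather than by invoking the convergence of $\PP^\delta[E^\delta_{\bs{x}}]$ to a positive limit; your route works but is heavier, and you would still owe a word on why the limiting determinant expression is strictly positive (the tube construction is, in effect, the cheapest way to see this).

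The genuine gap is that you prove tightness in $X^n$, not in $(X_0^n, \dist_X)$. Your reduction ``tight if and only if each coordinate is tight'' is valid for the product space with the max metric~\eqref{eqn::curve_metric_many}, but $X_0^n$ is the subspace of $n$-tuples of \emph{disjoint} simple curves, and it is not closed in $X^n$ (the paper explicitly notes $(X_0^n,\dist_X)$ is not complete). A product $K_1\times\cdots\times K_n$ of compact sets of curves is compact in $X^n$ but contains tuples of intersecting curves, so it is not a compact subset of $X_0^n$; tightness in $X_0^n$ additionally requires a quantitative, $\delta$-uniform lower bound on the mutual separation of the branches (equivalently, that subsequential limits remain pairwise disjoint simple curves). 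This is exactly the second half of the paper's proof, which you omit entirely: conditioning on $\gamma_1^\delta$ and applying the discrete Beurling estimate to the walk generating $\gamma_j^\delta$, one gets
\begin{align*}
\PP^\delta \big[ \{\gamma_j^\delta\textnormal{ hits the } \eps\textnormal{-neighbourhood of }\gamma_1^\delta \} \cap E^\delta_{\bs{x}} \big] \le C_1 \eps^{C_2}
\end{align*}
uniformly in $\delta$, whence (after dividing by the uniform lower bound on $\PP^\delta[E^\delta_{\bs{x}}]$) the limiting curves are almost surely disjoint. Without this step the lemma as stated --- tightness in $(X_0^n,\dist_X)$, which is what the subsequent Radon--Nikodym arguments in Proposition~\ref{prop::scaling_curve} rely on --- is not established.
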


\begin{proof}
Take disjoint subdomains $U_1,\ldots, U_n$ of $\Omega$ such that $x_j\in\BdryIn\cap U_j$ and $\BdryOut\cap \partial U_j \neq \emptyset$, 
and discrete approximations $U_1^\delta,\ldots, U_n^\delta$ on $\Omega^\delta$ such that 
$x_j^\delta \in\BdryIn^\delta\cap U_j^\delta$ and $\BdryOut^\delta\cap \partial U_j^\delta \neq \emptyset$, 
and $\dist(\partial U_j^\delta,\partial U_j)\to 0$ as $\delta\to 0$, for $1\le j\le n$. 
There exists a constant $C=C(\Omega)>0$ such that
\begin{align}\label{eqn::lower_bound_prob}
\PP^\delta [E^\delta_{\bs{x}}] 
\; \ge \; \prod_{j=1}^{n} \Prob_{x_j}[\LR\textnormal{ hits }\partial U_j^\delta\cap\BdryOut^\delta \textnormal{ when it exits }U_j^\delta ]
\; \ge \; C 
\end{align}
for all $\delta$ small enough. 
Thus, the tightness of $\{(\cev{\gamma}_1^\delta,\ldots,\cev{\gamma}_n^\delta)\}_{\delta>0}$ 
in $(X_0^n, \dist_X)$ is readily obtained from the case of $n=1$,
and any subsequential limit comprises simple curves $(\cev{\gamma}_1,\ldots,\cev{\gamma}_n)$. 
To show that they are also disjoint, by symmetry it suffices to show that $\cev{\gamma}_1$ avoids the other curves. 
Indeed, the discrete Beurling estimate implies that there exist constants $C_1,C_2>0$ such that 
\begin{align*}
\;& \PP^\delta \big[ \PP^\delta \big[ \{\gamma_j^\delta\textnormal{ hits the } \eps\textnormal{-neighbourhood of }\gamma_1^\delta \} \cap E^\delta_{\bs{x}} \cond\gamma_1^\delta ] \big]
\\
\le \;& \Prob_{x_j}[\LR\textnormal{ ends at }\BdryOut^\delta\textnormal{ and hits the } \eps\textnormal{-neighbourhood of }\gamma_1^\delta]
\; \le \; C_1 \prod_{i=2}^{n} \eps^{C_2} , \qquad \eps>0 .
\end{align*}
By letting $\delta\to 0$, we find that $\cev{\gamma}_1\cap\cev{\gamma}_j=\emptyset$ almost surely for all $2\le j\le n$. 
Thus, $(\cev{\gamma}_1,\ldots,\cev{\gamma}_n) \in X_0^n$. 
\end{proof}

We are now ready to finish the proof of Proposition~\ref{prop::scaling_curve}.
We use the same notation as in the above proofs, and recall Wilson's algorithm (Theorem~\ref{thm: Wilsons algorithm}) relating LERW to the UST branches. 

\begin{proof}[Proof of Proposition~\ref{prop::scaling_curve}, $n\ge 2$]
By conformal invariance, we may assume that $\Omega=\A_r$.  
We proceed by induction on $n\ge 1$. The base case $n = 1$ was established above. 
Fix $k\ge 2$ and suppose that the claims in Proposition~\ref{prop::scaling_curve} hold for all $1\le n\le k-1$. 
For $k$ disjoint simple discrete curves $(\simpleCurv^{\delta}_1,\ldots,\simpleCurv^\delta_k)$, we have
\begin{align*}
\;&\PP^\delta \big[ (\cev{\gamma}_1^{\delta}[0,\cev{\exitTT}_{\eps}^\delta],\ldots,\cev{\gamma}_k^{\delta}[0,\cev{\exitTT}_{\eps}^\delta])=(\simpleCurv_1^\delta,\ldots,\simpleCurv_k^\delta) \big] 
\\
= \;& \frac{1}{\PP^\delta [E^\delta_{\bs{x}}]}\frac{\#\{\textnormal{spanning trees $\LT^\delta$ on }\A_r^\delta \textnormal{ such that } \cev{\bs\gamma}^\delta \in E^\delta_{\bs{x}}\textnormal{ and }(\cev{\gamma}_1^{\delta}[0,\cev{\exitTT}_{\eps}^\delta],\ldots,\cev{\gamma}_k^{\delta}[0,\cev{\exitTT}_{\eps}^\delta])=(\simpleCurv_1^\delta,\ldots,\simpleCurv_k^\delta)\}}{\#\{\textnormal{spanning trees $\LT^\delta$ on }\A_r^\delta\}}\\
=\;&h_{\A_r^\delta,\bs\simpleCurv^\delta}(x_1^\delta,\simpleCurv_1^\delta(\cev{\exitTT}_{\eps}^\delta))
\; \frac{M(\bs\simpleCurv^\delta;\cev{\exitTT}_{\eps}^\delta)}{p^\delta(\A_r^\delta;\bs{x}^\delta;\hat{\bs{x}}^\delta)} 
\; \frac{\#\{\textnormal{spanning trees $\LT^\delta$ on }\A_r^\delta\cup\bs\simpleCurv^\delta\}}{\#\{\textnormal{spanning trees $\LT^\delta$ on }\A_r^\delta\cup\hat{\bs\simpleCurv}^\delta\}}
\\
\;&\times\frac{\PP^\delta [E(\A_r^\delta\cup\hat{\bs\simpleCurv}^\delta;x_2^\delta,\ldots,x_k^\delta;\simpleCurv_2^\delta(\cev{\exitTT}_{\eps}^\delta),\ldots,\simpleCurv_k^\delta(\cev{\exitTT}_{\eps}^\delta))]}{\PP^\delta [E^\delta_{\hat{\bs{x}}}]}
\; \frac{\#\{\textnormal{spanning trees $\LT^\delta$ on }\A_r^\delta\cup\hat{\bs\simpleCurv}^\delta\}}{\#\{\textnormal{spanning trees $\LT^\delta$ on }\A_r^\delta\}} ,
\end{align*}
where $\eps > 2r$,
and we denote by $\bs\simpleCurv^\delta = \bigcup_{j=1}^{k}\simpleCurv_j^\delta$, 
and $\hat{\bs\simpleCurv}^\delta = \bigcup_{j=2}^{k}\simpleCurv_j^\delta$, and 
by $\A_r^\delta\cup\bs\simpleCurv^\delta$ (resp.~$\A_r^\delta\cup\hat{\bs\simpleCurv}^\delta$)
the graph obtained by wiring the outer boundary $\AnnBdryOut_r^\delta$ and $\bs\simpleCurv^\delta$ (resp.~$\hat{\bs\simpleCurv}^\delta$) together;
and where 
\begin{align*}
h_{\A_r^\delta\cup\bs\simpleCurv^\delta}(x_1^\delta,\simpleCurv_1^\delta(\cev{\exitTT}_{\eps}^\delta)) 
:= \; & \Prob_{x_1} \big[ \LR\textnormal{ hits }\AnnBdryOut_r^\delta\cup\bs\simpleCurv^\delta\textnormal{ through }\simpleCurv_1^\delta(\cev{\exitTT}_{\eps}^\delta) \big]
\; \in \; (0,1) ,
\\
M(\bs\simpleCurv^\delta;\cev{\exitTT}_{\eps}^\delta)
:= \; & \frac{1}{h_{\A_r^\delta\cup\bs\simpleCurv^\delta}(x_1^\delta,\simpleCurv_1^\delta(\cev{\exitTT}_{\eps}^\delta))}
\; \frac{\PP^\delta [E(\A_r^\delta\cup\bs\simpleCurv^\delta;x_1^\delta,\ldots,x_k^\delta;\simpleCurv_1^\delta(\cev{\exitTT}_{\eps}^\delta),\ldots,\simpleCurv_k^\delta(\cev{\exitTT}_{\eps}^\delta))]}{\PP^\delta [E(\A_r^\delta\cup\hat{\bs\simpleCurv}^\delta;x_2^\delta,\ldots,x_k^\delta;\simpleCurv_2^\delta(\cev{\exitTT}_{\eps}^\delta),\ldots,\simpleCurv_k^\delta(\cev{\exitTT}_{\eps}^\delta))]} 
\; \in \; (0,1) ,
\\ 
p^\delta(\A_r^\delta;\bs{x}^\delta;\hat{\bs{x}}^\delta) 
:= \; & \frac{\PP^\delta [E^\delta_{\bs{x}}]}{\PP^\delta [E^\delta_{\hat{\bs{x}}}]} 
\; = \; \frac{\PP^\delta [E(\A_r^\delta;x_1^\delta,\ldots,x_k^\delta)]}{\PP^\delta [E(\A_r^\delta;x_2^\delta,\ldots,x_k^\delta)]} 
\; \in \; (0,1) .
\end{align*}

Consider now random curves $\cev{\bs{\Gamma}}^\delta = (\cev{\Gamma}_1^\delta,\ldots,\cev{\Gamma}_k^\delta)$ such that
\begin{itemize}[leftmargin=*]
\item the law of $(\cev{\Gamma}_2^\delta,\ldots,\cev{\Gamma}_k^\delta)$ is the conditional law of $(\cev{\gamma}_2^\delta,\ldots,\cev{\gamma}_k^\delta)$ given the event $E^\delta_{\hat{\bs{x}}} = E(\A_r^\delta;x_2^\delta,\ldots,x_k^\delta)$;

\item given $(\cev{\Gamma}_2^\delta,\ldots,\cev{\Gamma}_k^\delta)$, the law of $\cev{\Gamma}_1^\delta$ is that of the time-reversed LERW $\cev{\Gamma}_{x_1}^\delta$ starting from $x_1^\delta$ and ending at $\AnnBdryOut_r^\delta\cup\bigcup_{j=2}^{k}\cev{\Gamma}_j^\delta$, reflected at $\AnnBdryIn_r^\delta$.
\end{itemize}
Then, the Radon--Nikodym derivative of 
$\cev{\bs{\gamma}}^\delta[0,\cev{\exitTT}_{\eps}^\delta] = (\cev{\gamma}_1^\delta[0,\cev{\exitTT}_{\eps}^\delta],\ldots,\cev{\gamma}_k^\delta[0,\cev{\exitTT}_{\eps}^\delta])$ with respect to $\cev{\bs{\Gamma}}^\delta[0,\cev{\exitTT}_{\eps}^\delta]$ is 
\begin{align*}
L(\cev{\bs{\Gamma}}^\delta[0,\cev{\exitTT}_{\eps}^\delta]) 
:= \frac{ M(\cev{\bs{\Gamma}}^\delta[0,\cev{\exitTT}_{\eps}^\delta];\cev{\exitTT}_{\eps}^\delta) }{ p^\delta(\A_r^\delta;\bs{x}^\delta;\hat{\bs{x}}^\delta) }
\; \one{ \Big\{\cev{\Gamma}^\delta_1[0,\cev{\exitTT}_{\eps}^\delta] \cap \bigcup_{j=2}^{k}\cev{\Gamma}^\delta_j[0,\cev{\exitTT}_{\eps}]=\emptyset \Big\} }
\; \one{ \big\{ (\cev{\Gamma}^\delta_1(0),\ldots,\cev{\Gamma}^\delta_k(0))\in\LX_k \big\} }.
\end{align*}
Similarly to~\eqref{eqn::lower_bound_prob}, there exists a constant $C=C(r)>0$, which only depends on $r$, such that
\begin{align*}
p^\delta(\A_r^\delta;\bs{x}^\delta;\hat{\bs{x}}^\delta) \ge \PP^\delta [E^\delta_{\bs{x}}] \ge C .
\end{align*}
Combining this with the fact that $M(\cev{\bs{\Gamma}}_1^\delta[0,\cev{\exitTT}_{\eps}^\delta];\cev{\exitTT}_{\eps}^\delta)  \leq 1$, we see that $L(\cev{\bs{\Gamma}}^\delta[0,\cev{\exitTT}_{\eps}^\delta]) \le C$.

\medskip

Lemma~\ref{lem::scaling_one_step} implies that both families 
$\{\cev{\bs{\Gamma}}^\delta\}_{\delta>0}$ and $\{\cev{\bs{\gamma}}^\delta\}_{\delta>0}$ are tight in $(X_0^k, \dist_X)$. 
It follows similarly as in the proof of~\cite[Theorem~3.9]{LSW:Conformal_invariance_of_planar_LERW_and_UST} 
that the families are also tight in the uniform topology of curves with common parametrisation. 
Take any respective subsequential limits $\cev{\bs{\Gamma}}^{(r)} = (\cev{\Gamma}_1^{(r)},\ldots,\cev{\Gamma}_k^{(r)})$ and $\cev{\bs{\gamma}}^{(r)} = (\cev{\gamma}_1^{(r)},\ldots,\cev{\gamma}_k^{(r)})$.
By Skorokhod's representation theorem, without loss of generality we may assume that along subsequences, 
$\{\cev{\bs{\Gamma}}^\delta\}_{\delta>0}$ and $\{\cev{\bs{\gamma}}^\delta\}_{\delta>0}$ 
converge almost surely to $\cev{\bs{\Gamma}}^{(r)}$ and $\cev{\bs{\gamma}}^{(r)}$, respectively, in both $(X_0^k, \dist_X)$ and locally uniformly.
The induction hypothesis now implies the following facts. 
\begin{itemize}[leftmargin=*]
\item We have 
$\cev{\bs{\Gamma}}^{(r)}[0,\cev{\exitTT}_{\eps}] \to \cev{\bs{\Gamma}}[0,\cev{\exitTT}_{\eps}]$ as $r \to 0$ in total variation distance.
In particular, there exists a coupling $\LQ$ such that almost surely, for every $t\ge 0$, the curves $\cev{\bs{\Gamma}}^{(r)}[0,t]$ agree with the curves $\cev{\bs{\Gamma}}[0,t]$ when $r>0$ is small enough. 

\item The conditional law of $\cev{\Gamma}_1[0,\cev{\exitTT}_{\eps}]$ given $(\cev{\Gamma}_2[0,\cev{\exitTT}_{\eps}],\ldots,\cev{\Gamma}_k[0,\cev{\exitTT}_{\eps}])$ is that of 
the radial $\SLE_2$ curve $\slecurv_1[0,\exitTT_{\eps}]$ in $\hat{D}_\eps := \D\setminus\bigcup_{j=2}^{k}\cev{\Gamma}_j[0,\cev{\exitTT}_{\eps}]$ from $\cev{\Gamma}_1(0)$ to the origin.

\item The conditional law of the starting point $\cev{\Gamma}_1(0)$ is given by the harmonic measure in $\hat{D}_\eps$ seen from~$0$.
Thus, its density on $\UC=\partial\D$ equals 
\begin{align*}
\rho(z_1) = \frac{\hat\phi'_\eps(z_1)}{2\pi} \; \one{ \big\{(z_1,\cev{\Gamma}_2(0),\ldots,\cev{\Gamma}_k(0))\in\LX_k \big\} } ,
\end{align*}
where $\hat\phi_{\eps} \colon \hat{D}_\eps \to \D$ is the uniformising map such that $\hat\phi_{\eps}(0)=0$ and $\hat\phi_{\eps}'(0)>0$. 
\end{itemize}
We will also consider the uniformising map $\phi_{\eps} \colon \D \setminus \bigcup_{j=1}^{k}\cev{\Gamma}_j[0,\cev{\exitTT}_{\eps}] \to \D$ such that $\phi_{\eps}(0)=0$ and $\phi_{\eps}'(0)>0$,
and the conformal isomorphisms 
\begin{itemize}
\item $\phi^{(r)}_{\eps} \colon \A_r \setminus\bigcup_{j=1}^{k}\cev{\Gamma}_j[0,\cev{\exitTT}_{\eps}] \to \A_{r_{\eps}}$, such that $\phi^{(r)}_{\eps}(r) =: r({\cev{\exitTT}_{\eps}}) =: r_{\eps}$, so that $\frac{r_\eps}{r} \to \phi_\eps'(0)$ as $r\to 0$; and

\item $\hat\phi^{(r)}_{\eps} \colon \A_r\setminus \bigcup_{j=2}^{k}\cev{\Gamma}_j[0,\cev{\exitTT}_{\eps}] \to \A_{\hat{r}_{\eps}}$, such that $\hat\phi^{(r)}_{\eps}(r) =: \hat{r}({\cev{\exitTT}_{\eps}}) =: \hat{r}_{\eps}$, so that $\frac{\hat{r}_\eps}{r} \to \hat\phi_\eps'(0)$ as $r\to 0$.
\end{itemize}

Next, as in Proposition~\ref{prop::hittingpoints} and Lemma~\ref{lem::Poisson_limit}, 
one can prove that  
$M(\cev{\bs{\Gamma}}_1^\delta[0,\cev{\exitTT}_{\eps}^\delta];\cev{\exitTT}_{\eps}^\delta)$ converges almost surely to a limit $M_r(\cev{\bs{\Gamma}}_1[0,\cev{\exitTT}_{\eps}];\cev{\exitTT}_{\eps}) \leq 1$ as $\delta\to 0$,
and $p^\delta(\A_r^\delta;\bs{x}^\delta;\hat{\bs{x}}^\delta) \to p(\A_r;\bs{x};\hat{\bs{x}})$ 
 --- so by the dominated convergence theorem, the Radon--Nikodym derivative of
$\cev{\bs{\gamma}}^{(r)}[0,\cev{\exitTT}_{\eps}]$ with respect to $\cev{\bs{\Gamma}}^{(r)}[0,\cev{\exitTT}_{\eps}]$ is
\begin{align*}
L_r(\cev{\bs{\Gamma}}[0,\cev{\exitTT}_{\eps}]) 
:= \frac{ M_r(\cev{\bs{\Gamma}}[0,\cev{\exitTT}_{\eps}];\cev{\exitTT}_{\eps}) }{ p(\A_r;\bs{x};\hat{\bs{x}}) }
\; \one{ \Big\{\cev{\Gamma}_1[0,\cev{\exitTT}_{\eps}] \cap \bigcup_{j=2}^{k} \cev{\Gamma}_j[0,\cev{\exitTT}_{\eps}]=\emptyset \Big\} }
\; \one{ \big\{ (\cev{\Gamma}_1(0),\ldots,\cev{\Gamma}_k(0))\in\LX_k \big\} }.
\end{align*}

A computation similar to the one presented in the proof of Lemma~\ref{lem::scaling_constant} yields
\begin{align*}
\frac{ \underset{1\le i<j\le k}{\prod} \sin \Big( \frac{1}{2} \big( \arg \phi_\eps^{(r)} (\cev{\Gamma}_j(\cev{\exitTT}_{\eps}))
- \arg \phi_\eps^{(r)} (\cev{\Gamma}_i(\cev{\exitTT}_{\eps})) \big) \Big) }
{ \underset{2\le i<j\le k}{\prod} \sin \Big( \frac{1}{2} \big( \arg \hat\phi_\eps^{(r)} (\cev{\Gamma}_j(\cev{\exitTT}_{\eps}))
- \arg \hat\phi_\eps^{(r)} (\cev{\Gamma}_i(\cev{\exitTT}_{\eps})) \big) \Big)  }
\; & \; \overset{r \to 0}{\longrightarrow} \; 
\frac{ \underset{1\le i<j\le k}{\prod} \sin \Big( \frac{1}{2} \big( \arg \phi_\eps (\cev{\Gamma}_j(\cev{\exitTT}_{\eps}))
- \arg \phi_\eps (\cev{\Gamma}_i(\cev{\exitTT}_{\eps})) \big) \Big) }
{ \underset{2\le i<j\le k}{\prod} \sin \Big( \frac{1}{2} \big( \arg \hat\phi_\eps (\cev{\Gamma}_j(\cev{\exitTT}_{\eps}))
- \arg \hat\phi_\eps (\cev{\Gamma}_i(\cev{\exitTT}_{\eps})) \big) \Big)  }
\\
\underset{1\le i<j\le k}{\prod} \sin \Big( \tfrac{1}{2} \big( \arg \phi_\eps^{(r)} (x_j)
- \arg \phi_\eps^{(r)} (x_i) \big) \Big) 
\; & \; \overset{r \to 0}{\longrightarrow} \; 
\underset{1\le i<j\le k}{\prod} \sin \Big( \tfrac{1}{2} \big( \arg x_j
- \arg x_i \big) \Big) 
\\
\underset{2\le i<j\le k}{\prod} \sin \Big( \tfrac{1}{2} \big( \arg \hat\phi_\eps^{(r)} (x_j)
- \arg \hat\phi_\eps^{(r)} (x_i) \big) \Big) 
\; & \; \overset{r \to 0}{\longrightarrow} \; 
\underset{2\le i<j\le k}{\prod} \sin \Big( \tfrac{1}{2} \big( \arg x_j
- \arg x_i \big) \Big) 
\end{align*}
Combining this with Proposition~\ref{prop::hittingpoints}, Lemma~\ref{lem::deltato0odd}~\&~\ref{lem::Poisson_limit}, 
and Equation~\eqref{eqn::scaling_constant_poly} in Proposition~\ref{prop::BM_mix_Diri}
(also proven in Lemma~\ref{lem::scaling_constant} in Appendix~\ref{app::scaling_constant}), we find that
\begin{align*}
L_r(\cev{\bs{\Gamma}}[0,\cev{\exitTT}_{\eps}]) 
= \; & \frac{1}{2\pi} \, \frac{\LZ_{k-1}}{\LZ_k}\;(1+o_r(1)) \; 
\; \one{ \Big\{\cev{\Gamma}_1[0,\cev{\exitTT}_{\eps}] \cap \bigcup_{j=2}^{k} \cev{\Gamma}_j[0,\cev{\exitTT}_{\eps}]=\emptyset \Big\} }
\; \one{ \big\{ (\cev{\Gamma}_1(0),\ldots,\cev{\Gamma}_k(0))\in\LX_k \big\} }
\\
\times \; & 
\frac{(\phi_\eps'(0))^{\frac{k^2-1}{4}}}{(\hat\phi_\eps'(0))^{\frac{(k-1)^2-1}{4}}}
\; 
\prod_{j=2}^{k} \big| (\phi_\eps \circ \hat\phi_\eps^{-1})'(\hat\phi_\eps(\cev{\Gamma}_j(\cev{\exitTT}_{\eps}))) \big|
\frac{ \underset{1\le i<j\le k}{\prod} \sin \Big( \frac{1}{2} \big( \arg \phi_\eps (\cev{\Gamma}_j(\cev{\exitTT}_{\eps}))
- \arg \phi_\eps (\cev{\Gamma}_i(\cev{\exitTT}_{\eps})) \big) \Big) }
{ \underset{2\le i<j\le k}{\prod} \sin \Big( \frac{1}{2} \big( \arg \hat\phi_\eps (\cev{\Gamma}_j(\cev{\exitTT}_{\eps}))
- \arg \hat\phi_\eps (\cev{\Gamma}_i(\cev{\exitTT}_{\eps})) \big) \Big)  } .
\end{align*}
By Fatou's lemma, under the coupling $\LQ$, we have
\begin{align}\label{eqn::Fatou}
1 = \liminf_{r\to 0} \LQ \big[ L_r(\cev{\bs{\Gamma}}[0,\cev{\exitTT}_{\eps}]) \big] 
\ge \LQ \big[ L(\cev{\bs{\Gamma}}[0,\cev{\exitTT}_{\eps}]) \big] ,
\end{align}
where $L_r(\cev{\bs{\Gamma}}[0,\cev{\exitTT}_{\eps}]) \to L(\cev{\bs{\Gamma}}[0,\cev{\exitTT}_{\eps}]) \leq C$ as $r \to 0$, almost surely.

We use Lemma~\ref{lem::flowline_nsided} to identify the limit.
Let $\bs{\slecurv}^{\bs z} := (\slecurv_1^{z_1},\ldots,\slecurv_k^{z_k})$ be $k$ independent radial $\SLE_\kappa$ curves on $\D$ in the common parametrisation started at $\bs{z} = (z_1, \ldots,z_k)$. For any bounded continuous function $f$, we have
\begin{align}
\label{eqn::martingale_induction}
\;&\LQ \big[ f(\cev{\bs{\Gamma}}[0,\cev{\exitTT}_{\eps}]) \, L(\cev{\bs{\Gamma}}[0,\cev{\exitTT}_{\eps}])\big]
\\
\notag
= \;& \frac{\LZ_{k-1}}{\LZ_{k}} \int_{\UC} \E \Big[ \rho(z_1) \, f( \slecurv_1^{z_1}[0,\exitTT_{\eps}], \cev{\Gamma}_2[0,\cev{\exitTT}_{\eps}], \ldots, \cev{\Gamma}_k[0,\cev{\exitTT}_{\eps}] )
\, L( \slecurv_1^{z_1}[0,\exitTT_{\eps}], \cev{\Gamma}_2[0,\cev{\exitTT}_{\eps}], \ldots, \cev{\Gamma}_k[0,\cev{\exitTT}_{\eps}] )
\\
\notag
\;& \qquad\qquad\qquad\quad \times 
J(\cev{\Gamma}_2[0,\cev{\exitTT}_{\eps}], \ldots, \cev{\Gamma}_k[0,\cev{\exitTT}_{\eps}]; \slecurv_1^{z_1}[0,\exitTT_{\eps}])
\Big] 
\; |\ud z_1|
\qquad\qquad\qquad\qquad\;\;
\textnormal{[by~\eqref{eqn::RN_IM_2}~\&~symm.]}
\\
\notag
= \;& \frac{1}{\LZ_{k}} 
\int_{\LX_k}
\underset{2\le i<j\le k}{\prod} \sin \Big( \frac{\arg z_j - \arg z_i}{2}\Big) 
\; \E \Big[ f( \bs{\slecurv}^{\bs z}[0,\exitTT_{\eps}] ) \, S(\bs{\slecurv}^{\bs z}[0,\exitTT_{\eps}]) \Big] \;\prod_{j=1}^{k} |\ud z_j| ,
\qquad\;
\textnormal{[ind.~hypo.~\&~Lem~\ref{lem::flowline_nsided}]}
\end{align}
where using the notation from~(\ref{eqn::RN_IM_2},~\ref{eqn::RN_n_curves}), we have
\begin{align*}
S(\bs{\slecurv}^{\bs z}[0,\exitTT_{\eps}]) 
:= \rho(z_1) \, L(\bs{\slecurv}^{\bs z}[0,\exitTT_{\eps}]) \, J(\bs{\slecurv}^{\bs z}[0,\exitTT_{\eps}]) 
\, R( \D; \slecurv_2^{z_2}[0,\exitTT_{\eps}], \ldots, \slecurv_k^{z_k}[0,\exitTT_{\eps}]) .
\end{align*}
A direct computation shows that
\begin{align*}
S(\bs{\slecurv}^{\bs z}[0,\exitTT_{\eps}]) 
=\frac{ \underset{1\le i<j\le k}{\prod} \sin \Big( \tfrac{1}{2} \big( \arg z_j
- \arg z_i \big) \Big) }
{ \underset{2\le i<j\le k}{\prod} \sin \Big( \tfrac{1}{2} \big( \arg z_j
- \arg z_i \big) \Big) }
\; R( \D; \bs{\slecurv}^{\bs z}[0,\exitTT_{\eps}] ) ,
\end{align*}
where $\bs{\slecurv}^{\bs z}[0,\exitTT_{\eps}] = (\slecurv_1^{z_1}[0,\exitTT_{\eps}], \slecurv_2^{z_2}[0,\exitTT_{\eps}], \ldots, \slecurv_k^{z_k}[0,\exitTT_{\eps}])$.
Taking $f \equiv 1$, Lemma~\ref{lem::flowline_nsided} yields
\begin{align}\label{eqn::uniform_inte}
\LQ \big[ L(\cev{\bs{\Gamma}}[0,\cev{\exitTT}_{\eps}]) \big] = 1 ,
\end{align}
and we see that weighted by $L(\cev{\bs{\Gamma}}[0,\cev{\exitTT}_{\eps}])$, 
the law of $\cev{\bs{\Gamma}}[0,\cev{\exitTT}_{\eps}]$ satisfies the properties~\ref{item::scaling_curve1}~\&~\ref{item::scaling_curve2} in the statement of Proposition~\ref{prop::scaling_curve}.
Moreover, by~(\ref{eqn::Fatou},~\ref{eqn::uniform_inte}), under the coupling $\LQ$, the family $\{L_r(\cev{\bs{\Gamma}}[0,\cev{\exitTT}_{\eps}]) \}_{r>0}$ is uniformly integrable.
Hence, we deduce that the law of $\cev{\bs{\gamma}}^{(r)}[0,\cev{\exitTT}_{\eps}]$ also converges as $r \to 0$ in total variation distance: 
\begin{align*}
\lim_{r\to 0} \TVnorm{ \cev{\bs{\Gamma}}[0,\cev{\exitTT}_{\eps}] - \cev{\bs{\gamma}}^{(r)}[0,\cev{\exitTT}_{\eps}] }
= \; & \lim_{r\to 0} \sup_A \Big| \PP \big[ \cev{\bs{\Gamma}}[0,\cev{\exitTT}_{\eps}] \in A \big] -  \PP \big[ \cev{\bs{\gamma}}^{(r)}[0,\cev{\exitTT}_{\eps}] \in A \big] \Big|
\\
= \; & \lim_{r\to 0} \sup_A \Big| \LQ \big[ \cev{\bs{\Gamma}}^{(r)}[0,\cev{\exitTT}_{\eps}] \in A \big] - \LQ \big[ L_r(\cev{\bs{\Gamma}}[0,\cev{\exitTT}_{\eps}]) \, \one{\{\cev{\bs{\Gamma}}^{(r)}[0,\cev{\exitTT}_{\eps}] \in A\}} \big] \Big|
\\
\leq \; & \lim_{r\to 0} \Big| \LQ \big[ | 1 - L_r(\cev{\bs{\Gamma}}[0,\cev{\exitTT}_{\eps}]) | \big]  \Big| 
\; = \; 0 
\end{align*}
by the dominated convergence theorem and the choice of the coupling.
This finishes the induction step.
\end{proof}

We now turn to the reparameterised  discrete curves $(\eta_1^\delta,\ldots,\eta_n^\delta)$ in $\OmegaFill=\D$.  
To prove Theorem~\ref{thm::conv_curves}, it suffices to prove that their limit has the same law as the $n$-sided radial $\SLE_2$ curves $\bs{\eta}[0,\exitTT_{\eps}]$. We will, in fact, prove a more general result.

\begin{lemma}\label{lemma::together}
Fix $\kappa \in (0,4]$. 
Let $\bs{z} \in \UC^n$ be $n$ distinct points in counterclockwise order.
Fix $\eps>0$. 
Let the curves $(\eta^{\bs z}_{1,\eps},\ldots,\eta^{\bs z}_{n,\eps})$ be reparameterised by common parametrisation, to obtain $(\hat\eta^{\bs z}_{1,\eps},\ldots,\hat\eta^{\bs z}_{n,\eps})$. 
Then, up to the hitting time $\exitTT_{\eps}$ to $\partial B(0,\eps)$, the law of $(\hat\eta^{\bs z}_{1,\eps},\ldots,\hat\eta^{\bs z}_{n,\eps})$ is that of the $n$-sided radial $\SLE_\kappa$.
\end{lemma}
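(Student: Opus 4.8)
The plan is to combine Lemma~\ref{lem::flowline_nsided} with the defining absolute-continuity description of $n$-sided radial $\SLE_\kappa$ due to Healey and Lawler~\cite{Healey-Lawler:N_sided_radial_SLE}. By conformal invariance we may assume $\OmegaFill = \D$, and since the starting points $\bs{z} \in \UC^n$ are fixed and distinct throughout, no starting-point density enters the discussion. Recall from~\cite[Theorem~3.12]{Healey-Lawler:N_sided_radial_SLE} (as recapped before~\eqref{eqn::drivingfunction}) that, up to any time before the curves meet --- in particular up to the common-parametrisation time $\exitTT_{\eps}$ at which the joint hull first reaches $\partial B(0,\eps)$ --- the law of the $n$-sided radial $\SLE_\kappa$ from $\bs{z}$ is absolutely continuous with respect to the law of $n$ independent radial $\SLE_\kappa$ curves $\bs{\slecurv}^{\bs{z}} = (\slecurv_1^{z_1},\ldots,\slecurv_n^{z_n})$, with an explicit Radon--Nikodym derivative $M_{\mathrm{HL}}$ (a ratio of radial-SLE partition functions times an exponential of a Brownian loop measure term). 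Crucially, $M_{\mathrm{HL}}$ is a functional of the \emph{curves} (equivalently, of the family of hulls), not of their parametrisation.

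First I would invoke Lemma~\ref{lem::flowline_nsided} with the individual stopping times taken to be the hitting times of $\partial B(0,\eps)$, which identifies the law of the full inductive-construction curves $(\eta^{\bs z}_{1,\eps},\ldots,\eta^{\bs z}_{n,\eps})$ as that of $\bs{\slecurv}^{\bs{z}}$ (each stopped at $\partial B(0,\eps)$) reweighted by the martingale $R$ of~\eqref{eqn::RN_n_curves}, which is again a functional of the curves. Both this statement and the Healey--Lawler description are parametrisation-free; restricting to common-parametrisation time $t \le \exitTT_{\eps}$ --- on which event the curve configurations avoid $B(0,\eps)$ and the two stopping conventions agree on the restricted configuration --- the reparametrisation by common capacity is a deterministic measurable operation on the curves, so the claim reduces to the identity $R = M_{\mathrm{HL}}$ of these two functionals. (Alternatively, one verifies via It\^o's formula and Girsanov's theorem that reweighting $n$ independent radial $\SLE_\kappa$ driving functions by $R$ produces exactly the drift $2\sum_{j\ne i}\cot\big(\tfrac{\Theta_i - \Theta_j}{2}\big)$ of~\eqref{eqn::drivingfunction}.)

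The remaining step is a direct computation matching the two explicit weights. One checks that the ingredients of $R$ in~\eqref{eqn::RN_n_curves} reassemble into those of $M_{\mathrm{HL}}$: the power $\tfrac{n^2-1}{2\kappa}+\tfrac{(6-\kappa)(\kappa-2)}{8\kappa} = \tfrac{4n^2-(\kappa-4)^2}{8\kappa}$ of the conformal radius at the origin matches the interior conformal weight of the $n$-sided radial partition function; the boundary-tip exponents $\tfrac{6-\kappa}{2\kappa}$ together with the $\tfrac{2}{\kappa}$-powers of the $\sin\big(\tfrac{1}{2}(\arg(\cdot) - \arg(\cdot))\big)$ products match its boundary part; and the Brownian loop measure terms --- appearing in both $R$ and $M_{\mathrm{HL}}$ with the same loop-soup coefficient $\tfrac{(3\kappa-8)(6-\kappa)}{4\kappa}$ --- cancel against one another. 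The main obstacle is precisely this bookkeeping: recognising each factor of $R$ as a piece of the Dub\'edat/Healey--Lawler radial-SLE partition-function machinery and confirming that the loop-soup contributions on the two sides are genuinely the same, so that no central-charge anomaly survives; the conceptual point that makes this routine is that every quantity in play is a functional of the curves, so the choice of (common) parametrisation is immaterial and matching the reweighting functionals suffices.
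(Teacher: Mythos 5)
Your reduction ``the reparametrisation by common capacity is a deterministic measurable operation on the curves, so the claim reduces to the identity $R = M_{\mathrm{HL}}$'' is where the argument breaks, and it is exactly the difficulty the paper's proof is organised around. Lemma~\ref{lem::flowline_nsided} identifies the law of the inductive construction with each curve stopped at its \emph{individual} hitting time of $\partial B(0,\eps)$ (the $T_{j,\eps}$ are separate stopping times for the $n$ curves), whereas the Healey--Lawler Radon--Nikodym derivative is stated at \emph{common-parametrisation} times, at which generically only one curve has reached $\partial B(0,\eps)$ and the others are strictly shorter than their individual hitting times. The common-time-$\exitTT_{\eps}$ configuration is indeed a measurable truncation $\Phi$ of the individually-stopped configuration, but pushing forward a tilted law does not carry the tilt along: if $\ud P_1 = R\,\ud Q_1$ at individual hitting times, then $\ud(\Phi_*P_1)/\ud(\Phi_*Q_1)$ equals the \emph{conditional expectation} $\E_{Q_1}[R \cond \Phi]$, not $R$ evaluated on the truncated configuration --- and $R$ genuinely depends on the full individually-stopped curves (through $\psi_\eps'(0)$ and the loop-measure term), so this conditional expectation is nontrivial. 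Equating the two requires a martingale/cocycle property of $R$ across the two different stopping schemes; that is the actual content of the paper's proof, which runs the $n$-sided $\SLE_\kappa$ to the common time $\exitTT_{2\eps}$, concatenates an inductive construction in the multi-slit domain from the tips, and checks via~\eqref{eqn::Raux_0} together with the single-curve Radon--Nikodym derivatives of~\cite[Proposition~2.2]{Healey-Lawler:N_sided_radial_SLE} that the product of the three Radon--Nikodym factors telescopes to $R(\D;\bs{\slecurv}[\bs{0},\tilde{\bs{T}}_\eps])$. The observation that $R$ is a functional of the unparametrised curves does not by itself supply this.

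Your second paragraph is not where the work lies: the paper simply quotes~\cite[Propositions~3.4~\&~3.6]{Healey-Lawler:N_sided_radial_SLE} for the fact that the $n$-sided radial $\SLE_\kappa$ at common time $t$ is the $R$-tilt of independent radial $\SLE_\kappa$'s, so no separate matching of $R$ against $M_{\mathrm{HL}}$ is needed. To repair the proposal you would need either to reproduce the concatenation/cocycle step, or to prove directly that $R$ defines a uniformly integrable martingale with respect to the multi-curve filtration indexed by arbitrary multi-time stopping rules, so that optional stopping identifies $\E_{Q}[R(\textnormal{individual hitting times}) \cond \mathcal{F}_{\exitTT_{\eps}}]$ with $R$ evaluated at the common time $\exitTT_{\eps}$.
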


\begin{proof}
Let $\bs{\slecurv} = (\slecurv_1,\ldots,\slecurv_n)$ be $n$ independent radial $\SLE_\kappa$ curves from $\bs{z}$ to $0$ on $\D$, 
and $\bs{\eta} = (\eta_1,\ldots,\eta_n)$ an $n$-sided radial $\SLE_\kappa$ from $\bs{z}$ to $0$ on $\D$, 
both with the common parametrisation. 
For each $t \geq 0$, by~\cite[Propositions~3.4~\&~3.6]{Healey-Lawler:N_sided_radial_SLE} 
the Radon--Nikodym derivative of $\bs{\eta}[0,t]$ with respect to $\bs{\slecurv}[0,t]$ is given by 
the same formal expression  $R(\D;\bs{\slecurv}[0,t])$ in Equation~\eqref{eqn::RN_n_curves} in Lemma~\ref{lem::flowline_nsided}. 
The difficulty is that we cannot stop the $n$-sided SLE (run with common parametrisation) at 
a stopping time in such a way that all $n$ curves stop when they first reach $\partial B(0, \eps)$, 
in order to compare with the expression of the Radon--Nikodym derivative in that lemma and thus conclude. 
We deal with this issue as follows. 
Let 
\begin{align*}
\exitTT_{2\eps} := \inf \big\{ t>0 \colon \bs{\eta}[0, t] \cap B(0, 2\eps) \neq \emptyset \big\}
\end{align*}
be the first time when \emph{one} of the $n$ curves forming the $n$-sided radial $\SLE_\kappa$ tuple $\bs \eta$ in common parametrisation hits $\partial B( 0, 2\eps)$. 
Given these $n$ curves up to time $\exitTT_{2\eps}$, we then concatenate to them $n$ additional curves 
${\bs \eta}^{\bs w}_{\eps} = (\eta^{\bs w}_{1,\eps},\ldots,\eta^{\bs w}_{n,\eps})$ 
which have the law of the inductive construction of Definition~\ref{D:inductive} in the multi-slit domain $\D \setminus \bs{\eta}[ 0, \exitTT_{2\eps}]$ towards 0$,$ 
starting from the tips of $\bs w = \bs{\eta} (\exitTT_{2\eps})$, and stopped when they respectively hit $\partial B(0, \eps)$. 
For each $j$, we denote by $\tilde \eta_j [ 0, \tilde T_{j, \eps}]$ the concatenation of $\eta_j [ 0, \exitTT_{2\eps}]$ with $\eta^{\bs w}_{j,\eps} [ 0, T_{j,\eps}]$. 
Altogether, this defines $n$ curves run until they each hit the ball $B(0, \eps)$, which we view as curves up to time-reparametrisation. 
We will consider the Radon--Nikodym derivative 
$\ud \tilde{\bs \eta}[ \bs{0}, \tilde{\bs{T}}_\epsilon] / \ud \bs{\slecurv}[\bs{0}, \tilde{\bs{T}}_\epsilon]$. 
Note that by the domain Markov property, 
conditional on $\bs{\slecurv}[0,\exitTT_{2\eps}]$, the law of each $\slecurv_j[0,T_{j,\eps}]$ is that of 
the radial $\SLE_\kappa$ curve $\slecurv_{j,\eps}[0, T_{j,\eps}]$ in the slit domain $\D \setminus \slecurv_j[0,\exitTT_{2\eps}]$ towards $0$. 
In order to apply Lemma~\ref{lem::flowline_nsided}, we need to consider independent radial $\SLE_\kappa$ curves 
${\bs \slecurv}^{\bs w}_{\eps} := (\slecurv^{\bs w}_{1,\eps},\ldots,\slecurv^{\bs w}_{n,\eps})$ 
in the multi-slit domain $\D \setminus \bs{\slecurv}[ 0, \exitTT_{2\eps}]$ towards $0$, 
starting from the tips $\bs w = \bs{\slecurv} (\exitTT_{2\eps})$, and stopped when they respectively hit $\partial B(0, \eps)$. 
We have
\begin{align}
\frac{\ud \, \tilde{\bs \eta}[\bs{0}, \tilde{\bs{T}}_\eps] }{ \ud \, \bs{\slecurv}[\bs{0}, \tilde{\bs{T}}_\eps]}
= \; & \frac{\ud \, \bs{\eta} [ 0, \exitTT_{2\eps}]}{\ud \, \bs{\slecurv}[0,\exitTT_{2\eps}] } 
\; \frac{\ud \, {\bs \eta}^{\bs w}_{\eps}[ \bs{0}, \bs{T}_\eps]}{\ud \, {\bs \slecurv}^{\bs w}_{\eps}[\bs{0}, \bs{T}_\eps]} 
\; \prod_{j=1}^{n}\frac{\ud \, \slecurv^{\bs w}_{j,\eps} [ 0, T_{j,\eps}]}{\ud \, \slecurv_{j,\eps}[0, T_{j,\eps}]}
\nonumber 
\\
= \; & R(\D;\bs{\slecurv}[0,\exitTT_{2\eps}]) 
\; R(\D\setminus \bs{\slecurv}[0,\exitTT_{2\eps}];{\bs \slecurv}^{\bs w}_{\eps}[\bs{0}, \bs{T}_\eps]) 
\; \prod_{j=1}^{n}\frac{\ud \, \slecurv^{\bs w}_{j,\eps} [ 0, T_{j,\eps}]}{\ud \, \slecurv_{j,\eps}[0, T_{j,\eps}]}
,
\label{eq:RNtriple}
\end{align}
using~\cite[Proposition 3.4 \& 3.6]{Healey-Lawler:N_sided_radial_SLE} for the first term and Lemma~\ref{lem::flowline_nsided} for the second term.
To analyse the third term, consider the following conformal isomorphisms preserving the origin and with positive derivative there:
\begin{itemize}
\item $\psi_{\bs{\slecurv}[0,\exitTT_{2\eps}]} \colon \D\setminus \bs{\slecurv}[0,\exitTT_{2\eps}] \to \D$; 

\item $\psi_{\slecurv_j[0, \exitTT_{2\eps}]} \colon \D\setminus\slecurv_j[0,\exitTT_{2\eps}] \to \D$, for $1\le j\le n$;

\item $\psi_j \colon \D\setminus (\slecurv_j[0,\exitTT_{2\eps}]\cup \slecurv^{\bs w}_j[0,T_{j,\eps}]) \to \D$, for $1\le j\le n$;

\item $\hat \psi_j \colon \D\setminus(\bs\slecurv[0,\exitTT_{2\eps}]\cup\slecurv^{\bs w}_j[0,T_{j,\eps}]) \to \D$, for $1\le j\le n$.
\end{itemize}
Then, by definition of $R$ in Equation~\eqref{eqn::RN_n_curves}, we find that
\begin{align}
\;& \frac{R(\D;\bs{\slecurv}[0,\tilde{\bs{T}}_\eps])}{R(\D;\bs{\slecurv}[0,\exitTT_{2\eps}]) \; R(\D\setminus \bs{\slecurv}[0,\exitTT_{2\eps}];\bs{\slecurv}^{\bs w}_ {\eps}[\bs{0},\bs{T}_\eps])}
\notag\\
= \;& \one{\{I_n^\eps\}}\;
\prod_{j=1}^{n}\bigg(\frac{(\hat \psi_j\circ\psi_j^{-1})'(\psi_j(\slecurv^{\bs w}_j(T_{j,\eps})))}{( \psi_{\bs\slecurv[0,\exitTT_{2\eps}]}\circ\psi_{\slecurv_j[0,\exitTT_{2\eps}]}^{-1})'(\psi_{\slecurv_j[0,\exitTT_{2\eps}]}(\slecurv_j(\exitTT_{2\eps})))}\bigg)^{\frac{6-\kappa}{2\kappa}}\;
\prod_{j=1}^{n}\bigg(\frac{\hat \psi_j'(0)/\psi_j'(0)}{\psi'_{\bs\slecurv[0,\exitTT_{2\eps}]}(0)/\psi_{\slecurv_j[0,\exitTT_{2\eps}]}'(0)}\bigg)^{\frac{(6-\kappa)(\kappa-2)}{8\kappa}}
\notag\\
\;&\times\prod_{j=1}^{n}\exp \bigg( \frac{(3\kappa-8)(6-\kappa)}{4\kappa} \; 
\mu_{\D\setminus \bs\slecurv[0,\exitTT_{2\eps}]} \Big[\ell \cap \slecurv_j^{\bs w}[0, T_{j,\eps}] \neq\emptyset \textnormal{ and } \ell \cap \bs\slecurv[0,\exitTT_{2\eps}] \neq\emptyset \Big] \bigg).
\label{eqn::Raux_0}
\end{align}
Thus, by \cite[Proposition 2.2]{Healey-Lawler:N_sided_radial_SLE} and~\eqref{eqn::Raux_0}, we have
\begin{align*}
R(\D;\bs{\slecurv}[0,\exitTT_{2\eps}]) 
\; R(\D\setminus \bs{\slecurv}[0,\exitTT_{2\eps}];{\bs \slecurv}^{\bs w}_{\eps}[\bs{0}, \bs{T}_\eps]) 
\; \prod_{j=1}^{n}\frac{\ud \, \slecurv^{\bs w}_{j,\eps} [ 0, T_{j,\eps}]}{\ud \, \slecurv_{j,\eps}[0, T_{j,\eps}]}
= R(\D;\bs{\slecurv}[0,\tilde{\bs{T}}_\eps]) .
\end{align*}
Combining with~\eqref{eq:RNtriple}, we deduce that 
\begin{align*}
\frac{\ud \, \tilde{\bs \eta}[\bs{0}, \tilde{\bs{T}}_\eps] }{ \ud \, \bs{\slecurv}[\bs{0}, \tilde{\bs{T}}_\eps]}
= R(\D; \bs{\slecurv}[\bs{0}, \tilde{\bs{T}}_\eps]) 
= \frac{\ud \, {\bs \eta}^{\bs z} [ \bs{0}, \tilde{\bs{T}}_\eps] }{ \ud \bs{\slecurv}[\bs{0}, \tilde{\bs{T}}_\eps]} ,
\end{align*}
which implies that $\tilde{\bs{\eta}}[ \bs{0}, \tilde{\bs{T}}_\eps]$ has the same law as ${\bs \eta}^{\bs z} [ \bs{0}, \tilde{\bs{T}}_\eps]$. 
By construction, considering $\tilde{\bs{\eta}}$ with the common parametrisation, we see that the law of $(\hat\eta^{\bs z}_{1,\eps},\ldots,\hat\eta^{\bs z}_{n,\eps})$ is that of the $n$-sided radial $\SLE_\kappa$, as desired.
\end{proof}

Now, we finish the proof of Theorem~\ref{thm::conv_curves}.
\begin{proof}[Proof of Theorem~\ref{thm::conv_curves}]
Lemma~\ref{lem::scaling_one_step} gives the tightness of $\{(\cev{\gamma}_1^{\delta},\ldots,\cev{\gamma}_n^{\delta})\}_{\delta>0}$. The convergence after shrinking $\BdryIn$ to a point is given by Proposition~\ref{prop::scaling_curve} and Lemma~\ref{lemma::together}.
\end{proof}

\subsection{Variance of the winding: proof of Proposition~\ref{coro::truncated_winding}}
\label{subsec:truncated_winding}

Consider the circular Dyson Brownian motion with parameters $\kappa > 0$ and $\beta \geq 1$, 
that is, the unique strong solution $(\Theta_1,\ldots,\Theta_n)$ to the SDE system 
\begin{align}\label{eqn:DBM_gen}
\ud\Theta_i(t) = \sqrt \kappa \, \ud W_i(t) + \frac{\beta \kappa}{4}\sum_{j\neq i} \cot\bigg(\frac{\Theta_i(t) - \Theta_j(t)}{2}\bigg)\ud t, \qquad 1 \leq i \leq n ,
\end{align}
where $(W_i(t) \colon t\ge 0)$ for $1\le i\le n$ are $n$ independent Brownian motions on $\R$. 
(The existence of a unique strong solution for any $\beta \geq 1$ follows from the analogous result for Dyson Brownian motion~\cite{AGZ:An_introduction_to_random_matrices}). 
First, we consider the asymptotics of the variance of this Dyson Brownian motion.

\begin{lemma}\label{lem::Dyson_driving}
For any $\kappa > 0$ and $\beta \geq 1$, we have 
\begin{align}\label{eqn::varthetai}
\lim_{t\to\infty} \frac{1}{t} \, \mathrm{Var}\big[\Theta_i(t)\big] = \frac{\kappa}{n} , \qquad 1 \leq i \leq n.
\end{align}
\end{lemma}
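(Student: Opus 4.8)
The plan is to exploit the symmetry of the system and the fact that the sum $\Theta_1 + \cdots + \Theta_n$ is a martingale (indeed, a Brownian motion up to scaling), so that the interaction terms cancel. First I would observe that, by summing the SDEs \eqref{eqn:DBM_gen} over $i$, the drift terms cancel pairwise since $\cot$ is odd: $\cot(\tfrac{\Theta_i-\Theta_j}{2}) + \cot(\tfrac{\Theta_j-\Theta_i}{2}) = 0$. Hence $\Sigma(t) := \sum_{i=1}^n \Theta_i(t)$ satisfies $\ud \Sigma(t) = \sqrt{\kappa}\sum_{i=1}^n \ud W_i(t)$, so $\Sigma(t) - \Sigma(0)$ is a centered Gaussian with variance $n\kappa t$. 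In particular $\mathrm{Var}[\Sigma(t)] = n\kappa t$ exactly, for all $t \geq 0$.

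Next I would use exchangeability: the law of $(\Theta_1,\ldots,\Theta_n)$ is invariant under permutations of the indices provided the initial condition is exchangeable — but for the asymptotic statement the initial condition is irrelevant, so one may as well reduce to that case, or argue directly that $\mathrm{Var}[\Theta_i(t)] = \mathrm{Var}[\Theta_j(t)] + o(t)$ for all $i,j$ by a coupling/relabeling argument. Granting the symmetry, $\mathrm{Var}[\Sigma(t)] = n\,\mathrm{Var}[\Theta_1(t)] + n(n-1)\,\mathrm{Cov}[\Theta_1(t),\Theta_2(t)]$. Combined with $\mathrm{Var}[\Sigma(t)] = n\kappa t$, this gives
\begin{align*}
\mathrm{Var}[\Theta_1(t)] = \kappa t - (n-1)\,\mathrm{Cov}[\Theta_1(t),\Theta_2(t)].
\end{align*}
So the claim \eqref{eqn::varthetai} is equivalent to showing $\mathrm{Cov}[\Theta_1(t),\Theta_2(t)] = \tfrac{\kappa t}{n} + o(t)$, i.e.\ that the difference processes $\Theta_i - \Theta_j$ have variance that is $o(t)$ (in fact bounded, or growing slower than linearly). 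Indeed, if $\mathrm{Var}[\Theta_i(t) - \Theta_j(t)] = o(t)$ then $\mathrm{Cov}[\Theta_1,\Theta_2] = \mathrm{Var}[\Theta_1] + o(t) = \mathrm{Var}[\Theta_2] + o(t)$ after averaging appropriately, and plugging back yields $n\,\mathrm{Var}[\Theta_1(t)] = \kappa t + o(t)$.

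The main obstacle is therefore controlling the gap processes $D_{ij}(t) := \Theta_i(t) - \Theta_j(t)$ and showing $\mathrm{Var}[D_{ij}(t)] = o(t)$ (ideally $O(1)$). The idea is that the drift $\tfrac{\beta\kappa}{4}\cot(\tfrac{D_{ij}}{2})$ is \emph{restoring}: it pushes the gaps away from $0$ and (working mod $2\pi$) the system is positive-recurrent on the configuration space $\T_0^n$ of ordered points on the circle. I would make this precise either (a) by noting that the circular $\beta$-ensemble Gibbs measure $\propto \prod_{i<j}|e^{\ii\theta_i} - e^{\ii\theta_j}|^{\beta/2}$ on the torus (times $2/\kappa$ scaling in time) is the stationary distribution for \eqref{eqn:DBM_gen} — this is standard for Dyson-type dynamics — so that for an initial condition drawn from stationarity, the gaps $D_{ij}(t)$ are \emph{stationary}, hence $\mathrm{Var}[D_{ij}(t)] = O(1)$ uniformly in $t$; and then (b) removing the stationarity assumption on the initial condition by a comparison argument (the gaps from an arbitrary start couple to the stationary ones, or one uses that $t \mapsto \tfrac1t\mathrm{Var}[\Theta_i(t)]$ has a limit independent of the start by a renewal/mixing argument). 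Modulo this recurrence input — for which I would cite the analogous facts for Dyson Brownian motion from \cite{AGZ:An_introduction_to_random_matrices} or prove a crude Lyapunov-function bound using $V(\bs\theta) = -\sum_{i<j}\log|e^{\ii\theta_i}-e^{\ii\theta_j}|$ — the computation above closes the proof. I expect the cleanest writeup is: reduce to the stationary initial condition, use that gaps are then stationary (hence $O(1)$ variance), combine with the exact identity $\mathrm{Var}[\Sigma(t)] = n\kappa t$ and exchangeability to get $\mathrm{Var}[\Theta_i(t)] = \tfrac{\kappa t}{n} + O(1)$, and finally note the limit $\tfrac1t\mathrm{Var}[\Theta_i(t)] \to \tfrac{\kappa}{n}$ does not depend on the initial law.
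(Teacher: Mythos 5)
Your opening move is the same as the paper's: summing the SDEs over $i$ kills the drift because $\cot$ is odd, so $\Sigma(t)=\sum_i\Theta_i(t)$ is $\sqrt{\kappa}$ times a sum of independent Brownian motions and $\mathrm{Var}[\Sigma(t)]=n\kappa t$ exactly. You also correctly identify that everything reduces to showing the gap processes $\Theta_i-\Theta_j$ have variance $o(t)$. The problem is that you leave precisely this step unproven, and the route you sketch for it (positive recurrence on $\T_0^n$, stationarity of the circular $\beta$-ensemble, then a coupling argument to remove the dependence on the initial condition) is far heavier than necessary and is explicitly deferred ("modulo this recurrence input"). That is a genuine gap: the recurrence facts for \emph{circular} Dyson dynamics are not simply citable from \cite{AGZ:An_introduction_to_random_matrices}, and the de-initialization step you wave at would itself require a nontrivial mixing or coupling argument. (As a minor point, the invariant density for \eqref{eqn:DBM_gen} is $\propto\prod_{i<j}|\ee^{\ii\theta_i}-\ee^{\ii\theta_j}|^{\beta}$, not $\beta/2$.)

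The missing step has a one-line deterministic proof, which is what the paper uses: since the initial angles satisfy $\theta_1<\cdots<\theta_n<\theta_1+2\pi$ and the particles never collide for $\beta\ge 1$, the ordering on the circle is preserved for all time, so $\Delta_j(t):=\Theta_j(t)-\Theta_1(t)\in[0,2\pi]$ \emph{surely} for every $t$ and every $j$. Hence $\mathrm{Var}\big[\sum_{j\ge 2}\Delta_j(t)\big]=O(1)$ uniformly in $t$, with no ergodicity input at all. The paper then writes $\Theta_1(t)=\frac{1}{n}\big(\Sigma(t)-\sum_{j\ge2}\Delta_j(t)\big)$ and observes that $\Sigma$ is independent of $(\Delta_2,\ldots,\Delta_n)$ (because $\sum_j W_j$ is independent of the differences $W_j-W_1$, and the $\Delta$'s solve an autonomous SDE driven only by those differences), so the variances add exactly and $\mathrm{Var}[\Theta_1(t)]=\frac{1}{n^2}\big(n\kappa t+O(1)\big)$; the case $i\ne1$ follows since $\Theta_i=\Theta_1+\Delta_i$ with $\Delta_i$ bounded. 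Your exchangeability-plus-covariance bookkeeping would also close the argument once the deterministic bound on the gaps is in hand (exact exchangeability fails for a fixed initial condition, but the bounded gaps give $\mathrm{Var}[\Theta_i(t)]=\mathrm{Var}[\Theta_j(t)]+O(\sqrt{t})$ by Cauchy--Schwarz), so the fix is to replace your recurrence step by this elementary observation.
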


\begin{proof}
Let us first consider the case of $i=1$. 
The Dyson SDEs~\eqref{eqn:DBM_gen} yield
\begin{align}\label{eqn::SDEalpha}
\Sigma(t) := \; & \sum_{j=1}^{n} \Theta_j(t) 
&& \Longrightarrow \qquad 
\ud \Sigma(t) = \sum_{j=1}^{n} \ud  \Theta_j(t) 
= \sqrt \kappa \, \sum_{j=1}^{n} \ud W_j(t) ,
\\
\nonumber
\Delta_j(t) := \; & \Theta_j(t) - \Theta_1(t) 
&& \Longrightarrow \qquad 
\ud \Delta_j(t) = \sqrt \kappa \, \ud ( W_j(t) - W_1(t)) 
\\
\nonumber
& && \qquad\qquad\qquad\qquad
+ \frac{\beta \kappa}{4} \sum_{\substack{2 \le l \le n \\ l \neq j}}
\bigg( \cot\bigg(\frac{\Delta_j(t) - \Delta_l(t)}{2}\bigg)\ud t + \cot\bigg(\frac{\Delta_l(t) }{2}\bigg)\bigg)\ud t
\end{align}
for $2 \leq j \leq n$. Since $W_1,\ldots,W_n$ are $n$ independent Brownian motions, we see that $\Sigma$ and
$\Delta_2, \ldots, \Delta_n$ are independent.  In particular, we have
\begin{align}\label{eqn::varalpha}
\mathrm{Var} \Big[ \Sigma(t) - \sum_{j=2}^{n} \Delta_j(t) \Big]
= \mathrm{Var} [ \Sigma(t) ] + \mathrm{Var} \Big[ \sum_{j=2}^{n} \Delta_j(t) \Big]
= \kappa n t + \mathrm{Var} \Big[ \sum_{j=2}^{n} \Delta_j(t) \Big] ,
\end{align}
where $\mathrm{Var}[\Sigma(t)] = \kappa n t$ by~\eqref{eqn::SDEalpha}.
On the one hand, note that $\Delta_j(t) = \Theta_j(t) - \Theta_1(t) \in [0,2\pi]$ for every $t\ge 0$ and for $2 \leq j \leq n$. 
Hence, there exists a constant $M > 0$ such that
\begin{align}\label{eqn::varbeta}
\mathrm{Var} \Big[ \sum_{j=2}^{n} \Delta_j(t) \Big] \le Mn^2.
\end{align}
On the other hand, note that 
\begin{align*}
\Theta_1(t) = \frac{1}{n} \Big(\Sigma(t) - \sum_{j=2}^{n} \Delta_j(t) \Big) .
\end{align*}
Combining this with~(\ref{eqn::varalpha},~\ref{eqn::varbeta}), we obtain
\begin{align}\label{eqn::vartheta1}
\lim_{t\to\infty} \frac{1}{t} \, \mathrm{Var}\big[\Theta_i(t)\big] = \frac{ \kappa }{n}.
\end{align}
Lastly, as $\Theta_j(t) = \Delta_j(t) + \Theta_1(t)$ for $2 \leq j \leq n$, Equation~\eqref{eqn::vartheta1} also yields~\eqref{eqn::varthetai} for $j \neq 1$.
\end{proof}

For each $n \geq 1$, we sample the starting points $(\Theta_1(0),\ldots,\Theta_n(0)) = (\theta_1,\ldots,\theta_n)$ from the density
\begin{align*}
\rho(\theta_1,\ldots,\theta_n) = \frac{1}{\mathcal{Z}_n}\prod_{1\le i\le j\le n}\sin\bigg(\frac{\theta_j-\theta_i}{2}\bigg) .
\end{align*}
Then, if $(\Theta_1(t),\ldots,\Theta_n(t))$ for $t\ge 0$ evolves according to~\eqref{eqn:DBM_gen}, then 
$\big((\ee^{\ii\Theta_1(t)},\ldots,\ee^{\ii\Theta_n(t)}) \colon t\ge 0\big)$ is a stationary Markov process on $\LX_n$. 
Now, we are ready to prove Proposition~\ref{coro::truncated_winding}.
The proof is very similar to the proof of~\cite[Equation~(7.3)]{Schramm:Scaling_limits_of_LERW_and_UST}.
We now take $\kappa=2$ and $\beta=4$.

\begin{proof}[Proof of Proposition~\ref{coro::truncated_winding}]
Let $\bs\eta$ be a $n$-sided radial $\SLE_2$, whose Loewner driving process as in~\eqref{eqn::ODE} 
is given by the circular Dyson Brownian motion with parameters $\kappa=2$ and $\beta=4$ in~\eqref{eqn:DBM_gen}.
Fix $T > 0$ and for $0\le t\le T$, define $h_t(w):=g_{T-t}(g_T^{-1}(w))$ for $w\in\UC$, 
so that $h_t(\ee^{\ii W_j(T)}) :=g_{T-t}(\eta_j(T))$, for $1 \leq j \leq n$.
We can write $h_t(\ee^{\ii W_j(T)}) = \exp ( X_j(t) + \ii Y_j(t) )$, where 
$(X_j(t) \colon 0<t\le T)$ and $(Y_j(t) \colon 0<t\le T)$ satisfy 
\begin{align}\label{eqn::log_aux}
\partial_t X_j(t) + \ii\partial_t Y_j(t) 
= \sum_{i=1}^{n} \frac{\sinh ( X_j(t) ) + \ii \sin \big(\Theta_i(T-t) - Y_j(t) \big)}{\cosh ( X_j(t) ) - \cos \big(\Theta_i(T-t) - Y_j(t) \big)} , \qquad 1 \leq j \leq n.
\end{align}
Like in the proof of~\cite[Theorem~7.2]{Schramm:Scaling_limits_of_LERW_and_UST}, 
because $\partial_t X_j(t)< 0$ for $0<t\le T$, we can write $Y_j$  as a function of $X_j$, say $Y_j=F(X_j)$.  By~\eqref{eqn::log_aux}, we have
\begin{align*}
|F'(X_j(t))|\le \frac{n}{|\sinh X_j(t)|} .
\end{align*}
Now, let $T_j$ to be the first time when $X_j = -1$. 
Then, as in~\cite[Equation~(7.7)]{Schramm:Scaling_limits_of_LERW_and_UST}, we find that
\begin{align}\label{eqn::Y_bound1}
|Y_j(T) - Y_j(T_j)| \lesssim \int_{-\infty}^{-1}\frac{ \ud x }{|\sinh x|}< \infty.
\end{align}
Now, set $\psi_j(s,t)=g_{T-t}(\eta_j(T-s))$ for $0\le s\le t\le T$. Note that the image of $\psi_j$ does not contain $0$ and $\psi_j$ is a homotopy between $(\psi_j(s,T) \colon 0\le s\le T)$ and $(\psi_j(0,T-t) \colon 0\le t\le T)\cup (\psi_j(t,t) \colon 0\le t\le T)$. 
Thus, we can compute the topological winding $\ph_j(T)$ of $\eta_j[0,T]$ around the origin as
\begin{align*}
\ph_j(T) := \arg \eta_j(T) - \arg \eta_j(0)
= \Theta_j(T) - \Theta_j(0) + Y_j(T) - Y_j(T_j) + Y_j(T_j) - Y_j(0) .
\end{align*}
From~\eqref{eqn::Y_bound1} and the proof of Lemma~\ref{lem::Dyson_driving}, 
if $f$ is a bounded continuous test function, then we have
\begin{align*}
\lim_{T\to\infty} \E \Big[ f\Big(\frac{\ph_1(T)}{\sqrt{2T/n}},\ldots, \frac{\ph_n(T)}{\sqrt{2T/n}}\Big) \Big]
= \lim_{T\to\infty} \E \Big[ f\Big(\frac{\Sigma(T)}{\sqrt{2nT}},\ldots, \frac{\Sigma(T)}{\sqrt{2nT}}\Big) \Big]
= \lim_{T\to\infty} \E \big[ f(X,\ldots,X) \big] ,
\end{align*}
where $\Sigma(t) := \sum_{j=1}^{n} \Theta_j(t)$ as in~\eqref{eqn::SDEalpha} in the proof of Lemma~\ref{lem::Dyson_driving}, 
and where $X\sim \LN (0,1)$. 
\end{proof}

\section{Flow-line coupling for $n$-sided radial SLE with GFF}
\label{sec::GFF}

Lastly, we derive a connection of the $n$-sided radial $\SLE_\kappa$, for $\kappa \in (0,4)$,
with a variant $\field$ of the Gaussian free field (GFF) having a singularity at the origin with additive monodromy.
Here, $\bs{\eta}$ represent ``flow lines'' of the ``ill-defined'' vector field of type $\ee^{\ii \field}$.
The intuition from the dimer height function goes as follows (cf.~Figure~\ref{F:dimers}).
While the discrete dimer height function $\gff^\delta$ does define flow lines $\fline^\delta \colon [0,t] \to \C$ 
by solving the related flow ODE $\frac{\ud}{\ud t} \fline^\delta(t) = \exp\big(\ii \gff^\delta(\fline^\delta(t))\big)$, 
its scaling limit --- the GFF --- is not a function but rather a random Schwartz distribution.
The theory of flow lines for the GFF was extensively developed by Miller and Sheffield~\cite{Miller-Sheffield:Imaginary_geometry1, Miller-Sheffield:Imaginary_geometry4}, and earlier by Dub\'edat~\cite{Dubedat:SLE_and_free_field}.
The series of works~\cite{BLR:Dimers_and_imaginary_geometry, BLR:Dimers_on_Riemann_surfaces_2, BLR:Dimers_on_Riemann_surfaces_1} 
addresses the case of present interest from the point of view of the dimer height function.  
We refer to these works for more background and details, and keep this section very concise.

\subsection{Gaussian free field}

Suppose $D\subset\C$ is a regular domain (in the sense, say of~\cite{Berestycki-Powell:Gaussian_free_field_and_Liouville_quantum_gravity}, so that the Dirichlet Green's function is finite). 
The \textbf{zero-boundary GFF} $\gff$ on $D$ is a random distribution such that for every smooth compactly supported test function $f\in C^{\infty}_c(D)$ on $D$, 
the quantity $(\gff,f)$ is a Gaussian random variable with mean zero and variance obtained from the Green's function $\Gren_D$ on $D$ with Dirichlet boundary condition: 
\begin{align*}
\E[(\gff,f)]=0 
\qquad \textnormal{and} \qquad 
\E[(\gff,f)^2] = \int_D \int_D f(z) \, \Gren_D (g_t(z),g_t(w)) \, f(w) \; \ud z \, \ud w .
\end{align*}
Here, we normalise the Dirichlet Green's function so that $\Gren_D(z,w)= - \log |z-w| + O(1)$ as $|z-w| \to 0$. 
The GFF with boundary data $u$ is the sum of the zero-boundary GFF on $D$ and harmonic extension of $u$ to $D$. 
See~\cite{Berestycki-Powell:Gaussian_free_field_and_Liouville_quantum_gravity} for a more detailed introduction.

\medskip
In~\cite{Miller-Sheffield:Imaginary_geometry4}, the authors establish the theory of \textbf{flow lines} for the Gaussian free field with a singularity at one interior point. 
The procedure goes roughly as follows. The GFF can be coupled with a radial $\SLE_\kappa(\rho)$ process by taking the field to be $\gff + \alpha\arg(\cdot)$ on $\D$, where $\alpha=\alpha(\rho) \in \R$ is the strength of the singularity at the origin 
and one assumes certain boundary data on $\partial\D$ depending on $\kappa$~\cite[Proposition 3.1]{Miller-Sheffield:Imaginary_geometry4}.
In the present article, we consider a similar setting: 
we will derive a coupling of the $n$-sided $\SLE_\kappa$ curves $\bs{\eta} = (\eta_1, \ldots, \eta_n)$ 
with a Gaussian free field having suitable boundary data and singularity at the origin, yielding Theorem~\ref{thm::coupling}.
Although the value of $\kappa=2$ is the most natural in the context of the uniform spanning tree,
with the same effort we may consider arbitrary $\kappa \in (0,4)$.
The (critical) case of $\kappa=4$ would correspond to a level-line with monodromy~\cite{Dubedat:SLE_and_free_field}, which we shall not consider here.

\subsection{GFF flow lines with additive monodromy: proof of Theorem~\ref{thm::coupling}}

Let us first recall the ingredients of the claim in Theorem~\ref{thm::coupling}. 
Fix $\kappa \in (0, 4)$ and $\ee^{\ii\bs{\theta}}\in\LX_n$, and set
\begin{align*}
\chi_\kappa:=\frac{2}{\sqrt\kappa}-\frac{\sqrt\kappa}{2} 
\qquad \textnormal{and} \qquad
\lambda_\kappa:=\frac{\pi}{\sqrt\kappa} .
\end{align*}
Consider the harmonic function $\harm \colon \D \to \R$ defined for $w \in \D$ as in~\eqref{eqn::bound}:
\begin{align*}
\harm(w) = \harm_{\bs{\theta}}(w) 
:= -\frac{1}{\sqrt \kappa} \sum_{j=1}^n\arg \bigg(\frac{\ee^{\ii\theta_j}-w}{1-\overline{w}\ee^{\ii\theta_j}}\bigg) 
= -\frac{2}{\sqrt\kappa} \sum_{j=1}^n\arg(\ee^{\ii\theta_j}-w) + \frac{1}{\sqrt\kappa}\sum_{j=1}^n\theta_j.
\end{align*}
Let $\gff$ be a GFF field with Dirichlet boundary conditions in $\D$, and define the field as in~\eqref{eq:field}, 
\begin{align*}
\field(\cdot) := \gff(\cdot) + \Big(\chi_\kappa+\frac{n}{\sqrt\kappa}\Big)\arg(\cdot)+\harm_{\bs{\theta}}(\cdot) ,
\end{align*}
having additive monodromy at the origin due to the multivalued function $\arg(\cdot)$. 
We consider flow lines $\bs{\fline} = (\fline_1,\ldots,\fline_n)$ of the formal vector field of type $\ee^{\ii \field}$, starting from $\ee^{\ii\bs{\theta}}$ and with respective angles
\begin{align*}
\alpha_j = \frac{2\lambda_\kappa (j-1)}{\chi_\kappa} , \qquad 1 \leq j \leq n .
\end{align*}
These are random curves in the space $X_0^n$ (with metric~\eqref{eqn::curve_metric_many}) coupled with $\field$ so that for all stopping times $\bs{\tau} = (\tau_1, \ldots, \tau_n)$,  
conditionally given $\bs{\fline}[0, \bs{\tau}] = (\fline_1[0, \tau_1], \ldots, \fline_n[0, \tau_n])$, we have
\begin{align*}
\field|_{\D_{\bs{\tau}}} = \tilde \field \circ g_{\bs{\tau}} - \chi_\kappa \arg g'_{\bs{\tau}},  
\end{align*}
where $\tilde \field$ has the same law as~\eqref{eq:field}, and $g_{\bs{\tau}} \colon \D_{\bs{\tau}} \to \D$ is the unique conformal isomorphism uniformising the complement of $\D_{\bs{\tau}} := \D \setminus \bs{\fline}[0, \bs{\tau}]$ 
normalised such that $g_{\bs{\tau}}(0) = 0$ and $g'_{\bs{\tau}} (0) >0$.

\medskip 
We claim that, after parameterising $\bs{\fline}$ with the common (capacity) parametrisation, their law agrees with that of the $n$-sided radial $\SLE_\kappa$ curves $\bs{\eta}$,
whose Loewner driving process is the circular Dyson Brownian motion with parameters $\kappa=2$ and $\beta=4$ as in~\eqref{eqn:DBM}. 
In essence, this is encapsulated in the asserted Equation~\eqref{eq:IG} in Theorem~\ref{thm::coupling}, which we now proceed to prove.

\begin{proof}[Proof of Theorem~\ref{thm::coupling}]
The strategy is same as the proof of~\cite[Theorem~1.1]{Miller-Sheffield:Imaginary_geometry1} and~\cite[Proposition~3.1]{Miller-Sheffield:Imaginary_geometry4}. Fix $\eps>0$ and write $\bs z = \ee^{\ii\bs{\theta}}$. 
As in Lemma~\ref{lemma::together}, the law of the curves $(\hat\eta^{\bs z}_{1,\eps},\ldots,\hat\eta^{\bs z}_{n,\eps})$ with common parametrisation is that of the $n$-sided radial $\SLE_\kappa$ curves $(\eta_1,\ldots,\eta_n)$, 
up to the hitting time $\exitTT_{\eps}$ to $\partial B(0,\eps)$. 
Let $(\eta^{\bs z}_{1,\eps},\ldots,\eta^{\bs z}_{n,\eps})$ be the curves related to $(\hat\eta^{\bs z}_{1,\eps},\ldots,\hat\eta^{\bs z}_{n,\eps})$ as in Lemma~\ref{lemma::together}, 
let $T_{1,\eps} = T < \exitTT_{\eps}$ be a stopping time for the first curve $\eta^{\bs z}_{1,\eps}$, and let $T_{j,\eps}=0$ for all $2\le j\le n$. 
Let $(\Theta(t) \colon 0\le t\le T)$ be the Loewner driving function of $\eta^{\bs z}_{1,\eps}$, 
and $(g_t \colon 0\le t\le T)$ the associated radial Loewner flow~\eqref{eqn::ODE1}.
Also, let $\ee^{\ii V_{j}(t)} = g_t(\ee^{\ii\theta_j})$ be the time-evolutions of the force points of $\eta^{\bs z}_{1,\eps}$, for $2\le j\le n$.

By considering a partition of unity, for the GFF we only need to consider test functions $f$ whose support is simply connected and does not contain $0$.
Let $\Gren_\D(\cdot,\cdot)$ denote the Green's function on $\D$ with Dirichlet boundary condition. 
For fixed $a\in\R$, consider the process
\begin{align*}
M(t) 
:= \; & \E\bigg[ \exp \Big( \ii a 
\Big( \gff \circ g_t (\cdot) + \big(\chi_\kappa+\tfrac{n}{\sqrt\kappa}\big) \arg g_t(\cdot) + \harm_{\Theta(t),V_2(t),\ldots,V_n(t)}(g_t(\cdot)) - \chi_\kappa \arg g'_t(\cdot)   
\Big) , \; f(\cdot) \Big) \Bigcond \eta_1[0,t] \bigg]
\\
= \; & \exp \Big( \ii a
\Big( \big(\chi_\kappa+\tfrac{n}{\sqrt\kappa}\big) \arg g_t(\cdot) + \harm_{\Theta(t),V_2(t),\ldots,V_n(t)}(g_t(\cdot)) - \chi_\kappa \arg g'_t(\cdot)   
\Big) , \; f(\cdot)  \Big) \\
\; & \times \exp \Big( \! - \frac{a^2}{2} \int_\D\int_\D f(z) \, \Gren_\D (g_t(z),g_t(w)) \, f(w) \; \ud z \, \ud w
\Big) , \qquad 0\le t\le T .
\end{align*}
To establish the coupling, it suffices to show that $(M(t \wedge T) \colon t\ge 0)$ is a martingale. 
By It\^o's formula and Fubini's theorem, this is equivalent to verifying that for all $z,w\in\D$, 
the process $(H(t \wedge T) \colon t\ge 0)$, 
\begin{align*}
H(t) := 
\big(\chi_\kappa+\tfrac{n}{\sqrt\kappa}\big) \arg g_t(z)
+ \harm_{\Theta(t),V_2(t),\ldots,V_n(t)}(g_t(z)) - \chi_\kappa \arg g'_t(z) , \qquad 0\le t\le T ,
\end{align*}
is a martingale, and the quadratic variation of the harmonic function agrees with the Hadamard formula
\begin{align}\label{eqn::drift2}
\ud \big\langle \harm_{\Theta(t),V_2(t),\ldots,V_n(t)}(g_t(z)) , \; \harm_{\Theta(t),V_2(t),\ldots,V_n(t)}(g_t(w)) \big\rangle 
= - \ud \Gren_\D (g_t(z),g_t(w)) .
\end{align}
Let $W$ be the Brownian motion in the SDE~\eqref{eqn::SLE_kappa_rho} for $\Theta$.
By a direct computation, we find 
\begin{align}\label{eqn::11}
\;&\ud \arg\big(\ee^{\ii \Theta(t)}-g_t(z)\big) 
= \sqrt\kappa\;\Re \bigg(\frac{\ee^{\ii \Theta(t)}}{\ee^{\ii \Theta(t)}-g_t(z)} \bigg) \ud W(t)
\notag\\
+ \;& \bigg(\Re\bigg(\frac{\ee^{\ii \Theta(t)}}{\ee^{\ii \Theta(t)}-g_t(z)}\sum_{j=2}^n\cot\bigg(\frac{\Theta(t)-V_{j}(t)}{2}\bigg)\bigg)
- \Im\bigg(\frac{g_t(z)}{\ee^{\ii \Theta(t)}-g_t(z)}\bigg)^2 \!\!
+ \bigg(\frac{\kappa}{2}-1\bigg)\Im\bigg(\frac{\ee^{\ii \Theta(t)}g_t(z)}{(\ee^{\ii \Theta(t)}-g_t(z))^2}\bigg)\bigg)\ud t 
\notag\\
=\;&\sqrt\kappa\;\Re \bigg(\frac{\ee^{\ii \Theta(t)}}{\ee^{\ii \Theta(t)}-g_t(z)} \bigg) \ud W(t)
-\bigg(\sum_{j=2}^n\Im\bigg(\frac{\ee^{\ii \Theta(t)}(\ee^{\ii \Theta(t)}+\ee^{\ii V_{j}(t)})}{(\ee^{\ii \Theta(t)}-g_t(z))(\ee^{\ii \Theta(t)}-\ee^{\ii V_{j}(t)})} \bigg)
\\
\;&  \qquad\qquad\qquad\qquad\qquad\qquad\qquad\qquad
+ \Im\bigg(\frac{g_t(z)}{\ee^{\ii \Theta(t)}-g_t(z)}\bigg)^2 \!\!
- \bigg(\frac{\kappa}{2}-1\bigg)\Im\bigg(\frac{\ee^{\ii \Theta(t)}g_t(z)}{(\ee^{\ii \Theta(t)}-g_t(z))^2}\bigg)\bigg)\ud t ,
\notag
\end{align}
and for $2\le j\le n$, 
\begin{align}\label{eqn::12}
\ud \arg\big(\ee^{\ii V_{j}(t)}-g_t(z)\big) 
= 2 \, \Im \bigg( \frac{e^{2\ii\Theta(t)}}{(\ee^{\ii \Theta(t)}-g_t(z))(\ee^{\ii \Theta(t)}-\ee^{\ii V_{j}(t)})} \bigg) \ud t .
\end{align}
Moreover, from the Loewner equation~\eqref{eqn::ODE1} we have
\begin{align}
\notag
\ud \arg g_t(z) = \;&\Im \bigg( \frac{2\ee^{\ii \Theta(t)}}{\ee^{\ii \Theta(t)}-g_t(z)}\bigg) \ud t ,
\\ 
\notag
\ud \arg g'_t(z) = \;& \bigg(\Im\bigg( \frac{2\ee^{\ii \Theta(t)}}{\ee^{\ii \Theta(t)}-g_t(z)} \bigg) 
+ \Im \bigg( \frac{2\ee^{\ii \Theta(t)}g_t(z)}{(\ee^{\ii \Theta(t)}-g_t(z))^2}\bigg)\bigg) \ud t ,
\\
\label{eqn::15}
\ud \Big( \Theta(t) + \sum_{j=2}^n V_{j}(t) \Big) = \;&\sqrt\kappa \, \ud W(t) .
\end{align}
Combining~\eqref{eqn::11}--\eqref{eqn::15}, we obtain
\begin{align}\label{eqn::16}
\ud H(t) = \bigg(\!\! -2\Re\bigg(\frac{\ee^{\ii \Theta(t)}}{\ee^{\ii \Theta(t)}-g_t(z)}\bigg) + 1\bigg)  \ud W(t) ,
\end{align}
which proves that the process $(H(t \wedge T) \colon t\ge 0)$ is a martingale.
To get the identity~\eqref{eqn::drift2}, we can use the explicit formula
\begin{align*}
\Gren_\D (g_t(z),g_t(w)) 
 = -\log\bigg|\frac{g_t(z)-g_t(w)}{1-\overline{g_t(z)}g_t(w)}\bigg| ,
\end{align*}
to obtain
\begin{align*}
\ud \Gren_\D (g_t(z),g_t(w)) 
= 1 - 2 \, \Re\bigg(\frac{\ee^{2\ii \Theta(t)}}{(\ee^{\ii \Theta(t)}-g_t(z))(\ee^{\ii \Theta(t)}-g_t(w))}+\frac{\overline{g_t(z)}g_t(w)}{(e^{-i\Theta(t)}-\overline{g_t(z)})(\ee^{\ii \Theta(t)}-g_t(w))}\bigg) .
\end{align*}
Writing $a:=\frac{\ee^{\ii \Theta(t)}}{(\ee^{\ii \Theta(t)}-g_t(z))}$ and $b:=\frac{\ee^{\ii \Theta(t)}}{(\ee^{\ii \Theta(t)}-g_t(w))}$, we see that
\begin{align*}
\ud \Gren_\D (g_t(z),g_t(w)) 
= 1 - 2 \, \Re \big(ab+\overline{(1-a)}(1-b) \big) 
= 1 - 4 \, (\Re \, a)(\Re \, b) + 2 \, \Re \, a + 2 \, \Re \, b ,
\end{align*}
which, combined with~\eqref{eqn::16}, implies~\eqref{eqn::drift2} and completes the proof.
\end{proof}

\appendix
\section{Brownian loop identity via martingale argument}
\label{app::scaling_constant}

The below Lemma~\ref{lem::scaling_constant} is a re-statement of Equation~\eqref{eqn::scaling_constant_poly} in Proposition~\ref{prop::BM_mix_Diri}, which we prove here using scaling limit results and a martingale argument.

\begin{lemma}\label{lem::scaling_constant}
Suppose $\simpleCurv$ is a simple curve on $\OmegaFillCl$ which only intersects $\BdryOut$ at its starting point and does not hit $0$. Then, we have 
\begin{align} \label{eqn::scaling_constant_poly2}
\lim_{\diam (\BdryIn)\to 0} \exp \Big( 2\mu_{\Omega}^{\textnormal{ND}} \big[\ell \in \LL_* \textnormal{ such that } \tfrac{1}{2\pi}\ph(\ell)\textnormal{ is odd and }\ell\cap\simpleCurv\neq\emptyset \big] \Big) 
= \bigg|\frac{\phi_\simpleCurv'(0)}{\phi'(0)}\bigg|^{1/4} ,
\end{align}
where $\phi \colon \OmegaFill \to \D$ and $\phi_\simpleCurv \colon \OmegaFill\setminus\simpleCurv \to \D$ are any conformal isomorphisms fixing the origin. 
\end{lemma}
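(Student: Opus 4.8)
The plan is to identify the left-hand side of~\eqref{eqn::scaling_constant_poly2} with a conformal radius ratio by first reducing to the simply connected domain $\OmegaFill$ via Proposition~\ref{prop::BM_mix_Diri}, and then evaluating the resulting Brownian loop quantity $\mu_{\OmegaFill}[\ell \in \LL_* \colon \tfrac{1}{2\pi}\ph(\ell)\text{ odd and }\ell\cap\simpleCurv\neq\emptyset]$ through a martingale argument rather than through the direct integral computation used in the proof of Proposition~\ref{prop::BM_mix_Diri}. By~\eqref{eqn::conv_loop_odd_aux}, it suffices to compute the Brownian loop measure quantity in $\OmegaFill$, and by conformal invariance we may assume $\OmegaFill = \D$ with $\phi = \mathrm{id}$ and $\simpleCurv$ a simple curve starting on $\partial\D$ and avoiding $0$.

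First I would parametrise $\simpleCurv$ by its capacity seen from the origin and let $g_t \colon \D\setminus\simpleCurv[0,t]\to\D$ be the radial Loewner maps with $g_t(0)=0$, $g_t'(0)>0$, and let $\simpleCurv$ be driven by a smooth (deterministic) driving function $\Theta(t)$ --- the statement is purely deterministic in $\simpleCurv$, so no SDE is needed, only a Loewner ODE. Write $Q(t) := \mu_{\D}[\ell \in \LL_* \colon \tfrac{1}{2\pi}\ph(\ell)\text{ odd and }\ell\cap\simpleCurv[0,t]\neq\emptyset]$. Using the restriction covariance property of the Brownian loop measure (\cite[Proposition~3]{Werner:The_conformally_invariant_measure_on_self-avoiding_loops}) and the Markov-type decomposition of loops meeting $\simpleCurv[0,t]$, one gets that the derivative $\tfrac{\ud}{\ud t}Q(t)$ is expressible through the loop measure of loops in $\D\setminus\simpleCurv[0,t]$ touching the tip $\simpleCurv(t)$; but the cleaner route is to show directly that $\exp(2Q(t)) \cdot (g_t'(0))^{-1/4}$ is constant in $t$ (i.e. that $\tfrac{\ud}{\ud t}[2Q(t)] = \tfrac14 \tfrac{\ud}{\ud t}\log g_t'(0) = \tfrac14 \sum_j 1$, normalised suitably). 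For this I would invoke Yor's formula for the law of the winding index of a Brownian loop around a point (\cite{Yor:Loi_de_lindice_du_lacet_brownien_et_distribution_de_Hartman-Watson}) together with \cite[Theorems~5.1~\&~5.2]{Garban-Trujillo-Ferreras:The_expected_area_of_the_filled_planar_Brownian_loop_is_Pi_over_5}, exactly as in the proof of Proposition~\ref{prop::BM_mix_Diri}, to identify $\mu_{\D}[\tfrac{1}{2\pi}\ph(\ell) = 2k+1, \ell\cap\simpleCurv\neq\emptyset] = \tfrac{1}{2\pi^2(2k+1)^2}\log|\phi_\simpleCurv'(0)|$, and then sum $\sum_{k}\tfrac{1}{2\pi^2(2k+1)^2} = \tfrac18$ using $\sum_{k\geq 0}(2k+1)^{-2} = \pi^2/8$.

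The martingale/ODE perspective enters when one wants to avoid quoting the area-formula black box and instead verify the identity infinitesimally: track the quantity $N(t) := 2Q(t) - \tfrac14\log g_t'(0)$ and show $\tfrac{\ud}{\ud t}N(t) = 0$. The infinitesimal increment $\ud Q(t)$ is the $\mu$-mass of loops in $\D\setminus\simpleCurv[0,t]$ with odd winding around $0$ that are ``absorbed'' into touching the curve in $[t,t+\ud t]$; pushing forward by $g_t$ this becomes, up to the conformal distortion factor $|g_t'|$ near the tip, a universal local quantity at $\simpleCurv(t)\in\partial\D$, while $\tfrac{\ud}{\ud t}\log g_t'(0)$ is read off from the radial Loewner equation~\eqref{eqn::ODE1}. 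Matching the two requires the Beurling/Lawler-type estimate on the Brownian loop mass near a boundary point, which gives the exact constant $\tfrac18$ per unit of $\log$-conformal-radius change. Finally, taking $t\to$ (terminal time for $\simpleCurv$) and combining with $N(0) = 0$ (no curve, so $Q(0)=0$ and $g_0 = \mathrm{id}$) yields $\exp(2Q) = |g_\infty'(0)|^{1/4} = |\phi_\simpleCurv'(0)/\phi'(0)|^{1/4}$, which is~\eqref{eqn::scaling_constant_poly2}.

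The main obstacle I anticipate is making the infinitesimal loop-mass computation rigorous: controlling the $\mu_{\D\setminus\simpleCurv[0,t]}$-mass of loops that wind around $0$, touch the tip $\simpleCurv(t)$, and have the correct (odd) winding parity, uniformly as the tip moves, and extracting the exact local constant $\tfrac18$. This is precisely the kind of estimate that the area-formula approach of Garban--Trujillo-Ferreras packages cleanly; doing it ``by hand'' via a martingale requires a careful decomposition of loops by their last visit to a neighbourhood of the tip and an application of the strong Markov property for Brownian motion with reflecting boundary on $\BdryIn$ (in the $\Omega$ picture) or with the tip removed (in the $\D$ picture). An alternative, which I would fall back on if the direct estimate proves delicate, is to keep the computation at the level of the explicit series $\sum_k \tfrac{1}{2\pi^2(2k+1)^2}$ and only use the martingale argument to re-derive the single identity $\mu_{\D}[\ell\cap\simpleCurv\neq\emptyset, \ph(\ell)=2\pi m] = \tfrac{1}{2\pi^2 m^2}\log|\phi_\simpleCurv'(0)|$ for each fixed $m\neq 0$ --- this is a cleaner target since for fixed winding class the relevant loop mass is finite and its conformal covariance under $g_t$ can be differentiated directly against the Loewner flow.
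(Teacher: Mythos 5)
There is a genuine gap: your argument never actually produces the constant $\tfrac18$ by a route independent of the Yor/Garban--Trujillo-Ferreras computation, which is the whole point of this lemma. Your primary route simply re-invokes \cite[Theorems~5.1~\&~5.2]{Garban-Trujillo-Ferreras:The_expected_area_of_the_filled_planar_Brownian_loop_is_Pi_over_5} and Yor's law of the winding index and sums $\sum_k \tfrac{1}{2\pi^2(2k+1)^2}=\tfrac18$ --- but that is verbatim the proof of Equation~\eqref{eqn::scaling_constant_poly} already given inside Proposition~\ref{prop::BM_mix_Diri}, so as a ``self-contained proof via a martingale technique'' it is circular. Your fallback route (b) --- showing $\tfrac{\ud}{\ud t}\big[2Q(t)-\tfrac14\log g_t'(0)\big]=0$ along a deterministic radial Loewner flow --- founders on exactly the step you flag as the main obstacle: the ``universal local quantity at the tip'' whose value per unit capacity must equal $\tfrac18$ is precisely the generating function of the winding index of a Brownian loop restricted to odd winding, i.e.\ the Yor-type input again. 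You offer no independent mechanism (no martingale, no observable) from which this constant could be extracted, so the infinitesimal argument is not a proof but a restatement of what needs to be shown.

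The paper's actual argument is structurally different and worth contrasting. It stays at the level of the \emph{discrete model}: it takes the two-branch ($n=2$) UST event in $\A_r^\delta$, uses the Fomin/Wilson identities (Lemma~\ref{lem::neven}, which for even $n$ carries the factor $\ee^{\Lambda_0[\LL_*^\delta]-\Lambda_\pi[\LL_*^\delta]}$, i.e.\ exactly the odd-winding loop mass) to write a discrete martingale $N$ for the time-reversed LERW, passes to the scaling limit where that branch becomes radial $\SLE_2$, and applies optional stopping at the time $\sigma_c$ when the conformal radius has shrunk by the factor $c=\phi_\simpleCurv'(0)$. The constant then falls out of the It\^o computation showing that $S(t)=\ee^{3t/4}\,|g_t'(e)|\,\sin\big(\tfrac{\arg g_t(e)-\Theta(t)}{2}\big)$ is an $\SLE_2$ martingale: matching $\tfrac1c$ against $c^{-3/4}\ee^{-2F(c)}$ gives $F(c)=\tfrac18\log c$. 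In other words, the loop-measure constant is deduced from the known convergence of a single LERW branch to radial $\SLE_2$ plus an exact discrete partition-function identity --- no input about the winding law of Brownian loops is needed. If you want to salvage your approach, you would need to replace your ``local constant at the tip'' step with an observable of this kind; as written, the proposal does not close.
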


\begin{proof}
By Equation~\eqref{eqn::conv_loop_odd_aux} and the conformal invariance of Brownian loop measure, it suffices to prove~\eqref{eqn::scaling_constant_poly2} when $\Omega=\A_r$. 
In this case, we can take $\phi$ to be the identity map and $c = c(\simpleCurv) := \phi_\simpleCurv'(0)>1$. 
Note that the right-hand side of~\eqref{eqn::conv_loop_odd_aux} only depends on the distortion factor $c$: 
\begin{align}\label{eq:Fc}
\; & \mu_{\D} \big[ \ell\in \LL_* \textnormal{ such that }  \tfrac{1}{2\pi}\ph(\ell)\textnormal{ is odd and }\ell\cap\simpleCurv\neq\emptyset \big]
\\
\nonumber
= \; & \lim_{r\to 0} \bigg( 
\mu_{\A_r}^{\textnormal{ND}} \big[ \ell\in \LL_* \textnormal{ such that }  \tfrac{1}{2\pi}\ph(\ell)\textnormal{ is odd } \big]
\, - \, 
\mu_{\A_r\setminus\simpleCurv}^{\textnormal{ND}} \big[ \ell\in \LL_* \textnormal{ such that }  \tfrac{1}{2\pi}\ph(\ell)\textnormal{ is odd } \big]
\bigg)
\\
\nonumber
= \; & \lim_{r\to 0} \bigg( 
\mu_{\A_r}^{\textnormal{ND}} \big[ \ell\in \LL_* \textnormal{ such that }  \tfrac{1}{2\pi}\ph(\ell)\textnormal{ is odd } \big]
\, - \, 
\mu_{\A_{cr}}^{\textnormal{ND}} \big[ \ell\in \LL_* \textnormal{ such that }  \tfrac{1}{2\pi}\ph(\ell)\textnormal{ is odd } \big]
\bigg)
&& \textnormal{[conf.~inv.]}
\\
\nonumber
= \; & \mu_{\A_{1/c}}^{\textnormal{ND}} \big[ \ell\in \LL_* \textnormal{ such that }  \tfrac{1}{2\pi}\ph(\ell)\textnormal{ is odd } \big]
\, =: \, F(c) .
&& \textnormal{[conf.~inv.]}
\end{align}

As in the proof of Proposition~\ref{prop::scaling_curve} when $n=1$, 
we will denote by $\Gamma_{w}^\delta$ the loop-erasure of a random walk $\LR\sim \Prob_{w}$ on $\A_r^\delta$ starting from $w^\delta\in\A_r^\delta$ and ending at $\AnnBdryOut_r^\delta$, reflected at $\AnnBdryIn_r^\delta$; 
and by $\cev{\Gamma}_{w}^\delta$ its time-reversal. 
Next, we will use a martingale technique to obtain the explicit formula~\eqref{eqn::scaling_constant_poly2}. 
Consider the case $n=2$ and take $\A_r^\delta:=\A_r\cap\delta\Z^2$. 
We take $\alpha_2^\delta$ to be an edge $e^\delta$ and assume that it converges to $e\in\partial\D$ as $\delta\to 0$; 
and we take $x_1=r$ and $x_2=-r$. 
Recall the definition of $E^\delta_{\bs{x}, \bs{\alpha}} = E(\A_r^\delta;x_1^\delta,x_2^\delta;\alpha_1^\delta,\alpha_2^\delta)$ in Equation~\eqref{eqn::defE3} for the UST branches. 
For every discrete simple curve $\simpleCurv^\delta$ which intersects $\alpha_1^\delta$, we have
\begin{align*}
\;&\PP^\delta \Big[ \cev{\gamma}_1^\delta[0,t\wedge \cev{\exitTT}_{\eps}^\delta]=\simpleCurv^\delta \Bigcond E(\A_r^\delta;x_1^\delta,x_2^\delta;\alpha_1^\delta,\alpha_2^\delta) \Big] 
\\
=\;&\frac{\PP^\delta \big[E(\A_r^\delta\cup\simpleCurv^\delta;x_1^\delta,x_2^\delta;\simpleCurv^\delta(t\wedge \exitTT_{\eps}^\delta),\alpha_2^\delta)\big]}{\PP^\delta \big[E(\A_r^\delta;x_1^\delta,x_2^\delta;\alpha_1^\delta,\alpha_2^\delta)\big]}
\; \frac{\#\{\textnormal{spanning trees $\LT^\delta$ on }\A_r^\delta \textnormal{ s.t. } \simpleCurv^\delta\subset\LT^\delta\}}{\#\{\textnormal{spanning trees $\LT^\delta$ on }\A_r^\delta\}}\\
=\;&\frac{\PP^\delta \big[E(\A_r^\delta\cup\simpleCurv^\delta;x_1^\delta,x_2^\delta;\simpleCurv^\delta(t\wedge \exitTT_{\eps}^\delta),\alpha_2^\delta)\big]}{\PP^\delta \big[E(\A_r^\delta;x_1^\delta,x_2^\delta;\alpha_1^\delta,\alpha_2^\delta)\big]}
\;\frac{\PP^\delta \big[ \cev{\Gamma}_{x_1}^\delta[0,t\wedge \cev{\exitTT}_{\eps}^\delta]=\simpleCurv^\delta \big]}{\Prob_{x_1}[\LR\textnormal{ hits }\AnnBdryOut_r^\delta\cup\simpleCurv^\delta\textnormal{ at }\simpleCurv^\delta(t\wedge \exitTT_{\eps}^\delta)]} ,
\end{align*}
where we denote by $\A_r^\delta\cup\simpleCurv^\delta$ the graph obtained by wiring $\simpleCurv^\delta$ and $\AnnBdryOut_r^\delta$ together.
Thus, the Radon--Nikodym derivative of the conditional law of the time-reversed branch $\cev{\gamma}_1^\delta[0,t\wedge \cev{\exitTT}_{\eps}^\delta]$ given $E(\A_r^\delta;x_1^\delta,x_2^\delta;\alpha_1^\delta,\alpha_2^\delta)$, with respect to the time-reversed LERW $\cev{\Gamma}_{x_1}^\delta[0,t\wedge \cev{\exitTT}_{\eps}^\delta]$, is given by 
\begin{align*}
M(t\wedge \cev{\exitTT}_{\eps}^\delta) 
:= \frac{\one{\{\cev{\Gamma}_{x_1}^\delta(0)\in\alpha_1^\delta\}}}{\PP^\delta \big[E(\A_r^\delta;x_1^\delta,x_2^\delta;\alpha_1^\delta,\alpha_2^\delta) \big]} 
\; \frac{\PP^\delta \big[E(\A_r^\delta\cup\cev{\Gamma}_{x_1}^\delta[0,t\wedge \cev{\exitTT}_{\eps}^\delta];x_1^\delta,x_2^\delta;\cev{\Gamma}_{x_1}^\delta(t\wedge \cev{\exitTT}_{\eps}^\delta),\alpha_2^\delta) \big]}{\Prob_{x_1}[\LR\textnormal{ hits }\AnnBdryOut_r^\delta\cup\cev{\Gamma}_{x_1}^\delta[0,t\wedge \cev{\exitTT}_{\eps}^\delta]\textnormal{ at }\cev{\Gamma}_{x_1}^\delta(t\wedge \cev{\exitTT}_{\eps}^\delta)]} .
\end{align*}
In particular, $(M(t\wedge \cev{\exitTT}_{\eps}^\delta) \colon t\ge 0)$ is a discrete martingale for $\cev{\Gamma}_{x_1}^\delta$. 
Define 
\begin{align*}
N(t\wedge \cev{\exitTT}_{\eps}^\delta)
:= \;& \frac{\PP^\delta \big[E(\A_r^\delta\cup\cev{\Gamma}_{x_1}^\delta[0,t\wedge \cev{\exitTT}_{\eps}^\delta];x_1^\delta,x_2^\delta;\cev{\Gamma}_{x_1}^\delta(t\wedge \cev{\exitTT}_{\eps}^\delta),\alpha_2^\delta) \big]}{\Prob_{x_1}[\LR\textnormal{ hits }\AnnBdryOut_r^\delta\cup\cev{\Gamma}_{x_1}^\delta[0,t\wedge \cev{\exitTT}_{\eps}^\delta]\textnormal{ at }\cev{\Gamma}_{x_1}^\delta(t\wedge \cev{\exitTT}_{\eps}^\delta)] 
\; \Prob_{x_2}[\LR\textnormal{ hits }\AnnBdryOut_r^\delta\cup\cev{\Gamma}_{x_1}^\delta[0,t\wedge \cev{\exitTT}_{\eps}^\delta]\textnormal{ through }\alpha_2^\delta]}\\
\;&\times \frac{\Prob_{x_2}[\LR\textnormal{ hits }\AnnBdryOut_r^\delta\cup\cev{\Gamma}_{x_1}^\delta[0,t\wedge \cev{\exitTT}_{\eps}^\delta]\textnormal{ through }\alpha_2^\delta]}{\Prob_{x_2}[\LR\textnormal{ hits }\AnnBdryOut_r^\delta\textnormal{ through }\alpha_2^\delta]}
\; \one{\{\cev{\Gamma}_{z}^\delta(0)\in\alpha_1^\delta\}}
\end{align*}
Then, $N = (N(t\wedge \cev{\exitTT}_{\eps}^\delta) \colon t\ge 0)$ is a discrete martingale for $\cev{\Gamma}_{x_1}^\delta$
--- and by a Harnack-type inequality, there exists a constant $C=C(\eps)>0$ that only depends on $\eps$, such that
\begin{align*}
N(t\wedge \cev{\exitTT}_{\eps}^\delta)
\; \le \; 1 \; + \; \frac{\Prob_{x_2}[\LR\textnormal{ hits }\AnnBdryOut_r^\delta\cup\cev{\Gamma}_{x_1}^\delta[0,t\wedge \cev{\exitTT}_{\eps}^\delta]\textnormal{ at }\cev{\Gamma}_{x_1}^\delta(t\wedge \cev{\exitTT}_{\eps}^\delta)] 
\; \Prob_{x_1}[\LR\textnormal{ hits }\AnnBdryOut_r^\delta\textnormal{ through }\alpha_2^\delta]}{\Prob_{x_1}[\LR\textnormal{ hits }\AnnBdryOut_r^\delta\cup\cev{\Gamma}_{x_1}^\delta[0,t\wedge \cev{\exitTT}_{\eps}^\delta]\textnormal{ at }\cev{\Gamma}_{x_1}^\delta(t\wedge \cev{\exitTT}_{\eps}^\delta)] 
\; \Prob_{x_2}[\LR\textnormal{ hits }\AnnBdryOut_r^\delta\textnormal{ through }\alpha_2^\delta]}
\; \le \; C.
\end{align*}
We have proved the convergence (both in $(X, \dist_X)$ and locally uniformly)  
of the time-reversed UST branch $\cev{\gamma}_1^\delta$ to the radial $\SLE_2$ curve $\slecurv = \cev{\gamma}_1$ 
in the proof of Proposition~\ref{prop::scaling_curve} (case $n=1$), when first $\delta \to 0$ and then $r \to 0$.
Similarly, the family $\{\cev{\Gamma}_{x_1}^\delta\}_{\delta>0}$ is tight in $(X, \dist_X)$
(cf.~\cite[Theorem~3.9]{LSW:Conformal_invariance_of_planar_LERW_and_UST} and~\cite[Lemma~3.2]{Berestycki-Liu:Piecewise_Temperleyan_dimers_and_multiple_SLE8}). 
Suppose $\cev{\Gamma}_r \in X$ is any subsequential limit, 
which we parameterise by capacity. It follows similarly as in the proof of~\cite[Theorem~3.9]{LSW:Conformal_invariance_of_planar_LERW_and_UST} 
that the family $\{\cev{\Gamma}_{x_1}^\delta\}_{\delta>0}$ is also tight in the uniform topology of curves parameterised by capacity. 
Thus, by Skorokhod's representation theorem, without loss of generality we may assume that along a subsequence still denoted $\delta \to 0$, 
the time-reversed LERW $\cev{\Gamma}_{x_1}^\delta$ converges almost surely to $\cev{\Gamma}_r$ in both $(X, \dist_X)$ and locally uniformly.

Consider the conformal isomorphism $\phi^{(r)}_{t,\eps} \colon \A_r\setminus\cev{\Gamma}_r[0,t\wedge \cev{\exitTT}_{\eps}] \to \A_{r_{t,\eps}}$, such that $\phi^{(r)}_{t,\eps}(r) =: r({t\wedge \cev{\exitTT}_{\eps}}) =: r_{t,\eps}$.
The arguments leading to Equation~\eqref{eqn::TV_conv_1} show that 
$\cev{\Gamma}_r[0,\cev{\exitTT}_{\eps}] \to \slecurv[0,\exitTT_{\eps}]$ as $r \to 0$ in total variation distance.
In particular, there exists a coupling such that almost surely, for every $t\ge 0$, the curve $\cev{\Gamma}_r[0,t]$ agrees with the radial $\SLE_2$ curve $\slecurv[0,t]$ when $r>0$ is small enough. 
Hence, $\phi^{(r)}_{t,\eps}$ converges to the radial Loewner flow $g_{t\wedge \exitTT_{\eps}}$ corresponding to $\slecurv$ in~\eqref{eqn::ODE} locally uniformly on $\D\setminus(\{0\}\cup\slecurv[0,t\wedge \exitTT_{\eps}])$.

Similarly to Lemma~\ref{lem::Poisson_limit}, one can prove that $N$ converges almost surely as $\delta\to 0$ to a limit $N_r$, which satisfies the following two properties.
\begin{itemize}[leftmargin=*]
\item
The process $N_r = (N_r(t\wedge \cev{\exitTT}_{\eps})\colon t\ge 0)$ is a martingale for $\cev{\Gamma}_r$, 
and we have $N_r(t\wedge \cev{\exitTT}_{\eps}) \le C$.

\item Let $\A_r(t,\eps) := \A_r\setminus\cev{\Gamma}_r[0,t\wedge \cev{\exitTT}_{\eps}]$. 
We can write $N$ in the form 
\begin{align}
\nonumber
\hspace*{-5mm}
N_r(t\wedge \cev{\exitTT}_{\eps}) 
=\;& \one{\{\cev{\Gamma}_r(0)\in\alpha_1\}}
\; r_{t,\eps} \; 
\exp\Big(\!-2\mu_{ \A_r(t,\eps) }^{\textnormal{ND}} \big[ \ell\in \LL_* \textnormal{ such that }  \tfrac{1}{2\pi}\ph(\ell)\textnormal{ is odd } \big] \Big)
\\
\nonumber
\;&\times \sin\left(\frac{\arg\phi^{(r)}_{t,\eps}(x_2)-\arg\phi^{(r)}_{t,\eps}(x_1)}{2}\right)
\; \sin\left(\frac{\arg\phi^{(r)}_{t,\eps}(e)-\arg\phi^{(r)}_{t,\eps}(\cev{\Gamma}_r(t\wedge \cev{\exitTT}_{\eps}))}{2}\right) 
\\
\label{eqn::mart_N}
\;&\times 
\frac{|\big(\phi^{(r)}_{t,\eps}\big)'(e)|}{\Pois_{\A_r}(x_2,e) \; \Pois_{\A_{r_{t,\eps}}}(\phi^{(r)}_{t,\eps}(x_1),\phi^{(r)}_{t,\eps}(\cev{\Gamma}_r(t\wedge \cev{\exitTT}_{\eps})))}
\; \times (1+O( r_{t,\eps})) ,
\end{align}
where $\Pois_{\smash{\A_{r}}}(\cdot,\cdot)$ is the Poisson kernel on $\smash{\A_{r}}$ with Dirichlet boundary condition on $\partial B(0,1)$ 
and Neumann boundary condition on $\partial B(0,r)$: the function $\Pois_{\smash{\A_{r}}}(\cdot,\cdot)$ is smooth on the set 
\begin{align*}
(\overline{\A}_{r} \times\overline{\A}_{r})\setminus\{(y,y) \;|\; y\in\partial B(0,1)\};
\end{align*}
for every $z\in\partial B(0,1)$, the function $\Pois_{\smash{\A_{r}}}(\cdot,y)$ is harmonic on $\smash{\A_{r}}$, and
\begin{align*}
\Pois_{\smash{\A_{r}}}(\cdot,y)|_{\partial B(0,1)\setminus\{y\}}=0
\qquad\textnormal{and}\qquad 
\partial_n \Pois_{\smash{\A_{r}}}(\cdot,z)|_{\partial B(0,r)}=0.
\end{align*}
\end{itemize}

Next, for $c>0$, define $\sigma_c$ to be the first time for the curves such that $r(\sigma)=cr$ 
(we can first take $r$ small enough and then take $\eps$ small enough such that $\sigma_c < \cev{\exitTT}_{\eps}$). 
By the optional stopping theorem, 
\begin{align}\label{eqn::mart_value}
\E[N_r(\sigma_c)] = \E[N_r(0)].
\end{align}
By a Harnack-type inequality, for every $\eps_0>0$, there exists $s_0>0$, such that for every 
$s<s_0$,
\begin{align}\label{eqn::kernel_estimate}
\big|\Pois_{\A_s}(x,e)-\tfrac{1}{2\pi}\big|<\eps_0 , \qquad x\in\partial B(0,s) , \; e\in\partial\D .
\end{align}
Plugging~\eqref{eqn::kernel_estimate} into~(\ref{eqn::mart_N},~\ref{eqn::mart_value}), we find that
\begin{align}\label{eqn::cons_aux_mart}
\;&\frac{1}{c}\;\sin\bigg(\frac{\arg x_2-\arg x_1}{2}\bigg)
\;\E \bigg[\sin\bigg(\frac{\arg e-\arg\cev{\Gamma}_r(0)}{2}\bigg) \; \one{\{\cev{\Gamma}_r(0)\in\alpha_1\}}\bigg]\notag\\
=\;&(1+o_r(1)) \;
\E\bigg[e^{-2F(\sigma_c,r)}\sin\bigg(\frac{\arg\phi^{(r)}_{\sigma_c,\eps}(x_2)-\arg\phi^{(r)}_{\sigma_c,\eps}(x_1)}{2}\bigg)\sin\left(\frac{\arg\phi^{(r)}_{\sigma_c,\eps}(e)-\arg\phi^{(r)}_{\sigma_c,\eps}(\cev{\Gamma}_r(\sigma_c))}{2}\right)
\notag\\
\;& \qquad\qquad\qquad\quad
\times \big| (\phi^{(r)}_{\sigma_c,\eps})'(e) \big| \; \one{\{\cev{\Gamma}_r(0)\in\alpha_1\}}\bigg],
\end{align}
where 
\begin{align*}
F(\sigma_c,r) := \mu_{\A_{r}}^{\textnormal{ND}} \big[ \ell\in \LL_* \textnormal{ such that }  \tfrac{1}{2\pi}\ph(\ell)\textnormal{ is odd and } \ell\cap\cev{\Gamma}_r[0,\sigma_c]\neq\emptyset \big] .
\end{align*}

Let us now gather the following observations. 
\begin{itemize}[leftmargin=*]
\item By the optional stopping theorem, for $z\in\partial B(0,r)$ and $1\geq R>r$, we have
\begin{align*}
(\phi^{(r)}_{\sigma_c,\eps})'(z) 
= (\phi^{(r)}_{\sigma_c,\eps})'(B^{\textnormal{ND}}(0)) 
= \ProbEBM_{z} \big[ (\phi^{(r)}_{\sigma_c,\eps})'(B^{\textnormal{ND}}(\exitT_R))
\big] ,
\end{align*}
where $B^{\textnormal{ND}} \sim \ProbBM_{z}$ is the Brownian motion on $\A_r$ started at $B^{\textnormal{ND}}(0) = z\in\partial B(0,r)$ 
with reflective inner boundary $\partial B(0,r)$ and stopped when hitting $\partial B(0,R)$ at the hitting time $\exitT_R$. 
Since $\phi_{\sigma_c,\eps}^{(r)}$ converges to the radial Loewner flow $g_{\log c}$ 
locally uniformly on $\D\setminus(\{0\}\cup\slecurv[0,\log c])$, we have
\begin{align*}
\lim_{r\to 0}\max_{z\in\partial B(0,r)}\big|(\phi^{(r)}_{\sigma_c,\eps})'(z) - \log c \big|=0.
\end{align*}
This implies that
\begin{align*}
\lim_{r\to 0}\sin\bigg(\frac{\arg\phi^{(r)}_{\sigma_c,\eps}(x_2)-\arg\phi^{(r)}_{\sigma_c,\eps}(x_1)}{2}\bigg) 
= \sin\bigg(\frac{\arg x_2-\arg x_1}{2}\bigg) .
\end{align*}

\item
By the uniform convergence, we also have
\begin{align*}
\lim_{r\to 0}\phi^{(r)}_{\sigma_c,\eps}( \slecurv(\sigma_c)) = g_{\log c}(\slecurv(c)) = \exp\big(\ii \Theta(\log c)\big) ,
\end{align*}
where $(\Theta(t) \colon t\ge 0)$ is the driving function of the curve $(\slecurv(t) \colon t\ge 0)$ as in~\eqref{eqn::ODE}. 

\item By It\^o's formula, the following process is a martingale for $\slecurv$:
\begin{align*}
S(t) := \ee^{\frac{3}{4}t} \, |g_t'(e)| \, \sin\bigg(\frac{\arg g_{t}(e)-\Theta(t)}{2}\bigg) .
\end{align*}

\end{itemize}
Plugging these observations into~\eqref{eqn::cons_aux_mart}, by letting $r\to 0$ and shrinking $\alpha_1$ to a point, 
and applying the optional stopping theorem to $S$, we find 
\begin{align*}
\frac{1}{c} \; \sin\bigg(\frac{\arg e-\Theta(0)}{2}\bigg)
= \; & \ee^{-2 F(c)} \; \E\bigg[\sin\bigg(\frac{\arg g_{\log c}(e)-\Theta(\log c)}{2}\bigg) \big| g'_{\log c}(e) \big| \bigg] 
\\
= \; & c^{-\frac{3}{4}} \, \ee^{-2 F(c)} \; \E\big[ S(\log c) \big] 
= c^{-\frac{3}{4}} \, \ee^{-2 F(c)} \; \sin\bigg(\frac{\arg e -\Theta(0)}{2}\bigg) ,
\end{align*}
which by Equation~\eqref{eq:Fc} implies the asserted exact identity~\eqref{eqn::scaling_constant_poly2}: 
\begin{align*}
F(c) = \mu_{\D} \big[ \ell\in \LL_* \textnormal{ such that }  \tfrac{1}{2\pi}\ph(\ell)\textnormal{ is odd and }\ell\cap\simpleCurv\neq\emptyset \big]
= \frac{1}{8} \log c .
\end{align*}
\end{proof}

{\small \bibliographystyle{alpha}

}
\end{document}